\numberwithin{equation}{section}
\theoremstyle{plain}
\newtheorem{theorem}{Theorem}[section]
\newtheorem{proposition}[theorem]{Proposition}
\newtheorem{lemma}[theorem]{Lemma}
\newtheorem{corollary}[theorem]{Corollary}
\newtheorem{definition}[theorem]{Definition}
\newtheorem{example}[theorem]{Example}
\newtheorem{remark}[theorem]{Remark}
\newenvironment{proof}{{\noindent \textbf{Proof}\,\,}}{\hspace*{\fill}$\Box$\medskip}
\def\mcv{\mathcal V}
\def\wt#1{\widetilde#1}
\def\rr{\mathbb R}
\def\var{\varepsilon}
\def\wh#1{\widehat#1}
 \def\mcx{\mathcal X}
\def\mcy{\mathcal Y}
\def\nn{\mathbb N}
 \def\mct{\mathcal T}
 \def\mcb{\mathcal B}
 \def\zz{\mathbb Z}
 \def\rp{\mathbb{RP}}
 \def\mcb{\mathcal B}
\def\La{\Lambda}
\def\mca{\mathcal A}
  \def\diag{\operatorname{diag}}
 \def\la{\lambda}
\def\mcc{\mathcal C}
\def\mcu{\mathcal U}
\def\mcd{\mathcal D}
\def\Xi{\mathcal Z}
\def\mcl{\mathcal L}
\def\mcd{\mathcal D}
\def\mcf{\mathcal F}
\def\sh{\sinh}
\def\mco{\mathcal O}
\def\sign{\operatorname{sign}}
\title{On curves with Poritsky property}
\author{Alexey Glutsyuk\thanks{ CNRS, France (UMR 5669 (UMPA, ENS de Lyon) and UMI 2615 (Interdisciplinary Scientific Center J.-V.Poncelet)), 
Lyon, France. 
E-mail:
aglutsyu@ens-lyon.fr}
\thanks{National Research University Higher School of Economics (HSE), Russian Federation}
\thanks{The author is partially supported by Laboratory of Dynamical Systems and Applications NRU HSE of the Ministry of science and higher education of the RF grant ag. No 075-15-2019-1931}
 \thanks{Supported by part by RFBR grants 16-01-00748 and 16-01-00766}
 \thanks{Partially supported by RFBR and JSPS (research project 19-51-50005)}
 \thanks{This material is based upon work supported by the National Science Foundation under Grant No. DMS-1440140, while the author was in residence at the Mathematical Sciences Research Institute in Berkeley, California, during the fall semester 2018}}
\begin{document}
\maketitle
\begin{abstract} Reflection in planar billiard  acts on the space of oriented lines. 
For a given closed 
convex planar curve $\gamma$ 
the string construction 
yields a one-parameter family of nested billiards containing $\gamma$ for which 
$\gamma$ is a {\it caustic:} 
each tangent line to $\gamma$ is reflected  to a line tangent to $\gamma$. 
Thus, the reflections in these billiards  act on the tangent 
lines to $\gamma$ and hence, on the tangency points, inducing a family of {\it string  diffeomorphisms} $\mathcal T_p:\gamma\to\gamma$.   
We say that $\gamma$ has {\it string Poritsky property,} if it admits a parameter $t$ (called {\it Poritsky  
string length}) in which all the transformations $\mathcal T_p$ with small $p$ 
are translations $t\mapsto t+c_p$.   These definitions also make sense for germs 
of curves $\gamma$. Poritsky property is closely related to the famous Birkhoff Conjecture. 
 It is classically known that each conic has string Poritsky property. 
 In 1950 H.Poritsky proved the converse: {\it each germ of  planar 
curve with Poritsky property is a conic.} 
In the present paper we extend this Poritsky's result to germs of curves in 
simply connected complete 
Riemannian surfaces of constant curvature and to outer billiards on all these surfaces. In the general case of 
curves with Poritsky property on any two-dimensional surface with Riemannian metric we prove the following two results: {\it 1) the Poritsky string length 
coincides with Lazutkin parameter, introduced by V.F.Lazutkin in 1973, up to additive and multiplicative 
constants;  2) a germ of $C^5$-smooth curve with Poritsky property is uniquely 
determined by its 4-th jet.} In the Euclidean case the latter statement follows from the above-mentioned 
Poritsky's result.  
\end{abstract}
\tableofcontents
\section{Introduction and main results}

Consider the billiard in a bounded planar domain $\Omega\subset\rr^2$ with a strictly convex smooth boundary. The 
billiard dynamics $T$ acts on the space of oriented lines intersecting 
$\Omega$. Namely, let $L$ be an oriented line 
intersecting $\Omega$, and let $A$ be its last point (in the sense of orientation) of its intersection with 
$\partial\Omega$. By definition $T(L)$ is the image of the line $L$ under the symmetry with respect to the tangent 
line $T_A\partial\Omega$, being oriented from the point $A$ inside the domain $\Omega$. A  curve $\gamma\subset\rr^2$  is a 
{\it caustic} of the billiard $\Omega$, if each line tangent to $\gamma$ is reflected from the boundary $\partial\Omega$ 
again to a line tangent to $\gamma$; in other words, if the curve formed by oriented lines tangent to $\gamma$ is 
invariant under the billiard transformation $T$. In what follows we consider only {\it smooth caustics} (in particular, 
without cusps). 

It is well-known that each planar billiard with sufficiently smooth strictly convex boundary has a Cantor family of caustics 
\cite{laz}. Analogous statement for outer billiards was proved in \cite{amiran2}. 
Every elliptic billiard is {\it Birkhoff caustic integrable,} that is, an inner neighborhood of its boundary is foliated by closed 
caustics. The famous Birkhoff Conjecture states the converse: the only Birkhoff caustic integrable planar billiards are 
elllipses. The Birkhoff Conjecture together with its extension to billiards on surfaces of constant curvature and its 
version (due to Sergei Tabachnikov) for outer billiards on the latter surfaces are big open problems, see, e.g.,  
\cite{hess, kalsor} and references therein for history and related results.

It is  well-known that each smooth convex planar curve $\gamma$ is a caustic for a family of  billiards 
$\Omega=\Omega_p$, $p\in\rr_+$, whose boundaries $\Gamma=\Gamma_p=\partial\Omega_p$ are given 
by the $p$-th string constructions, see \cite[p.73]{tab}.  Namely, let $|\gamma|$ denote the length of the curve $\gamma$. 
Take an arbitrary number $p>0$ and 
a string of length $p+|\gamma|$ enveloping the curve $\gamma$. Let us put  a pencil between the curve 
$\gamma$ and the string, and let us push it out of $\gamma$ until a position, when the string, which envelopes 
$\gamma$ and the pencil,  becomes stretched. Then let us move  the pencil around the curve $\gamma$ so that the 
string remains  stretched. Thus moving pencil draws a convex curve that 
is called the {\it $p$-th string construction,} see Fig. 1.
\begin{figure}[ht]
  \begin{center}
   \epsfig{file=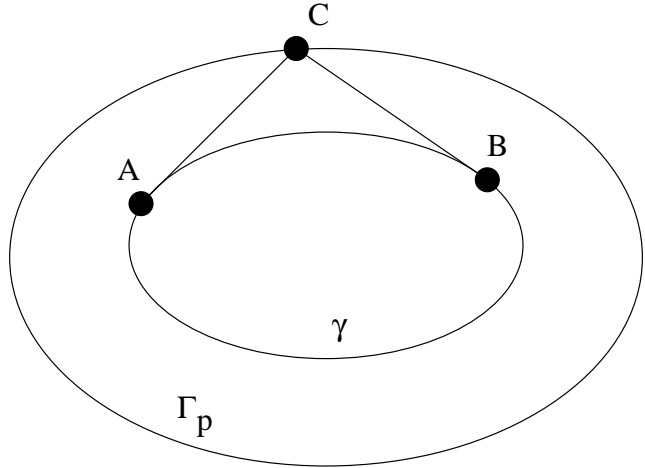}
    \caption{The string construction.}
    \label{fig:01}
  \end{center}
\end{figure}
 
For every $A\in\gamma$ by $G_A$ we denote the line 
tangent to $\gamma$ at $A$. If $\gamma$ is oriented by a 
vector in $T_A\gamma$, then 
we orient $G_A$ by the same vector.
The billiard reflection $T_p$  from the curve  $\Gamma_p$ 
 acts on the oriented lines tangent to $\gamma$. It induces the mapping $\mct_p:\gamma\to\gamma$ 
 acting on tangency points and called {\it string diffeomorphism.} It sends 
 each point $A\in\gamma$ 
 to the point of tangency of the curve $\gamma$ with  the line $T_p(G_A)$.  
 
 Consider the special case, when $\gamma$ is an ellipse. Then for every 
 $p>0$ the 
 curve $\Gamma_p$ given by the $p$-th string construction is an ellipse confocal to $\gamma$.  Every ellipse  
$\gamma$ admits a canonical bijective parametrization by the circle $S^1=\rr\slash2\pi\zz$ equipped with parameter $t$ 
 such that for every $p>0$ small enough one has $\mct_p(t)=t+c_p$, $c_p=c_p(\gamma)$, 
 see \cite[the discussion before corollary 4.5]{tab}. The property of existence 
 of the above parametrization 
 will be called the {\it string Poritsky property,} and the parameter $t$ will be called {\it Poritsky--Lazutkin  string length.}
 
 In his seminal paper \cite{poritsky} Hillel Poritsky proved the Birkhoff Conjecture under the following 
 additional assumption called {\it Graves} (or {\it evolution}) {\it property}: for every two nested caustics $\gamma_{\lambda}$, $\gamma_{\mu}$ 
 of the billiard under question the smaller caustic  $\gamma_{\lambda}$ is also a caustic 
 of the billiard in the bigger caustic $\gamma_{\mu}$. His beautiful geometric proof was based on his remarkable 
 theorem  stating that in Euclidean plane only conics have string Poritsky 
 property, see \cite[section  7]{poritsky}. 
 
 In the present paper we extend the above Poritsky's 
 theorem to billiards on simply connected complete 
 surfaces of constant curvature (Subsection 1.1 and Section 4) 
 and prove its version for 
 outer billiards and area construction on these surfaces (Subsection 1.2 and Section 5). 
 All the results of the present paper will be stated and proved for germs of curves, and thus, in Subsection 1.1 (1.2) 
 we state the definitions of Poritsky string (area) property for germs.  
 We also study Poritsky property on arbitrary surfaces equipped 
 with a Riemannian metric. 
 In this  general case we show that 
 the Poritsky  string length coincides with the Lazutkin parameter 
 \begin{equation}t_L(s)=\int_{s_0}^s\kappa^{\frac23}(\zeta)d\zeta\label{lazparl}\end{equation}
  introduced in \cite[formula (1.3)]{laz}, 
 up to multiplicative and additive constants  
 (Theorem \ref{ttk} in Subsection 1.3, proved in Section 6). Here $\kappa$ is the geodesic 
 curvature. 
 This explains the name "Poritsky--Lazutkin length".
 
 In the same famous paper \cite{laz}, 
 for a given curve $\gamma\subset\rr^2$ 
 V.F.Lazutkin introduced 
 remarkable coordinates $(x,y)$ on the space of oriented geodesics,  in which the billiard ball map given 
 by reflection from the curve $\gamma$ takes the form 
 $$(x,y)\mapsto(x+y+o(y), y+o(y^2));$$
 \centerline{the $x$-axis coincides with the set of the geodesics tangent to $\gamma$;} 
 $$  \ x=t_L(s) 
 \text{ on the } x-\text{axis up to multiplicative and additive constants.}$$ 
 In \cite{mm} Melrose and Marvizi studied 
 planar billiard ball map with reflection from a 
 $C^{\infty}$-smooth curve and proved their famous 
 theorem on existence of an interpolating Hamiltonian. 
 Namely, they have shown that  the billiard ball map 
 coincides with a unit time flow map   of 
 appropriately "time-rescaled" 
 smooth Hamiltonian vector field, up to a flat correction. 
 
 We state and prove the above-mentioned 
  Lazutkin's result (in slightly different 
 form) for a more 
 general class of symplectic maps, the so-called 
 "weakly billiard-like maps", which include 
 billiard ball maps in arbitrary Riemannian surface. 
 Using it, we extend Theorem \ref{ttk} on coincidence 
 of Poritsky and Lazutkin parameters to families 
 of weakly billiard-like maps with appropriate 
 converging family of invariant curves 
 (Theorem \ref{tsympor} stated and proved in Section 7). We retrieve Theorem \ref{ttk} (for $C^6$-smooth curves) from Theorem \ref{tsympor} at the end of Section 7. The proof of Theorem \ref{tsympor} is based on 
  Lemma \ref{plog} on asymptotic behavior of orbits of a weakly 
 billiard-like map, which  may have an independent interest.

 For curves on arbitrary  surface equipped with a $C^6$-smooth 
 Riemannian metric 
 we show that a $C^5$-smooth germ of 
 curve with string  Poritsky property is uniquely determined by its 4-jet (Theorem \ref{uniq4} stated in Subsection 1.4 and proved in Section 8). 
 
 Theorem \ref{thsm}  in Subsection 1.1 (proved in Section 3) states that if a metric and a germ of 
  curve $\gamma$ are both $C^k$-smooth, then the  string curve foliation 
 is tangent to a line field $C^{[\frac k2]-1}$-smooth on a domain adjacent 
 to $\gamma$ (including $\gamma$). 
 
 In Section 2 we present a Riemannian-geometric 
background material on normal coordinates, equivalent definitions of 
geodesic curvature etc. used in the proofs of main results.

\subsection{Poritsky property for string construction and Poritsky--Lazutkin string length} 
Let $\Sigma$ be a two-dimensional surface equipped with a Riemannian metric. 
Let $\gamma\subset\Sigma$ be a  smooth curve (a germ of smooth curve at a point $O\in\Sigma$). 
We consider it to be {\it convex:} 
its geodesic curvature should be non-zero. For every given two points $A,B\in\gamma$ 
close enough 
by $C_{AB}$ we will denote the unique point (close to them) of intersection  of the geodesics $G_A$ and $G_B$ tangent to $\gamma$ at $A$ and 
$B$ respectively. (Its existence will be proved in Subsection 2.1.) 
Set 
$$\la(A,B):= \text{ the length of the arc } AB \text{ of the curve } \gamma, $$
\begin{equation}L(A,B):=|AC_{AB}|+|BC_{AB}|-\la(A,B).\label{lab}\end{equation}
Here for $X,Y\in\Sigma$ close enough and lying in a compact subset in $\Sigma$  
by $|XY|$ we denote the length of small geodesic segment connecting $X$ and $Y$. 

\begin{definition} \label{dstring} (equivalent definition of string construction) 
Let $\gamma\subset\Sigma$ be a germ of curve with non-zero geodesic curvature. For every $p\in\rr_+$ small enough 
the subset  
$$\Gamma_p:=\{ C_{AB} \ | \  L(A,B)=p\}\subset\Sigma$$
is  called the {\it $p$-th string construction,} see \cite[p.73]{tab}.
\end{definition}
\begin{remark} For every $p>0$ small enough  $\Gamma_p$ is a well-defined smooth curve,  we set $\Gamma_0=\gamma$.  
The curve $\gamma$ is a caustic for the billiard transformation 
acting by reflection from the curve $\Gamma_p$: a line tangent to $\gamma$ is reflected from 
the curve $\Gamma_p$  to a line tangent to $\gamma$ \cite[theorem 5.1]{tab}. 
In Section 3 we will prove the following theorem.
\end{remark}
\begin{theorem} \label{thsm} Let $k\geq2$, $\Sigma$ be a $C^k$-smooth surface equipped 
with a $C^k$-smooth Riemannian metric, and let $\gamma\subset\Sigma$ be a germ of $C^k$-smooth curve at $O\in\Sigma$ with positive geodesic curvature. Let $\mcu\subset\Sigma$ denote the domain adjacent 
  to $\gamma$ from the concave side. For every $C\in\mcu$ let  
$\Lambda(C)\subset T_C\Sigma$ denote the one-dimensional subspace that  is the exterior bisector of the angle 
formed by the two geodesics through $C$ that are tangent to $\gamma$.
  Then the following statements hold.
  
1) The subspaces $\La(C)$ form a germ at $O$ of 
 line field $\La$ that is 
$C^{k-1}$-smooth on 
$\mcu$ and $C^{r(k)}$-smooth on $\overline{\mcu}$, 
$$r(k)=[\frac{k}2]-1.$$ 

2) The string  curves $\Gamma_p$ are tangent to  $\La$ and $C^{r(k)+1}$-smooth. Their 
$(r(k)+1)$-jets at base points $C$ depend 
continuously on $C\in\overline{\mcu}$. 
\end{theorem}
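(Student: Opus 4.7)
The basic idea is that the two tangency parameters $s_A(C), s_B(C)$ of the geodesics from $C\in\mcu$ to $\gamma$ coalesce as $C\to\gamma$, but both the bisector $\La(C)$ and the string length $L(A,B)$ depend on them only through symmetric functions. Since the elementary symmetric functions $\sigma=s_A+s_B$ and $\pi=s_As_B$ extend smoothly across the coalescence locus $\gamma$, while $s_A-s_B$ extends only as a square root, one obtains regularity of $\La$ and $\Gamma_p$ up to $\gamma$ at the cost of halving the number of derivatives, which accounts exactly for the exponent $r(k)=[k/2]-1$.

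Concretely, I would fix Fermi coordinates $(s,n)$ along $\gamma$ with $\gamma=\{n=0\}$ and define an incidence function $F(s,C)$ measuring the signed distance from $C$ to the tangent geodesic $G_{\gamma(s)}$. Since the metric is $C^k$, the geodesic flow and hence $F$ is $C^{k-1}$, and the positivity of the geodesic curvature of $\gamma$ guarantees that for $C\in\gamma$ near $O$ the function $s\mapsto F(s,C)$ has a nondegenerate double zero, while for $C\in\mcu$ near $\gamma$ it has two simple zeros $s_A(C)<s_B(C)$ collapsing as $C\to\gamma$. The Malgrange preparation theorem then factors $F(s,C)=(s^2-\sigma(C)s+\pi(C))h(s,C)$ with $h$ nowhere vanishing, and the symmetric coefficients $\sigma(C),\pi(C)$ extend across $\gamma$ with regularity $C^{r(k)}$, the halving of derivatives being the standard feature of preparation at a diagonal double zero.

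Next, I would realise $\La(C)$ as a symmetric function of $(s_A,s_B)$. Let $v(s,C)\in T_C\Sigma$ be the unit tangent at $C$ to the geodesic joining $C$ to $\gamma(s)$; this is $C^{k-1}$ in $(s,C)$ by smoothness of the exponential map. The vector $V(C):=v(s_A(C),C)+v(s_B(C),C)$ is symmetric under swapping $s_A,s_B$, so it factors through $(\sigma,\pi)$ and inherits regularity $C^{r(k)}$ up to $\gamma$; the line $\La(C)$ is the orthogonal complement of $V(C)$, hence also $C^{r(k)}$ on $\overline{\mcu}$. On the interior $\mcu$, where $s_A\ne s_B$, one can label the two branches smoothly and work directly with $(s_A,s_B)$, recovering the full regularity $C^{k-1}$ inherited from the exponential map. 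For the second statement, the caustic/reflection property recalled in the remark just above the theorem identifies the tangent to $\Gamma_p$ at $C$ with $\La(C)$, so $\Gamma_p$ is an integral curve of $\La$; standard ODE parameter-dependence then upgrades a $C^{r(k)}$ line field to $C^{r(k)+1}$ integral curves whose $(r(k)+1)$-jets depend continuously on $C\in\overline{\mcu}$.

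The main obstacle is the precise regularity bookkeeping: verifying that the combined effects of (i) the $C^{k-1}$ regularity of the geodesic exponential map, (ii) the sharp halving produced by Malgrange preparation at a nondegenerate double zero, and (iii) the passage from $(s_A,s_B)$ to their symmetric functions, really do yield the exponent $[k/2]-1$ on $\overline{\mcu}$ rather than something strictly weaker. A secondary point is to check that the symmetric-function extension of $\La$ across $\gamma$ agrees pointwise with the interior definition as exterior bisector of the two tangent geodesics, which is automatic from the invariance of the construction under swapping $s_A$ and $s_B$.
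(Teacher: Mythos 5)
Your strategy is essentially the same as the paper's in its core mechanism: the generator of $\La(C)$ is the sum $v(s_A(C),C)+v(s_B(C),C)$, which is invariant under swapping the two tangency parameters, and it is exactly this symmetry that converts the square-root coalescence of $s_A,s_B$ at $\gamma$ into a loss of half the derivatives, yielding $r(k)=[\frac k2]-1$. Where you differ is in how the derivative count is actually established. The paper does not invoke Malgrange preparation or the Glaeser--Barban\c con symmetric-function theorem; instead it proves two elementary, self-contained lemmas (Lemmas \ref{lsm1} and \ref{lsm2}): that $\sign(y)\sqrt{f(x,y)}$ is $C^{k-1}$ when $f$ is $C^k$ with a nondegenerate quadratic zero in $y$, and that an even $C^k$ function of $y$ is a $C^{[k/2]}$ function of $z=y^2$. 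It then builds an explicit square-root diffeomorphism $F:(\sigma,s)\mapsto(s,y)$ with $y=\sign(\sigma-s)\sqrt{z(\sigma,s)}$ (using the quadratic asymptotics (\ref{kssig}), i.e. positivity of the curvature), inverts it to get $\sigma=\sigma(s,y)\in C^{k-2}$, and observes that $w(s,y)=v(s,y)+v(s,-y)$ is even in $y$, so Lemma \ref{lsm2} applies directly. This replaces your two black boxes (preparation in $s$ with parameter $C$, plus factoring a symmetric function through $(\sigma,\pi)$) by a single one-variable evenness argument.

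The genuine gap in your write-up is precisely the point you flag as "the main obstacle": you assert, but do not prove, that finitely smooth Malgrange preparation at a nondegenerate double zero produces coefficients $\sigma(C),\pi(C)$ of class $C^{r(k)}$, and that a symmetric $C^{k-1}$ function of $(s_A,s_B)$ factors through the elementary symmetric functions with the right class. Both statements are true in spirit but their sharp finite-smoothness versions are nontrivial theorems with their own delicate derivative bookkeeping (and the losses from the two steps must be composed and shown not to fall below $[\frac k2]-1$). Since establishing exactly this quantitative control is the entire content of the theorem beyond the easy tangency statement, your proposal should either cite precise finite-differentiability versions of these results with explicit loss, or prove the one-variable substitutes as the paper does. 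The remaining ingredients of your argument --- interior $C^{k-1}$ regularity by following a single branch, tangency of $\Gamma_p$ to $\La$ via the reflection property (the paper verifies it by computing the derivative of $L(A(C),B(C))$ along $v$, which equals $-(\cos\alpha+\cos\beta)$), and the upgrade from a $C^r$ line field to $C^{r+1}$ integral curves with continuously varying jets --- match the paper and are fine.
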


\begin{definition} \label{dpor} We say that a germ of oriented curve $\gamma\subset\Sigma$ with non-zero geodesic 
curvature has {\it string Poritsky property,} if it admits a 
$C^1$-smooth 
parametrization  by a parameter $t$  (called {\it Poritsky--Lazutkin string length}) such that 
for every $p>0$ small enough there exists a $c=c_p>0$  such that for every  pair $B,A\in\gamma$ 
ordered by  orientation with $L(A,B)=p$ one has $t(A)-t(B)=c_p$.
\end{definition}

\begin{example} \label{expor} It is classically known that 

(i) for every planar conic $\gamma\subset\rr^2$ and every $p>0$ the $p$-th string construction $\Gamma_p$ 
is a conic confocal to $\gamma$;

(ii) all the conics confocal to $\gamma$ and lying inside a given string construction conic 
$\Gamma_p$ are caustics of the billiard inside the conic $\Gamma_p$; 

(iii) each planar conic $\gamma$ has string Poritsky property  \cite[section  7]{poritsky},  \cite[p.58]{tab};  

(iv) conversely, {\it each planar curve with string 
Poritsky property is a conic}, by a theorem of H.Poritsky  \cite[section  7]{poritsky}. 
\end{example}

Two  results of the present paper extend statement (iv) to billiards on simply connected complete 
surfaces of constant curvature 
(by adapting Poritsky's arguments from \cite[section  7]{poritsky}) and to outer billiards on the latter surfaces. To state them, 
let us recall the notion of a conic on a surface of constant curvature.

Without loss of generality we consider  simply connected complete surfaces $\Sigma$ of constant curvature  0, $\pm1$ and realize each of them in its standard model in the space $\rr^3_{(x_1,x_2,x_3)}$ equipped with appropriate quadratic form 
$$<Qx,x>, \ Q\in\{\diag(1,1,0),\diag(1,1,\pm1)\}, \ < x,x >=x_1^2+x_2^2+x_3^2.$$

- Euclidean plane: $\Sigma=\{x_3=1\}$, $Q=\diag(1,1,0)$. 

- The unit sphere: $\Sigma=\{ x_1^2+x_2^2+x_3^2=1\}$, $Q=Id$.

- The hyperbolic plane: $\Sigma=\{ x_1^2+x_2^2-x_3^2=-1\}\cap\{ x_3>0\}$, $Q=\diag(1,1,-1)$. 

The metric of constant curvature on the surface $\Sigma$ under question is induced by the  quadratic form $<Qx,x>$. 
The {\it geodesics} on $\Sigma$ are its intersections with two-dimensional vector subspaces in $\rr^3$. The {\it conics} 
on $\Sigma$ are its intersections with quadrics $\{<Cx,x>=0\}\subset\rr^3$, where $C$ is a real symmetric $3\times 3$-matrix, see \cite{izm, veselov2}.  

\begin{proposition} \label{p-bir} On every surface of constant curvature each conic has string Poritsky property.
\end{proposition}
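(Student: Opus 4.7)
The plan is to identify the family of string constructions $\{\Gamma_p\}_{p>0}$ of a conic $\gamma$ on a simply connected surface $\Sigma$ of constant curvature with the family of confocal conics to $\gamma$ lying outside $\gamma$, and then to exhibit a $C^1$-smooth parameter $t$ on $\gamma$ in which the tangential dynamics induced by every such confocal caustic is a translation. The Euclidean case is classical (Example \ref{expor}(iii), \cite[section 7]{poritsky}, \cite[p.73]{tab}); the content of the proposition is its extension to the sphere and the hyperbolic plane, where I would follow the same two-step strategy.

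For Step 1, I would invoke the Graves (evolution) property for confocal conics on $\Sigma$: if $\tilde\gamma$ is a conic confocal to $\gamma$ whose concave domain contains $\gamma$, then $\gamma$ is a caustic of the billiard bounded by $\tilde\gamma$. In the Euclidean case this is the classical Chasles--Graves theorem, and on surfaces of constant curvature it is part of the integrable geometry of confocal quadrics developed in \cite{izm, veselov2}. Equivalently, the quantity $L(A,B)$ from (\ref{lab}) depends only on $\tilde\gamma$ as $A,B$ run over pairs of tangency points on $\gamma$ of geodesics meeting at a point of $\tilde\gamma$. This common value varies continuously and monotonically with the confocal parameter and tends to $0$ as $\tilde\gamma\to\gamma$, so for every small $p>0$ there is a unique outer confocal conic with that value, and by Definition \ref{dstring} it coincides with $\Gamma_p$.

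For Step 2, I would construct $t$ using the integrable structure of the confocal conic billiard. Integrating the action one-form associated with the Joachimsthal first integral along $\gamma$ produces a coordinate in which every billiard transformation in a confocal caustic acts, on the tangency points, as a rotation. On the sphere and on the hyperbolic plane the same construction applies, with the Joachimsthal integral replaced by its constant curvature analog extracted from the pencil of quadrics $\{<Qx,x>=\mu<Cx,x>\}\subset\rr^3$ cutting out $\gamma$ and its confocal family. Combined with Step 1, this yields $\mct_p(t)=t+c_p$ for every small $p$, which is exactly Definition \ref{dpor}.

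The main obstacle is the simultaneity in Step 2: one needs a single parameter $t$ on $\gamma$ that linearizes the tangential action of \emph{every} member of the family $\{\Gamma_p\}$ at once, not merely one of them. This commuting-rotations property is the manifestation on $\gamma$ of the complete integrability of the confocal conic billiard. In the Euclidean plane it reduces to the addition law for elliptic integrals; on surfaces of constant curvature one must justify the analogous addition law on the spectral curve of the pencil of quadrics, which is the technical point requiring careful citation in the non-Euclidean cases.
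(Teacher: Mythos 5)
Your Step 1 is exactly the paper's route: the paper records in Example \ref{excon} that conics on constant curvature surfaces have the evolution (Graves) property and that the string construction curves are the confocal conics, citing Proclus--Poncelet in the plane and \cite[theorem 3]{veselov2} in general. Where you diverge is Step 2, and the divergence matters precisely at the point you yourself flag as the main obstacle. You propose to linearize the tangential dynamics by integrating an action form built from the Joachimsthal integral (resp.\ its constant-curvature analog from the pencil of quadrics), and you correctly observe that getting a \emph{single} parameter $t$ linearizing $\mct_p$ for \emph{all} $p$ simultaneously then rests on an addition law on the spectral curve, which you defer to ``careful citation.'' As written, that step is not established: it is the whole content of the proposition, and the non-Euclidean addition law is exactly the kind of statement that needs either a precise reference or a proof.

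The paper closes this gap by a softer symplectic argument (Proposition \ref{pev}) that makes the simultaneity automatic and never touches elliptic integrals. Namely: the billiard reflections $T_p$ from the curves $\Gamma_p$ all preserve the same canonical symplectic form $\omega$ on the space of oriented geodesics; the curves $\Gamma_q^*$ of geodesics tangent to $\Gamma_q$ form a foliation (smooth up to the boundary leaf $\gamma^*=\Gamma_0^*$ by Theorem \ref{thsm}) whose every leaf is invariant under every $T_p$ --- this is exactly the evolution property. Writing the foliation as the level sets of a noncritical function $\psi$, each $T_p$ preserves $\psi$ and $\omega$, hence preserves the Hamiltonian field $H_\psi$, hence acts on each leaf --- in particular on $\gamma^*$ --- as a translation in the time coordinate of $H_\psi$. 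That single time coordinate is the Poritsky parameter, for all $p$ at once. So the commutativity/simultaneity you identify as the technical crux is not an input requiring an addition law; it is an output of invariance of one Hamiltonian vector field. Your approach, if completed, buys an explicit formula for $t$ in terms of the pencil of quadrics; the paper's buys a proof valid for any curve with the evolution property on any Riemannian surface, with conics entering only through Step 1. I would recommend either adopting the paper's $H_\psi$ argument for Step 2 or, if you keep the spectral-curve route, supplying the precise non-Euclidean references (e.g.\ in the spirit of \cite{veselov2}) and verifying that the action coordinate they produce is $C^1$ up to the degenerate leaf $\gamma$ itself, which is where the confocal family degenerates.
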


\begin{theorem} \label{t-bir} Conversely, on every surface of constant curvature each germ of 
$C^2$-smooth curve with string Poritsky property is a conic. 
\end{theorem}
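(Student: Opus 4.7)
The plan is to follow Poritsky's original scheme from the Euclidean case \cite[section 7]{poritsky}, transposed to the constant curvature ambient. I would work in the standard model $\Sigma\subset\mathbb{R}^3$ with quadratic form $\langle Qx,x\rangle$, so that geodesics appear as intersections of $\Sigma$ with linear $2$-planes through the origin and conics as intersections with homogeneous quadrics $\{\langle Cx,x\rangle=0\}$. Given a germ $\gamma$ at $O$ with string Poritsky property and Poritsky parameter $t$, I would use the $5$-dimensional family of conics on $\Sigma$ (symmetric $3\times3$ matrices up to scale) to produce a conic $\Gamma$ through $O$ whose contact with $\gamma$ at $O$ matches as many derivatives as the $5$ conic parameters allow; by Proposition \ref{p-bir}, $\Gamma$ has string Poritsky property as well, with its own Poritsky parameter $\tilde t$.

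Next, I would exploit Poritsky's core observation: the translation condition $\mathcal{T}_p(t)=t+c_p$ for all small $p$ forces the string diffeomorphisms to commute and to form a one-parameter group of translations on $\gamma$, and the same on $\Gamma$. Invoking Theorem \ref{ttk} would identify both Poritsky parameters with the corresponding Lazutkin parameters $\int\kappa^{2/3}\,ds$ up to affine changes, which already ties $t$ intrinsically to the geodesic curvature. I would then expand each $\mathcal{T}_p$ asymptotically as $p\to 0$: its Taylor coefficients in fractional powers of $p$ are universal differential polynomials in $\kappa$ and in the ambient (constant) curvature of $\Sigma$. Demanding that this expansion be a pure translation in the Lazutkin parameter, order by order, yields a hierarchy of ODEs on $\kappa(s)$. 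On a constant curvature surface, I expect these ODEs to be exactly the ones satisfied by geodesic curvatures of conics on $\Sigma$, so matching jets at $O$ with the conic $\Gamma$ constructed above forces $\gamma=\Gamma$ as germs.

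The main obstacle is the final identification: showing that the constraint system produced by the Poritsky hypothesis admits only conic solutions on $\Sigma$, not some larger integrable class. Poritsky's planar argument exploits affine coordinates in which tangent lines are linear and the string length has an explicit expression, both of which fail on the sphere and in the hyperbolic plane. The analogue must use the projective picture in $\mathbb{R}^3$, where geodesic reflection across a conic $\{\langle Cx,x\rangle=0\}$ is realized as a $Q$-orthogonal projective involution. Combining this description with the Graves (evolution) property, which guarantees that the nested string curves $\Gamma_p$ serve as caustics of each other's billiards, should yield a finite set of algebraic relations on the matrix $C$ of the candidate conic, forcing $\gamma$ to coincide with $\Gamma$ and completing the extension of Poritsky's theorem to $\Sigma$.
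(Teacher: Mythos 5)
Your proposal does not follow the paper's route, and as it stands it has genuine gaps that leave the theorem unproved. The paper's argument is entirely different and much more elementary: it first derives from the Poritsky property a \emph{coboundary identity} $\psi(L_A)/\psi(L_B)=u(A)/u(B)$ for the tangent-segment lengths $L_A=|C_{AB}A|$, $L_B=|C_{AB}B|$ (Proposition \ref{coboundary}, obtained by differentiating the relation $t(A)-t(B)=c_p$ along a string curve using the derivative formula (\ref{dersq})); it then feeds this identity into Ceva's Theorem on constant curvature surfaces (Theorem \ref{ceva} and its addendum) to conclude that $\gamma$ has the \emph{tangent incidence property}; finally it shows (Theorem \ref{incon}) that a curve with tangent incidence property must be tangent to the analytic line field defined by a pencil of conics, hence is itself a conic. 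No jet expansion, no ODE hierarchy, and no appeal to the Lazutkin parameter is needed, which is why the paper can work with only $C^2$ regularity.

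The concrete problems with your plan are the following. First, the step you yourself flag as the main obstacle --- showing that the ``hierarchy of ODEs on $\kappa(s)$'' extracted from the translation condition admits only conics as solutions --- is precisely the content of the theorem, and you give no mechanism for closing it; saying the relations ``should'' force $\gamma=\Gamma$ is not an argument. Second, your jet-matching strategy intrinsically requires high finite smoothness (at least $C^5$, since the osculating conic must match a $4$-jet and the resulting first-order ODE on $\mcj^4$ needs the $4$-jet extension of $\gamma$ to be differentiable); the theorem is stated for $C^2$ germs, so even a completed version of your argument would prove a strictly weaker statement. Indeed the paper derives the $4$-jet rigidity (Theorem \ref{uniq4}) as a \emph{consequence} of Theorem \ref{t-bir} on constant curvature surfaces, not the other way around. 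Third, you invoke the Graves (evolution) property of the string curves $\Gamma_p$ of $\gamma$; but within this paper only the implication ``Graves $\Rightarrow$ Poritsky'' is established (Proposition \ref{pev}), and the converse is exactly what you are not entitled to assume for an arbitrary germ with Poritsky property. You would need to either prove that implication or remove the appeal to it.
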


Proposition \ref{p-bir} and Theorem \ref{t-bir} will be proved in Section 4. 

\begin{remark}
In the case, when the surface under question  is Euclidean plane, Proposition \ref{p-bir} was proved in  
\cite[formula (7.1)]{poritsky}, 
and  Theorem \ref{t-bir} 
was proved in \cite[section  7]{poritsky}. 
\end{remark}

\def\mcu{\mathcal U}
\subsection{Poritsky property for outer billiards and area construction}

Let $\gamma\subset\rr^2$ be a smooth strictly convex closed curve. Let $\mcu$ be the exterior connected component 
of the complement $\rr^2\setminus\gamma$. Recall that the {\it outer billiard map} $T:\mcu\to\mcu$ 
associated to the curve $\gamma$ acts as follows. Take  a point $A\in\mcu$. There are two tangent lines to $\gamma$ through 
$A$. Let $L_A$ denote the right tangent line (that is, the image of the line $L_A$ under a small clockwise rotation around 
the point $A$ is disjoint from the curve $\gamma$). Let $B\in\gamma$ denote its tangency point. 
By definition, the image $T(A)$ is the point of the line $L_A$ that is 
central-symmetric  to $A$ with respect to the point $B$.

It is well-known that if $\gamma$ is an ellipse, then the corresponding outer billiard map is {\it integrable:} 
that is, an  exterior neighborhood of the curve $\gamma$ 
is foliated by invariant closed curves for the outer billiard map so that $\gamma$ is a leaf of this foliation. 
 The analogue of Birkhoff Conjecture for the outer billiards, which was suggested by 
 S.Tabachnikov \cite[p.101]{tab08}, states 
 the converse: if $\gamma$ generates an integrable outer billiard, then it is an ellipse. Its polynomially integrable version was studied in \cite{tab08} and 
 recently solved in \cite{gs}. For a survey on outer billiards see 
 \cite{tab95, tab, tabdog} and references therein.

 For a given strictly convex smooth curve $\Gamma$ there exists a one-parametric family of curves $\gamma_p$  
 such that $\gamma_p$ lies in the interior component $\Omega$ of the complement $\rr^2\setminus\Gamma$, 
 and the curve $\Gamma$ is invariant under the outer billiard map $T_p$ generated  by $\gamma_p$. The curves 
 $\gamma_p$ are given by the following {\it area construction} analogous to the string construction.  
 Let $\mca$ denote the area of the domain $\Omega$. For every oriented line $\ell$ intersecting 
 $\Gamma$ let $\Omega_-(\ell)$ denote the connected component of the complement $\Omega\setminus \ell$ 
 for which $\ell$ is a negatively oriented part of boundary.  Let now $L$ be a class of parallel and co-directed oriented lines. 
 For every $p>0$, $p<\frac12\mca$,  let $L_p$ denote the oriented line representing $L$ that intersects 
 $\Gamma$ and such that $Area(\Omega_-(L_p))=p$. For every given $p$, 
 the lines $L_p$ corresponding 
 to different classes $L$ form a one-parameter family parametrized by the circle: the  azimuth of the line is the parameter. 
 Let  $\gamma_p$ denote the enveloping curve  of the latter family, and let $T_p$ denote the  outer billiard 
  map generated by $\gamma_p$. It is well-known that the curve $\Gamma$ 
 is $T_p$-invariant  for every $p$ as above 
 \cite[corollary 9.5]{tab}. See Fig. 2. 
 \begin{figure}[ht]
  \begin{center}
   \epsfig{file=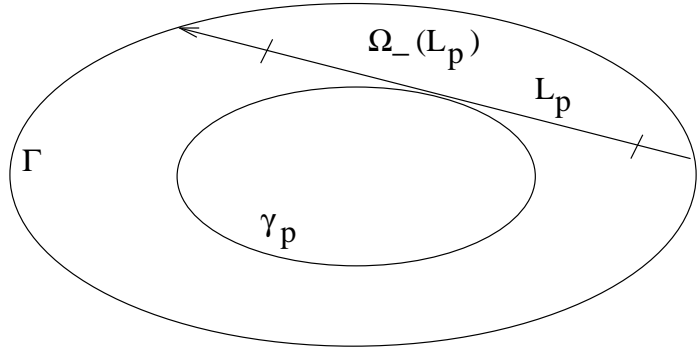}
    \caption{The area construction: $Area(\Omega_-(L_p))\equiv p$.}
    \label{fig:01}
  \end{center}
\end{figure}

 \begin{remark} For every $p>0$ small enough the curve $\gamma_p$ given by the area construction is 
 smooth. But for big $p$ it may have singularities (e.g., cusps). 
 \end{remark}
 
 For $\Gamma$ being an ellipse, all the  $\gamma_p$'s are ellipses homothetic to $\Gamma$ with respect 
 to its center. In this case there exists a parametrization of the curve $\Gamma$ by  circle $S^1=\rr\slash2\pi\zz$ 
 with parameter $t$ 
 in which $T_p:\Gamma\to\Gamma$ is a translation $t\mapsto t+c_p$ 
 for every $p$. This 
 follows from the area-preserving property of outer billiards, 
 see \cite[corollary 1.2]{tabcom}, and $T_q$-invariance of the ellipse $\gamma_p$ for  $q>p$, 
 analogously to the arguments in \cite[section  7]{poritsky}, \cite[the discussion before corollary 4.5]{tab}. Similar statements 
 hold for all conics, as in loc. cit.  
 
In our paper we prove the converse statement given by the following theorem, which will be stated in local context, 
for germs of smooth curves. To state it, let us introduce the following definition. 

\begin{definition} Let $\Sigma$ be a surface with smooth Riemannian metric, $O\in\Sigma$. 
Let $\Gamma\subset\Sigma$ be a 
germ of smooth strictly convex curve at a point $O$ (i.e., with positive 
 geodesic curvature). Let 
$U\subset\Sigma$ be a disk centered at $O$ that is split by $\Gamma$ into two components. One of these components 
is convex; let us denote it by $V$.  Consider 
the curves $\gamma_p$ given by the above area construction with $p>0$ small enough and lines replaced by geodesics. 
The curves $\gamma_p$ form a germ at $O$ of foliation  in the domain $V$, and its boundary curve $\Gamma=\gamma_0$ is 
a leaf of this foliation. We say that the curve $\Gamma$ has {\it area Poritsky property,} 
if it admits a local $C^1$-smooth 
parametrization by parameter $t$ called the 
{\it area Poritsky parameter} such that for every $p>0$ small enough the mapping 
$T_p:\Gamma\to\Gamma$ is a translation $t\mapsto t+c_p$ in the coordinate $t$. 
\end{definition}

\begin{proposition} \label{p-outer} (see \cite[lemma 3]{tabponc} for the hyperbolic case; \cite[lemma 5.1]{tabcom} for planar conics). 
On every surface of constant curvature each 
conic  has area Poritsky property.
\end{proposition}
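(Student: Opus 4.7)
The plan is to derive the area Poritsky property from the Jacobi--Chasles integrability of outer billiards around conics on surfaces of constant curvature, after identifying the area construction applied to a conic $\Gamma$ as producing the family of conics confocal to $\Gamma$ inside $\Omega$.

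First I would establish an ``area-Graves theorem'': for every conic $\widetilde\gamma$ confocal to $\Gamma$ and contained in $\Omega$, every geodesic tangent to $\widetilde\gamma$ cuts $\Omega$ into two pieces whose areas depend only on $\widetilde\gamma$ and not on the tangency point. Together with monotonicity of this common area in the confocal parameter, this identifies the area-construction curve $\gamma_p$ with the unique confocal conic cutting off area $p$. The Euclidean case is covered by \cite[Lemma 5.1]{tabcom} and the hyperbolic case by \cite[Lemma 3]{tabponc}; for the spherical case I would carry out a parallel verification using the spherical area form and the realization of spherical conics as $\Sigma\cap\{\langle Cx,x\rangle=0\}$ from Subsection 1.1.

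Next I would define the area Poritsky parameter $t$ on $\Gamma$ directly from the confocal pencil, as the canonical angular coordinate arising from the action--angle structure of the Jacobi--Chasles integrable system; equivalently, $t$ is, up to an affine change, the unique $C^1$ parameter on $\Gamma$ in which every outer billiard map $T_{\widetilde\gamma}$, with $\widetilde\gamma$ confocal to $\Gamma$ and contained in $\Omega$, acts as a translation. By Jacobi--Chasles on the surface of constant curvature under consideration, each such $T_{\widetilde\gamma}$ preserves $\Gamma$ setwise, and its restriction to $\Gamma$ is a rotation in $t$. Specializing to $\widetilde\gamma=\gamma_p$ then yields $T_p|_\Gamma:t\mapsto t+c_p$, which is the area Poritsky property; the $C^1$-smoothness of $t$ is immediate from the construction.

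The main obstacle is the area-Graves theorem of the first step, especially its spherical version. The difficulty is not conceptual --- the argument parallels the Euclidean and hyperbolic proofs --- but requires careful bookkeeping of the spherical area form and of the intersection geometry of confocal spherical conics. Once this is in place, the Jacobi--Chasles machinery delivers the second step directly, producing the desired area Poritsky parameter simultaneously for all small~$p$.
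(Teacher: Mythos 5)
Your overall architecture (identify the curves $\gamma_p$ with a distinguished pencil of conics, show all the corresponding outer billiard maps preserve $\Gamma$ and commute, then extract a common translation parameter from the area-preserving property) is the right one, and it is essentially the route the paper gestures at via \cite[lemma 5.1]{tabcom} and \cite[lemma 3]{tabponc}. But your key identification is wrong: the area construction applied to a conic $\Gamma$ does \emph{not} produce the conics confocal to $\Gamma$. In the Euclidean case, an affine map sending the ellipse $\Gamma$ to a circle scales all areas uniformly, so the envelope of chords cutting off a fixed area is a concentric circle; pulling back, $\gamma_p$ is an ellipse \emph{homothetic} to $\Gamma$ about its center (this is stated explicitly in Subsection 1.2 of the paper). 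Confocal ellipses are not homothetic, and tangent lines to a conic confocal to $\Gamma$ do not cut off constant area from $\Gamma$. So your proposed ``area-Graves theorem'' is false as stated, and the ``careful bookkeeping'' you plan for the spherical case would be a verification of a false statement. The correct invariant family is the pencil spanned by $\Gamma$ and the absolute (in the models of Subsection 1.1, the conics $\{\langle (C+\lambda Q)x,x\rangle=0\}$ where $\Gamma=\{\langle Cx,x\rangle=0\}$); under the polarity duality this pencil corresponds to the confocal family of the \emph{dual} conic, which is also why invoking Jacobi--Chasles directly for the outer billiard is a misattribution --- the integrability of outer billiards about conics is the polar-dual statement, not Jacobi--Chasles itself. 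With the pencil corrected, your steps 2--3 go through essentially as in the proof of Proposition \ref{pev} (commuting area-preserving maps sharing an invariant foliation act as translations in the time parameter of the Hamiltonian field of the foliation's first integral).

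For comparison, the paper does not prove this proposition from scratch: it cites \cite{tabcom} for the planar case and \cite{tabponc} for the hyperbolic case, and for the sphere it uses the duality between spherical outer billiards and spherical Birkhoff billiards given by orthogonal polarity, which reduces the area Poritsky property of a spherical conic to the string Poritsky property of its polar dual conic (Proposition \ref{p-bir}, proved in Section 4 via the evolution property). That duality argument is considerably lighter than the direct spherical-area computation you propose, and it sidesteps the need to identify the pencil explicitly.
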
 

\begin{remark} (S.Tabachnikov) The area Poritsky property for conics on the 
sphere follows 
from their string Poritsky property and the fact that  the spherical 
outer billiards  are 
dual to the spherical Birkhoff billiards  \cite[subsection 4.1, lemma 5]{tab95}: 
the  duality is given by orthogonal polarity. Analogous duality  
holds on hyperbolic plane realized as the half-pseudo-sphere  of radius -1 
in 3-dimensional Minkovski space \cite[section 2, remark 2]{bm2}. 
\end{remark}

\begin{theorem} \label{t-outer} Conversely, on every surface of constant curvature each germ of 
$C^2$-smooth  curve with area Poritsky property 
 is a conic\footnote{For {\it planar} curves with area Poritsky property the statement of Theorem \ref{t-outer} for 
 $C^4$-smooth curves was earlier proved by Sergei 
 Tabachnikov (unpublished paper, 2018) by analytic arguments showing that the affine curvature of the curve should be constant. 
 In Section 5 we present a different, geometric proof, 
 analogous to Poritsky's arguments from \cite[section  7]{poritsky} 
 (which were given for Birkhoff billiards and string construction),  which works on all the surfaces of constant curvature simultaneously.}. 
\end{theorem}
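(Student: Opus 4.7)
The plan is to adapt Poritsky's geometric argument from the string case (used to prove Theorem \ref{t-bir}) to the area construction, working uniformly in the projective models of the three constant-curvature surfaces. The guiding idea is that in each such model the conics are precisely the curves cut out by quadrics in $\mathbb{R}^3$, and the area Poritsky property imposes enough rigidity on $\Gamma$ to force such an algebraic equation to hold.

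I would begin by exploiting the commutativity inherent in the hypothesis. Since the area Poritsky parameter $t$ linearizes every $T_p$ as a translation, the maps $T_p$ commute pairwise on $\Gamma$ and share a common infinitesimal generator $\partial_t$. Next, by differentiating the area construction in $p$ and using the area-preserving property of outer billiards together with the $T_q$-invariance of $\gamma_p$ for $q > p$, I would show that each caustic $\gamma_p$ itself inherits area Poritsky property with a parametrization compatible with $t$ on $\Gamma$. This is the outer-billiard analogue of Poritsky's Graves/evolution property and produces a coherent family of "translation flows" on the entire foliation $\{\gamma_p\}$.

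The next step is to convert this rigidity into a projective-algebraic statement about $\Gamma$. On the sphere and hyperbolic plane the polarity duality between outer and Birkhoff billiards noted in the Remark preceding the theorem translates an area-Poritsky curve into a string-Poritsky curve, and Theorem \ref{t-bir} then identifies $\Gamma$ as a conic. In the Euclidean case this shortcut is unavailable, and one must argue directly: using the extended projective automorphism group that preserves the quadric $\langle Q x, x\rangle$, I would identify the one-parameter $t$-translation group as the restriction to $\Gamma$ of a one-parameter subgroup of projective transformations preserving the caustic foliation $\{\gamma_p\}$. A curve that admits a non-trivial projective one-parameter symmetry compatible with its full outer-billiard caustic family must be algebraic of degree two, hence a conic.

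The main obstacle is making the Euclidean case fully geometric without invoking polarity duality. If the direct projective approach proves awkward, the fallback is to expand the area condition $\operatorname{area}(\Omega_-(L_p)) = p$ in powers of a small chord parameter, extract the area analogue of Poritsky's identity matching $t$ to an affine invariant of $\Gamma$, and deduce that the affine curvature of $\Gamma$ is constant; the classical affine differential-geometric characterization then yields that $\Gamma$ is a conic. The delicate part is the bookkeeping of the higher-order terms in the area expansion and verifying that the invariance condition $t \circ T_p - t \equiv c_p$ for all small $p$ is genuinely as restrictive as its string-construction counterpart, rather than some weaker symmetry.
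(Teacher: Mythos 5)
Your proposal diverges substantially from the paper's route, and the Euclidean branch contains a genuine gap. The paper's proof (Section 5) is short and concrete: from the area Poritsky property one differentiates the condition $t\circ T_p - t \equiv c_p$ along the family of chords cutting off constant area, obtaining the coboundary identity $\sin\alpha/\sin\beta = u(A)/u(B)$ for the angles a chord $AB$ makes with $\gamma$ at its endpoints, with $u = dt/ds$ (Proposition \ref{angco}). Combined with the Sine Theorem on constant-curvature surfaces, this yields exactly the Ceva relation (\ref{fce}), hence the tangent incidence property (Proposition \ref{areat}), and Theorem \ref{incon} (the pencil-of-conics argument) then forces $\gamma$ to be a conic. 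Nothing in this chain uses commutativity of the $T_p$, an evolution property for the caustics $\gamma_p$, or any ambient group action.

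The gap in your argument is the assertion that the one-parameter translation group in $t$ is "the restriction to $\Gamma$ of a one-parameter subgroup of projective transformations preserving the caustic foliation." The Poritsky property only gives you a distinguished parametrization of $\Gamma$ in which certain self-maps of $\Gamma$ become translations; it provides no extension of that flow to the ambient plane, and for a non-conic there is no reason such a projective extension should exist. Indeed, conics are characterized among convex curves precisely by admitting such an ambient projective symmetry, so this step assumes the conclusion. Your duality shortcut for the sphere and hyperbolic plane is plausible in spirit (the paper's Remark points in that direction for the forward implication), but it would still need a careful verification that polarity carries the area Poritsky parameter to a string Poritsky parameter on the dual germ, and in any case it leaves the Euclidean case unresolved. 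The fallback via constant affine curvature is the Tabachnikov argument cited in the footnote; it is planar only, requires $C^4$ rather than the stated $C^2$, and you do not carry out the expansion. To repair the proof along geometric lines you would need something like the paper's Proposition \ref{angco}: extract from the infinitesimal form of the area condition a pointwise sine-ratio identity and feed it into Ceva's theorem.
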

 
\subsection{Coincidence of Poritsky and Lazutkin lengths}
Everywhere in the subsection $\Sigma$ is a two-dimensional surface equipped with a  $C^3$-smooth Riemannian metric. 

\begin{definition} Let $\gamma\subset\Sigma$ be a 
$C^2$-smooth curve, let $s$ be its natural length parameter. Let $\kappa(s)$ denote its geodesic curvature. 
 Fix a point in $\gamma$, let $s_0$ denote 
the corresponding length parameter value.  The parameter 
\begin{equation}t_L:=\int_{s_0}^s\kappa^{\frac23}(\zeta)d\zeta\label{lazpar}\end{equation}
is called the {\it Lazutkin parameter.} See 
\cite[formula (1.3)]{laz}.
\end{definition}
 
\begin{theorem} \label{ttk} Let $\gamma\subset\Sigma$ be a germ of 
$C^3$-smooth curve with positive geodesic 
curvature $\kappa$ and  string Poritsky property. 
Then its Poritsky string length parameter $t$ coincides 
with the Lazutkin parameter (\ref{lazpar}) up to additive 
and multiplicative constants. That is, up to constant factor one has
\begin{equation}\frac{dt}{ds}=\kappa^{\frac23}(s).\label{dtk}\end{equation}
\end{theorem}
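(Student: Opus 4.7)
The plan is to extract \eqref{dtk} from the leading-order asymptotics of $L(A,B)$ as $A,B$ collapse onto a base point of $\gamma$, combined with the Poritsky translation condition applied at that base point.

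\emph{Step 1: Leading asymptotics of $L$.} First I would establish the expansion
\[
L\bigl(\gamma(s+\ell),\,\gamma(s)\bigr) \;=\; \tfrac{1}{12}\,\kappa^{2}(s)\,\ell^{3} \;+\; O(\ell^{4}) \qquad (\ell\to 0),
\]
where $s$ is the arc-length parameter on $\gamma$. The Euclidean version is classical and easily verified on a circular arc of radius $R=1/\kappa$, for which the two-sided tangent triangle of half-opening angle $\alpha=\ell/(2R)$ gives $L=2R(\tan\alpha-\alpha)=\kappa^{2}\ell^{3}/12+O(\ell^{5})$. In the Riemannian case I would work in geodesic normal coordinates centered at $\gamma(s)$: in these coordinates geodesics through the origin are straight lines and the metric agrees with the Euclidean one up to terms $O(\|x\|^{2})$, so the Euclidean computation for the distances $|AC_{AB}|,\,|BC_{AB}|$ and for the arc length of $\gamma$ between $A$ and $B$ is accurate up to errors that do not touch the cubic leading term. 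All required background (normal coordinates, the equivalence of definitions of geodesic curvature, etc.) is in Section~2 of the paper.

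\emph{Step 2: Inverting the relation $L=p$.} From Step 1, for each fixed $s$ in the domain of the germ, the pair $A=\gamma(s+\ell),\,B=\gamma(s)$ with $L(A,B)=p$ satisfies
\[
\ell(s,p) \;=\; \Bigl(\tfrac{12\,p}{\kappa^{2}(s)}\Bigr)^{1/3}\bigl(1+O(p^{1/3})\bigr) \qquad (p\to 0).
\]

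\emph{Step 3: Using the Poritsky condition.} By Definition \ref{dpor} there is a constant $c_p$ depending only on $p$ such that $t(A)-t(B)=c_p$ whenever $L(A,B)=p$ and $A,B$ are ordered by orientation. Applying this to the pair from Step~2 and using that $t$ is $C^{1}$,
\[
\frac{dt}{ds}(s) \;=\; \lim_{\ell\to 0}\frac{t(\gamma(s+\ell))-t(\gamma(s))}{\ell}\;=\;\lim_{p\to 0}\frac{c_p}{\ell(s,p)}\;=\;\kappa^{2/3}(s)\cdot\lim_{p\to 0}\frac{c_p}{(12\,p)^{1/3}}.
\]
Fixing one reference $s_{0}$ where $dt/ds(s_{0})$ is finite and nonzero forces $c_p/(12p)^{1/3}$ to possess a finite nonzero limit $C$ as $p\to 0$; since $C$ is a single number (no $s$-dependence), the same formula now gives $dt/ds(s)=C\,\kappa^{2/3}(s)$ at every $s$, which is \eqref{dtk} up to the advertised multiplicative constant $C$ (the additive constant reflects the choice of base point in $t$).

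\emph{Main obstacle.} The only non-routine step is Step~1: guaranteeing that the cubic asymptotics $L\sim\kappa^{2}\ell^{3}/12$ survives the Riemannian perturbation of the Euclidean model, uniformly enough that the $O(\ell^{4})$ error does not contaminate the limit in Step~3. Once that is in place, Steps~2 and~3 reduce to one clean limit, and the $C^{3}$-hypothesis on both metric and curve is exactly what is needed for the cubic term to be well-defined and the error term meaningful.
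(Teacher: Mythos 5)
Your plan is the paper's own: reduce everything to the cubic asymptotics $L(A,B)=\frac{\kappa^2(A)}{12}|s_A-s_B|^3(1+o(1))$ (Theorem \ref{lasyl}) and then let the Poritsky translation condition determine $dt/ds$. Steps 2 and 3 are correct and are essentially the paper's Corollary \ref{ttk2} plus the closing argument of Section 6 (the paper compares two variable points $A$ and $Q$, you compare each $s$ to a fixed reference $s_0$ through the constant $c_p$; the content is the same), and your remark that evaluating at one point forces $c_p/(12p)^{1/3}$ to have a finite limit is exactly how the multiplicative constant arises.

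The gap is in Step 1, and it is not a routine detail. $L(A,B)$ is a near-total cancellation of three quantities of order $\ell$ with a result of order $\ell^3$; in a normal chart the metric differs from the Euclidean one by relative errors $O(\|x\|^2)=O(\ell^2)$, so the a priori error in each of $|AC_{AB}|$, $|BC_{AB}|$, $\lambda(A,B)$ is $O(\ell^3)$ --- the same order as the answer. Hence "errors that do not touch the cubic leading term" is precisely the statement that must be proved, not a consequence of $O(\|x\|^2)$-closeness; moreover, centering the chart at $\gamma(s)$ straightens only $G_B$, not $G_A$. The paper's proof of Theorem \ref{lasyl} instead centers the normal chart at $C_{AB}$, so both tangent segments are radial and their $g$-lengths are exactly Euclidean (Gauss lemma), and then shows that the angular part of the metric, which deviates from the Euclidean one by $O(r^4\,d\phi^2)$, contributes only $o(\ell^3)$ to $\lambda(A,B)$, using that the tangent direction of $\gamma$ is nearly radial outside a short subarc near the point closest to $C_{AB}$; a further reduction to the osculating circle is needed before the model computation $L=R(2\tan(\phi/2)-\phi)$ applies. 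Note also that under the stated $C^3$ hypotheses one obtains only a multiplicative $1+o(1)$, not your $O(\ell^4)$ remainder; fortunately $o(\ell^3)$ is all that Steps 2 and 3 require.
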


A direct proof of Theorem \ref{ttk} will be presented in Section 6. It is 
based on the following theorem on asymptotics of the function $L(A,B)$ 
and its corollaries on string diffeomorphisms, also proved 
in the same section.

\begin{theorem} \label{lasyl} Let $\gamma\subset\Sigma$ be a  
$C^3$-smooth curve with positive geodesic curvature. 
For every $A\in\gamma$ let $s_A$ denote the corresponding 
natural length parameter value. Let $L(A,B)$ denote the quantity defined in (\ref{lab}). 
One has 
\begin{equation}L(A,B)=\frac{\kappa^2(A)}{12}|s_A-s_B|^3 (1+o(1)), 
\label{labas}\end{equation}
uniformly, as  $s_A-s_B\to0$ so that $A$ and $B$ remain in a compact 
subarc in $\gamma$. Asymptotic (\ref{labas}) is also uniform in 
the metric  running through a closed 
bounded subset in the space of $C^3$-smooth 
Riemannian metrics. 
\end{theorem}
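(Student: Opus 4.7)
The plan is to work in Fermi (semi-geodesic) coordinates along $\gamma$ and to extract the asymptotic directly from the geodesic equations. In Fermi coordinates $(\sigma,v)$ the curve $\gamma$ is the locus $\{v=0\}$, $\sigma$ is its arc-length parameter, and the metric has the form $g=J(\sigma,v)^2\,d\sigma^2+dv^2$ where the Jacobi-field profile $J$ satisfies $J(\sigma,0)=1$ and $\partial_v J(\sigma,0)=-\kappa(\sigma)$. Under the $C^3$-hypothesis on the metric and on $\gamma$, $J$ is $C^2$ with uniform bounds on all relevant partial derivatives in terms of the $C^3$-data; this is what will ultimately deliver uniformity of the asymptotic in the closed bounded family of metrics and the compact subarc.

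Next I will Taylor-expand the tangent geodesic based at a point $P=\gamma(s_P)\in\gamma$, parametrized by Riemannian arc length $\ell$ with initial velocity $\epsilon\,\partial_\sigma$, $\epsilon=\pm1$. The Christoffel symbols of $g$ are elementary ($\Gamma^\sigma_{\sigma v}=J_v/J$, $\Gamma^v_{\sigma\sigma}=-JJ_v$, etc.), and evaluating the geodesic equations at $v=0$ gives $\ddot\sigma(0)=0$, $\ddot v(0)=-\kappa(s_P)$, $\sigma^{(3)}(0)=-2\epsilon\kappa(s_P)^2$, and $v^{(3)}(0)=-\epsilon\kappa'(s_P)$. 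Therefore
\begin{equation*}
\sigma(\ell)=s_P+\epsilon\ell-\frac{\epsilon\kappa(s_P)^2}{3}\ell^3+O(\ell^4),\qquad v(\ell)=-\frac{\kappa(s_P)}{2}\ell^2-\frac{\epsilon\kappa'(s_P)}{6}\ell^3+O(\ell^4),
\end{equation*}
with remainders uniform in the sense stated. Note that the ambient Gauss curvature enters $J$ only at order $v^2$, hence through the geodesics at order $\ell^4$, which is why it does not appear in the leading term.

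I will then apply this expansion with $P=A,\epsilon=-1$ and with $P=B,\epsilon=+1$ and solve the $2\times 2$ system $\sigma_A(\ell_A)=\sigma_B(\ell_B)$, $v_A(\ell_A)=v_B(\ell_B)$ for $C=C_{AB}$. Setting $M:=\gamma((s_A+s_B)/2)$, $h:=(s_A-s_B)/2$, $\kappa_0:=\kappa(M)$, and writing $\ell_A=h+a$, $\ell_B=h+b$, the $v$-equation determines the antisymmetric combination $a-b=O(h^2)$ (a contribution driven by $\kappa'(M)$), while the $\sigma$-equation yields
\begin{equation*}
a+b=\frac{2}{3}\kappa_0^2\,h^3+O(h^4).
\end{equation*}
Since $\ell_A,\ell_B$ are Riemannian arc lengths, $|AC|+|BC|=\ell_A+\ell_B$ and $\lambda(A,B)=2h$, so $L(A,B)=a+b$; combined with $\kappa(A)-\kappa(M)=O(h)$, this gives
\begin{equation*}
L(A,B)=\frac{\kappa^2(A)}{12}|s_A-s_B|^3\bigl(1+o(1)\bigr),
\end{equation*}
uniformly as required. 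I expect the only nontrivial technical obstacle to be in this last step: one must track how the asymmetric contributions from $\kappa'$ affect $a-b$ at order $h^2$ yet cancel inside the symmetric combination $a+b$, and one must verify that the $O(\ell^4)$ remainders in the geodesic expansion produce uniform $O(h^4)$ errors in the final formula. Both are handled by a standard implicit-function-theorem argument together with the explicit $C^3$-bounds established in the setup.
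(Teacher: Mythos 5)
Your computation is correct and reaches the right constant, but it takes a genuinely different route from the paper. The paper works in normal coordinates centered at the intersection point $C=C_{AB}$ and proves two comparison claims: first, that both $\la(A,B)$ and $L(A,B)$ change only by $o(|s_A-s_B|^3)$ when the Riemannian metric is replaced by the Euclidean metric of that chart (the tangent segments $AC$, $BC$ are radial, hence have identical lengths in both metrics, and the angular discrepancy of the metric is estimated carefully along the arc); second, that replacing the planar arc by the osculating circle at $A$ again costs only $o(\delta^3)$. The asymptotic then drops out of the exact circle identity $L=R(2\tan(\phi/2)-\phi)\simeq R\phi^3/12$. Your approach instead Taylor-expands the two tangent geodesics to third order in Fermi coordinates and solves the $2\times2$ intersection system near the midpoint; I verified that your values $\ddot v(0)=-\kappa$, $\sigma^{(3)}(0)=-2\epsilon\kappa^2$, $v^{(3)}(0)=-\epsilon\kappa'$ are right, that the $\kappa'$-contributions enter only the antisymmetric combination $a-b=O(h^2)$ and cancel from $a+b$, and that $a+b=\tfrac23\kappa_0^2h^3+o(h^3)$ gives exactly $\tfrac{\kappa^2(A)}{12}|s_A-s_B|^3(1+o(1))$. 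Your method is more computational but arguably more systematic (it also exhibits explicitly why the Gauss curvature and $\kappa'$ do not enter the leading term), whereas the paper's reduction to the osculating circle is softer and avoids solving for $C$.

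One point deserves more care than your closing sentence gives it: with the metric and $\gamma$ only $C^3$, the Fermi chart is itself only $C^2$, so $J$ is \emph{not} $C^2$ jointly in $(\sigma,v)$ with uniform bounds "on all relevant partial derivatives", and the Christoffel symbols of $g=J^2d\sigma^2+dv^2$ are merely continuous in $\sigma$. The specific quantities you actually use at $v=0$ (namely $J_v=-\kappa$ and $J_{\sigma v}=-\kappa'$, with $\kappa\in C^1$) do exist and are continuous, and since the theorem only asserts a $(1+o(1))$ asymptotic you only need $o(\ell^3)$ remainders in the geodesic expansions, not $O(\ell^4)$; these can be extracted from the integral form of the geodesic equation, or by first proving the statement for smooth data with constants controlled by the $C^3$-norms and passing to the limit. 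As written, the claimed $O(\ell^4)$ and $O(h^4)$ error terms are not justified at this regularity, but the weaker $o(\ell^3)$ and $o(h^3)$ versions suffice and are attainable, so the gap is one of bookkeeping rather than substance.
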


 \begin{corollary} \label{ttk2} Let  $\gamma\subset\Sigma$ be a germ of 
 $C^3$-smooth curve  with positive geodesic curvature. For every small 
 $p>0$ let $\mct_p:\gamma\to\gamma$ denote the corresponding 
 string diffeomorphism (induced by reflection of geodesics tangent to $\gamma$ 
 from the string curve $\Gamma_p$ and acting on the tangency 
 points). For every points $A$ and $Q$ lying in a compact subarc 
 $\wh\gamma\Subset\gamma$ one has  
 \begin{equation}\kappa^{\frac23}(A)\la(A,\mct_p(A))\simeq\kappa^{\frac23}
 (Q)\la(Q,\mct_p(Q)), \text{ as } p\to0,\label{ka32la}
 \end{equation}
 uniformly in  $A,Q\in\wh\gamma$.  
 \end{corollary}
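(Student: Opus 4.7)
The plan is to reduce this directly to Theorem \ref{lasyl} by using the defining property of the string diffeomorphism. By definition of $\mct_p$, for each $A \in \gamma$ we have $L(A,\mct_p(A)) = p$, and under the natural length parametrization $s$, the arc length $\la(A,\mct_p(A))$ equals $|s_A - s_{\mct_p(A)}|$. First I would show that $\mct_p(A) \to A$ uniformly on $\wh\gamma$ as $p \to 0$, which is immediate since $\mct_0 = \mathrm{Id}$ and $\mct_p$ depends continuously on $p$ down to $p=0$; this ensures that the asymptotics of Theorem \ref{lasyl} apply uniformly to the pair $(A, \mct_p(A))$ as $p \to 0$.

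Then I would substitute $B = \mct_p(A)$ into the asymptotic (\ref{labas}) to obtain
\begin{equation*}
p \;=\; L(A,\mct_p(A)) \;=\; \frac{\kappa^2(A)}{12}\,\la(A,\mct_p(A))^3\,(1+o(1)),
\end{equation*}
where the $o(1)$ is uniform in $A \in \wh\gamma$ by the uniformity clause of Theorem \ref{lasyl} (applied to the compact subarc $\wh\gamma$ and the single fixed metric). Solving for $\la$ and using that $\kappa(A)$ is bounded away from zero on $\wh\gamma$ (positivity plus compactness) gives
\begin{equation*}
\la(A,\mct_p(A)) \;=\; (12p)^{1/3}\,\kappa^{-2/3}(A)\,(1+o(1)),
\end{equation*}
uniformly in $A \in \wh\gamma$, and therefore
\begin{equation*}
\kappa^{2/3}(A)\,\la(A,\mct_p(A)) \;=\; (12p)^{1/3}\,(1+o(1)).
\end{equation*}

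Since the right-hand side does not depend on the point, the same formula holds with $Q$ in place of $A$ (with a uniform $o(1)$), and taking the ratio yields (\ref{ka32la}). The only non-routine aspect is verifying the uniformity of the $o(1)$ term as $A$ varies over $\wh\gamma$, but this is precisely what the ``uniformly, as $s_A-s_B \to 0$ so that $A$ and $B$ remain in a compact subarc'' part of Theorem \ref{lasyl} provides; the remaining steps are elementary algebraic manipulations using the lower bound on $\kappa$ on $\wh\gamma$.
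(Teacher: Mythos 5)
Your proof is correct and is essentially the paper's own argument: both use that $L(A,\mct_p(A))=p$ by definition of the string curve $\Gamma_p$, invoke the uniform asymptotics of Theorem \ref{lasyl} to conclude that $\kappa^{\frac23}(A)\la(A,\mct_p(A))$ is uniformly asymptotic to the point-independent quantity $(12p)^{\frac13}$, and take the ratio. The extra care you take in justifying $\mct_p(A)\to A$ uniformly on $\wh\gamma$ is a welcome detail that the paper leaves implicit.
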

 \begin{corollary} \label{cttk2} In the conditions of Corollary \ref{ttk2} one has 
   \begin{equation}\kappa^{\frac23}(A)\la(A,\mct_p(A))\simeq\kappa^{\frac23}
 (\mct_p^m(A))\la(\mct_p^m(A),\mct_p^{m+1}(A)), \text{ as } p\to0,\label{ka32m}
 \end{equation}
uniformly in $A\in\wh\gamma$ and those $m\in\nn$ for which 
$\mct_p^m(A)\in\wh\gamma$. 
\end{corollary}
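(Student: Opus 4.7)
The plan is to derive Corollary \ref{cttk2} as an immediate specialization of Corollary \ref{ttk2}. The crucial feature of Corollary \ref{ttk2} that will be used is that the asymptotic equivalence
$$\kappa^{\frac23}(A)\la(A,\mct_p(A))\simeq\kappa^{\frac23}(Q)\la(Q,\mct_p(Q))$$
is stated to hold uniformly as $p\to0$ with $A$ and $Q$ ranging \emph{independently} through the compact subarc $\wh\gamma$. Concretely, this means that
$$\sup_{A,Q\in\wh\gamma}\left|\frac{\kappa^{\frac23}(A)\la(A,\mct_p(A))}{\kappa^{\frac23}(Q)\la(Q,\mct_p(Q))}-1\right|\longrightarrow 0\quad\text{as } p\to0.$$

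The proof of (\ref{ka32m}) is then simply to set $Q:=\mct_p^m(A)$. First I would fix $p>0$ small enough so that $\mct_p$ is well-defined on a neighborhood of $\wh\gamma$. Then for any $A\in\wh\gamma$ and any $m\in\nn$ such that $\mct_p^m(A)\in\wh\gamma$, the point $Q:=\mct_p^m(A)$ is a legitimate choice in $\wh\gamma$, and Corollary \ref{ttk2} applied to the pair $(A,Q)$ gives
$$\kappa^{\frac23}(A)\la(A,\mct_p(A))\simeq\kappa^{\frac23}(\mct_p^m(A))\la\bigl(\mct_p^m(A),\mct_p(\mct_p^m(A))\bigr)=\kappa^{\frac23}(\mct_p^m(A))\la(\mct_p^m(A),\mct_p^{m+1}(A)),$$
which is exactly (\ref{ka32m}).

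For the uniformity claim, note that the supremum of the deviation from $1$ of the ratio appearing in (\ref{ka32m}), taken over all admissible pairs $(A,m)$ with $A,\mct_p^m(A)\in\wh\gamma$, is bounded above by the supremum over unconstrained pairs $(A,Q)\in\wh\gamma\times\wh\gamma$ of the analogous ratio from Corollary \ref{ttk2}, which tends to zero. No further argument is needed: the left-hand side of (\ref{ka32m}) depends on the orbit only through the value $\mct_p^m(A)$ and the next iterate, so all dependence on $m$ is absorbed into the position $\mct_p^m(A)\in\wh\gamma$. Consequently, there is no genuine obstacle in this corollary — its role is to package the previous uniform estimate in the form most useful for the subsequent proof of Theorem \ref{ttk}, where it will be applied along orbits of the string diffeomorphism.
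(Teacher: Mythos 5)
Your proposal is correct and matches the paper's argument: the paper simply states that Corollary \ref{cttk2} follows immediately from Corollary \ref{ttk2}, and your specialization $Q:=\mct_p^m(A)$ with the observation that the uniformity over pairs $(A,Q)\in\wh\gamma\times\wh\gamma$ subsumes the uniformity over admissible $(A,m)$ is exactly the intended reasoning.
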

A symplectic generalization of Theorem \ref{ttk} to families of the so-called 
weakly billiard-like maps of string type will be presented in Section 7.

\subsection{Unique determination by 4-jet}
The next theorem is a Riemannian generalization of the classical 
fact stating that each planar conic is uniquely determined by 
its 4-jet at some its point.
\begin{theorem} \label{uniq4} Let $\Sigma$ be a surface equipped with a 
$C^6$-smooth Riemannian metric. A $C^5$-smooth germ of curve with string Poritsky property 
is uniquely determined by its 4-jet.
\end{theorem}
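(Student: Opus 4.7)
The plan is to extract from the Poritsky property a third-order ODE on the geodesic curvature $\kappa(s)$ of $\gamma$ (parametrized by arc length $s$) and then invoke the uniqueness theorem for ODEs. A $C^5$-smooth convex germ is determined by its base point, its unit tangent there, and its curvature function $\kappa(s)$; the 4-jet of $\gamma$ records exactly the position, the unit tangent, and the 2-jet $(\kappa(s_0),\kappa'(s_0),\kappa''(s_0))$, so a third-order ODE on $\kappa$ with these three numbers as initial data will complete the proof.

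By Theorem \ref{ttk}, $dt/ds=c\,\kappa^{2/3}(s)$. The Poritsky translation property says that $t(B)-t(A)$ depends only on $L(A,B)$; differentiating this identity along $\gamma$ in $s_A$ and in $s_B$ and eliminating the unknown dependence on $L$ yields the functional equation
\[
\kappa^{2/3}(A)\,\frac{\partial L}{\partial s_B}(A,B)+\kappa^{2/3}(B)\,\frac{\partial L}{\partial s_A}(A,B)=0
\]
for all close pairs $A,B\in\gamma$. I would then Taylor-expand $L(A,A+u)=\sum_{k\ge3}e_k(A)u^k$ for $u>0$ by working in normal coordinates at $A$: each $e_k(A)$ is a universal polynomial expression in $\kappa(A),\kappa'(A),\ldots,\kappa^{(k-3)}(A)$ together with the ambient Gauss curvature and its derivatives at $A$. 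Theorem \ref{lasyl} gives $e_3=\kappa^2/12$; a direct expansion produces $e_4$ (involving only $\kappa,\kappa'$ and the metric) and $e_5$, with $e_5$ containing the term $\tfrac{7}{240}\kappa\kappa''$.

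Substituting the expansions into the functional equation and matching coefficients of $u^m$ gives a sequence of identities. The $u^3$-identity is equivalent to the Lazutkin relation and is automatic; the $u^4$-identity does not involve $\kappa''$ and at most imposes a compatibility condition on $\kappa,\kappa'$ and the metric data. The first identity involving $\kappa'''$ is the $u^5$-identity, whose $\kappa'''$-contributions arise from $\kappa^{2/3}(A)\,e_5'(A)$ on one side and from the $\kappa'''$-part of $3\cdot\tfrac{1}{6}(\kappa^{2/3})'''(A)\cdot e_3(A)$-type terms on the other; a short bookkeeping shows that these contributions do not cancel (the net coefficient of $\kappa'''$ is a nonzero multiple of $\kappa^{5/3}$). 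Hence the $u^5$-identity can be solved for $\kappa'''(s)$ as a smooth function
\[
\kappa'''(s)=\Phi\bigl(s,\kappa(s),\kappa'(s),\kappa''(s)\bigr),
\]
where $\Phi$ depends smoothly on the $C^6$-smooth ambient metric through its Gauss curvature and a few of its derivatives. Applying the existence-and-uniqueness theorem for ODEs to this third-order equation, with initial data read from the 4-jet of $\gamma$, then determines $\kappa(s)$ and therefore $\gamma$.

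The main obstacle is the explicit expansion of $L(A,A+u)$ through order $u^5$ in the Riemannian setting, which requires computing in normal coordinates at $A$ the tangent geodesics $G_A,G_B$, their intersection $C_{AB}$, and the three lengths $|AC_{AB}|,|BC_{AB}|,\lambda(A,B)$ with all ambient-curvature corrections tracked. The essential check is that the coefficient of $\kappa\kappa''$ in $e_5$ is the Euclidean value $\tfrac{7}{240}$, unaffected by the ambient Gauss curvature at this order (for dimensional and symmetry reasons); this is what guarantees a nonzero $\kappa'''$-coefficient in the final ODE and thereby reduces the statement to the standard ODE uniqueness argument.
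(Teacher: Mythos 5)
Your overall strategy coincides with the paper's: extract from the Poritsky property a single scalar identity whose coefficient at the first ``odd'' order beyond the automatic ones is an affine function of the fifth jet coefficient (equivalently $\kappa'''$) with nonzero leading coefficient, read this as a fifth-order ODE on the curve, and conclude by ODE uniqueness with initial data given by the 4-jet. The paper implements this via the antisymmetrization $\Lambda(t)=L(0,t)-L(-t,0)$, whose identical vanishing gives $\wh\Lambda_6=\sigma_5 b_5-P_5=0$ with $\sigma_5\neq0$ (Lemma \ref{lmjet}), packaged as a $C^1$ line field inside the Cartan distribution on $\mcj^4$ (Theorem \ref{osc}); you instead use the functional equation $\kappa^{2/3}(A)\partial L/\partial s_B+\kappa^{2/3}(B)\partial L/\partial s_A=0$ and match the $u^5$ coefficient. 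Both identities are valid necessary consequences of the Poritsky property, and your bookkeeping of where $\kappa'''$ enters (through $e_5'$ and through $(\kappa^{2/3})'''e_3$) is consistent with the paper's parity pattern ($\sigma_n=0$ for $n=3$ and even $n$, $\sigma_n\neq0$ for odd $n>3$). One formulation issue: $\kappa'''=\Phi(s,\kappa,\kappa',\kappa'')$ is not literally a closed ODE on an inhomogeneous surface, since $\Phi$ depends on the (unknown) position and tangent direction of the curve through the metric; you must couple it with the Frenet-type equations, i.e.\ pass to the jet-space formulation the paper uses.

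The genuine gap is that the entire technical content of the proof is asserted rather than established. First, the existence of an asymptotic expansion $L(A,A+u)=\sum_{k\ge3}e_k(A)u^k+o(u^5)$ with coefficients that are differentiable in $A$ \emph{and} whose remainder survives differentiation in the base point (you need $e_5'$, hence term-by-term differentiation of an asymptotic expansion of a finitely smooth, degenerate-at-the-diagonal function) is exactly the kind of statement the paper labors over in Section 3 (the $o_l(y^m)$ calculus) and Subsections 8.4--8.5 (Lemmas \ref{lemli}, \ref{leman}); it is not automatic. Second, and most importantly, your argument hinges on the coefficient of $\kappa\kappa''$ in $e_5$ being the Euclidean value and ``unaffected by the ambient Gauss curvature at this order (for dimensional and symmetry reasons)''. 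This is true, but it is the heart of the matter: the paper proves it not by expanding $L$ explicitly but by the perturbative Lemma \ref{lcomp}, which compares two curves osculating to order $n-1$ and shows, through a chain of geodesic-triangle estimates, that $\La_{n,b}(t)-\La(t)=\frac{(n-2)(n-3)}{6(n+1)}bt^{n+1}+o(t^{n+1})$ for odd $n$, together with Lemma \ref{lemli} showing the $(n+1)$-jet of $L$ depends only on the $n$-jet of the metric. Without a proof of this invariance (or an honest Riemannian computation of $e_5$ with all curvature corrections), the nonvanishing of the net $\kappa'''$-coefficient --- and hence the whole reduction to ODE uniqueness --- remains unestablished. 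As written, the proposal is a correct plan whose decisive step is a placeholder.
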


Theorem \ref{uniq4} will be proved in Section 8.

\begin{remark} In the case, when $\Sigma$ is the Euclidean plane, the statement of Theorem \ref{uniq4} 
follows from Poritsky's result \cite[section  7]{poritsky} (see statement (iv) of Example \ref{expor}). Similarly, in the case, when $\Sigma$ is a surface of constant curvature, 
the statement of Theorem \ref{uniq4} follows from Theorem \ref{t-bir}.
\end{remark}

\section{Background material from Riemannian geometry}
We consider curves $\gamma$ with positive geodesic curvature 
on an oriented  surface $\Sigma$ equipped with a Riemannian metric. 
In Subsection 2.1 we recall the notion of normal coordinates. 
We state and prove  equivalence of different definitions of  
geodesic curvature. 
One of these definitions deals with  geodesics tangent to 
$\gamma$ at close points $A$ and $B$ and the asymptotics of 
angle between them at their intersection point $C$. In the same subsection we prove 
existence of two geodesics tangent to $\gamma$ through every point $C$ close to 
$\gamma$ and lying on 
the concave side from $\gamma$; the corresponding tangency points will be denoted by 
$A=A(C)$ and $B=B(C)$. We also prove some technical statements on derivative  of azimuth of a vector tangent to a geodesic 
(Proposition \ref{pro1a}). In Subsection 2.2 we  prove 
 formulas for the derivatives 
$\frac{dA}{dC}$, $\frac{dB}{dC}$, which will be used in 
the proofs of Theorems \ref{thsm}, \ref{t-bir}, \ref{ttk}. In Subsection 2.3 we consider a pair of geodesics issued from the same point $A$ and their points $G(s)$, $Z(s)$ 
lying at a given distance $s$  to $A$. We  
give an asymptotic formula for difference of azimuths of their 
tangent vectors at  $G(s)$ and $Z(s)$, as $s\to0$. 
We will use this formula in the proof of Theorem \ref{uniq4}. 

\subsection{Normal coordinates and equivalent definitions of 
geodesic curvature}
Let $\Sigma$ be a two-dimensional surface equipped with a 
$C^3$-smooth Riemannian metric $g$.  Let $O\in\Sigma$. Let $\gamma$ be a $C^2$-smooth germ of curve  
at $O$ parametrized by its natural length parameter. Recall that  its geodesic curvature $\kappa=\kappa(O)$ equals the norm of 
the covariant derivative $\nabla_{\dot\gamma}\dot\gamma$. In the Euclidean case it coincides with  the inverse of the osculating circle radius. 

Consider the exponential 
chart $\exp:v\mapsto\exp(v)$ parametrizing a neighborhood of the point $O$ by a neighborhood of zero in the tangent plane $T_O\Sigma$. We introduce orthogonal  
linear coordinates $(x,y)$, on $T_O\Sigma$, which together with the exponential 
chart, induce {\it normal coordinates} centered at $O$, 
also denoted by $(x,y)$, on a neighborhood of the point $O$. 
It is well-known that in normal coordinates the metric has the same 1-jet 
at $O$, as 
the standard Euclidean metric (we then say that its {\it 1-jet is trivial} at $O$.) 
Its Christoffel symbols vanish at $O$. 

\begin{remark} \label{sigsmooth} Let the surface $\Sigma$ and the metric be $C^{k+1}$-smooth. Then normal coordinates are 
$C^{k}$-smooth. This follows from  theorem on dependence of 
solution of differential equation on initial condition (applied to the equation of geodesics) 
and $C^{k}$-smoothness of the Christoffel symbols. Thus,  each $C^{k}$-smooth curve is represented by a 
$C^k$-smooth curve in normal coordinates.
\end{remark}

\begin{proposition} \label{kappa=} For every curve $\gamma$  as above 
its geodesic curvature $\kappa(O)$ equals its Euclidean geodesic curvature $\kappa_e(O)$ 
in normal coordinates centered at $O$. 
If the normal coordinates $(x,y)$ are chosen so that the $x$-axis is tangent to $\gamma$, then 
$\gamma$ is the graph of a germ of function: 
\begin{equation} \gamma=\{ y=f(x)\}, \ \ \ f(x)=\pm\frac{\kappa(O)}2x^2+o(x^2), \text{ as } x\to0.\label{x122}\end{equation}
\end{proposition}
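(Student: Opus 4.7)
The plan rests entirely on the defining property of normal coordinates recalled just before the proposition: the metric has trivial 1-jet at $O$, i.e.\ $g_{ij}(O)=\delta_{ij}$ and $\partial_k g_{ij}(O)=0$, so that all Christoffel symbols $\Gamma^k_{ij}$ vanish at $O$. Once this is in hand, both claims become routine.

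To establish $\kappa(O)=\kappa_e(O)$, I would write $\gamma(s)=(\gamma^1(s),\gamma^2(s))$ in normal coordinates parametrized by Riemannian arc length $s$. At the origin the covariant derivative reduces to the ordinary second derivative, since
$$(\nabla_{\dot\gamma}\dot\gamma)^k(0)=\ddot\gamma^k(0)+\Gamma^k_{ij}(O)\dot\gamma^i(0)\dot\gamma^j(0)=\ddot\gamma^k(0).$$
Because $g(O)$ coincides with the Euclidean inner product, taking its norm gives $\kappa(O)=|\ddot\gamma(0)|_g=|\ddot\gamma(0)|_{\mathrm{eucl}}$. The same identity at $O$ ensures that the Riemannian and Euclidean unit-tangent conditions agree at $s=0$, so $s$ is also the Euclidean arc length at first order at $O$; hence $|\ddot\gamma(0)|_{\mathrm{eucl}}$ is the Euclidean geodesic curvature $\kappa_e(O)$ of the planar curve $\gamma$ in the $(x,y)$-chart.

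For the graph formula, I would choose the normal coordinates so that the $x$-axis is tangent to $\gamma$ at $O$, so $\gamma$ is locally a graph $y=f(x)$ with $f(0)=f'(0)=0$. The Euclidean curvature of such a graph at the origin equals $|f''(0)|/(1+f'(0)^2)^{3/2}=|f''(0)|$, which by the first part equals $\kappa(O)$. A Peano-form Taylor expansion of $f$ at $0$ for $C^2$ functions then yields $f(x)=\tfrac{f''(0)}{2}x^2+o(x^2)$, giving the stated expansion with the $\pm$ sign determined by the side of the tangent line on which $\gamma$ lies (equivalently, by the chosen orientation of the $y$-axis relative to the inward normal of $\gamma$).

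The only point requiring any care is to verify that the Riemannian and Euclidean arc-length parametrizations match sufficiently at $s=0$ for the two second derivatives to be compared directly; but this is immediate from $g(O)$ being Euclidean and $\partial_k g_{ij}(O)=0$, so there is no genuine obstacle beyond unwinding the definitions. In short, normal coordinates were engineered precisely so that curvature invariants at the base point can be read off from their flat Euclidean counterparts, and the proposition is a clean instance of this principle.
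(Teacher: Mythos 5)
Your proof is correct and follows essentially the same route as the paper, whose entire argument is the one-line observation that the claims "follow from definition and vanishing of the Christoffel symbols at $O$ in normal coordinates"; you have simply unwound that remark carefully, including the second-order matching of the Riemannian and Euclidean arc lengths at $O$, which is the only point needing attention.
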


\begin{proof}  Proposition \ref{kappa=} 
 follows from definition and vanishing of the Christoffel symbols at $O$ in normal coordinates. 
 \end{proof}

\def\az{\operatorname{az}}
\def\dist{\operatorname{dist}}

\begin{proposition} \label{cabex} Let the germ $(\gamma,O)\subset\Sigma$ be the same as at the beginning of 
the subsection, and let $\gamma$ have positive geodesic curvature. Let $\mcu\subset\Sigma$ 
be the domain adjacent to $\gamma$ from the concave side: $\gamma$ is its concave boundary. 
Let $\widehat\gamma\subset\gamma$ be a compact subset: an arc with boundary.
For every $C\in\mcu$ close enough to $\widehat\gamma$ there exist exactly two geodesics 
through $C$ tangent to $\gamma$. In what follows we denote their tangency points with $\gamma$ by 
$A=A(C)$ and $B=B(C)$ so that $AC$ is the right geodesic through $C$ tangent  to $\gamma$. 
\end{proposition}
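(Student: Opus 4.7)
My strategy is to work in normal coordinates centered at points of $\widehat\gamma$ and to identify the exponential map along vectors tangent to $\gamma$ as a Whitney fold. Since $\widehat\gamma$ is compact, it suffices to prove the statement in a neighborhood of each point $A_0\in\widehat\gamma$. Fix such an $A_0$ and, using Remark \ref{sigsmooth} and Proposition \ref{kappa=}, introduce normal coordinates $(x,y)$ centered at $A_0$ with $x$-axis tangent to $\gamma$. Then $\gamma=\{y=f(x)\}$ with $f(x)=\tfrac{\kappa(A_0)}{2}x^2+o(x^2)$ and $\kappa(A_0)>0$, the Christoffel symbols vanish at the origin, and $\mcu$ locally coincides with $\{y<f(x)\}$.

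Next I would parametrize tangent geodesics by their tangency points: for $s$ near $0$, set $A(s):=(s,f(s))$, $v(s):=(1,f'(s))$, and consider
\[
\Phi(s,\tau):=\exp_{A(s)}\bigl(\tau\, v(s)\bigr).
\]
The tangent geodesics through $C$ correspond bijectively to the solutions of $\Phi(s,\tau)=C$. At points of $\gamma\times\{0\}$ both partial derivatives of $\Phi$ equal $v(s)$, so $d\Phi$ has rank $1$; a direct computation (using that the Christoffel symbols vanish at the origin) shows that the Hessian of $\Phi$ in the kernel direction $\partial_s-\partial_\tau$ has non-trivial projection $-\kappa(A_0)\,n(A_0)$ on the normal to $\gamma$, pointing into $\mcu$. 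This identifies $\gamma\times\{0\}$ as a Whitney fold of $\Phi$ whose fold image is $\gamma$ and whose fold opens toward the concave side.

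Concretely, since the Christoffel symbols vanish at the origin, the geodesic ODE gives
\[
\Phi(s,\tau)=\bigl(s+\tau,\ f(s)+\tau f'(s)\bigr)+O\bigl((|s|+|\tau|)\tau^2\bigr).
\]
For $C=(X,Y)\in\mcu$, eliminating $\tau$ via the first coordinate and substituting $s=X+\sigma$ in the second reduces the system to a single scalar equation of the form
\[
\sigma^2=\rho(X,Y)+\sigma^3\,h(\sigma,X,Y),\qquad \rho(X,Y)=X^2-\tfrac{2Y}{\kappa(A_0)}+\mathrm{l.o.t.},
\]
with $\rho>0$ on $\mcu$ near $A_0$ and $h$ smooth and bounded. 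A contraction argument in the rescaled variable $\zeta:=\sigma/\sqrt{\rho}$ yields exactly two simple roots $\sigma=\pm\sqrt{\rho}\bigl(1+o(1)\bigr)$, and hence exactly two tangent geodesics through $C$; the root with $\sigma>0$ (resp.\ $\sigma<0$) defines the right (resp.\ left) tangent geodesic, giving $A(C)$ and $B(C)$. The main obstacle I anticipate is the uniform control of the remainder as $C\to\gamma$, where both $\rho$ and $\sigma$ tend to $0$: the implicit function step must be carried out in properly rescaled variables so that the perturbation remains a contraction. A finite cover of $\widehat\gamma$ by such normal charts completes the proof.
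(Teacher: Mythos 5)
Your argument is sound, but it takes a genuinely different route from the paper. The paper's proof is a two-line reduction: it takes normal coordinates centered at the point $C$ itself (depending smoothly on $C$), so that every geodesic through $C$ becomes a straight line, while $\gamma$ stays strictly convex in the Euclidean sense by Proposition \ref{kappa=}; the claim then is exactly the classical Euclidean fact that a point on the concave side of a strictly convex arc admits exactly two tangent lines. You instead center the chart at a point $A_0\in\widehat\gamma$, view the union of tangent geodesics as the image of the fold map $\Phi(s,\tau)=\exp_{A(s)}(\tau v(s))$, and solve the tangency equation quantitatively. The paper's route is shorter and essentially computation-free; yours is heavier but yields as a by-product the asymptotic location of the tangency points, $s=X\pm\sqrt{\rho(X,Y)}\,(1+o(1))$ with $\rho\asymp X^2-2Y/\kappa$, which is precisely the kind of information the paper has to re-derive later by separate arguments (cf.\ formula (\ref{kssig}) in the proof of Theorem \ref{thsm} and Proposition \ref{pdersq}).

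Two points in your write-up deserve tightening, though neither is fatal. First, after eliminating $\tau$ the remainder is $O\bigl((|X|+|\sigma|)\sigma^2\bigr)$, not $O(\sigma^3)$; the piece $O(|X|)\,\sigma^2$ must be absorbed as a multiplicative correction $\sigma^2\bigl(1+O(|X|)\bigr)=\rho+O(\sigma^3)$ rather than pushed into $\sigma^3h$, which only rescales $\rho$ harmlessly. Second, to get \emph{exactly} two (rather than at most two in each chart of a finite cover), you should note that the exact discriminant vanishes precisely on $\gamma$ -- this is the content of your fold statement -- and that any geodesic through $C$ tangent to $\gamma$ must touch it at a point whose distance to the foot of $C$ is $O(\sqrt{\dist(C,\gamma)})+o(1)$, so that for $C$ close enough to $\widehat\gamma$ all tangency points fall inside a single chart; your formula for $\sigma$ supplies exactly this bound, so you should say so explicitly when you invoke the finite cover.
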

\begin{proof} The statement of the proposition is obvious in the Euclidean case. The non-Euclidean 
case is reduced to the Euclidean case by considering a point $C\in\mcu$ close to $\widehat\gamma$ 
and normal coordinates $(x_C,y_C)$ centered at $C$ so that their family depends smoothly on $C$. 
In these coordinates the curves $\gamma=\gamma_C$ depend smoothly on $C$ and 
are  strictly convex in the Euclidean sense, by Proposition \ref{kappa=}. 
The geodesics through $C$ are lines. This together with the statement of Proposition \ref{cabex} 
in the Euclidean case implies its statement in the non-Euclidean case. 
\end{proof}

Let us consider that 
$\Sigma$ is a Riemannian disk centered at $O$, the curve $\gamma$ splits $\Sigma$ into two open parts, and 
$\gamma$ has positive geodesic 
curvature. For every point $A\in\gamma$ the 
geodesic tangent to $\gamma$ at $A$ will be denoted by $G_A$. 

\begin{proposition} \label{pdiv} Taking the disk $\Sigma$ small enough, one can 
achieve that  for every $A\in\gamma$  the curve $\gamma$ lies in 
the closure of one and the same component of the complement 
$\Sigma\setminus G_A$, $\gamma\cap G_A=\{ A\}$. 
\end{proposition}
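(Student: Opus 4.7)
The plan is to reduce the statement to a uniform Euclidean-convexity statement by working in a smooth family of normal coordinate charts centered at moving points of $\gamma$. For each $A\in\gamma$ let $\phi_A$ denote the exponential chart at $A$ with a fixed orthonormal frame chosen so that the first coordinate axis is tangent to $\gamma$ at $A$ in the positive direction; by Remark \ref{sigsmooth} and smooth dependence of the exponential map on the base point, the family $\phi_A$ depends $C^1$-smoothly on $A$. In these normal coordinates centered at $A$ all geodesics through the origin are straight lines, so $G_A$ is represented by the $x$-axis, and by Proposition \ref{kappa=} the image of $\gamma$ is locally a graph
\[
y=f_A(x)=\tfrac{\eps\,\kappa(A)}{2}\,x^2+o(x^2),\qquad \eps=\pm1,
\]
where the sign $\eps$ does not depend on $A$ once an orientation of $\gamma$ has been fixed (by continuity of the one-sided tangent frame along the connected curve $\gamma$).

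First I would fix a compact arc $\widehat\gamma\Subset\gamma'$ strictly larger than $\gamma$, and use compactness of $\widehat\gamma$ together with the smooth dependence of $\phi_A^{-1}$ on $A$ to produce a uniform radius $r>0$ such that for every $A\in\widehat\gamma$ the representation of $\gamma\cap\phi_A(B_r(0))$ in the chart is a graph $y=f_A(x)$ over $|x|<r$ with $\eps f_A''(x)\geq\tfrac{1}{2}\kappa(A)>0$ and with $f_A(0)=f_A'(0)=0$. This gives $\eps f_A(x)>0$ for all $0<|x|<r$, so in each such chart the arc $\gamma\cap\phi_A(B_r(0))$ meets the geodesic $G_A$ only at $A$ and lies strictly to one (consistent) side of $G_A$.

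Next I would shrink the Riemannian disk $\Sigma$ around $O$ so that (i) $\Sigma\subset\phi_A(B_r(0))$ for every $A$ in the shrunken $\gamma$, which is possible because the charts $\phi_A$ depend continuously on $A$ and all contain a uniform neighborhood of $O$; and (ii) $\Sigma\setminus G_A$ has exactly two connected components for every $A\in\gamma$, which is automatic once $\Sigma$ is a small Riemannian disk and $G_A\cap\Sigma$ is a simple geodesic arc. Under (i) the local argument of the previous paragraph applies to the whole of $\gamma$, giving $\gamma\cap G_A=\{A\}$ and $\gamma\setminus\{A\}$ contained in one fixed component of $\Sigma\setminus G_A$ (the one corresponding to the sign $\eps$ in every chart, which is consistent by the orientation argument above).

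The main obstacle is the uniformity in $A$: the statement is immediate in a single normal chart, but we must ensure that the "good" neighborhood of $A$ and the "good" side of $G_A$ can be chosen uniformly as $A$ varies along $\gamma$. This is handled precisely by the compactness of $\widehat\gamma$ together with the $C^1$-dependence of the normal charts $\phi_A$ on $A$ and the strict positivity of $\kappa$ on $\widehat\gamma$; no further analytic input beyond Propositions \ref{kappa=} and \ref{cabex} is needed.
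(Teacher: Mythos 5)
Your proposal is correct and follows essentially the same route as the paper, which disposes of this proposition in one line by invoking the Euclidean version of the statement together with Proposition \ref{kappa=}; your write-up simply makes explicit the uniformity-in-$A$ argument (compactness of a slightly larger arc plus smooth dependence of the normal charts on the base point) that the paper leaves implicit. The only blemish is the undefined symbol $\gamma'$ in ``$\widehat\gamma\Subset\gamma'$'' (you clearly mean a compact arc slightly larger than the one under consideration), which does not affect the argument.
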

Proposition \ref{pdiv} follows its Euclidean version and  Proposition \ref{kappa=}. 

\begin{proposition} \label{pr24}  For every two points $A,B\in\gamma$ close enough to $O$ the geodesics $G_A$ and $G_B$ intersect at a 
unique point $C=C_{AB}\in\mcu$ close to $O$.
\end{proposition}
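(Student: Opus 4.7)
The plan is to work in normal coordinates $(x,y)$ centered at $O$ with the $x$-axis equal to $T_O\gamma$. By Proposition \ref{kappa=}, in these coordinates $\gamma$ is the graph $\{y=f(x)\}$ of a Euclidean-strictly-convex function with $f(0)=f'(0)=0$ and $|f''(0)|=\kappa(O)>0$, and the metric has trivial $1$-jet at $O$. Consequently geodesics are smooth curves depending smoothly on their initial conditions, and for $P\in\gamma$ close to $O$ the tangent geodesic $G_P$ differs from the Euclidean tangent line $\ell_P$ to $\gamma$ at $P$ by a perturbation that vanishes to first order at $P$ and is uniformly controlled on a neighborhood of $O$.

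First I would dispose of the Euclidean toy model. For distinct $A=(a,f(a))$ and $B=(b,f(b))$ close to $O$, strict convexity of $f$ implies that $\ell_A$ and $\ell_B$ meet at a unique point $C_E(A,B)$ on the concave side of $\gamma$, with intersection angle comparable to $|f''(0)|\,|a-b|$. The point $C_E$ depends continuously on $(A,B)$ and satisfies $C_E(A,A)=A$.

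Passing to the Riemannian case, I would apply the implicit function theorem to the equation $G_A(u)=G_B(v)$, where $u,v$ are arc-length parameters along $G_A,G_B$ based at $A,B$. In the Euclidean setting the analogous equation has a unique nondegenerate solution giving $C_E(A,B)$, the Jacobian there being invertible precisely because $\ell_A,\ell_B$ meet transversely. Since $G_A,G_B$ are $C^1$-small perturbations of $\ell_A,\ell_B$, the perturbed equation admits a unique nearby solution whose value gives $C_{AB}$; and $C_{AB}\in\mcu$ by continuity combined with Proposition \ref{pdiv}, which confines $\gamma\setminus\{A\}$ to the convex side of $G_A$.

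The main obstacle is that the transversality angle $\kappa(O)|a-b|$ between $\ell_A$ and $\ell_B$ tends to zero as $A,B\to O$, so a naive implicit function theorem lacks uniform estimates near the diagonal. The remedy is a parabolic rescaling $(x,y)\mapsto(x/\epsilon,\,y/\epsilon^2)$ with $\epsilon$ comparable to $|a-b|$, adapted to the osculating parabola $y=\pm\tfrac{\kappa(O)}{2}x^2$. In the rescaled picture, $\gamma$ converges to the osculating parabola, the rescaled metric converges to the Euclidean one, and the rescaled tangent geodesics converge to tangent lines meeting at a uniformly bounded angle. This restores uniform transversality and yields existence and uniqueness of $C_{AB}$, together with the facts that $C_{AB}\in\mcu$ and $C_{AB}\to O$ as $A,B\to O$.
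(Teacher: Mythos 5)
Your argument is sound in outline, but it takes a genuinely different and considerably heavier route than the paper. The paper's proof is a three-line topological separation argument: it draws the geodesic $H$ through $B$ orthogonal to $T_B\gamma$, lets $P(A,B)=H\cap G_A\in\mcu$, and observes that by Proposition \ref{pdiv} the tangent geodesic $G_B$ separates $P(A,B)$ from the punctured curve $\gamma\setminus\{B\}$ --- in particular from $A$ --- so $G_B$ must cross the arc of $G_A$ between $A$ and $P(A,B)$; uniqueness comes for free from the fact that two geodesics in a small disk meet at most once. This buys existence with no transversality estimates at all, which is exactly the point where your implicit-function-theorem approach has to work hard near the diagonal $A=B$. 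What your approach buys in exchange is quantitative information (the location of $C_{AB}$ up to $o$-terms, smooth dependence on $(A,B)$ off the diagonal, and the degenerate behaviour as $A\to B$), none of which is needed for this proposition but which the paper extracts later by other means (Propositions \ref{gangle} and \ref{pdersq}).

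Two points in your rescaling step should be tightened if you keep this route. First, the parabolic rescaling $(x,y)\mapsto(x/\eps,\,y/\eps^2)$ is anisotropic, so the rescaled metric does \emph{not} converge to the Euclidean one (its $dY^2$-part degenerates like $\eps^2$); the statement you actually need, and which is true, is that the rescaled tangent geodesics converge $C^1$-uniformly to the tangent lines of the osculating parabola --- this follows from the curvature bound of Proposition \ref{pro1a}, since the geodesics $G_A$, $G_B$ stay at distance $O(\eps^2)$ from $\gamma$'s base point and hence have Euclidean curvature $O(\eps^2)$, negligible against the mutual angle $\asymp\kappa(O)\,|a-b|$. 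Second, rescaling about $O$ with $\eps\sim|a-b|$ is not uniform over pairs with $|a-b|\ll|a|$; you should recenter the normal coordinates at $A$ (or at a point of $\gamma$ between $A$ and $B$), as the paper does in the proof of Proposition \ref{cabex}, before rescaling. Both repairs are routine.
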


\begin{proof} Let $H$ denote the geodesic through $B$ orthogonal 
to $T_B\gamma$. It intersects the geodesic $G_A$ at  
some  point $P(A,B)\in\mcu$. The geodesic 
$G_B$ separates $P(A,B)$ from 
the punctured curve $\gamma\setminus\{ B\}$, by construction 
and Proposition \ref{pdiv}. Therefore, $G_B$ intersects the 
interval $(A,P(A,B))$ of the geodesic $G_A$. This proves the 
proposition.
\end{proof}

\begin{proposition} \label{gangle}
 For every $A,B\in\gamma$ close enough to $O$ let $C=C_{AB}$ denote 
the  point of intersection $G_A\cap G_B$. Let $\alpha(A,B)$ 
denote the acute angle between the  geodesics $G_A$ and $G_B$ at $C$, and let  $\la(A,B)$ denote the length of 
the arc $AB$ of the curve $\gamma$. The  geodesic curvature $\kappa(O)$ 
of the curve $\gamma$ at $O$ can be found  from any of the two 
following limits:
\begin{equation}\kappa(O)=\lim_{A,B\to O}\frac{\alpha(A,B)}{\la(A,B)};\label{ka}
\end{equation}
\begin{equation}\kappa(O)=\lim_{A,B\to O}2\frac{\dist(B,G_A)}{\la(A,B)^2}.\label{x22}
\end{equation}
\end{proposition}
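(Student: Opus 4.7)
The plan is to reduce both equalities to their Euclidean counterparts by working in normal coordinates centered at $O$. Introduce normal coordinates $(x,y)$ with $x$-axis aligned with $T_O\gamma$. By Proposition \ref{kappa=}, in these coordinates $\gamma=\{y=f(x)\}$ with $f(x)=\pm\frac{\kappa(O)}{2}x^2+o(x^2)$, the metric coefficients satisfy $g_{ij}(p)=\delta_{ij}+O(|p|^2)$, and the Christoffel symbols vanish at $O$, so are of order $O(|p|)$. In particular, Riemannian angles and distances at points $p$ close to $O$ agree with the corresponding Euclidean quantities up to errors quadratic in $|p|$.

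Next I would show that the geodesic $G_A$ through $A\in\gamma$ tangent to $\gamma$ at $A$ is $C^1$-close in normal coordinates, on the relevant scale, to the Euclidean straight line tangent to $\gamma$ at $A$ with deviation of order $O(|A|^2)$. This follows from standard ODE perturbation applied to the geodesic equation, whose coefficients are $O(|p|)$. Consequently $C=C_{AB}$ lies close to the Euclidean intersection of the two Euclidean tangent lines, and both the Riemannian angle $\alpha(A,B)$ at $C$ and the Riemannian distance $\dist(B,G_A)$ coincide with their Euclidean analogues in normal coordinates up to corrections of order $o(|A-B|)$ and $o(|A-B|^2)$ respectively, as $A,B\to O$.

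The remaining step is the Euclidean computation. Writing $A=(x_A,f(x_A))$, $B=(x_B,f(x_B))$, the slopes of the Euclidean tangent lines at $A$ and $B$ are $f'(x_A)=\kappa(O)x_A+o(x_A)$ and $f'(x_B)=\kappa(O)x_B+o(x_B)$, so the Euclidean angle between them equals $|f'(x_A)-f'(x_B)|+o(|x_A-x_B|)=\kappa(O)|x_A-x_B|+o(|x_A-x_B|)$. Combined with $\la(A,B)=|x_A-x_B|(1+o(1))$ this yields (\ref{ka}). For (\ref{x22}), Taylor expanding $f$ at $x_A$ gives
$$f(x_B)-f(x_A)-f'(x_A)(x_B-x_A)=\tfrac{f''(x_A)}{2}(x_B-x_A)^2+o((x_B-x_A)^2),$$
and hence the Euclidean distance from $B$ to the tangent line at $A$ equals $\frac{\kappa(O)}{2}(x_A-x_B)^2+o((x_A-x_B)^2)$, which together with $\la(A,B)^2=(x_A-x_B)^2(1+o(1))$ implies the second limit.

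The main obstacle is the rigorous control of the Riemannian-versus-Euclidean discrepancy: one must verify that the deviation of $G_A$ from the Euclidean line tangent to $\gamma$ at $A$, together with the analogous discrepancies in measuring angles at $C$ and the distance $\dist(B,G_A)$, are genuinely of smaller order than the leading Euclidean terms $\kappa(O)\la(A,B)$ and $\frac{\kappa(O)}{2}\la(A,B)^2$. This bookkeeping is straightforward once one uses that the metric has trivial $1$-jet at $O$, but it is the only substantive point beyond the Euclidean Taylor-expansion computation.
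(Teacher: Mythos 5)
Your argument is correct: both limits do reduce to the classical Euclidean Taylor computation, and the error bookkeeping you outline goes through because the metric has trivial $1$-jet at $O$. The paper's proof follows the same overall strategy (reduction to the Euclidean formulas in normal coordinates) but implements it differently: instead of a single chart centered at $O$, it uses normal coordinates centered at $C=C_{AB}$ for the angle formula and at the point of $G_A$ closest to $B$ for the distance formula. That choice makes the relevant geodesics straight lines through the center and makes the angle at $C$ (resp.\ the distance from $B$ to $G_A$) \emph{exactly} Euclidean there, so no estimate on the bending of $G_A$, $G_B$ is needed; the price is that one must invoke the smooth dependence of the representation of $\gamma$ on the (moving) center of the chart, as in Proposition \ref{cabex}. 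Your version keeps the chart fixed but must then control the Euclidean curvature of the geodesics $G_A$, $G_B$ away from the center; this is exactly what Proposition \ref{pro1a} supplies, since $\dist(G_A,O)=O(|s_A|^2)$ gives a direction drift $o(|s_A-s_B|)$ along the arc $AC$ and a positional drift $o(|s_A-s_B|^2)$, both subordinate to the leading terms $\kappa(O)\la(A,B)$ and $\tfrac{\kappa(O)}{2}\la(A,B)^2$. Either route is legitimate; the paper's is shorter to state, yours is more self-contained in a single chart.
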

 
 \begin{proof} In the Euclidean case formulas (\ref{ka}) and (\ref{x22}) are classical. Their non-Euclidean 
 versions follow by applying the Euclidean versions in normal coordinates centered respectively at $C$ 
 and at the point in $G_A$ closest to $B$,  as in the proof of Proposition \ref{cabex} (using 
 smoothness of family of representations of 
 the curve $\gamma$ in  normal coordinates with variable center).
 \end{proof}

For every point $A\in\Sigma$ lying in a chart $(x,y)$, e.g., a normal chart  centered at $O$, and 
 every tangent vector $v\in T_A\Sigma$ set  
$$\az(v):=\text{ the azimuth of the vector } v: \text{ its Euclidean 
angle with the } x-\text{axis},$$
i.e., the angle in the Euclidean metric in the coordinates $(x,y)$. 
The azimuth of an oriented one-dimensional subspace in $T_A\Sigma$ is defined analogously. 

\begin{proposition} \label{pro1a} Let $A\in\Sigma$ be a point close to $O$ and $\alpha(s)$  be a geodesic
through $A$ parametrized by the natural length parameter $s$,  
$\alpha(0)=A$.  

1) Let $\kappa_e(s)$ denote the Euclidean curvature of the geodesic $\alpha$ as a planar 
curve in normal chart $(x,y)$ centered at $O$. For every $\var>0$ small enough 
\begin{equation}\kappa_e(s)=O(\dist(\alpha,O)), \text{ as } A\to O, 
\text{ uniformly on } \{|s|\leq\var\},\label{kappas}
\end{equation}
$$\dist(\alpha,O):=\text{ the distance of the geodesic } \alpha \text{ to the point } O.$$
2) Set $v(s)=\dot\alpha(s)$. One has 
\begin{equation}\frac{d\az(v(s))}{ds}=O(\dist(\alpha,O))
=O(\angle(v(0),AO)\dist(A,O)) \text{ as } A\to O,
\label{azim}\end{equation}
uniformly on the set $\{|s|\leq\var\}$. The latter angle in (\ref{azim}) is the Riemannian angle between 
the vector $v(0)$ and the Euclidean line $AO$. 
\end{proposition}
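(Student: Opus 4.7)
The plan is to work in normal coordinates $(x,y)$ centered at $O$, in which the Christoffel symbols $\Gamma^k_{ij}$ vanish at $O$ and radial geodesics $t\mapsto tv$ are Euclidean straight lines. The key algebraic input is that $\Gamma^k_{ij}(tv)\,v^iv^j\equiv 0$ for every tangent vector $v$, since the radial line is itself a geodesic through $O$. Applying the mean value theorem to $p\mapsto\Gamma^k_{ij}(p)v^iv^j$ along the Euclidean segment from $(p\cdot v)v/|v|^2$ to $p$ yields
\begin{equation*}
 |\Gamma^k_{ij}(p)v^iv^j|\le C\,|p^{\perp}|_{\mathrm{Euc}},
\end{equation*}
where $p^\perp$ is the Euclidean component of $p$ perpendicular to $v$ and $C$ is a bound on $|\partial\Gamma|$ in a fixed neighbourhood of $O$. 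Substituted into the geodesic equation $\ddot\alpha^k=-\Gamma^k_{ij}(\alpha)\dot\alpha^i\dot\alpha^j$ with $v=v(s)=\dot\alpha(s)$, this gives $|\ddot\alpha(s)|_{\mathrm{Euc}}\le C\,h(s)$, where $h(s)$ denotes the Euclidean perpendicular distance from $O$ to the Euclidean tangent line $L_s$ of $\alpha$ at $\alpha(s)$.

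Since $|\dot\alpha|_{\mathrm{Euc}}$ is bounded above and below by positive constants near $O$, the Euclidean curvature satisfies $\kappa_e(s)=O(|\ddot\alpha(s)|_{\mathrm{Euc}})$, and item 1) reduces to $h(s)=O(\dist(\alpha,O))$ uniformly on $\{|s|\le\var\}$. I would prove this by a bootstrap. Set $M:=\sup_{|s|\le\var}h(s)$. From $|\ddot\alpha|\le CM$ and Taylor's formula, $\alpha$ stays within Euclidean distance $\tfrac12 C\var^2M$ of each tangent line $L_s$ on $|s|\le\var$. For $A$ close enough to $O$, the Euclidean-closest point of $\alpha$ to $O$ has its parameter inside $|s|\le\var$, so
\begin{equation*}
 h(s)=\dist_{\mathrm{Euc}}(O,L_s)\le \dist_{\mathrm{Euc}}(O,\alpha)+\tfrac12 C\var^2M.
\end{equation*}
Taking the supremum and choosing $\var$ so small that $C\var^2<1$ absorbs $M$ and gives $M=O(\dist_{\mathrm{Euc}}(O,\alpha))=O(\dist(\alpha,O))$, since Euclidean and Riemannian distances are comparable near $O$; this proves item 1).

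For item 2), the first equality is immediate: $\frac{d\az(v(s))}{ds}=\kappa_e(s)\cdot|\dot\alpha(s)|_{\mathrm{Euc}}=O(\kappa_e)=O(\dist(\alpha,O))$ by item 1). For the second equality I would compare $\alpha$ with the Euclidean line $L_0$ through $A$ in direction $v(0)$: its Euclidean distance from $O$ is $|A|_{\mathrm{Euc}}\sin\theta$ with $\theta=\angle_{\mathrm{Euc}}(v(0),-A)$, and the estimates above show that $\alpha$ stays within Euclidean distance $O(|A|_{\mathrm{Euc}}^2\dist(\alpha,O))$ of $L_0$ near the closest-approach point. A second absorption (analogous to the one above) then gives $\dist(\alpha,O)\le 2|A|_{\mathrm{Euc}}\sin\theta\le 2|A|_{\mathrm{Euc}}\theta$, and since Riemannian and Euclidean distances and angles at $A$ coincide up to multiplicative constants near $O$, this yields $\dist(\alpha,O)=O(\angle(v(0),AO)\dist(A,O))$. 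The step requiring the most care is the bootstrap closure: I must verify that the Euclidean-closest point of $\alpha$ to $O$ genuinely falls inside $|s|\le\var$ as $A\to O$, which holds because its parameter is approximately $-(A\cdot v)/|v|^2=O(|A|_{\mathrm{Euc}})\to 0$.
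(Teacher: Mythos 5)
Your proof is correct and follows essentially the same route as the paper's: both rest on the identity $\Gamma^k_{ij}(tv)v^iv^j\equiv 0$ in normal coordinates (geodesics through $O$ are Euclidean lines) combined with $C^1$-smoothness of the Christoffel symbols, which is precisely the paper's ``move the geodesic out of $O$ by a small distance $\delta$ and the derivative changes by $O(\delta)$'' step. Your bootstrap closing the self-referential estimate $\sup_s h(s)=O(\dist(\alpha,O))$, and the Euclidean-line comparison yielding the second equality in (\ref{azim}), just make explicit what the paper asserts in one line.
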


\begin{proof}  In the coordinates $(x,y)$ the geodesics are solutions of the second order ordinary differential equation saying that 
$\ddot\alpha$ equals a quadratic form in $\dot\alpha$ with coefficients equal to appropriate Christoffel symbols 
of the metric $g$ (which vanish at $O$), and $|\dot\alpha|=1$ in the metric $g$. 
  The derivative in (\ref{azim}) is expressed in terms of the Christoffel symbols. 
This derivative taken along a geodesic $\alpha$ through $O$ vanishes identically 
on $\alpha$, since each geodesic through $O$ is a straight line  in normal coordinates. 
Therefore if we move the geodesic through $O$ out of $O$ by a small distance 
$\delta$, then  the derivative in (\ref{azim}) will change by an amount of order $\delta$: 
 the Christoffel symbols are $C^1$-smooth, since the metric is $C^3$-smooth 
 (hence, $C^2$-smooth in normal coordinates). 
This  implies the first  equality in (\ref{azim}). 
The second equality follows from the fact that  the 
geodesics through $A$ issued in the direction 
of the vectors  $\vec{AO}$ and $v(0)$ are 
respectively the  line $AO$ and  
$\alpha$, hence, $\dist(\alpha,O)=O(\angle(v(0),AO)\dist(A,O))$. 
This proves (\ref{azim}). 

Let $s_e$ denote the Euclidean natural parameter of the curve $\alpha$, with respect to the standard Euclidean 
metric in the chart $(x,y)$. 
Recall that $\kappa_e(s)=\frac{d\az(v(s))}{ds_e}$. For $\var>0$ small enough and $A$  close enough to $O$ 
the ratio   $\frac{ds_e}{ds}$ is uniformly bounded on $\{ |s|\leq\var\}$.   
This together with (\ref{azim}) implies (\ref{kappas}). The proposition is proved.
\end{proof} 

 \subsection{Angular derivative of exponential mapping 
 and the derivatives $\frac{dA}{dC}$, $\frac{dB}{dC}$}

In the proof of main results we will use an explicit formula for 
 the derivatives of the functions $A(C)$ and $B(C)$ from Proposition \ref{cabex}. To state it, let us 
introduce the following auxiliary functions. For every $x\in\Sigma$ set 
$$\psi(x,r):=\frac1{2\pi}\text{ (the length of circle of radius } r 
\text{ centered at } x).$$
Consider the 
polar  coordinates $(r,\phi)$ on the Euclidean plane 
$T_x\Sigma$. For every unit vector $v\in T_x\Sigma$, 
$|v|=1$ (identified with the corresponding angle coordinate $\phi$) and every 
$r>0$ let $\Psi(x,v,r)$ denote  $\frac1r$ times 
the module of derivative in $\phi$ of the exponential mapping at the point $rv$: 
\begin{equation}\Psi(x,v,r):=r^{-1}\left|\frac{\partial\exp}{\partial\phi}(rv)\right|.\label{depsi}\end{equation}

\begin{proposition}  (see \cite{bennett} in the hyperbolic case). Let 
$\Sigma$ be a complete simply connected Riemannian 
surface of constant curvature. Then 
\begin{equation}r\Psi(x,v,r)=\psi(x,r)=\psi(r)=\begin{cases}  r, & \text{ if } \Sigma \text{ is Euclidean plane,}\\
 \sin r, & \text{ if } \Sigma \text{ is unit sphere,}\\
 \sinh r, & \text{ if } \Sigma \text{ is hyperbolic plane.}\end{cases}\label{psix0}
\end{equation}
\end{proposition}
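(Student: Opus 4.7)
The plan is to recognize $\partial\exp/\partial\phi(rv)$ as the value at time $r$ of a Jacobi field along the geodesic $s\mapsto\exp(sv)$, and then invoke the explicit form of the Jacobi equation in constant curvature.

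First, I would relate $\Psi$ and $\psi$ directly. Writing a unit vector in $T_x\Sigma$ in polar form $v(\phi)$, the geodesic circle of radius $r$ centered at $x$ is the image of the curve $\phi\mapsto\exp(rv(\phi))$, so its length equals
\begin{equation*}
2\pi\psi(x,r)=\int_0^{2\pi}\left|\frac{\partial\exp}{\partial\phi}(rv(\phi))\right|d\phi.
\end{equation*}
By homogeneity and isotropy of a simply connected complete surface of constant curvature (the isometry group acts transitively on unit tangent vectors), the integrand is independent of $\phi$ and of $x$. This immediately gives $\psi(x,r)=\psi(r)$ and $\psi(r)=|\partial_\phi\exp(rv)|=r\Psi(x,v,r)$, so it remains only to compute $\psi(r)$.

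Second, I would carry out the Jacobi-field computation. Fix $v\in T_x\Sigma$ with $|v|=1$ and consider the geodesic variation $\phi\mapsto\exp(sv(\phi))$ of $\gamma(s)=\exp(sv)$. Its variation vector field $J(s)=\partial_\phi\exp(sv(\phi))$ is by definition a Jacobi field along $\gamma$ satisfying $J(0)=0$ and $DJ/ds(0)=v'(\phi_0)$, a unit vector perpendicular to $\gamma'(0)=v$. In constant sectional curvature $K\in\{0,\pm1\}$, the Jacobi equation for a vector field orthogonal to $\gamma'$ reduces to the scalar ODE $f''+Kf=0$ with $f(s)=|J(s)|$, subject to $f(0)=0$, $f'(0)=1$. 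Integrating gives $f(r)=r,\ \sin r,\ \sinh r$ in the Euclidean, spherical and hyperbolic cases respectively, which is exactly the claimed formula.

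The only point requiring a moment of care is the reduction to the scalar equation. It uses that $J$ remains orthogonal to $\gamma'$ for all $s$: indeed $\langle J,\gamma'\rangle''=0$ by the Jacobi equation (since $R(\gamma',J)\gamma'$ is perpendicular to $\gamma'$), and the initial conditions $\langle J(0),\gamma'(0)\rangle=0$, $\langle DJ/ds(0),\gamma'(0)\rangle=\langle v'(\phi_0),v(\phi_0)\rangle=0$ then force $\langle J,\gamma'\rangle\equiv 0$. In constant curvature $R(J,\gamma')\gamma'=KJ$, so the full Jacobi equation becomes $D^2J/ds^2+KJ=0$, whose orthogonal part translates directly into the scalar equation for $f$. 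No real obstacle appears; this is the standard Jacobi-field derivation of the volume-comparison formula in space forms.
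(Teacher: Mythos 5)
Your proof is correct, and it takes a genuinely different route from the paper's. You identify $\frac{\partial\exp}{\partial\phi}(rv)$ as a Jacobi field $J$ along the radial geodesic with $J(0)=0$, $\frac{DJ}{ds}(0)$ a unit normal vector, reduce the Jacobi equation in constant curvature $K$ to the scalar ODE $f''+Kf=0$ with $f(0)=0$, $f'(0)=1$, and read off $f(r)=r,\ \sin r,\ \sinh r$; the identities $r\Psi=\psi(x,r)=\psi(r)$ come from isotropy exactly as in the paper. The paper instead argues model by model: the spherical circumference is computed by projecting the geodesic disk of radius $r\le\pi/2$ onto the equatorial plane (an isometry on the boundary circle, giving Euclidean radius $\sin r$), and the hyperbolic one by an explicit change of variable in the Poincar\'e disk with metric $\frac{2|dz|}{1-|z|^2}$, converting the Euclidean radius $R$ to hyperbolic radius $r=\log\frac{1+R}{1-R}$ and the Euclidean circumference to $\frac{4\pi R}{1-R^2}=2\pi\sinh r$. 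Your argument is more uniform and intrinsic — all three cases fall out of one ODE — and it makes transparent why these are exactly the comparison functions of space forms; the paper's argument is more elementary, requiring no covariant differentiation or curvature tensor, at the cost of separate ad hoc computations in each model. Your care about $J$ staying orthogonal to $\gamma'$ (so that $|J|$ satisfies the scalar equation) is exactly the right point to flag, and your verification of it is complete.
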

 
\begin{proof} The left equality in (\ref{psix0}) and independence 
of $x$ and $v$ follow from homogeneity. Let us prove the right inequality: formula for the function $\psi(r)$. 
In the planar case this formula is obvious.

a) Spherical case. Without loss of generality let us place the 
center $O$ of the 
circle under question to the north pole $(0,0,1)$ in the Euclidean coordinates 
$(x_1,x_2,x_3)$ on the ambient space. Since each geodesic is a big circle 
of length $2\pi$ and due to symmetry, without loss of generality we consider that 
$0<r\leq\frac{\pi}2$. Then the disk in $\Sigma$ centered at $O$ of radius $r$ is 
1-to-1 projected to the disk of radius $\sin r$ in the coordinate $(x_1,x_2)$-plane, and 
the  length of its boundary obviously equals the Euclidean length of the boundary 
of its projection, that is, $2\pi\sin r$. This proves statement a). 

b) Case of hyperbolic plane. We consider the hyperbolic plane in the model of 
unit disk equipped with the metric $\frac{2|dz|}{1-|z|^2}$ in the complex coordinate $z$. 
For every $R>0$, $R<1$ the Euclidean circle $\{ |z|=R\}$  of radius $R$ is a hyperbolic 
circle of radius 
$$r=\int_0^R\frac{2ds}{1-s^2}=\log\left|\frac{1+R}{1-R}\right|.$$
The hyperbolic length of the same circle equals $L=\frac{4\pi R}{1-R^2}$.
Substituting the former formula to the latter one yields 
$$R=\frac{e^r-1}{e^r+1}, \  L=2\pi\sh r$$
and finishes the proof of the proposition.
\end{proof} 

\begin{proposition} \label{pdersq}  Let $\gamma\subset\Sigma$ be a germ of $C^2$-smooth curve. 
 Let $s$ be the length parameter on $\gamma$ orienting it positively 
as a boundary of a convex domain. Let $\mcu\subset\Sigma$ be the 
concave domain adjacent to $\gamma$, see Proposition \ref{cabex}. 
For every $C\in\mcu$ let $A(C)$, $B(C)\in\gamma$ be 
the corresponding points from Proposition \ref{cabex}, and let 
$s_A=s_A(C)$, $s_B=s_B(C)$ denote the corresponding  
length parameter values as functions of $C$. Set 
$$L_A:=|CA(C)|, \ L_B:=|CB(C)|.$$
For every $Q=A,B$ let $w_Q\in T_Q\gamma$ be the  unit tangent 
vector of the geodesic $CQ(C)$ directed to $C$, and let 
$\zeta_Q\in T_C\Sigma$ denote the  unit tangent vector 
of the same geodesic at $C$ directed to $Q(C)$.  For every $v\in T_C\Sigma$ and every $Q=A,B$ one has 
\begin{equation}\frac{ds_Q}{dv}=
\frac{v\times \zeta_Q}{\kappa(Q)L_Q\Psi(Q,w_Q,L_Q)}; \ 
v\times\zeta_Q:=|v|\sin\angle(v,\zeta_Q),
\label{dersq}\end{equation}
where  $\angle(v,\zeta_Q)$ is the oriented angle between 
the vectors $v$ and $\zeta_Q$: it is positive, if the latter vectors form 
an orienting basis of the space $T_C\Sigma$. 
\end{proposition}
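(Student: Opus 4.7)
The plan is to realize $C$ via the exponential map with $s_Q$ as a parameter, and to read off $ds_Q/dv$ from the Jacobi field that controls the derivative of this realization. Parametrize $\gamma$ by its length parameter $s$, let $\vec t(s)\in T_{\gamma(s)}\Sigma$ be the unit tangent to $\gamma$ at $\gamma(s)$ oriented in the same direction along which the tangent geodesic from $\gamma(s)$ heads toward $C$, so that $\vec t(s_Q)=w_Q$. Set
\[\Phi(s,r):=\exp_{\gamma(s)}(r\,\vec t(s)).\]
Then $C=\Phi(s_Q,L_Q)$ with $\partial_r\Phi(s_Q,L_Q)=-\zeta_Q$ (since $\zeta_Q$ points from $C$ back toward $Q$), and $J(r):=\partial_s\Phi(s_Q,r)$ is a Jacobi field along the tangent geodesic $r\mapsto\Phi(s_Q,r)$ by the standard fact that variations of geodesics are Jacobi fields. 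Differentiating the implicit relation $C=\Phi(s_Q(C),L_Q(C))$ in a direction $v\in T_C\Sigma$ gives
\[v\;=\;J(L_Q)\,ds_Q\;-\;\zeta_Q\,dL_Q,\]
and taking the oriented cross product with $\zeta_Q$ kills the second term, reducing the claim to the identification $J(L_Q)\times\zeta_Q=\pm\kappa(Q)\,L_Q\,\Psi(Q,w_Q,L_Q)$ up to sign.

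Next I compute the initial data of $J$ directly from the construction. Clearly $J(0)=\vec t(s_Q)=w_Q$, and by torsion-freeness
\[\nabla_r J(0)\;=\;\nabla_s\partial_r\Phi(s,0)\big|_{s=s_Q}\;=\;\nabla_s\vec t(s_Q)\;=\;\pm\kappa(Q)\,n_Q,\]
by the very definition of geodesic curvature, with $n_Q$ the unit normal to $\gamma$ (equivalently, to the tangent geodesic) at $Q$. Decompose $J=j^T\dot\alpha+j^N m$ along the geodesic, where $m$ is a parallel unit normal field. The tangential scalar $j^T$ is linear in $r$ with $j^T(0)=1$ and $(j^T)'(0)=0$, hence $j^T\equiv 1$, so it contributes nothing to the cross product with $\zeta_Q$. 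The normal scalar $j^N$ solves the scalar Jacobi equation $(j^N)''+K\,j^N=0$ along the geodesic with data $j^N(0)=0$ and $(j^N)'(0)=\pm\kappa(Q)$, so by linearity $j^N=\pm\kappa(Q)\,h$, where $h$ is the solution with $h(0)=0$, $h'(0)=1$.

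The final step is to recognize $h$ as the Jacobi field hidden inside $\Psi$. The angular variation $\phi\mapsto\exp_Q(r\,w_Q(\phi))$ fixes the base point $Q$ and has $\partial_\phi$-derivative that, at $r=0$, vanishes and has $\nabla_r$-derivative $\partial_\phi w_Q$, which is a unit vector normal to $w_Q$. Hence $\partial_\phi\exp_Q(rw_Q)$ is precisely the normal Jacobi field $h(r)$ up to sign, and the definition (\ref{depsi}) yields $h(r)=r\,\Psi(Q,w_Q,r)$. Plugging in $r=L_Q$ gives $|j^N(L_Q)|=\kappa(Q)\,L_Q\,\Psi(Q,w_Q,L_Q)$, and combining this with the cross-product identity above produces (\ref{dersq}). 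The principal obstacle is orientation bookkeeping: the sign $\epsilon$ in $\vec t(s)=\epsilon\dot\gamma(s)$ differs for $Q=A$ and $Q=B$, and one must verify that the oriented cross product $m(L_Q)\times\zeta_Q$ is compatible with the convention $v\times\zeta_Q=|v|\sin\angle(v,\zeta_Q)$ declared in the statement. Once these signs are reconciled the formula drops out with no further computation.
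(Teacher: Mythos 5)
Your argument is correct in substance but takes a genuinely different route from the paper. The paper's proof is elementary and asymptotic: it moves $A$ to $A_\varepsilon$ along $\gamma$, uses the limit definition (\ref{ka}) of geodesic curvature to get the intersection angle $\alpha(\varepsilon)\simeq\kappa(A)\varepsilon$ between $G_A$ and $G_{A_\varepsilon}$ near $A$, invokes the definition of $\Psi$ to estimate $\dist(C,G_{A_\varepsilon})\simeq\alpha(\varepsilon)L_A\Psi(A,w_A,L_A)$, and finally divides by $\sin\angle(v,\zeta_A)$ to convert that transversal distance into displacement along a curve tangent to $v$. You instead differentiate the exact parametrization $C=\Phi(s_Q,L_Q)=\exp_{\gamma(s_Q)}(L_Q\vec t(s_Q))$ and identify $\partial_s\Phi$ as a Jacobi field $J$ along the tangent geodesic; the cross product with $\zeta_Q$ isolates the normal component $j^N$, whose initial data $j^N(0)=0$, $(j^N)'(0)=\pm\kappa(Q)$ come from the definition of geodesic curvature, and whose value at $L_Q$ is $\pm\kappa(Q)L_Q\Psi(Q,w_Q,L_Q)$ once one recognizes $r\Psi(Q,w_Q,r)$ in (\ref{depsi}) as the absolute value of the normalized normal Jacobi solution. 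That last identification is exactly right and is the structural content hiding behind the paper's distance estimate (\ref{diagc}). What your approach buys is an exact identity rather than a chain of asymptotic equivalences, plus a cleaner justification that $s_Q$ is differentiable in $C$ (local invertibility of $\Phi$, since $j^N(L_Q)\neq 0$); what the paper's approach buys is minimal machinery, consistent with the low regularity assumed ($C^2$ curve, $C^3$ metric).

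The one loose end is the sign bookkeeping you defer at the end. Since (\ref{dersq}) is a signed identity with a specific orientation convention for $\angle(v,\zeta_Q)$, and since $\epsilon$ in $\vec t=\epsilon\dot\gamma$ and the orientation of the parallel normal $m$ both flip between $Q=A$ and $Q=B$, you do need to check in each case that $J(L_Q)\times\zeta_Q=+\kappa(Q)L_Q\Psi(Q,w_Q,L_Q)$ with the stated sign convention. This is routine but not automatic, and it is the only part of your write-up that is asserted rather than proved.
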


\begin{proof}  Let us prove (\ref{dersq}) for $Q=A$; the proof 
for $B$ is analogous. 
As $A=A(C)$ moves by $\var$ along the curve $\gamma$ 
to the point $A_\var$ with the natural parameter $s_A+\var$, 
 the geodesic $G_A$ tangent to $\gamma$ at $A$ 
 is deformed to the geodesic $G_{A_\var}$ intersecting $G_A$ 
 at a point converging 
 to $A$, as $\var\to0$. Let $\alpha(\var)$ denote their intersection angle at the latter point. One has 
 \begin{equation}\alpha(\var)\simeq\kappa(A)\var.
 \label{alfep}\end{equation}
Both above statements follow from  (\ref{ka}) and definition. One 
also has  
 \begin{equation}\dist(C,G_{A_\var})\simeq 
 \alpha(\var)L_A\Psi(A,w_A,L_A)\simeq\var\kappa(A)  
 L_A\Psi(A,w_A,L_A),\label{diagc}\end{equation} 
 by the definition of the function $\Psi$ and (\ref{alfep}). 
 
 Without loss of generality we consider that $v$ 
is a unit vector. Let us draw a curve $c$ through $C$ 
tangent to $v$ and oriented by $v$. Let $\tau$ denote 
its  natural parameter defined by this orientation. Let 
$C_{\var}$ denote the point of intersection of 
the geodesic $G_{A_\var}$ with $c$, see Fig. 3. Consider $\tau=\tau(C_\var)$ as a function of $\var$: 
$\tau=\tau(\var)$.  
 One has
\begin{figure}[ht]
  \begin{center}
   \epsfig{file=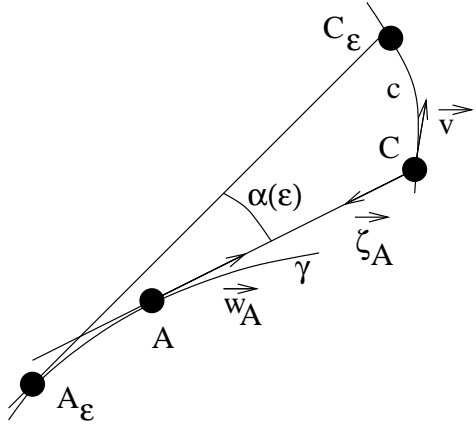}
    \caption{The tangent geodesics to $\gamma$ at points $A$ and $A_\var$ $\var$-close along the curve $\gamma$. The angle between them is $\alpha(\var)\simeq\kappa(A)\var$. One has
 $\dist(C,A_\var C_\var)\simeq \alpha(\var)L_A\Psi(A,w_A,L_A)$, $L_A=|AC|$. The length of the arc $CC_\var$ is asymptotic to $\dist(C,A_\var C_\var)\slash\sin\angle(v,\zeta_A)$.}
    \label{fig:01}
  \end{center}
\end{figure}
 
 $$\frac{ds_Q}{dv}=\left(\frac{d\tau}{d\var}(0)\right)^{-1}, \ \tau(C_{\var})-\tau(C)\simeq\frac{\dist(C,G_{A_\var})}{\sin\angle(v,\zeta_A)}
\simeq\var\frac{d\tau}{d\var}(0)=\var\left(\frac{ds_Q}{dv}\right)^{-1},$$
as $\var\to0$, which follows from  definition. Substituting (\ref{diagc}) to the latter formula yields (\ref{dersq}).
\end{proof}
\subsection{Geodesics passing through the same base point; azimuths of tangent vectors at equidistant points}

\begin{proposition} Let $\Sigma$ be a surface with a $C^3$-smooth Riemannian metric. 
Let $G_t(s),Z_t(s)\subset\Sigma$ be two families of 
geodesics parametrized by the natural length  $s$ and depending on a parameter $t\in[0,1]$. Let they be issued from the same point $A_t=G_t(0)=Z_t(0)$. Let $A_t$ lie in a given 
 compact subset (the same for all $t$) in a local chart $(x,y)$   
 (not necessarily a normal chart).  Set
 $$\phi_t=\az(\dot G_t(0))-\az(\dot Z_t(0)).$$ 
 One has
\begin{equation}\az(\dot G_t(s))-\az(\dot Z_t(s))\simeq \phi_t,  \text{ as } s\to 0, \text{ uniformly in } t\in[0,1].\label{azas}\end{equation}
\end{proposition}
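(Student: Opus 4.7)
The plan is to write down an ODE for $\az(\dot\gamma(s))$ along a geodesic $\gamma$ in the given chart, then compare the two solutions $G_t$ and $Z_t$ by integrating from $s=0$, extracting a uniform error estimate via compactness of $\{A_t\}_{t\in[0,1]}$.

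First I would use the geodesic equation $\ddot x^i+\Gamma^i_{jk}(\gamma)\dot x^j\dot x^k=0$ in the chart $(x,y)$; the Christoffel symbols $\Gamma^i_{jk}$ are $C^1$-smooth since the metric is $C^3$-smooth. Differentiating $\az(\dot\gamma)=\arctan(\dot y/\dot x)$ along a geodesic and substituting the geodesic equation gives an ODE of the form
\begin{equation*}
\frac{d}{ds}\az(\dot\gamma(s))=F(\gamma(s),\dot\gamma(s)),
\end{equation*}
where $F$ is $C^1$-smooth in $(\gamma,\dot\gamma)$, homogeneous of degree $0$ in $\dot\gamma$, and bounded together with its first derivatives on every compact subset of the chart.

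Next, since $G_t(0)=Z_t(0)=A_t$ and the initial velocities $\dot G_t(0)$, $\dot Z_t(0)$ are unit Riemannian vectors separated by Euclidean angle $\phi_t$, their Euclidean vector difference has norm $O(\phi_t)$. Applying Gronwall's inequality to the geodesic system with uniformly $C^1$-bounded Christoffel symbols on a fixed open neighborhood $U$ of the compact set $\{A_t\}_{t\in[0,1]}$ yields, uniformly in $t$ and for small $\sigma\geq 0$,
\begin{equation*}
|G_t(\sigma)-Z_t(\sigma)|=O(\sigma\phi_t),\quad|\dot G_t(\sigma)-\dot Z_t(\sigma)|=O(\phi_t).
\end{equation*}
Combining these with the Lipschitz estimate on $F$ on $U$ gives $F(G_t(\sigma),\dot G_t(\sigma))-F(Z_t(\sigma),\dot Z_t(\sigma))=O(\phi_t)$, uniformly for $\sigma\in[0,\var]$ and $t\in[0,1]$ with some fixed $\var>0$.

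Integrating the azimuth ODE from $0$ to $s\in[0,\var]$ for each of $G_t$ and $Z_t$ and subtracting the two identities then gives
\begin{equation*}
\az(\dot G_t(s))-\az(\dot Z_t(s))-\phi_t=\int_0^s\bigl[F(G_t(\sigma),\dot G_t(\sigma))-F(Z_t(\sigma),\dot Z_t(\sigma))\bigr]\,d\sigma=O(s\phi_t),
\end{equation*}
so $(\az(\dot G_t(s))-\az(\dot Z_t(s)))/\phi_t=1+O(s)\to 1$ uniformly in $t$, which is exactly (\ref{azas}). The one point requiring care is making the Gronwall and Lipschitz constants uniform in $t$; this is handled by fixing one open $U$ large enough to contain all of $G_t([0,\var])$ and $Z_t([0,\var])$ for a uniform $\var>0$, and using the uniform $C^1$-bounds on the Christoffel symbols on $U$ coming from compactness of $\overline{U}$.
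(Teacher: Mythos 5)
Your proof is correct and reaches the same conclusion, but by a somewhat different mechanism than the paper. The paper fixes the initial point, regards the terminal azimuth $\az(v(s))$ as a function of the initial azimuth $\az(v(0))$, and invokes the linear equation in variations: since the variational solution starts from the identity matrix, the derivative of $\az(v(s))$ in $\az(v(0))$ equals $1+u(s)$ with $u$ continuous and $u(0)=0$, and the Lagrange increment theorem then gives (\ref{azas}). You instead compare the two particular solutions directly: you write the scalar ODE $\frac{d}{ds}\az(\dot\gamma(s))=F(\gamma,\dot\gamma)$, control $|G_t-Z_t|$ and $|\dot G_t-\dot Z_t|$ by Gronwall, and integrate the difference of the right-hand sides to obtain the explicit bound $\az(\dot G_t(s))-\az(\dot Z_t(s))-\phi_t=O(s\phi_t)$. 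This avoids the equation in variations altogether and yields a slightly more quantitative error term, with uniformity in $t$ handled more explicitly; the paper's route is shorter once smooth dependence on initial conditions is taken for granted. Two small inaccuracies in your write-up, neither of which affects the argument: $F$ is homogeneous of degree $1$ (not $0$) in $\dot\gamma$, being a cubic form in $\dot\gamma$ divided by a quadratic one --- harmless because you only evaluate it on Riemannian unit vectors, whose Euclidean norms are uniformly bounded above and below; and the claim $|\dot G_t(0)-\dot Z_t(0)|=O(\phi_t)$ deserves the one-line remark that the Euclidean norm of a unit Riemannian vector at $A_t$ depends Lipschitz-continuously on its direction, since the two initial velocities generally have different Euclidean lengths.
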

\begin{proof} 
A geodesic, say, $G(s)$ is a solution of a  
second order vector differential equation with a 
given initial condition: a point $A\in\Sigma$ and the azimuth $\az(v(0))$ of a 
unit vector $v(0)\in T_A\Sigma$. Here we set $v(s)=\dot G(s)$. 
It depends smoothly on the initial condition. The derivative of the solution  in the 
initial conditions is a linear operator ($3\times 3$-matrix) function in $s$ that is a solution of the corresponding linear equation in variations. The  right-hand sides of the equation for geodesics and 
the  corresponding equation in variations are respectively 
$C^2$- and $C^1$-smooth. 
Let us now   
fix the initial point $A$  and consider the  derivative of the azimuth 
$\az(v(s))$ in the initial azimuth $\az(v(0))$ for fixed $s$. If $s=0$, 
then the 
latter derivative equals 1, since the  initial condition in 
the equation in variations is the  identity matrix. Therefore, 
in the general case the derivative of the azimuth $\az(v(s))$ in  $\az(v(0))$ equals 
$1+u(s)$, where $u(s)$ is a  $C^1$-smooth function with 
$u(0)=0$. This together with the above discussion and Lagrange Increment Theorem for the derivative in the initial azimuth implies  (\ref{azas}). 
\end{proof} 
 \subsection{Geodesic-curvilinear triangles   in normal coordinates}
 
 Everywhere below in the present subsection 
 $\Sigma$ is a two-dimensional surface equipped with a 
$C^3$-smooth Riemannian metric $g$, and $O\in\Sigma$.

 \begin{proposition} \label{rtri} Let $A_uB_uC_u$ be a 
family of geodesic right triangles lying in a compact subset in $\Sigma$ with right angle $B_u$. Set 
$$c=c_u=|A_uB_u|, \ b=b_u=|A_uC_u|, \ a=a_u=|B_uC_u|, \ \alpha=\alpha_u=\angle B_uA_uC_u.$$
Let   $b_u, \alpha_u\to0$, as $u\to u_0$. Then 
\begin{equation}b\simeq c, \ b-c\simeq\frac{a^2}{2c}\simeq\frac12c\alpha^2\simeq\frac12a\alpha, \ 
\angle B_uC_uA_u=\frac{\pi}2-\alpha+o(\alpha).\label{abc}\end{equation}
\end{proposition}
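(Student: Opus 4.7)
\textbf{Proof plan for Proposition \ref{rtri}.} The strategy is to show that a small geodesic right triangle is very close to a Euclidean right triangle, by successively placing normal coordinates at each of its three vertices $B_u$, $A_u$, $C_u$, and exploiting the fact that in normal coordinates centered at a point $P$, (i) geodesics through $P$ are Euclidean straight lines, (ii) angles at $P$ in the Riemannian metric equal Euclidean angles, and (iii) for any short curve contained in a ball of Euclidean radius $R$ around $P$, the Riemannian and Euclidean lengths differ by a factor $1 + O(R^2)$, since the metric coefficients satisfy $g_{ij}(x) = \delta_{ij} + O(|x|^2)$.

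First, I would work in normal coordinates centered at $B_u$. Here the geodesics $B_uA_u$ and $B_uC_u$ are Euclidean rays emanating from the origin, meeting at the Euclidean right angle given by the Riemannian right angle at $B_u$; their endpoints satisfy $A_u = (c,0)$ and $C_u = (0,a)$ (up to a rotation). Estimating the Riemannian length of the Euclidean straight segment from $A_u$ to $C_u$ against the Riemannian geodesic between the same points, using the property (iii) with $R = O(b)$, yields the \emph{Pythagorean identity with error}
\[
b^2 = a^2 + c^2 + O(b^4).
\]
Next, I would do the same in normal coordinates centered at $A_u$, where the geodesics $A_uB_u$ and $A_uC_u$ are Euclidean rays meeting at the Euclidean angle $\alpha$, with $B_u$ at Euclidean distance $c$ and $C_u$ at Euclidean distance $b$. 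Comparing Riemannian and Euclidean lengths of the straight Euclidean segment from $B_u$ to $C_u$ against the true Riemannian geodesic yields the \emph{law of cosines with error}
\[
a^2 = b^2 + c^2 - 2bc\cos\alpha + O(b^4).
\]

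Subtracting these two identities eliminates $a^2$ and gives $c^2 - bc\cos\alpha = O(b^4)$, whence $c = b\cos\alpha + O(b^3)$. From this, $b-c = b(1-\cos\alpha) + O(b^3) = \tfrac12 b\alpha^2(1+o(1))$, which immediately yields $b\simeq c$ and $b-c \simeq \tfrac12 c\alpha^2$. Plugging back into the law of cosines gives $a^2 = b^2\sin^2\alpha + O(b^4)$, hence $a \simeq b\alpha \simeq c\alpha$, from which the remaining equivalences $b-c \simeq \tfrac{a^2}{2c} \simeq \tfrac12 a\alpha$ follow by direct algebra.

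Finally, for the angle at $C_u$, I would place normal coordinates at $C_u$: there $A_u$ sits at Euclidean distance $b$ and $B_u$ at Euclidean distance $a$, and the Riemannian angle $\angle B_uC_uA_u$ equals the Euclidean angle between the two rays at the origin. The Euclidean distance from $A_u$ to $B_u$ equals $c$ up to $O(b^3)$, so the Euclidean law of cosines at $C_u$ combined with the Pythagorean identity $b^2-a^2 = c^2 + O(b^4)$ gives $\cos(\angle B_uC_uA_u) = a/b + O(b^2/a)$; together with $a/b = \sin\alpha + o(\alpha)$ this yields $\angle B_uC_uA_u = \pi/2 - \alpha + o(\alpha)$.

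The main technical obstacle is the bookkeeping of error terms to ensure that every $O(b^4)$ remainder is genuinely of lower order than the dominant term in each claimed $\simeq$- or $o(\cdot)$-asymptotic, which requires recording, at each step, how the sides $a,c$ scale with $b$ and $\alpha$; once $c = b\cos\alpha + O(b^3)$ and $a = b\sin\alpha + O(b^3)$ are established, the rest is routine.
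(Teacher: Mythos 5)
Your plan founders on the error bookkeeping that you yourself flag as ``the main technical obstacle'' and then declare routine. The hypothesis of the proposition is only that $b_u,\alpha_u\to0$, with no relation between them, and the paper genuinely uses the statement in regimes where $\alpha$ is much smaller than the sides (and also much larger). The crude comparison of Riemannian and Euclidean lengths in a ball of radius $R=O(b)$ gives only a multiplicative factor $1+O(b^2)$, so your Pythagorean identity carries an \emph{additive} error $O(b^4)$, whence $c-b\cos\alpha=O(b^3)$ and
$$b-c=\tfrac12 b\alpha^2\bigl(1+o(1)\bigr)+O(b^3).$$
The remainder $O(b^3)$ is $o(b\alpha^2)$ only if $b=o(\alpha)$; if, say, $\alpha_u=b_u^2$, the error swamps the claimed main term and none of $b-c\simeq\tfrac12 c\alpha^2\simeq\tfrac{a^2}{2c}\simeq\tfrac12 a\alpha$ follows. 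The angle estimate is worse: your $\cos(\angle B_uC_uA_u)=a/b+O(b^2/a)$ has error $O(b/\alpha)$ once $a\simeq b\alpha$, and $O(b/\alpha)=o(\alpha)$ would require $b=o(\alpha^2)$. To rescue the scheme you would need to show that every curvature correction to the Euclidean trigonometric identities carries a factor of $a^2\simeq b^2\alpha^2$ (e.g.\ via the expansion $d(x,y)^2=|x-y|^2-\tfrac{K}{3}|x\wedge y|^2+\dots$ in normal coordinates), which is a genuinely finer input than the $g_{ij}=\delta_{ij}+O(|x|^2)$ estimate you invoke.

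The paper's proof is built precisely to avoid additive errors in powers of $b$. It takes normal coordinates at $A_u$ and rescales them by $c_u$, so the rescaled metric converges to the Euclidean one and $|A_uB_u|=1$; then $b-c$ is read off as the gap between $C_u$ and the distance circle $S_u$ of radius $|A_uB_u|$ centered at $A_u$, to which the geodesic $B_uC_u$ is tangent at $B_u$. The quadratic-tangency formula (Proposition \ref{kappa=}) gives $|A_uC_u|-|A_uB_u|\simeq\tfrac12|B_uC_u|^2\simeq\tfrac12\alpha^2$ with a purely multiplicative $(1+o(1))$ error, uniformly as the rescaled circles converge to the unit circle, and this survives rescaling back regardless of how $\alpha$ compares with $b$. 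The angle at $C_u$ is then obtained from Gauss--Bonnet: the angle sum deviates from $\pi$ by $O(\mathrm{Area})=O(\alpha|A_uB_u|^2)=o(\alpha)$, again with no spurious $b/\alpha$ factors. You would need to adopt some version of these relative estimates (rescaling to unit hypotenuse, or tracking that all corrections are proportional to $a^2$) for your approach to close.
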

\begin{proof} 
Consider  normal coordinates $(x_u,y_u)$ centered at $A_u$ (depending 
smoothly on the base point $A_u$). The coordinates 
$$(X_u,Y_u):=\left(\frac{x_u}{c_u},\frac{y_u}{c_u}\right)$$ 
are normal coordinates centered at $A_u$ for the Riemannian metric rescaled by division by $c_u$. 
For the rescaled metric one has $|A_uB_u|=1$. In the rescaled normal coordinates 
$(X_u,Y_u)$ the  rescaled metric has trivial 1-jet at 0 
and tends to the Euclidean metric, as $u\to u_0$: its nonlinear part  tends to zero, as $u\to u_0$, uniformly 
on the Euclidean disk of radius 2 in the coordinates $(X_u,Y_u)$. One has obviously 
$|A_uB_u|\simeq|A_uC_u|$ in the rescaled metric, since $\alpha_u\to0$.  Rescaling back, we get the first asymptotic formula 
in (\ref{abc}). 

Let $S_u$ denote the  circle  
of radius $|A_uB_u|$ centered at $A_u$, and let $D_u$ denote its point lying on the 
geodesic $A_uC_u$: $|A_uB_u|=|A_uD_u|$; the arc $B_uD_u$ of the circle $S_u$ is its intersection with the 
geodesic angle $B_uA_uC_u$. In the rescaled coordinates $(X_u,Y_u)$ 
the circle $S_u$ tends  to the Euclidean unit circle. Thus, its geodesic curvature in the rescaled metric tends to 1. 
The geodesic segment $B_uC_u$ is tangent  to $S_u$ at the point $B_u$, and 
$\angle B_uC_uA_u\to\frac{\pi}2$. The two latter statements together with Proposition \ref{kappa=} 
(applied to $O=B$ and $\gamma=S_u$) imply that 
in the rescaled metric one has $|B_uC_u|\simeq\alpha$,
$$|D_uC_u|=|A_uC_u|-|A_uB_u|\simeq\frac{|B_uC_u|^2}2\simeq\frac12\alpha^2\simeq\frac12|B_uC_u|\alpha.$$
Rescaling back to the initial metric, we get the second, third and fourth 
formulas in (\ref{abc}). The fifth 
formula   follows from Gauss--Bonnet Formula, which 
implies  that the sum of angles in the triangle $A_uB_uC_u$ 
differs from $\pi$ by a quantity $O(Area)=O(|A_uB_u||B_uC_u|)=
O(\alpha|A_uB_u|^2)=o(\alpha)$. 
\end{proof}

\begin{proposition} Consider a family of $C^3$-smooth arcs $\gamma_u=A_uB_u$ of curves in $\Sigma$ 
(lying in a compact set) with uniformly bounded geodesic curvature such that $|A_uB_u|\to0$, 
as $u\to0$. Let $\la(A_u,B_u)$ denote their lengths. Let $\alpha_u$ denote 
the angle at $A_u$ between the arc $\gamma_u$ and the geodesic 
segment $A_uB_u$. One has 
\begin{equation}\la(A_u,B_u)=|A_uB_u|+O(|A_uB_u|^3), \ \alpha_u=O(|A_uB_u|).\label{labu}\end{equation}
\end{proposition}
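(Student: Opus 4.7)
The plan is to work in normal coordinates centered at $A_u$. Because radial geodesics through the origin of such a chart become Euclidean straight segments and Riemannian distance from the origin coincides with Euclidean distance, both assertions of the proposition will reduce to their elementary Euclidean analogues, with errors controlled by the fact that the metric differs from the Euclidean one only by $O(|x|^2)$.

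First I would introduce normal coordinates $(x,y)$ centered at $A_u$ (depending smoothly on the base point, hence defined uniformly on a fixed ball around each $A_u$), rotated so that the $x$-axis is tangent to $\gamma_u$ at $A_u$. In these coordinates one has $g_{ij}=\delta_{ij}+O(|x|^2)$, the geodesic segment $A_uB_u$ is the Euclidean segment from $0$ to $B_u$, and $|A_uB_u|=|B_u|_{\mathrm{Euc}}$. Writing $\gamma_u$ as a graph $\{y=f(x)\}$ with $f(0)=f'(0)=0$, the uniformly bounded geodesic curvature of $\gamma_u$ together with the uniformly bounded Christoffel symbols on a fixed ball (from $C^3$-smoothness of the metric on the compact region) yields a uniform bound $|f''(x)|\le C$ on a fixed interval around $0$. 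Hence $f'(x)=O(x)$ and $f(x)=O(x^2)$ uniformly in $u$.

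With this setup the two asymptotics are short computations. The Euclidean angle at the origin between the $x$-axis and the chord to $B_u=(x_0,f(x_0))$ is $\arctan(f(x_0)/x_0)=O(x_0)$; at the origin the metric is Euclidean, so this is also the Riemannian angle $\alpha_u$, and $x_0\simeq|A_uB_u|$ gives $\alpha_u=O(|A_uB_u|)$. For the length, the Euclidean arc length of $\gamma_u$ is $\int_0^{x_0}\sqrt{1+f'(t)^2}\,dt = x_0+O(x_0^3)$, while $ds_g = ds_{\mathrm{Euc}}(1+O(|x|^2))$ uniformly along $\gamma_u$ (which stays within $O(|A_uB_u|)$ of the origin) yields $\la(A_u,B_u)=x_0+O(x_0^3)$. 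Combined with $|A_uB_u|=\sqrt{x_0^2+f(x_0)^2}=x_0+O(x_0^3)$, this gives $\la(A_u,B_u)=|A_uB_u|+O(|A_uB_u|^3)$.

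The only nontrivial point, where I expect the main (though modest) obstacle to lie, is the uniform bound $|f''|\le C$ in normal coordinates, since the chart itself varies with $A_u$. The verification is routine: the Euclidean curvature of $\gamma_u$ in the chart differs from its Riemannian geodesic curvature by terms controlled by the Christoffel symbols and by the ratio of Euclidean to Riemannian speed, and both quantities are uniformly bounded on a fixed ball in normal coordinates centered at any point of the given compact set, thanks to the $C^3$-smoothness of the metric. A small bootstrap (shrinking the interval once so that $|f'|\le 1$) then converts the Euclidean curvature bound into the desired bound on $|f''|$.
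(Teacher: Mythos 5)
Your proposal is correct and follows essentially the same route as the paper: pass to normal coordinates centered at $A_u$, use that the metric there is $\delta_{ij}+O(|x|^2)$ and that geodesics through the origin are Euclidean rays, reduce both claims to their Euclidean versions, and absorb the metric discrepancy along the arc into the $O(|A_uB_u|^3)$ error. The only (immaterial) difference is that you align the $x$-axis with $T_{A_u}\gamma_u$ rather than with the chord $A_uB_u$, and you spell out the uniform bound on $f''$ that the paper leaves implicit in its hypothesis of uniformly bounded geodesic curvature.
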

\begin{proof} The proposition obviously holds  in Euclidean metric. It remains valid in the normal 
coordinates centered at $A_u$ with the geodesic $A_uB_u$ being the $x$-axis. Indeed, the 
length of the arc $\gamma_u$ in the Euclidean metric in the normal chart differs from its Riemannian length by 
a quantity $O(|A_uB_u|^3)$, since the difference of the metrics at a point $P\in\gamma_u$   is 
$O(|PA_u|^2)=O(|A_uB_u|^2)$ and the curvature of the arcs $\gamma_u$ is bounded.
\end{proof} 

\begin{proposition} \label{ptric} Consider a family of curvilinear triangles $T_u:=A_uB_uC_u$ in $\Sigma$ 
where the side $A_uB_u$ is geodesic and the sides $A_uC_u$, $B_uC_u$ are arcs of 
$C^3$-smooth curves  with uniformly bounded geodesic curvature. Let the side $A_uC_u$ be 
tangent to the side $A_uB_u$ at $A_u$. Set
$$\var:=|A_uB_u|, \ \theta:=\frac\pi2-\angle A_uB_uC_u.$$
Let the triangles under question lie in a compact subset in $\Sigma$, 
and let $\var$ and $\theta$ tend to zero, as $u\to0$. Then 
\begin{equation}\la(A_u,C_u)-|A_uB_u|=O(\var^3)+O(\var^2\theta).\label{tricurve}\end{equation}
\end{proposition}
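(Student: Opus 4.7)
\textbf{Proof proposal for Proposition \ref{ptric}.} The plan is to work in normal coordinates $(x,y)$ centered at $A_u$ chosen so that the geodesic side $A_uB_u$ lies along the $x$-axis; such coordinates exist since $A_uB_u$ is geodesic, and by Remark \ref{sigsmooth} they are at least $C^2$-smooth. Since the metric has trivial $1$-jet at $A_u$, it differs from the standard Euclidean metric by $O(r^2)$ on the disk of radius $r$ around $A_u$, and its Christoffel symbols are $O(r)$ there. In particular, for any point at Euclidean distance $O(\var)$ from $A_u$, the Riemannian length of a curve of Euclidean length $O(\var)$ differs from its Euclidean length by $O(\var^3)$, as in the preceding proposition.

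Next I describe the two sides in these coordinates. Since the arc $A_uC_u$ is tangent at $A_u$ to the $x$-axis and has uniformly bounded geodesic curvature, Proposition \ref{kappa=} and formula (\ref{x122}) express it as the graph $y=f(x)$ with $f(x)=O(x^2)$ on the relevant range. The side $B_uC_u$ starts at $B_u=(\var,0)$ with tangent direction making angle $\tfrac{\pi}{2}-\theta$ with the $x$-axis; since the coordinate system is not normal at $B_u$, this angle may be measured either in the Riemannian or in the Euclidean sense, the two differing by $O(\var^2)$. Parametrising $B_uC_u$ by Riemannian arc length $t$ and using that its geodesic curvature is uniformly bounded, its coordinates are
\begin{equation*}
  \bigl(\var + t\sin\theta + O(\var t + t^2),\; t\cos\theta + O(\var t + t^2)\bigr).
\end{equation*}

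The key computation is to locate the intersection $C_u$. Since $C_u$ lies on $y=f(x)$ with $x$-coordinate close to $\var$, its $y$-coordinate is $O(\var^2)$. Equating with the second coordinate above and using $\cos\theta\to 1$, I get $t=O(\var^2)$, whence the $x$-coordinate of $C_u$ equals
\begin{equation*}
   x_C \;=\; \var + t\sin\theta + O(\var t + t^2) \;=\; \var + O(\var^2\theta) + O(\var^3).
\end{equation*}
The Euclidean length of the graph $y=f(x)$ from $0$ to $x_C$ is $\int_0^{x_C}\!\sqrt{1+f'(x)^2}\,dx = x_C + O(x_C^3)$ because $f'(x)=O(x)$; by the Euclidean/Riemannian length comparison above, the Riemannian length $\la(A_u,C_u)$ differs from this by $O(\var^3)$. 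Combining,
\begin{equation*}
   \la(A_u,C_u) \;=\; \var + O(\var^2\theta) + O(\var^3),
\end{equation*}
which is exactly (\ref{tricurve}).

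The main obstacle, in my view, is purely bookkeeping: one must be careful to distinguish Euclidean and Riemannian quantities (angle at $B_u$, arc length on $B_uC_u$, length of the $A_uC_u$ graph), and to verify that in each case the passage between the two introduces only $O(\var^3)$ errors thanks to the normal-coordinate estimates on the metric and Christoffel symbols. Once this is done, the intersection analysis above is essentially a two-line computation.
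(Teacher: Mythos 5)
Your proof is correct, but it takes a genuinely different route from the paper's. The paper argues synthetically: it drops the geodesic perpendicular from $C_u$ to the geodesic $A_uB_u$, calls its foot $D_u$, and writes $\la(A_u,C_u)-|A_uB_u|$ as the telescoping sum $(\la(A_u,C_u)-|A_uC_u|)+(|A_uC_u|-|A_uD_u|)+|B_uD_u|$, estimating the three terms via the arc/chord comparison (\ref{labu}) and the right-triangle asymptotics (\ref{abc}) applied to the triangles $A_uC_uD_u$ and $B_uC_uD_u$. You instead perform one direct computation in the normal chart at $A_u$: represent $A_uC_u$ as a graph $y=f(x)$ with $f=O(x^2)$, parametrize $B_uC_u$ from $(\var,0)$, solve for the $x$-coordinate of the intersection, and integrate the arc length. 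Both arguments rest on the same normal-coordinate inputs (trivial $1$-jet of the metric at $A_u$, uniformly bounded Euclidean curvature of the arcs in the chart), so neither is more general; yours is more self-contained and makes the provenance of the two error terms explicit ($O(\var^2\theta)$ from the tilt of $B_uC_u$, $O(\var^3)$ from curvature and metric corrections), while the paper's recycles lemmas already proved and avoids writing any parametrization. Two minor points: (i) your step ``$t=O(\var^2)$'' really reads $t=O(\var^2+t^2)$ and needs the tacit assumption that $C_u\to A_u$ (so $t\to0$); the paper makes the same tacit assumption when it asserts $|B_uC_u|=O(\var^2)$ ``by construction'', so this is not a gap, but it deserves a sentence; (ii) the $x$-component of your parametrization of $B_uC_u$ should be $\var-t\sin\theta$ rather than $\var+t\sin\theta$ when $C_u$ lies on the $A_u$ side of $B_u$, though the sign is immaterial for the $O(\var^2\theta)$ bound.
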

\begin{proof} One has $|B_uC_u|=O(\var^2)$, which follows from 
construction and the definition of the angle $\theta$. Therefore, $|A_uC_u|\simeq \var$ and the 
distance of the point $C_u$ to the geodesic $A_uB_u$ is $O(\var^2)$.  Let $D_u$ denote the point 
closest to $C_u$ in the latter geodesic: the points $A_u$, $C_u$ and $D_u$ form a right triangle $\Delta_u$ 
with right angle at $D_u$. One has  
\begin{equation}|C_uD_u|=O(\var^2), \ \la(A_u,C_u)-|A_uC_u|=O(\var^3),\label{oe3}\end{equation}
 by definition and (\ref{labu}),   
 \begin{equation}  |A_uC_u|-|A_uD_u|=O(\frac{|C_uD_u|^2}{|A_uB_u|})
=O(\var^3), \label{oe32}\end{equation}
by (\ref{oe3}) and (\ref{abc}) applied to $\Delta_u$. Let us show that 
 \begin{equation}|A_uD_u|-|A_uB_u|=|B_uD_u|=O(\var^2\theta)+O(\var^3).\label{ova3}\end{equation}
In  the  right triangle $\wh\Delta_u$ with vertices $B_u$, $C_u$, $D_u$ 
one has $\angle D_uB_uC_u=\frac\pi2-\theta+O(\var^2)$. Indeed, the latter angle is the sum (difference) of the two following angles at $B_u$: 
the angle $\frac{\pi}2-\theta$ of the triangle $T_u$;  
the angle between the geodesic 
$B_uC_u$ and the curved side $B_uC_u$ in $T_u$, 
which is $O(|B_uC_u|)=O(\var^2)$, by (\ref{labu}). 
This implies the above formula 
for the angle $\angle D_uB_uC_u$, which 
in its turn implies that in the triangle $\wh\Delta_u$ one has  
 $\angle B_uC_uD_u=O(\theta)+O(\var^2)$ (the last formula in (\ref{abc})). 
 The latter formula together with (\ref{oe3}) imply  (\ref{ova3}). 
Adding formulas  (\ref{oe3}), (\ref{oe32}),  (\ref{ova3}) 
yields (\ref{tricurve}).
\end{proof}

\section{Smoothness of string foliation. Proof of Theorem 
\ref{thsm}}

\subsection{Finite smoothness lemmas}

Everywhere below in the present section 
we are dealing with a function  
$f(x,y)$ of two variables 
$(x,y)$: the variable $y$ is scalar, and 
the variable $x$ may be a vector variable. The function 
$f$ is supposed to be defined on the product 
$$Z=\overline U\times V$$ 
of closure of a domain 
$U$ in $x$-variable and  an interval $V=(-\var,\var)\subset\rr$ in 
$y$-variable.

The following two basic smoothness lemmas will be used 
in the proof of smoothness of the line field $\Lambda$. 

\begin{lemma} \label{lsm1} Lef a function $f$ as above 
be $C^k$-smooth on $Z$, $k\geq2$, and let 
\begin{equation} f(x,y)=a(x)y^2(1+o(1)), \text{ as } 
y\to 0, \ \text{ uniformly in } x\in\overline U; \ \ a>0 \text{ on } 
\overline U.
\label{quadas}\end{equation}
Then the function $g(x,y):=\sign(y)\sqrt{f(x,y)}$ is 
$C^{k-1}$-smooth on $Z$. 
\end{lemma}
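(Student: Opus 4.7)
The plan is to prove Lemma~\ref{lsm1} in three steps: a Hadamard-type factorization of $f$, derivation of $g_y \in C^{k-2}$, and a delicate uniform bound that handles pure $x$-derivatives of top order.

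First, the asymptotic (\ref{quadas}) together with $f\in C^k$ ($k\geq 2$) forces $f(x,0)\equiv 0$, $\partial_y f(x,0)\equiv 0$ and $\tfrac{1}{2}\partial_y^2 f(x,0)=a(x)>0$. Two applications of Taylor's theorem with integral remainder in the $y$-direction yield the factorization
\[
  f(x,y)=y^2 h(x,y),\qquad h(x,y)=\int_0^1 (1-t)\,\partial_y^2 f(x,ty)\,dt,
\]
with $h\in C^{k-2}(Z)$ and $h(x,0)=a(x)>0$; after shrinking $V$ we may assume $h>0$ on $Z$. Checking the cases $y>0$, $y<0$, $y=0$ separately yields the identity $g(x,y)=y\sqrt{h(x,y)}$, and since $\sqrt{\,\cdot\,}$ is smooth on $(0,\infty)$ we get $\sqrt h \in C^{k-2}(Z)$ and therefore $g\in C^{k-2}(Z)$ immediately.

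To upgrade to $C^{k-1}(Z)$, I verify continuity of all partial derivatives of order exactly $k-1$. Applying Hadamard's lemma once to $f_y$ (which vanishes at $y=0$) gives $f_y(x,y)=y\,u(x,y)$ with $u(x,y)=\int_0^1\partial_y^2 f(x,ty)\,dt\in C^{k-2}(Z)$; a direct computation valid for $y\neq 0$ and extending continuously to $y=0$ yields
\[
  g_y(x,y)=\frac{u(x,y)}{2\sqrt{h(x,y)}}\in C^{k-2}(Z).
\]
Hence every partial derivative of $g$ of total order $k-1$ that includes at least one $y$-derivative factors through $g_y$ and is continuous on $Z$.

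The remaining and most delicate case concerns pure $x$-derivatives $\partial_x^\beta g$ with $|\beta|=k-1$. Since $g(x,0)\equiv 0$ one has $\partial_x^\beta g(x,0)=0$, and for $y\neq 0$, $\partial_x^\beta g=y\,\partial_x^\beta\sqrt h$; the task is to show $y\,\partial_x^\beta\sqrt{h}(x,y)\to 0$ uniformly in $x\in\overline U$ as $y\to 0$. Expanding $\partial_x^\beta\sqrt h$ by Fa\`a di Bruno, every summand other than the leading one $\partial_x^\beta h/(2\sqrt h)$ is a product of $\partial_x^\gamma h$'s with $|\gamma|\leq k-2$ (hence continuous and locally bounded since $h\in C^{k-2}$) divided by a positive power of $\sqrt h$, so it is locally bounded on $Z$; multiplication by $y$ then kills it. For the leading term, differentiating the identities $f(\cdot,0)\equiv 0$ and $\partial_y f(\cdot,0)\equiv 0$ in $x$ and commuting partial derivatives (legitimate since $f\in C^k$) gives $\partial_x^\beta f(x,0)=0$ and $\partial_y\partial_x^\beta f(x,0)=0$. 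Since $\partial_y\partial_x^\beta f$ is continuous on $Z$ and vanishes on $\{y=0\}$, uniform continuity yields $|\partial_y\partial_x^\beta f(x,t)|\leq \varepsilon(|y|)$ for all $|t|\leq|y|$, where $\varepsilon(r)\to 0$ as $r\to 0$; integrating in $y$ gives $|\partial_x^\beta f(x,y)|\leq |y|\,\varepsilon(|y|)$, hence $|y\,\partial_x^\beta h(x,y)|=|\partial_x^\beta f(x,y)|/|y|\leq \varepsilon(|y|)\to 0$ uniformly in $x$. This is the main obstacle: the factor $y$ in $g=y\sqrt h$ is exactly what absorbs the potential $o(1/y)$-type blow-up of $\partial_x^\beta\sqrt h$ near $\{y=0\}$, and without it Lemma~\ref{lsm1} would fail.
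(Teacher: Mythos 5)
Your proposal is correct, and it organizes the argument differently from the paper. The paper differentiates $\sign(y)\sqrt{f}$ directly and splits into cases according to the number $m$ of $y$-differentiations, controlling each monomial $\sign(y)f^{\frac12-s}\prod_j\partial_x^{n_j}f$ via $f^{\frac12-s}\simeq a^{\frac12-s}(x)y^{1-2s}$ and, for $m\geq2$, a refined finitely-smooth Taylor expansion in $y$ (the $o_l(y^m)$ calculus) applied to a factorization $g=a^{\frac12}(x)y\sqrt{w}$. You instead perform one clean factorization $f=y^2h$ with $h\in C^{k-2}$, $h>0$, via the integral remainder, which gives $g=y\sqrt{h}\in C^{k-2}$ immediately; the last order of smoothness is then recovered from the closed formula $g_y=u/(2\sqrt{h})$ with $u\in C^{k-2}$ (covering all top-order derivatives containing a $\partial_y$) and from the uniform estimate $\partial_x^\beta f(x,y)=o(y)$, which follows from $f(\cdot,0)\equiv\partial_yf(\cdot,0)\equiv0$ and is exactly what tames the single potentially unbounded Fa\`a di Bruno term $y\,\partial_x^\beta h/(2\sqrt{h})$. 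This is the same underlying mechanism as the paper's (the vanishing of $f$ and $f_y$ on $\{y=0\}$ absorbs the blow-up of $f^{\frac12-s}$), but your version avoids the $o_l$ bookkeeping and isolates the one delicate estimate; the paper's heavier notation pays off only because it is reused verbatim in the proof of Lemma \ref{lsm2}. Two minor points you should make explicit: positivity of $h$ on all of $Z$ (and hence the validity of $\sqrt{h}$) uses compactness of $\overline U$ together with $h(x,0)=a(x)>0$, consistent with shrinking $V$; and the reduction of an arbitrary order-$(k-1)$ mixed partial containing a $\partial_y$ to a derivative of $g_y$ requires commuting the $\partial_y$ to the innermost position, which is licensed by the already-established $C^{k-2}$-smoothness of $g$ and the symmetric-derivative theorem — the paper glosses over the same point.
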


\begin{lemma} \label{lsm2} 
Let a function $f(x,y)$ as at the beginning 
of the section be $C^k$-smooth on $Z$ and even in $y$: 
$f(x,y)=f(x,-y)$. Then $g(x,z):=f(x,\sqrt z)$ is 
$C^{[\frac k2]}$-smooth on $Z$, and its restriction to $Z\setminus\{ y=0\}$ is $C^k$-smooth. 
\end{lemma}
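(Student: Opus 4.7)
The plan is to show continuity on $\overline U \times [0,\var^2)$ of every iterated partial $\partial_z^i \partial_x^\alpha g = \partial_z^i [\partial_x^\alpha f(x,\sqrt z)]$ with $|\alpha|+i \le m := [\frac{k}{2}]$; the standard fact that continuity of all iterated partials up to total order $m$ (in a fixed order of differentiation) implies joint $C^m$-smoothness then gives the first assertion. The second assertion, $C^k$-smoothness away from $\{z=0\}$, is immediate because $\sqrt z$ is $C^\infty$ there and $f \in C^k$.

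Fix $(\alpha,i)$ with $|\alpha|+i\le m$, set $F(x,y):=\partial_x^\alpha f(x,y)$ and $k_1:=k-|\alpha|$; note that $F\in C^{k_1}$ is even in $y$ and that $2i\le 2(m-|\alpha|)\le k_1$. Taylor-expand $F$ in $y$ to order $k_1$; evenness forces only even monomials to appear,
$$F(x,y) \;=\; \sum_{l=0}^{[k_1/2]} c_l(x)\, y^{2l} \;+\; R_F(x,y), \qquad c_l(x) \;=\; \frac{\partial_y^{2l}F(x,0)}{(2l)!},$$
where the $c_l$ are continuous on $\overline U$ and $R_F\in C^{k_1}$ is even in $y$. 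Construction kills the even derivatives of $R_F$ at $y=0$ and evenness kills the odd ones, so $\partial_y^s R_F(x,0)=0$ for every $0\le s\le k_1$. After substituting $y=\sqrt z$ the polynomial part becomes $\sum_l c_l(x)\,z^l$, whose $i$-th $z$-derivative is a polynomial in $z$ with continuous coefficients, manifestly continuous on $\overline U\times[0,\var^2)$ with value $i!\,c_i(x)$ at $z=0$.

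The remainder $R_F(x,\sqrt z)$ is the only subtle piece. Fa\`a di Bruno (or an elementary induction on $i$) gives, for $z>0$,
$$\partial_z^i\bigl[R_F(x,\sqrt z)\bigr] \;=\; \sum_{j=1}^{2i} \beta_{i,j}\, z^{(j-2i)/2}\, \partial_y^j R_F(x,\sqrt z)$$
for combinatorial constants $\beta_{i,j}$. The vanishing $\partial_y^s R_F(x,0)=0$ for $s\le k_1$ combined with Taylor's theorem with integral remainder, together with compactness of $\overline U$, yields the uniform estimate $\partial_y^j R_F(x,y) = o(y^{k_1-j})$ as $y\to 0$ for each $1\le j\le 2i\le k_1$. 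Substituting $y=\sqrt z$, each term of the sum becomes $o\bigl(z^{(k_1-2i)/2}\bigr)=o(1)$ because $2i\le k_1$; hence the whole sum tends to $0$ uniformly in $x$ as $z\to 0^+$. Combining with the polynomial part, $\partial_z^i\partial_x^\alpha g$ extends continuously to $\{z=0\}$, and $g\in C^m$ follows.

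The main delicate point is a bookkeeping one: the top Taylor coefficient $c_{[k_1/2]}(x)$ is only continuous (not differentiable) in $x$, so one cannot expand $f$ in $y$ once and afterwards take $x$-derivatives of the expansion---the order of operations matters. Applying $\partial_x^\alpha$ \emph{first} and only then Taylor-expanding in $y$, as above, keeps all needed regularity of the coefficients intact. Once this is observed the remaining ingredients---the chain rule, Taylor with remainder, and the arithmetic inequality $|\alpha|+2i\le k$---are entirely routine.
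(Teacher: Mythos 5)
Your proof is correct and follows essentially the same route as the paper's: Taylor-expand in $y$, use evenness to retain only even powers (so the polynomial part becomes a polynomial in $z$ with well-behaved coefficients, and the remainder is flat in $y$ to the maximal order), and observe that each $\partial_z$ costs a factor $\frac1{2y}\partial_y$, so the remainder's order of vanishing exactly absorbs the singular powers $z^{(j-2i)/2}$ produced by the chain rule. The only organizational difference is that you apply $\partial_x^{\alpha}$ before expanding, so the Taylor coefficients need only be continuous, whereas the paper expands $f$ once to order $2m$ with $C^l$ coefficients and tracks the $x$-derivatives of the remainder via its $o_l(\cdot)$ calculus --- the two devices perform the same bookkeeping.
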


In the proof of the lemmas for simplicity 
without loss of generality 
we consider that the variable $x$ is one-dimensional; 
in higher-dimensional case the proof is the same. 
 We use the following 
definition and a more precise version of the asymptotic 
Taylor formula for finitely-smooth functions. 
\begin{definition} Let $l,m\in\zz_{\geq0}$. 
 We say that 
$$f(x,y)= o_l(y^m), \text{ as } y\to0,$$
if for every $j,s\in\zz_{\geq0}$, $j\leq l$, $s\leq m$ 
 the derivative $\frac{\partial^{j+s}f}{\partial^jx\partial^sy}$ 
 exists and is continuous on $\overline U\times V$ and 
  one has 
\begin{equation} 
\frac{\partial^{j+s}f}{\partial^jx\partial^sy}(x,y)=o(y^{m-s}), 
\text{ as } y\to 0, \text{ uniformly in } x\in\overline U.
\label{defofol}\end{equation}
\end{definition}

\begin{proposition} Let $f(x,y)$ be as at the beginning 
of the section, and let $f$ be $C^k$-smooth on $Z$. 
Then for every $l,m\in\zz_{\geq0}$ with  
$l+m\leq k$ one has 
\begin{equation}f(x,y)=f(x,0)+\sum_{j=1}^ma_j(x)y^j+R_{m}(x,y), \ 
a_j(x)=\frac1{j!}\frac{\partial ^j f}{\partial y^j}(x,0)\in C^{l}(\overline U), \label{asform}\end{equation}
$$R_{m}(x,y)=o_l(y^m), \text{ as } y\to 0, \text{ uniformly in } x\in\overline U.$$
\end{proposition}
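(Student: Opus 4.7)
\medskip
\noindent\textbf{Proof proposal.} The plan is to reduce the statement to the classical one-variable Taylor formula with integral remainder applied in the scalar variable $y$, and to extract all mixed partial-derivative bounds by differentiating that integral representation.

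First I would set $a_j(x):=\frac1{j!}\frac{\partial^j f}{\partial y^j}(x,0)$ and verify that $a_j\in C^l(\overline U)$. Since $f\in C^k(Z)$ and $j+l\leq m+l\leq k$, the mixed partial $\frac{\partial^{i+j}f}{\partial x^i\partial y^j}$ exists and is continuous on $Z$ for every $i\leq l$; restricting to $y=0$ gives continuous $x$-derivatives of $a_j$ up to order $l$.

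Next, the scalar Taylor formula in $y$ with $x$ held fixed yields
$$f(x,y)=f(x,0)+\sum_{j=1}^m a_j(x)y^j+R_m(x,y),\qquad R_m(x,y)=\frac1{(m-1)!}\int_0^y(y-t)^{m-1}\bigl[\partial_y^m f(x,t)-\partial_y^m f(x,0)\bigr]\,dt.$$
To verify $R_m=o_l(y^m)$ I would estimate $\frac{\partial^{i+s}R_m}{\partial x^i\partial y^s}(x,y)$ for every $i\leq l$ and $s\leq m$. Differentiating $i$ times in $x$ under the integral sign is legal because $\partial_x^i\partial_y^m f$ is continuous on the compact set $\overline U\times[-\varepsilon,\varepsilon]$ (here $i+m\leq l+m\leq k$). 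Differentiating $s$ additional times in $y$ produces exactly the Taylor remainder of $\partial_x^i\partial_y^s f$ of order $m-s$, whose integral form is bounded in absolute value by
$$\frac{|y|^{m-s}}{(m-s)!}\,\sup_{|t|\leq|y|}\bigl|\partial_x^i\partial_y^m f(x,t)-\partial_x^i\partial_y^m f(x,0)\bigr|.$$
Uniform continuity of $\partial_x^i\partial_y^m f$ on the compact set $\overline U\times[-\varepsilon,\varepsilon]$ forces this supremum to vanish as $y\to0$, uniformly in $x\in\overline U$, which is precisely the $o(y^{m-s})$ estimate required in the definition of $o_l(y^m)$.

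The only delicate issue is the bookkeeping of derivative counts: every partial derivative I invoke must have its total order bounded by $k$, and the worst case $i=l$, $s=m$ saturates the hypothesis $l+m\leq k$ exactly. Beyond this accounting I do not anticipate any substantive obstacle; the argument is essentially a uniform-continuity refinement of the classical Taylor theorem with integral remainder, and the vector nature of $x$ requires no change since all $x$-derivatives are handled by treating each multi-index $i$ of order at most $l$ on the same footing.
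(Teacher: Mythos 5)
Your proof is correct and follows essentially the same route as the paper: the paper writes the remainder as an iterated integral over the simplex $\{0\leq y_m\leq\dots\leq y_1\leq y\}$ of $\partial_y^m f(x,\cdot)-\partial_y^m f(x,0)$, which is exactly your single-integral kernel form $\frac1{(m-1)!}\int_0^y(y-t)^{m-1}[\cdot]\,dt$, and then asserts that this expression is $o_l(y^m)$. Your differentiation-under-the-integral and uniform-continuity estimates simply supply the details the paper leaves to the reader.
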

\begin{proof} The first formula in (\ref{asform}) holds with 
\begin{equation} R_{m}(x,y)=\int_{0\leq y_m\leq\dots\leq y_1\leq y}(\frac{\partial^m}{\partial y^m}f(y_{m})-\frac{\partial^m}{\partial y^m}f(0))dy_mdy_{m-1}\dots
dy_1,\label{rmex}\end{equation}
by the classical asymptotic Taylor formula 
with error term in integral presentation. For example, 
$$f(x,y)-f(x,0)=\int_0^yf'_y(x,\eta)d\eta=yf'_y(x,0)+
\int_0^y(f'_y(x,\eta)-f'_y(x,0))d\eta,$$
etc. Now it remains to notice that the expression 
(\ref{rmex}) is $o_l(y^m)$, whenever $f\in C^k$ and 
$k\geq l+m$. The proposition is proved.
\end{proof}

\begin{proposition} One has 
\begin{equation} y^{-s}o_l(y^m)=o_l(y^{m-s}) \text{ for 
every } \ m,s\in\zz_{\geq0}, \ m\geq s.\label{prol1}
\end{equation}
\end{proposition}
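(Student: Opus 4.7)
The plan is to reduce the proposition to a single integral representation of $g(x,y):=y^{-s}f(x,y)$ that exhibits $g$ as an averaged value of the $s$-th partial of $f$ in $y$. The first step would be to apply the hypothesis $f=o_l(y^m)$ with indices $(0,k)$, $0\leq k\leq s-1$ (which is legitimate because $k\leq s-1\leq m-1$), to deduce that $\partial^k f/\partial y^k(x,0)=0$ for each such $k$. Taylor's theorem in the $y$-variable with integral remainder then gives
\[
f(x,y)=\frac{1}{(s-1)!}\int_0^y (y-t)^{s-1}\frac{\partial^s f}{\partial y^s}(x,t)\,dt,
\]
and the change of variable $t=yu$ transforms this into
\[
g(x,y)=\frac{1}{(s-1)!}\int_0^1 (1-u)^{s-1}\frac{\partial^s f}{\partial y^s}(x,yu)\,du,
\]
a representation valid for $y\neq 0$ and extending continuously to $y=0$. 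The degenerate case $s=0$ is trivial; throughout one assumes $s\geq 1$.

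Next I would differentiate under the integral sign. For every pair $j\leq l$ and $s'\leq m-s$, the hypothesis on $f$ guarantees existence and continuity of the mixed partial $\partial^{j+s+s'}f/\partial x^j\partial y^{s+s'}$ on $\overline{U}\times V$ (since $j\leq l$ and $s+s'\leq m$), which legitimises the Leibniz computation
\[
\frac{\partial^{j+s'}g}{\partial x^j\partial y^{s'}}(x,y)=\frac{1}{(s-1)!}\int_0^1 (1-u)^{s-1}u^{s'}\frac{\partial^{j+s+s'}f}{\partial x^j\partial y^{s+s'}}(x,yu)\,du.
\]

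The last step would apply the asymptotic part of the hypothesis $f=o_l(y^m)$ with indices $(j,s+s')$ to estimate the integrand. Writing $\partial^{j+s+s'}f/\partial x^j\partial y^{s+s'}(x,z)=z^{m-s-s'}\varepsilon(x,z)$ with $\varepsilon(x,z)\to 0$ as $z\to 0$ uniformly in $x\in\overline{U}$, and noting that $|yu|\leq|y|$ for $u\in[0,1]$, one obtains an integrand of size $o(y^{m-s-s'})$ uniform in $u$ and $x$; integration against the bounded weight $(1-u)^{s-1}u^{s'}$ then yields
\[
\frac{\partial^{j+s'}g}{\partial x^j\partial y^{s'}}(x,y)=o(y^{m-s-s'})\quad\text{uniformly in }x\in\overline{U},
\]
which is exactly the requirement for $g=o_l(y^{m-s})$. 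The only mildly delicate point — and the one I would treat most carefully — is verifying this uniformity after the substitution $z=yu$, i.e.\ that the auxiliary factor $\varepsilon(x,yu)$ is uniformly small; this is a direct consequence of the uniform convergence hypothesis applied to the shrinking window $|z|\leq|y|$.
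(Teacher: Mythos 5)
Your argument is correct. The paper itself gives no proof of this proposition beyond the single sentence ``follows from definition,'' so there is nothing to match step by step; what the author presumably has in mind is the bare Leibniz expansion
$\frac{\partial^{s'}}{\partial y^{s'}}\bigl(y^{-s}f\bigr)=\sum_i c_i\,y^{-s-i}\,\frac{\partial^{s'-i}f}{\partial y^{s'-i}}$,
each term of which is $y^{-s-i}\,o(y^{m-s'+i})=o(y^{m-s-s'})$ for $y\neq0$. Your route via the Taylor integral remainder and the substitution $t=yu$ is a genuinely better way to organize the same estimate: it buys, in one stroke, the part of the claim that the Leibniz computation leaves dangling, namely that the derivatives $\frac{\partial^{j+s'}}{\partial x^j\partial y^{s'}}\bigl(y^{-s}f\bigr)$ actually exist and are continuous across $\{y=0\}$ (each Leibniz summand is individually singular there, so only their sum is controlled, and one would still need an argument at $y=0$). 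Since the definition of $o_l$ explicitly demands existence and continuity of these derivatives on all of $\overline U\times V$, and Lemma \ref{lsm1} uses the proposition precisely to assert continuity of derivatives at $y=0$, your extra care is not pedantry but fills a real (if routine) gap in the paper's one-line justification. All the individual steps check out: the vanishing of $\partial^k f/\partial y^k(x,0)$ for $k\leq s-1$ follows from the $(0,k)$ case of the hypothesis since $m-k\geq1$; differentiation under the integral is legitimate because the needed mixed partials of $f$ of order $(j,s+s')$ with $j\leq l$, $s+s'\leq m$ are continuous; and the uniformity of $\varepsilon(x,yu)\to0$ over $u\in[0,1]$ and $x\in\overline U$ is exactly the uniform-in-$x$ clause of the hypothesis applied on the window $|z|\leq|y|$.
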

The proposition follows from definition.

\begin{proof} {\bf of Lemma \ref{lsm1}.} The function 
$g(x,y):=\sign(y)\sqrt{f(x,y)}$ is well-defined, by (\ref{quadas}). 
It is obviously $C^k$-smooth outside the hyperplane 
$\{ y=0\}$. Fix arbitrary 
$l,m\in\zz_{\geq0}$ such that $l+m\leq k-1$. 
Let us prove continuity of the 
derivative $\frac{\partial^{l+m}g}{\partial x^l\partial y^m}$ 
on $Z$. 

Case $m=0$; then $k\geq l+1$. The above derivative is a linear combination of 
expressions 
\begin{equation}
\sign(y) f^{\frac12-s}(x,y)\prod_{j=1}^s\frac{\partial^{n_j}f(x,y)}{\partial x^{n_j}}, \ s\in\nn, \ n_j\geq1,\  \sum_{j=1}^sn_j=l.
\label{f12sn}\end{equation}
The  partial derivatives in (\ref{f12sn}) are $C^1$-smooth, since $f$ is $C^k$-smooth and 
$n_j\leq l\leq k-1$. One has 
 \begin{equation}\sign(y)f^{\frac12-s}(x,y)\simeq a^{\frac12-s}(x)y^{1-2s},\label{1-2s}\end{equation}
 by definition. If $s=1$, then $y^{1-2s}=y^{-1}$, the expression 
 (\ref{f12sn}) contains only one  derivative, and this derivative 
 is asymptotic to $y$ times a continuous function in $x$, as $y\to0$, by smoothness and since 
 $f(x,0)\equiv0$. Therefore,  the expression 
 (\ref{f12sn}) is continuous. If $s\geq2$, then $n_j\leq l-1\leq k-2$. Hence, each derivative in (\ref{f12sn}) 
 is $C^2$-smooth, has vanishing first derivative in $y$ at $y=0$ 
  and is asymptotic to $y^2$ times a continuous function in $x$, and (\ref{f12sn}) is 
 again continuous, by (\ref{1-2s}). 
 
 Case $m=1$ is treated analogously with the following change: one of the derivatives in (\ref{f12sn}) 
 will contain one differentiation in $y$ and will be asymptotic to $y$ times a continuous function in $x$. 
 
Case $m\geq2$. Then $k\geq m+l+1\geq l+3$. One has 
\begin{equation}g(x,y)=a^{\frac12}(x)y\sqrt{w(x,y)},\label{3.8}\end{equation}
\begin{equation}w(x,y)=
1+\sum_{j=3}^{m+1}a^{-\frac12}(x)a_j(x)y^{j-2}+\frac{o_l(y^{m+1})}{y^2}, \ \ a, a_j\in C^l(\overline U),\label{a12x}\end{equation} 
 by (\ref{asform}) applied to the function $f(x,y)$ and $m$ replaced by $m+1$.
The derivative $\frac{\partial^{l+m-1}g}{\partial x^{l}\partial y^{m-1}}$ exists and continuous for small $y$, 
by (\ref{a12x}) and since $\frac{o_l(y^{m+1})}{y^2}=o_l(y^{m-1})$, see (\ref{prol1}). Now it remains to 
prove the same statement for the derivative $h:=\frac{\partial^{l+m}g}{\partial x^{l}\partial y^{m}}$. 
Those terms in its expression that include the derivatives of the function $\frac{o_l(y^{m+1})}{y^2}=o_l(y^{m-1})$ 
with differentiation in $y$ of orders less than $m$ 
  are  well-defined and continuous, as above. Each  term in $h$ that contains a derivative 
  $\frac{\partial^{j+m}}{\partial x^j\partial y^m}(\frac{o_l(y^{m+1})}{y^2})$ contains only one such derivative, and it comes with the factor $y$ from (\ref{3.8}). 
 On the other hand, the latter derivative is $\frac{o_{l-j}(y)}{y^2}=o(\frac1y)$, by (\ref{prol1}). Thus, its 
 product with the above factor $y$ is a continuous function, as are the other factors in 
 the term under question. Continuity of the derivative $h$ is proved. Lemma \ref{lsm1} is proved.
 \end{proof}

 \begin{proof} {\bf of Lemma \ref{lsm2}.} Fix $l,m\in\zz_{\geq0}$ such that $l+m\leq [\frac{k}2]$. 
 Then $l+2m\leq k$, and one has 
 $$f(x,y)=\sum_{j=0}^ma_j(x)y^{2j}+o_l(y^{2m}),$$
 where the functions $a_j(x)$ are $C^l$-smooth, 
 by (\ref{asform}) and evenness. Set $z=y^2$. The derivative 
 $\frac{\partial^{l+m}}{\partial x^l\partial z^m}$ 
 of the sum in the latter right-hand side is obviously continuous, since the sum is a polynomial in $z$ with 
 coefficients  being $C^l$-smooth functions in $x$. 
 Let us prove continuity of the derivative of the remainder 
 $o_l(y^{2m})$. One has 
 $$\frac{\partial}{\partial z}=\frac1{2y}\frac{\partial}{\partial y}.$$
 Therefore, the above $(l+m)$-th partial derivative of 
 the remainder $o_l(y^{2m})$ is $o(y^{\kappa})$, 
 $\kappa=2m-2m=0$, see (\ref{prol1}). Thus, it is $o(1)$. 
 This proves continuity and Lemma \ref{lsm2}.
 \end{proof} 
 
 \subsection{Proof of Theorem \ref{thsm}}
 
 The fact that the exterior bisector line field $\La$ is tangent to the string construction curves 
 is well-known and proved as follows. Consider the value $L(A,B)=|AC|+|CB|-\la(A,B)$ as a function of 
 $C$: here $A=A(C)$ and $B=B(C)$ are the same, as in Proposition \ref{cabex}. Its derivative along 
  the string construction curve $\Gamma_p$ through $C$ 
 should be zero. Let $v\in T_C\Sigma$ be a unit vector. 
 Let $\alpha$ and $\beta$ be respectively the oriented angles between the vector $v$ and 
 the  vectors $\zeta_A$ and $\zeta_B$ in $T_C\Sigma$ directing the geodesics $G_A$, $G_B$ from $C$ 
 to $A$ and $B$ respectively. The derivative of the above function $L(A(C),B(C))$ along the vector $v$ 
 is equal to $-(\cos\alpha+\cos\beta)$. Therefore, it vanishes if and only if the line generated by $v$ is the 
 exterior bisector $\La(C)$ of the angle $\angle ACB$. Therefore, the level sets of the function $L(A(C),B(C))$, 
 i.e., the string construction curves  are integral curves of the line field $\La$.
   
 It suffices to prove only statement 1) of Theorem \ref{thsm}: 
 $C^{k-1}$-smoothness on $\mcu$ and $C^{r(k)}$-smoothness 
 on $\overline\mcu$ of the line field $\La$. Statement 2) on $C^{r(k)+1}$-regularity of its integral curves (the string construction curves) and continuity 
 then follows from  the  next general fact: {\it for every $C^r$-smooth 
 line field  the $(r+1)$-jets  of its integral 
 curves at base points $A$ are expressed 
 analytically in terms of $r$-jets of the line field, and 
 hence, depend continuously on $A$.}
 
Fix a $C^k$-smooth coordinate system 
 $(s,z)$ on $\Sigma$ centered at the base point $O$ 
 of the curve $\gamma$ such that $\gamma$ is the 
 $s$-axis, 
 $s|_{\gamma}$ is the natural length parameter of
  the curve $\gamma$ and 
$\mcu=\{ z>0\}$. For every $\sigma\in\rr$ small 
  enough let $G(\sigma)$ denote the geodesic 
  tangent to $\gamma$ at the point with length parameter 
  value $\sigma$. For every $\sigma,s\in\rr$ small enough 
  let $A(\sigma,s)$ denote the point of intersection 
  of the geodesic $G(\sigma)$ with the line parallel to 
  the $z$-axis and having abscissa $s$.  The mapping 
  $(\sigma,s)\mapsto A(\sigma,s)$ is  
  $C^{k-1}$-smooth, since so is the family of geodesics 
  $G(\sigma)$ (by $C^k$-smoothness of the metric) and by transversality. Set 
  $$z(\sigma,s):=z(A(\sigma,s)).$$
  
  \begin{proposition} \label{propk-2}
  The function 
  $$y(\sigma,s):=\sign(\sigma-s)\sqrt{z(\sigma,s)}$$ 
  is $C^{k-2}$-smooth on a neighborhood of zero in 
  $\rr^2$ and $C^{k-1}$-smooth outside the diagonal 
  $\{\sigma=s\}$. The mapping 
  \begin{equation}F:(\sigma,s)\mapsto(s,y(\sigma,s))
  \label{deff}\end{equation}
  is a  $C^{k-2}$-smooth diffeomorphism of a neighborhood of the origin onto a neighborhood of the origin, and it 
  is $C^{k-1}$-smooth outside the diagonal. It sends the diagonal to the axis $\{y=0\}$.
  \end{proposition}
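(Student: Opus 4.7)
The strategy is to isolate the exact quadratic vanishing of $z(\sigma,s)$ on the diagonal $\{\sigma=s\}$ and then extract the signed square root via Lemma \ref{lsm1}, which costs exactly one derivative.

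First I would verify the baseline regularity of $z$. The Christoffel symbols of the $C^k$ metric are $C^{k-1}$; the initial data for $G(\sigma)$ (tangency point on $\gamma$ and unit tangent vector) are $C^{k-1}$ functions of $\sigma$. Smooth dependence of ODE solutions on initial conditions then gives a $C^{k-1}$ family $\{G(\sigma)\}$, and transversality of each $G(\sigma)$ to the vertical lines $\{s=\text{const}\}$ at the tangency point (where the tangent is horizontal, hence transverse to the vertical) makes $A(\sigma,s)$ and therefore $z(\sigma,s)$ a $C^{k-1}$ function on a neighborhood of the origin.

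Next I would pin down the quadratic asymptotic. Tangency gives $z(\sigma,\sigma)\equiv 0$ and $\partial_s z(\sigma,\sigma)\equiv 0$. Setting $u=s-\sigma$ and $h(\sigma,u):=z(\sigma,\sigma+u)$, Taylor's theorem yields
\[
h(\sigma,u)=c(\sigma)\,u^2\,(1+o(1))\quad\text{as }u\to 0,\text{ uniformly in }\sigma,
\]
with $c(\sigma)=\tfrac12\,\partial_u^2 h(\sigma,0)$ continuous. To prove $c(\sigma)>0$, I would pass to normal coordinates $(\tilde x,\tilde y)$ centered at $(\sigma,0)$ with $\tilde x$-axis along $G(\sigma)$. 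By Proposition \ref{kappa=}, in those coordinates $\gamma$ is the graph $\tilde y=\tfrac{\kappa(\sigma)}2\tilde x^2+o(\tilde x^2)$ while $G(\sigma)$ is the $\tilde x$-axis, so $\gamma$ separates from $G(\sigma)$ quadratically with positive leading coefficient $\kappa(\sigma)>0$. Pulling back through the $C^{k-1}$ change of coordinates (whose differential at the base point is nondegenerate and preserves the half-plane $\mcu=\{z>0\}$ into which the tangent geodesic enters) transports this positivity to the $(s,z)$-chart, realizing $c(\sigma)$ as a positive continuous multiple of $\kappa(\sigma)$.

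With the hypotheses of Lemma \ref{lsm1} verified for $f=h$ in the role of the lemma's $f$ and $C^{k-1}$ playing the role of the lemma's $C^k$, I conclude $\sign(u)\sqrt{h(\sigma,u)}$ is $C^{k-2}$ on a neighborhood of $\{u=0\}$. Since $\sign(\sigma-s)=-\sign(u)$, this gives $y(\sigma,s)=-\sign(u)\sqrt{h(\sigma,u)}\in C^{k-2}$ near the origin. Off the diagonal $z>0$ and $\sqrt{z}\in C^{k-1}$ while $\sign(\sigma-s)$ is locally constant, so $y\in C^{k-1}$ there.

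For the diffeomorphism statement I would use the factorization $y(\sigma,s)=(\sigma-s)\sqrt{A(\sigma,s)}$ with $A$ continuous and $A(0,0)=c(0)>0$, which drops out of the proof of Lemma \ref{lsm1}. Then at the origin
\[
\det dF(0,0)=\det\begin{pmatrix} 0 & 1\\ \partial_\sigma y(0,0) & \partial_s y(0,0)\end{pmatrix}=-\partial_\sigma y(0,0)=-\sqrt{c(0)}\neq 0,
\]
so the inverse function theorem yields a local $C^{k-2}$ diffeomorphism, upgraded to $C^{k-1}$ off the diagonal. That $F$ sends $\{\sigma=s\}$ to $\{y=0\}$ is immediate from $z(\sigma,\sigma)=0$. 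The main obstacle is the quadratic nondegeneracy $c(\sigma)>0$ in the non-normal chart $(s,z)$: this requires the normal-coordinate comparison of Proposition \ref{kappa=} and care with orientation, since $\mcu$ is the side from which two tangent geodesics to $\gamma$ emanate. Once this is established, Lemma \ref{lsm1} and a one-line Jacobian computation finish the argument.
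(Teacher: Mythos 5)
Your proposal is correct and follows essentially the same route as the paper: establish $C^{k-1}$-smoothness of $z(\sigma,s)$ from smooth dependence of geodesics on initial conditions, prove the uniform quadratic asymptotic $z(\sigma,s)=c(\cdot)(s-\sigma)^2(1+o(1))$ with positive coefficient proportional to $\kappa$ via normal coordinates (the paper invokes formula (\ref{x22}) where you invoke the equivalent graph form of Proposition \ref{kappa=}), and then apply Lemma \ref{lsm1}. Your explicit Jacobian computation for the diffeomorphism claim is a detail the paper leaves implicit, but it is the intended argument.
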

  
  \begin{proof} For every point $Q\in\Sigma$ lying in a smooth chart $(s,z)$ let   
  $u(Q)$ denote the orthogonal projection of the vector $\frac{\partial}{\partial z}\in T_Q\Sigma$ 
  to  the line   $(\rr\frac{\partial}{\partial s})^\perp$. Set 
 $\mu(Q):=||w||^{-1}$. Recall that  $\kappa(s)>0$. One has 
  \begin{equation}z(\sigma,s)=\frac12\mu(s,0)\kappa(s)(s-\sigma)^2+o((s-\sigma)^2), \text{ as } \sigma\to s, 
  \label{kssig}\end{equation} 
  uniformly in small $s$. This follows from formula (\ref{x22}) applied to normal 
  coordinates centered at $Q=(s,0)$ with one coordinate axis being tangent to 
  $\dot\gamma(Q)=\frac{\partial}{\partial s}$. This together with 
  $C^{k-1}$-smoothness of the function $z$ and 
  Lemma \ref{lsm1} implies the statements of the 
  proposition. 
  \end{proof}
  
Let us now return to the proof of statement 1) of 
Theorem \ref{thsm}. 
Consider the mapping inverse to the mapping $F$ from 
(\ref{deff}):
$$F^{-1}:(s,y)\mapsto(\sigma,s).$$
The function $\sigma=\sigma(s,y)$ is $C^{k-2}$-smooth, by 
Proposition \ref{propk-2}, and it is $C^{k-1}$-smooth outside 
the axis $\{ y=0\}$. 
Recall that the geodesic $G(\sigma(s,y))$ passes through 
the point $A=(s,z)=(s,y^2)\in\mcu$. For every $s$ and $y$ 
let $v=v(s,y)\in T_{A}\Sigma$ denote the unit tangent 
vector of the geodesic 
$G(\sigma(s,y))$  that 
 orients it in the same way, as the orienting 
 tangent vector of the curve $\gamma$ at $\sigma(s,y)$. 
 The vector function $v(s,y)$ is $C^{k-2}$-smooth 
 in $(s,y)$. For a given point $A=(s,z)$, set 
 $y:=\sqrt z$, the unit vectors $v(s,y)$ and $v(s,-y)$ both 
 lie in $T_A\Sigma$. They are tangent to the two 
 geodesics through $A$ that are tangent to $\gamma$, 
 by construction.   The sum $w(s,y)=v(s,y)+v(s,-y)$ 
 is a vector 
 generating the line $\Lambda(A)$ of the line field 
 $\Lambda$, which follows from definition. The vector function $w(s,y)$ is even 
 in $y$, 
 $C^{k-2}$-smooth in both variables, and $|w|=
 2|v|=2$, whenever $y=0$. Thus, $w$ is 
 $C^{[\frac k2]-1}$-smooth in $(s,z)$ and $C^{k-1}$-smooth outside the curve $\gamma=\{ z=0\}$, by 
 Proposition \ref{propk-2} and Lemma \ref{lsm2}. Finally,  $w$ induces a vector field generating $\La$ that is 
 $C^{[\frac k2]-1}$-smooth on $\overline \mcu$ and 
 $C^{k-1}$-smooth on $\mcu$. Theorem \ref{thsm} is proved.

\section{Billiards on surfaces of constant curvature. Proofs of Proposition 
\ref{p-bir} and Theorem \ref{t-bir}}

In Subsection 4.1 we prove Proposition \ref{p-bir}. The proof of Theorem \ref{t-bir}, 
which  follows 
its  proof given in \cite[section  7]{poritsky} in the Euclidean case,  
takes the rest of the section. In Subsection 4.2 we prove the following coboundary property of curves with string Poritsky property:  
for every $A,B\in\gamma$, set $C=C_{AB}$, the ratio of lengths of the geodesic segments 
$AC$ and $BC$ equals the ratio of values at $A$ and $B$ of some function 
on $\gamma$. In Subsection 4.3 we deduce Theorem \ref{t-bir} from the 
coboundary property by 
arguments of  elementary planimetry  using Ceva's Theorem. 

\subsection{Proof of Proposition \ref{p-bir}}

We re-state and prove Proposition \ref{p-bir} in a more general Riemannian context. To do this, let us recall the 
following definition.

\begin{definition} \label{devol} \cite[p. 345]{amiran} (implicitly considered in \cite{poritsky}) 
Let $\Sigma$ be a surface equipped with a Riemannian metric, 
 $\gamma\subset\Sigma$ be a (germ of) curve with positive geodesic curvature. Let $\Gamma_p$ denote 
 the family of curves obtained from it by string construction. We say that $\gamma$ has 
 {\it evolution} (or {\it Graves}) {\it property,} if for every $p_1<p_2$ the curve $\Gamma_{p_1}$ is a caustic for the curve $\Gamma_{p_2}$. 
 \end{definition}
 
 \begin{example} \label{excon} It is well-known that each conic on a surface $\Sigma$ of constant curvature has evolution property, and the corresponding curves $\Gamma_p$ given by string construction are confocal conics. In the Euclidean case 
 this follows from the classical fact saying that the caustics of a billiard in a conic are confocal conics (Proclus--Poncelet Theorem). Analogous statements hold in non-zero constant curvature and in higher dimensions, 
 see \cite[theorem 3]{veselov2}.  
\end{example}
\begin{proposition} \label{pev} Let $\Sigma$ be a 
surface equipped with a $C^4$-smooth Riemannian metric. Let $\gamma\subset\Sigma$ be a $C^4$-smooth 
germ of curve with positive geodesic curvature. Let $\gamma$ have evolution property. Then it has string Poritsky property\footnote{Very recently it was shown in a joint paper of the author with Sergei Tabachnikov and Ivan Izmestiev \cite{four} that for a $C^{\infty}$-smooth curve $\gamma$ 
 {\it evolution property is} {\bf equivalent} {\it to Poritsky property}. 
And that it is also  equivalent to the  
statement that the foliation by the curves $\Gamma_p$ and its orthogonal foliation 
form a {\it Liouville net} on the concave side $\mcu$  from 
the curve $\gamma$.}. For every $p,q>0$ the reflections from the corresponding curves 
$\Gamma_p$ and $\Gamma_q$ commute as transformations acting on the space of oriented geodesics intersecting both of them, disjoint from the curve $\gamma$  and lying on the concave side $\mcu$ 
 from the 
curve $\gamma$. 
\end{proposition}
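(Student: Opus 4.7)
The strategy, adapted from Poritsky's original argument, exploits the one-parameter family $\{\Gamma_s\}_{s \geq 0}$ of nested caustics furnished by the evolution property, which provides a complete system of invariant curves for each reflection $R_p$. The two conclusions of the proposition are intertwined: commutativity of $R_p, R_q$, restricted to the distinguished layer of geodesics tangent to $\gamma$, becomes commutativity of the induced string diffeomorphisms $\mct_p, \mct_q: \gamma \to \gamma$, and the Poritsky parameter is extracted as a common linearizing coordinate for this commuting family.

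The first step is a semigroup property: the $q$-string construction applied to $\Gamma_p$ yields precisely $\Gamma_{p+q}$. By the evolution property $\Gamma_p$ is a caustic of $\Gamma_{p+q}$, so the two tangent geodesic segments from any $C \in \Gamma_{p+q}$ to $\Gamma_p$, minus the arc they subtend on $\Gamma_p$, form a constant along $\Gamma_{p+q}$, identifying $\Gamma_{p+q}$ as the $q'$-string of $\Gamma_p$ for some $q'$; a length bookkeeping in the string envelope emanating from $\gamma$ identifies $q' = q$. Next, for commutativity of $R_p, R_q$, fix $p, q > 0$ and an oriented geodesic $L \subset \mcu$ disjoint from $\gamma$ and intersecting both $\Gamma_p, \Gamma_q$; such an $L$ is tangent to some $\Gamma_s$ with $0 < s < \min(p, q)$, and by the evolution property both $R_p(L), R_q(L)$ are tangent to $\Gamma_s$. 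Parametrizing the set $\mcc_s$ of oriented geodesics tangent to $\Gamma_s$ by tangency points on $\Gamma_s$, the restrictions $R_p|_{\mcc_s}, R_q|_{\mcc_s}$ are, by the semigroup property, the string diffeomorphisms of $\Gamma_s$ corresponding to its $(p-s)$- and $(q-s)$-string constructions. Commutativity then follows from a variational argument: the two reflection points of a double-bounce billiard trajectory between two geodesics in $\mcc_s$ are joint stationary points of the total geodesic length functional, which is symmetric under interchange of the two reflection points, so $R_p R_q = R_q R_p$ on $\mcc_s$.

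For the Poritsky property, apply commutativity to the boundary layer $s = 0$ of geodesics tangent to $\gamma$ to obtain commutativity of the induced string diffeomorphisms $\mct_p, \mct_q$ on $\gamma$. By Theorem \ref{thsm} the string foliation is smooth up to $\gamma$, so $\mct_p$ depends $C^1$-smoothly on $p$ with $\mct_0 = \mathrm{id}$, and the infinitesimal generator
$$X := \frac{d}{dp}\Big|_{p = 0^+} \mct_p$$
is a nonvanishing $C^1$ vector field on $\gamma$ near $O$ (nonvanishing by positivity of geodesic curvature). Differentiating the relation $\mct_p \circ \mct_q = \mct_q \circ \mct_p$ in $q$ at $q = 0$ shows that $\mct_p$ commutes with the local flow of $X$; in the arc-length chart $t$ for $X$, this flow is the family of translations $t \mapsto t + \tau$, and hence each $\mct_p$ is forced to be a translation $t \mapsto t + c_p$. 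This $t$ is the Poritsky--Lazutkin string length.

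The main obstacle is making the variational argument for commutativity rigorous in the Riemannian setting: one must identify the correct length functional whose stationary billiard trajectories encode both compositions $R_p R_q$ and $R_q R_p$, and verify its symmetry under $p \leftrightarrow q$ via the semigroup property of the string constructions. A secondary technical point is the $C^1$-regularity up to $O$ of the generator $X$, which relies on the boundary regularity of the string foliation from Theorem \ref{thsm} together with the $C^4$-smoothness hypothesis.
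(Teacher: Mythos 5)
Your overall architecture---exploit the common invariant family of curves supplied by the evolution property, show each reflection acts as a translation in a common parameter on each leaf, and read off both commutativity and the Poritsky property---is the right idea, but both key steps have genuine gaps. First, the commutativity argument does not work as stated. Knowing that $R_p$ and $R_q$ both preserve each curve $\mcc_s$ of oriented geodesics tangent to $\Gamma_s$ is far from sufficient (two diffeomorphisms of a curve preserving that curve need not commute), and the proposed ``total geodesic length functional'' is not symmetric under interchanging the two reflection points, since these are constrained to lie on \emph{different} curves $\Gamma_p$ and $\Gamma_q$. The missing ingredient is symplecticity: in the paper's proof each reflection $T_p$ preserves the canonical symplectic form $\omega$ on the space of oriented geodesics (Melrose's construction) and preserves the function $\psi$ whose level curves are the invariant curves $\Gamma_r^*$; hence it preserves the Hamiltonian field $H_\psi$ and acts on every leaf as a translation in the time coordinate of $H_\psi$. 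Translations in one and the same coordinate commute---that is where commutativity actually comes from, and the same step hands you the Poritsky parameter for free, as the time coordinate $t_0$ of $H_\psi$ on the boundary leaf $\Gamma_0^*$.

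Second, your route from commutativity to the Poritsky property breaks at the definition of the generator $X=\frac{d}{dp}\big|_{p=0^+}\mct_p$: this derivative does not exist (it is infinite). By Theorem \ref{lasyl} one has $L(A,B)\asymp|s_A-s_B|^3$, so $\la(A,\mct_p(A))\asymp p^{1/3}$ and $\frac{d}{dp}\mct_p$ blows up like $p^{-2/3}$ as $p\to0^+$; one would have to reparametrize by $p^{1/3}$ and then justify $C^1$-dependence in the new parameter, which you do not address. A smaller inaccuracy: the exact additivity ``the $q$-string curve of $\Gamma_p$ equals $\Gamma_{p+q}$'' is false in general. For concentric circles the string parameter of the circle of radius $R$ over the circle of radius $r$ is $2\sqrt{R^2-r^2}-2r\arccos(r/R)$, and letting the innermost curve degenerate to a point shows these quantities do not add. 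Only the qualitative statement that $\Gamma_{p_2}$ is \emph{some} string curve of $\Gamma_{p_1}$---which is exactly the evolution property---is true, and that is all the argument needs.
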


\begin{remark} In the Euclidean case the first part of Proposition \ref{pev} with a proof is  contained in 
\cite{poritsky, amiran}. Commutativity then follows by arguments 
from \cite[chapter 3]{tab}. The proof of the first part of 
Proposition \ref{pev}
given below is analogous to arguments from 
\cite{poritsky}, \cite[ch.3]{tab}. 
The analogue of evolution property for outer billiards was introduced and 
studied by E. Amiran \cite{amiran2}. 
\end{remark}

\begin{proof} {\bf of Proposition \ref{pev}.} Let  $O$ denote the base point of the germ $\gamma$. 
The string  curves $\Gamma_p$ form a  foliation tangent to 
a line field $\La$ that is $C^3$-smooth on 
the concave side $\mcu$ and $C^1$-smooth on $\overline{\mcu}$ (Theorem  \ref{thsm}).

Consider the billiard reflections from the curves $\Gamma_p$ acting on the manifold of oriented geodesics. They 
preserve a canonical symplectic area form $\omega$ on the latter space, one and the same for all the reflections. See
 \cite[section 3]{poritsky}, \cite[chapter 3]{tab} in the planar case; in the general case the 
 form $\omega$ is given by Melrose construction, see  
\cite[section 1.5]{tab95}, \cite{melrose1, melrose2, ar2, ar3}. 
For every $p$ let $\Gamma_p^*$ denote the family or  geodesics tangent to $\Gamma_p$ and 
oriented as $\Gamma_p$. 
For every $p_1<p_2$ the curve $\Gamma_{p_1}^*$ is invariant under 
the reflection $T_{p_2}$ from the curve $\Gamma_{p_2}$ (evolution property). The curves $\Gamma_p^*$ form a germ of foliation $F$ 
in the space of oriented geodesics; 
its base point  represents the geodesic tangent to $\gamma$ at $O$. 
The foliated domain, which consists of points representing 
the geodesics tangent to $\Gamma_p$, $p>0$, will be denoted 
by $\mcu^*$. The curve $\Gamma_0^*$ lies in its boundary and is a leaf of the foliation $F$.   The 
 foliation $F$ is $C^3$-smooth on $\mcu^*$ and $C^1$-smooth on its closure, as is the line field $\La$. In more detail, consider the mapping 
 of the set $\overline\mcu$ to the space of geodesics that sends each point 
 $Q\in\overline\mcu$ to the geodesic tangent to $\La(Q)$. This is a 
 mapping of the same regularity, as $\La$. The foliation $F$ is the 
 image of the  foliation by string curves $\Gamma_p$, and hence, also 
 has the above regularity.  Thus, $F$ is the 
 foliation by level curves of a function $\psi$ of the same regularity and without critical points. 
  The function $\psi$ is $T_p$-invariant for all $p$, by construction. Therefore,  its 
 Hamiltonian vector field $H_{\psi}$ is also invariant and tangent to the curves $\Gamma_q^*$. 
 Hence, for every $q<p$ the reflection 
 $T_p:\Gamma_q^*\to\Gamma_q^*$ acts by translation in the time coordinate 
 $t_q$ of the field $H_\psi$ on $\Gamma_q^*$, and this also holds for $q=0$. 
 The time coordinate $t_0$ on $\Gamma_0^*$ induces a parameter, also denoted 
 by $t_0$, on the curve $\Gamma_0=\gamma$. Therefore, $\gamma$   
 has Poritsky property with Poritsky--Lazutkin parameter $t_0$, by the above discussion. 
 Any two reflections 
 $T_p$ and $T_q$ commute while acting on the union of the curves $\Gamma_r^*$ with 
 $r\leq\min\{ p,q\}$,  since the latter curves are invariant and $T_p$, $T_q$  
 act as translations there. Proposition \ref{pev} is proved.
 \end{proof}
 
 Proposition \ref{p-bir} follows from Proposition \ref{pev} and Example \ref{excon}.

\subsection{Preparatory coboundary property of length ratio} 
Let $\Sigma$ be an oriented surface of constant 
curvature $K\in\{0,\pm1\}$: either Euclidean plane, or unit sphere in 
$\rr^3$, or hyperbolic plane. Let $O\in\Sigma$, and let $\gamma\subset\Sigma$ 
be a regular germ of curve through $O$ with positive geodesic curvature. We consider that 
$\gamma$ is oriented clockwise with respect to the orientation of the surface $\Sigma$.  
For every point $X\in\gamma$ by $G_X$ we denote the geodesic 
tangent to $\gamma$ at $X$.  
Let $A,B\in\gamma$ be two distinct points close to $O$ such that the curve $\gamma$ 
be oriented from $B$ to $A$. Let  $C=C_{AB}$ denote the unique intersection point of 
the geodesics $G_A$ and $G_B$  that is close to $O$. (Then $CA$ is the right geodesic 
tangent to $\gamma$ through $C$.) 
Set 
$$L_A:=|CA|; \ L_B:=|CB|;$$
here $|CX|$ is the length of the geodesic arc $CX$. 
Recall that we denote 
\begin{equation}\psi(x)=\begin{cases}  x, & \text{ if } \Sigma \text{ is Euclidean plane,}\\
 \sin x, & \text{ if } \Sigma \text{ is unit sphere,}\\
 \sinh x, & \text{ if } \Sigma \text{ is hyperbolic plane.}\end{cases}\label{psix}
\end{equation}

\begin{proposition} \label{coboundary} Let $\Sigma$ be as above, $\gamma\subset\Sigma$ 
be a germ of $C^2$-smooth curve at a point $O\in\Sigma$ with string Poritsky property. 
There exists a positive smooth function $u(X)$, $X\in\gamma$, such that for every 
$A,B\in\gamma$ close enough to $O$ one has 
\begin{equation} \frac{\psi(L_A)}{\psi(L_B)}=\frac{u(A)}{u(B)}.\label{psiab}
\end{equation}
\end{proposition}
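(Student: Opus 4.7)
The plan is to exploit the defining property of Poritsky parameter $t$: when $C$ varies along a string curve $\Gamma_p$ through $C_{AB}$, the values $A(C), B(C) \in \gamma$ are both smooth functions of $C$ and $t(A(C)) - t(B(C)) = c_p$ is \emph{constant}. Differentiating this identity along a vector $v \in T_C\Sigma$ tangent to $\Gamma_p$ and applying the chain rule yields
\begin{equation}
t'(s_A)\,\frac{ds_A}{dv} \;=\; t'(s_B)\,\frac{ds_B}{dv},
\label{eq:plan1}
\end{equation}
where $t' = dt/ds$ along $\gamma$. The goal is to identify both sides with an explicit expression involving $\psi(L_A)$ and $\psi(L_B)$, and then read off $u$.

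Next I would substitute the formula from Proposition \ref{pdersq} for the directional derivatives $ds_A/dv$ and $ds_B/dv$. On a surface of constant curvature, (\ref{psix0}) gives the crucial simplification $L_Q \Psi(Q,w_Q,L_Q) = \psi(L_Q)$, so the identity (\ref{eq:plan1}) becomes
\begin{equation}
\frac{t'(s_A)\,(v \times \zeta_A)}{\kappa(A)\,\psi(L_A)} \;=\; \frac{t'(s_B)\,(v \times \zeta_B)}{\kappa(B)\,\psi(L_B)}.
\label{eq:plan2}
\end{equation}
It remains to compare the cross products $v \times \zeta_A$ and $v \times \zeta_B$, and this is where the characterization of $\Gamma_p$ via the line field $\Lambda$ (Theorem \ref{thsm}) enters: $v$ lies along the exterior bisector of the angle $\angle ACB$ between the unit vectors $\zeta_A$ and $\zeta_B$. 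A direct elementary computation (working in an orthonormal basis of $T_C\Sigma$ in which $\zeta_A$ and $\zeta_B$ are symmetric about the interior bisector) shows $v \times \zeta_A = v \times \zeta_B$, since $v$ is perpendicular to the interior bisector.

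Cancelling the common factor $v \times \zeta_A = v \times \zeta_B$ in (\ref{eq:plan2}) gives
\begin{equation}
\frac{\psi(L_A)}{\psi(L_B)} \;=\; \frac{t'(s_A)/\kappa(A)}{t'(s_B)/\kappa(B)},
\label{eq:plan3}
\end{equation}
so one sets $u(X) := t'(s_X)/\kappa(X)$ on $\gamma$. Positivity is immediate from $\kappa > 0$ and from $t$ being an honest $C^1$ parametrization ($t' > 0$), and the required regularity of $u$ follows from that of $t'$ and $\kappa$. The only subtle point worth double-checking is the orientation convention ensuring equality rather than opposite signs in $v \times \zeta_A$ and $v \times \zeta_B$; this is a bookkeeping issue fixed by orienting $v$ consistently as the tangent to $\Gamma_p$ and using that $\zeta_A, \zeta_B$ both point \emph{away} from $C$ (toward $A$ and $B$ respectively), as prescribed in Proposition \ref{pdersq}. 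No genuine obstacle is expected beyond this.
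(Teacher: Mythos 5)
Your proposal is correct and follows essentially the same route as the paper: differentiate the Poritsky identity $t(A(C))-t(B(C))=c_p$ along the string curve $\Gamma_p$, substitute the derivative formula of Proposition \ref{pdersq} together with the constant-curvature identity $L_Q\Psi(Q,w_Q,L_Q)=\psi(L_Q)$ from (\ref{psix0}), and cancel the common angular factor (your $v\times\zeta_A=v\times\zeta_B$ is exactly the paper's observation that the exterior-bisector property forces the two sines to coincide), arriving at the same $u=(dt/ds)/\kappa$.
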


\begin{proof} 
For every $p>0$ small enough and every $C\in\Gamma_p$ close enough to $O$ there are two geodesics issued from the 
point $C$ that are tangent to $\gamma$ (Proposition \ref{cabex}). 
The corresponding tangency points $A=A(C)$ and $B=B(C)$ in $\gamma$ 
depend smoothly on the point $C\in\Gamma_p$.  
Let $s_p$ denote the natural length parameter 
of the curve $\Gamma_p$. We set $s=s_0$: the natural length parameter of the curve 
$\gamma$. We write  $C=C(s_p)$, and  consider the natural 
parameters $s_A(s_p)$, $s_B(s_p)$ of the points $A(C)$ and 
$B(C)$ as functions of $s_p$. Let $\alpha(C)$ denote the oriented 
angle between a vector $v\in T_C\Gamma_p$ 
 orienting the curve 
$\Gamma_p$ and a vector $\zeta_A\in T_CG_A$ 
directing the geodesic $G_A$ from $C$ to $A$. 
 It is equal  (but with opposite sign) to the oriented angle between 
the  vector $-v$ and a  vector $\zeta_B\in T_CG_B$ directing the geodesic 
$G_B$ from $C$ to $B$, since the 
tangent line to $\Gamma_p$ at $C$ is the exterior bisector 
of the angle between the geodesics $G_A$ and $G_B$ 
(Theorem \ref{thsm}). One has 
\begin{equation}
\frac{ds_A}{ds_p}=\frac{\sin\alpha(C)}{\kappa(A(C))\psi(|AC|)}, \ 
\frac{ds_B}{ds_p}=\frac{\sin\alpha(C)}{\kappa(B(C))\psi(|BC|)},
\label{dsabp}\end{equation}
by (\ref{dersq}),  (\ref{psix0}) and the above angle equality. 

Let now $t$ be the Poritsky parameter of the curve $\gamma$.  
Let  $t_A(s_p)$ and $t_B(s_p)$ denote its values at  
the points $A(C)$ and $B(C)$ respectively as functions of $s_p$. 
 Their difference is 
constant, by Poritsky property. Therefore, 
$$\frac{dt_A}{ds_p}=\frac{dt}{ds}(A)\frac{ds_A}{ds_p}=
\frac{dt_B}{ds_p}=\frac{dt}{ds}(B)\frac{ds_B}{ds_p}.$$
Substituting (\ref{dsabp}) to the latter formula and cancelling 
out $\sin\alpha(C)$ yields (\ref{psiab}) with 
$$u=\frac{\nu}{\kappa}, \ \nu:=\frac{dt}{ds}.$$
\end{proof}

\subsection{Conics and Ceva's Theorem on surfaces of constant curvature. 
Proof of Theorem \ref{t-bir}} 

\begin{definition} \label{dti} Let $\Sigma$ be a surface with Riemannian metric. 
We say that a germ of curve $\gamma\subset\Sigma$ at a point $O$ with non-zero geodesic curvature 
has {\it tangent incidence property,} if the following statement holds. Let $A',B',C'\in\gamma$ 
be arbitrary three distinct points close enough to $O$. 
Let $a$, $b$, $c$ denote the geodesics tangent to 
$\gamma$ at $A'$, $B'$, $C'$ respectively. Let $A$, $B$, $C$ denote the points of intersection $b\cap c$, $c\cap a$, $a\cap b$ respectively. Then the geodesics 
$AA'$, $BB'$, $CC'$ intersect at one point. See \cite[p.462, fig.5]{poritsky} and Fig. 4 
below. 
\end{definition}
\begin{figure}[ht]
  \begin{center}
   \epsfig{file=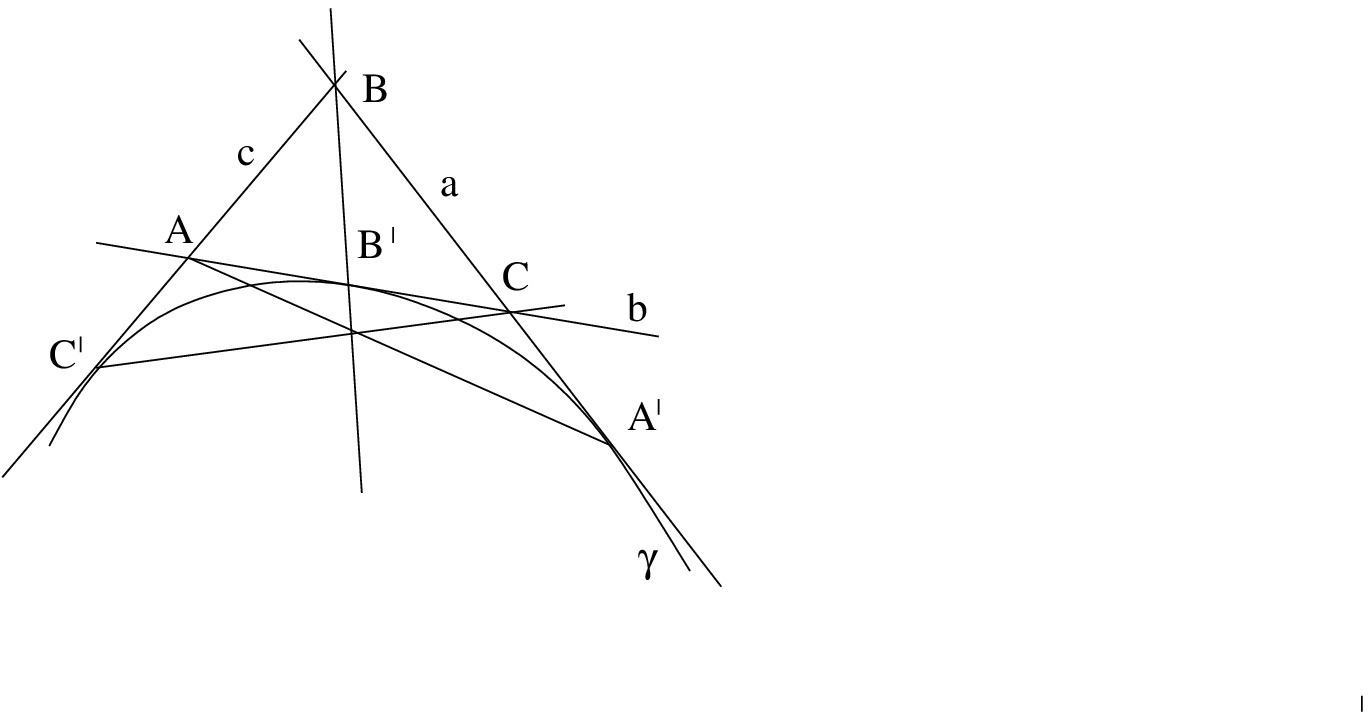}
    \caption{A curve $\gamma$ with tangent incidence property}
    \label{fig:0}
  \end{center}
\end{figure}

\begin{proposition} \label{propinc} Every germ of $C^2$-smooth curve  
with string Poritsky property on a surface 
of constant curvature has tangent incidence property.
\end{proposition}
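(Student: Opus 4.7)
The plan is to combine the coboundary identity (\ref{psiab}) of Proposition \ref{coboundary}, applied at each vertex of the geodesic triangle $ABC$ formed by the tangent geodesics $a,b,c$, with the Ceva theorem for triangles on surfaces of constant curvature. Recall that the latter asserts that on a simply connected complete surface of constant curvature $K\in\{0,\pm1\}$, for a small geodesic triangle with vertices $A,B,C$ and points $A'\in BC$, $B'\in CA$, $C'\in AB$, the three cevians $AA'$, $BB'$, $CC'$ concur if and only if
$$\frac{\psi(|BA'|)}{\psi(|A'C|)}\cdot\frac{\psi(|CB'|)}{\psi(|B'A|)}\cdot\frac{\psi(|AC'|)}{\psi(|C'B|)}=1,$$
with $\psi$ as in (\ref{psix}); this is classical (with $\sin$ on the sphere, $\sinh$ on the hyperbolic plane, and the standard Euclidean Ceva when $K=0$). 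In our configuration the geodesic $a$ (tangent to $\gamma$ at $A'$) is precisely the side $BC$ of the triangle $ABC$, and similarly $b=CA$, $c=AB$, so $A',B',C'$ are placed as required.

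Next I would apply Proposition \ref{coboundary} at each vertex. Since $A=b\cap c$ is the intersection of the two tangent geodesics to $\gamma$ issued at $B'$ and $C'$, identity (\ref{psiab}) yields
$$\frac{\psi(|AB'|)}{\psi(|AC'|)}=\frac{u(B')}{u(C')},$$
and in the same way at $B$ one obtains $\psi(|BA'|)/\psi(|BC'|)=u(A')/u(C')$, and at $C$ one obtains $\psi(|CA'|)/\psi(|CB'|)=u(A')/u(B')$. Substituting these three relations into the Ceva product and using $|A'C|=|CA'|$, $|B'A|=|AB'|$, $|C'B|=|BC'|$, each of the six lengths appears once in the numerator and once in the denominator, so the $\psi$-factors cancel completely, while the $u$-factors telescope as
$$\frac{u(A')}{u(C')}\cdot\frac{u(B')}{u(A')}\cdot\frac{u(C')}{u(B')}=1.$$
Thus the Ceva product equals $1$, whence the cevians $AA'$, $BB'$, $CC'$ are concurrent; this is exactly the tangent incidence property.

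The computation itself is just bookkeeping, so the only nontrivial point I expect is to pin down the correct form of Ceva's theorem in nonzero constant curvature and to make sure that, in the hyperbolic and spherical cases, one is working with the unsigned version that implies actual concurrency rather than a projective coincidence at infinity. This is handled by locality: since $A',B',C'$ are taken close to $O$ on a strictly convex arc, the triangle $ABC$ is small, each of $A',B',C'$ genuinely lies on the corresponding side, and every $\psi$-length appearing in the Ceva product is positive and smaller than the injectivity radius of the model, so the classical $\psi$-Ceva theorem applies directly and produces an honest intersection point of $AA',BB',CC'$ near $O$.
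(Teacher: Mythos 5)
Your overall strategy (apply the coboundary identity of Proposition \ref{coboundary} at each vertex and feed the resulting three ratios into Ceva's theorem, letting the $u$-factors telescope) is exactly the paper's, and the algebraic cancellation is carried out correctly. However, there is a genuine gap in the geometric justification for invoking Ceva: your claim that ``each of $A',B',C'$ genuinely lies on the corresponding side'' is false. For three consecutive points $A',B',C'$ on a strictly convex arc, only the \emph{middle} tangency point $B'$ lies on the side $AC$ of the triangle cut out by the three tangent geodesics; the outer tangency points $A'$ and $C'$ lie on the geodesics $a=BC$ and $c=AB$ but \emph{outside} the corresponding sides (e.g.\ for three nearby points on a circle, $A'$ lies beyond $C$ on the line $BC$, and $C'$ beyond $A$ on the line $AB$). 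Consequently the unsigned, interior version of the $\psi$-Ceva theorem that you invoke ``directly'' does not apply to this configuration, and your locality argument does not repair this: no matter how close to $O$ the points are taken, two of the three cevian feet remain exterior.

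The paper handles this by first establishing the arrangement facts ($A$ lies between $C'$ and $B$, and $C$ between $A'$ and $B$) and then appealing to an \emph{Addendum} to Theorem \ref{ceva} covering the case where exactly two of the cevian feet are exterior to their sides. This matters most in the hyperbolic case: there, even when the Ceva product equals $1$, one only gets concurrency of the corresponding projective lines in the Klein model, which may meet outside the hyperbolic plane (two of the geodesics could be ultraparallel). One must separately verify that two of the geodesics $AA'$, $BB'$, $CC'$ actually intersect --- the paper deduces that $AA'$ and $BB'$ meet from the arrangement statements --- before concluding that all three pass through a genuine point of $\Sigma$. Your proof is missing both the correct identification of the exterior configuration and this intersection argument; with those supplied, it coincides with the paper's proof.
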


As it is shown below,  Proposition \ref{propinc} follows from Proposition \ref{coboundary} 
and the next theorem. 

\begin{theorem} \label{ceva} \cite[pp. 3201--3203]{masal} 
(Ceva's Theorem on surfaces of constant curvature.) 
Let $\Sigma$ be a simply connected complete 
surface of constant curvature. Let $\psi(x)$ be the corresponding function 
in (\ref{psix}): the length of circle of radius $x$ divided by $2\pi$. 
Let $A,B,C\in\Sigma$ be three 
distinct points. Let $A'$, $B'$, $C'$ be respectively some points on 
the {\bf sides} $BC$, $CA$, $AB$ of the geodesic triangle $ABC$. 
Then the geodesics 
$AA'$, $BB'$, $CC'$ intersect at one point, if and only if 
\begin{equation}\frac{\psi(|AB'|)}{\psi(|B'C|)}\frac{\psi(|CA'|)}{\psi(|A'B|)}
\frac{\psi(|BC'|)}{\psi(|C'A|)}=1.\label{fce}\end{equation}
\end{theorem}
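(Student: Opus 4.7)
The plan is to reduce Ceva's identity on a surface of constant curvature to the trigonometric form of Ceva via the law of sines, which is the single universal identity that unifies the three geometries: in every geodesic triangle with sides $a,b,c$ opposite angles $A,B,C$ one has
\begin{equation*}
\frac{\psi(a)}{\sin A}=\frac{\psi(b)}{\sin B}=\frac{\psi(c)}{\sin C}.
\end{equation*}
This is precisely why the function $\psi$ from (\ref{psix}) is the correct replacement for Euclidean side length in the Ceva ratio.

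First I would convert each ``length ratio'' in (\ref{fce}) into an ``angle ratio'' one cevian at a time. For the cevian $AA'$ with $A'\in BC$, apply the law of sines in the two subtriangles $ABA'$ and $ACA'$ that share $AA'$. Since $\angle AA'B$ and $\angle AA'C$ are supplementary, their sines are equal; dividing the two identities gives
\begin{equation*}
\frac{\psi(|BA'|)}{\psi(|CA'|)}=\frac{\psi(|AB|)}{\psi(|AC|)}\cdot\frac{\sin\angle BAA'}{\sin\angle CAA'}.
\end{equation*}
Writing the cyclic analogues for $BB'$ and $CC'$ and multiplying all three relations, the factors $\psi(|AB|),\psi(|BC|),\psi(|CA|)$ cancel telescopically, yielding
\begin{equation*}
\frac{\psi(|AB'|)}{\psi(|B'C|)}\cdot\frac{\psi(|CA'|)}{\psi(|A'B|)}\cdot\frac{\psi(|BC'|)}{\psi(|C'A|)}=\frac{\sin\angle CAA'\,\sin\angle ABB'\,\sin\angle BCC'}{\sin\angle BAA'\,\sin\angle CBB'\,\sin\angle ACC'}.
\end{equation*}
So (\ref{fce}) is equivalent to the trigonometric Ceva relation asserting that the right-hand side equals $1$.

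Next I prove the trigonometric form. For the direct implication, assume $AA'$, $BB'$, $CC'$ meet at a common point $P$. The law of sines in the subtriangle $ABP$ gives $\sin\angle BAA'/\sin\angle ABB'=\psi(|BP|)/\psi(|AP|)$, and the analogous identities in $BCP$ and $CAP$ give $\psi(|CP|)/\psi(|BP|)$ and $\psi(|AP|)/\psi(|CP|)$ respectively. Their product telescopes to $1$, which is exactly the trigonometric Ceva identity (up to the obvious reciprocal). For the converse, let $A''=BB'\cap CC'$ and let $A^\ast$ be the point where the geodesic $AA''$ meets $BC$. The direct implication applied to the concurrent triple $AA^\ast,BB',CC'$ yields a Ceva relation; comparing it with the hypothesized (\ref{fce}) for $A'$ gives $\psi(|CA^\ast|)/\psi(|A^\ast B|)=\psi(|CA'|)/\psi(|A'B|)$. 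Since $\psi$ is strictly monotonic on the relevant range, the map $X\mapsto\psi(|CX|)/\psi(|XB|)$ is injective along $BC$, forcing $A^\ast=A'$ and hence concurrency.

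The main obstacle, such as it is, is the uniform and sign-correct handling of the three geometries at once, together with the monotonicity step in the converse: in the spherical case one needs the side $|BC|$ to be less than $\pi$, which is the natural range in which the geodesic triangle is well-defined and the configuration is geometrically meaningful. Apart from this, the whole argument is a uniform trigonometric computation driven by the single identity $\psi(\text{side})/\sin(\text{opposite angle})=\text{const}$, so no separate case analysis for curvature $0,+1,-1$ is needed.
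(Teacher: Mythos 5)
Your argument is correct, but note that the paper does not actually prove Theorem \ref{ceva}: it is quoted as a known result from Masaltsev \cite[pp. 3201--3203]{masal}, and the only thing the paper proves in its vicinity is the Addendum (by analytic continuation of (\ref{fce}) when $A'$, $C'$ leave the sides), which your write-up does not touch. What you supply is therefore a self-contained proof of the cited black box, and it follows the standard route: the unified law of sines $\psi(\text{side})/\sin(\text{opposite angle})=\mathrm{const}$ (which the paper itself invokes elsewhere, in the proof of Proposition \ref{areat}, citing \cite[p.215]{klein} and \cite[theorem 10.4.1]{sos}) converts each ratio $\psi(|CA'|)/\psi(|A'B|)$ into a ratio of sines of angles at the opposite vertex, reducing (\ref{fce}) to trigonometric Ceva; the direct implication is the telescoping product over the three subtriangles at the concurrency point, and the converse is the usual injectivity argument. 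Your monotonicity step is sound in all three geometries: for $X$ on the segment $BC$ with $|XB|=u$ and $|BC|=d$ one computes $\frac{d}{du}\bigl(\psi(d-u)/\psi(u)\bigr)=-\psi(d)/\psi^{2}(u)<0$ uniformly for $\psi=\mathrm{id},\sin,\sinh$ (using the addition law), provided $d<\pi$ in the spherical case, exactly as you flag. The one point worth making explicit is the existence and uniqueness of the intersection $A''=BB'\cap CC'$ inside the triangle (automatic in the Euclidean and hyperbolic cases for cevians from interior points of the sides; on the sphere one must choose the intersection point lying in the triangle rather than its antipode), after which $A^{\ast}=AA''\cap BC$ does lie on the side because the geodesic from $A$ through an interior point exits through the opposite side. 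With that caveat the proof is complete and, being uniform in the curvature, is in the spirit of what the paper imports from the reference.
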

{\bf Addendum to Theorem \ref{ceva}.} {\it Let now in the conditions of Theorem 
\ref{ceva} $A'$, $B'$, $C'$ be points on the} {\bf geodesics} {\it $BC$, $CA$, $AB$ 
respectively so that some two 
of them, say $A'$, $C'$ do not lie on the corresponding} {\bf sides} {\it and the 
remaining third point $B'$ lies on the corresponding side $AC$, see Fig. 4. 

1) In the Euclidean and 
spherical cases the geodesics $AA'$, $BB'$, $CC'$ intersect at the same point, 
if and only if (\ref{fce}) holds. 

2) In the hyperbolic case (when $\Sigma$ is of negative curvature) the geodesics 
$AA'$, $BB'$, $CC'$ intersect at the same point, 
if and only if some two of them intersect and (\ref{fce}) holds.  

3) Consider  the 
standard model of the hyperbolic plane $\Sigma$  in the Minkovski space $\rr^3$, see Subsection 1.1. Consider the 2-subspaces defining 
the geodesics $AA'$, $BB'$, $CC'$, and let us denote the corresponding 
projective lines (i.e., their tautological projections to $\rp^2$) by $\mca$, $\mcb$, $\mcc$ 
respectively. The projective lines $\mca$, $\mcb$, $\mcc$ intersect at one point 
(which may  be not the projection of a point in $\Sigma$), if and only if (\ref{fce}) holds.}

\begin{proof} Statements 1) and 2) of the addendum follow from Theorem \ref{ceva} by analytic extension, when 
some two points $A'$ and $C'$ go out of the corressponding sides $BC$, $BA$ while remaining on the same (complexified) geodesics $BC$, $BA$. Statement 3) is proved analogously. 
\end{proof}

\begin{proof} {\bf of Proposition \ref{propinc}.} Let $O$ be the base point of the germ $\gamma$, and let $A'$, $B'$, $C'$  be 
its three subsequent points close enough to $O$. Let $a$, $b$, $c$ be 
respectively the geodesics tangent to $\gamma$ at them. Then each pair of the latter 
geodesics intersect at one point close to $O$. Let $A$, $B$, $C$  be the points of 
intersections $b\cap c$, $c\cap a$, $a\cap b$ respectively. The point $B'$ lies on the 
geodesic arc $AC\subset b$. 
This follows from the assumption that the point $B'$ lies between $A'$ and $C'$ on the curve $\gamma$ and the inequality $\kappa\neq0$. In a similar way we get that the points 
$A'$ and $C'$ lie on the corresponding geodesics $a$ and $c$ but outside the sides 
$BC$ and $AB$ of the geodesic triangle $ABC$ so that $A$ lies between $C'$ and $B$, and $C$ lies between $A'$ 
and $B$.  The geodesics $BB'$ and $AA'$ intersect, by the two latter arrangement 
statements. Let $u:\gamma\to\rr$ be the function from Proposition \ref{coboundary}. One has 
$\frac{\psi(|BA'|)}{\psi(|BC'|)}=\frac{u(A')}{u(C')}$, by (\ref{psiab}), 
and similar equalities hold with $B$ replaced by  $A$ and $C$. Multiplying  the 
three latter equalities we get (\ref{fce}), since  the right-hand side cancels out.  Hence the geodesics $AA'$, $BB'$ and $CC'$ 
intersect at one point, by statements 1), 2) of the addendum to Theorem \ref{ceva}. 
Proposition \ref{propinc} is proved.
\end{proof}

\begin{theorem} \label{incon} Each conic on a surface of constant curvature has 
 tangent incidence property. Vice versa, each $C^2$-smooth curve on  a surface of constant 
 curvature that has  tangent incidence property is a conic.
 \end{theorem}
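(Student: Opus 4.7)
The plan is to work in the Cayley--Klein projective model of $\Sigma$. In this model, $\Sigma$ embeds as an open subset of $\rp^2$ in such a way that geodesics of $\Sigma$ are (restrictions of) projective lines and conics on $\Sigma$ are (restrictions of) projective conics. Consequently, the tangent incidence property for a curve on $\Sigma$ is equivalent to the corresponding purely projective property for its image in $\rp^2$, and Theorem \ref{incon} reduces to the statement that a regular $C^2$-smooth arc $\gamma\subset\rp^2$ with positive affine curvature has the projective tangent incidence property if and only if it is a projective conic.

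For the forward direction I would apply Brianchon's theorem. Given three points $A', B', C'$ on a conic $\sigma$ with tangent lines $a,b,c$, view the configuration as the degenerate hexagon $A'A'B'B'C'C'$ circumscribed about $\sigma$, in which each pair of adjacent vertices coalesces at a tangent point. Brianchon's theorem then yields concurrency of the three principal diagonals of this hexagon, which are precisely the cevians $AA'$, $BB'$, $CC'$ for $A=b\cap c$, $B=c\cap a$, $C=a\cap b$. Alternatively, a direct polar-duality argument works: the polar of the vertex $A=b\cap c$ with respect to $\sigma$ is the chord $B'C'$, and by the dual form of Ceva-Menelaus the concurrency of $AA'$, $BB'$, $CC'$ is equivalent to the collinearity of $a\cap B'C'$, $b\cap C'A'$, $c\cap A'B'$, which follows from polarity.

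For the converse direction, fix three close points $A'_0, B'_0, C'_0$ on $\gamma$ and let $\sigma$ denote the unique projective conic tangent to $a_0, b_0, c_0$ at $A'_0, B'_0, C'_0$ respectively (five conditions on the five-parameter family of conics). I would show $\gamma$ coincides with $\sigma$ near $O$ as follows. For any fourth point $D'\in\gamma$ close to the first three, with tangent $d$, apply the tangent incidence property on $\gamma$ to each of the triples $\{A'_0, B'_0, D'\}$, $\{A'_0, C'_0, D'\}$, and $\{B'_0, C'_0, D'\}$. Each yields a concurrency of three specific geodesics in $\rp^2$. The forward direction, applied to $\sigma$, yields analogous concurrences for the point $D''\in\sigma$ lying on $d$ (if such exists) and for its tangent to $\sigma$. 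Matching these two sets of concurrences across the three triples should force both $D'\in\sigma$ and the coincidence of $d$ with the tangent to $\sigma$ at $D'$.

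The principal obstacle is carrying out this matching rigorously with only $C^2$ regularity, since each application of tangent incidence is a single scalar equation per triple while membership in $\sigma$ is two scalar conditions (point on $\sigma$ plus correct tangent direction). I expect the crux to be a careful projective computation showing that the tangent incidence equations coming from distinct triples are independent and jointly pin down $(D',d)$ inside the one-parameter incidence variety of $\sigma$. As a fallback, one can pass to the limit in which the three points of the triple coalesce at a single point $P\in\gamma$; in this limit tangent incidence becomes a differential condition of finite order at $P$, and one shows (using the forward direction applied to the osculating conic) that this differential condition characterizes conics and admits a unique solution once the $2$-jet of $\gamma$ at $O$ is prescribed, forcing $\gamma$ to coincide with its osculating conic.
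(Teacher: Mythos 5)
Your forward direction is correct and genuinely different from the paper's: after passing to the projective model (where geodesics are lines and conics are projective conics --- a reduction the paper also implicitly relies on, since its geodesics and conics are intersections with planes and quadrics in $\rr^3$), Brianchon's theorem for the degenerate circumscribed hexagon with sides $a,a,b,b,c,c$ gives exactly the concurrency of $AA'$, $BB'$, $CC'$. The paper instead derives this from the string Poritsky property of conics combined with the constant-curvature Ceva theorem; your route is more classical and self-contained for this half.

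The converse, however, has a genuine gap: the ``matching'' step is precisely the content of the theorem and you have not carried it out. Your scheme fixes three points of tangency (five conditions, hence a single conic $\sigma$) and then tries to force a fourth pair $(D',d)$ into the incidence variety of $\sigma$ using one scalar concurrency equation per triple; but the space of pairs (point, line through it) is three-dimensional, the target incidence variety is one-dimensional, and the three equations are necessarily dependent along it, so nothing short of an explicit computation shows the solution set is not larger. Your fallback via osculating conics is also unavailable at the stated regularity: it would convert tangent incidence into a differential condition of order at least $4$ at a point (a conic is determined by its $4$-jet, not its $2$-jet as you write), which a merely $C^2$ curve need not satisfy in any classical sense. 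The paper avoids all of this by fixing only \emph{two} tangency data $(A',a)$ and $(C',c)$: the resulting pencil $\mcc$ of bitangent conics foliates a punctured neighborhood, so the conic $\phi\in\mcc$ through the third point $B'$ is automatic and only the single condition $T_{B'}\phi=T_{B'}\gamma$ must be checked. That is done by a monotonicity argument --- rotating the line through $B'$ from $T_{B'}\gamma$ to $T_{B'}\phi$ moves the intersection point of the cevians $A'A_t$ and $C'C_t$ strictly off the line $BB'$, contradicting the two concurrencies --- and then $\gamma$, being a $C^2$ integral curve of the analytic line field tangent to the pencil, coincides with a conic by uniqueness for ODEs. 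If you want to salvage your approach, you should replace the four-point matching by this two-point pencil argument, or else supply the projective computation you defer.
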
 
 
 \begin{proof} The first, easy statement of the theorem follows from 
 Propositions \ref{p-bir} and \ref{propinc}.  The proof of its second statement 
 repeats the arguments from \cite[p.462]{poritsky}, which are given in the 
 Euclidean case but remain valid in the other cases of constant curvature without 
 change. Let us repeat them briefly in full generality for completeness of presentation. 
 Let $\gamma$ be a germ of curve with tangent incidence property on a surface 
 $\Sigma$ of constant curvature. Let $A'$, $B'$, $C'$ denote three distinct 
 subsequent points of the curve $\gamma$, and let $a$, $b$, $c$ be respectively 
 the geodesics tangent to $\gamma$ at these points. Let $A$, $B$, $C$ denote 
 respectively the points of intersections $b\cap c$, $c\cap a$, $a\cap b$. 
 Fix the points $A'$ and $C'$. Consider the pencil $\mcc$ of conics through $A'$ and $C'$ 
 that are tangent to $T_{A'}\gamma$ and $T_{C'}\gamma$. Then each point of the surface 
 $\Sigma$ lies in a unique conic in $\mcc$ 
 (including two degenerate conics: the double geodesic 
 $A'C'$; the union of the geodesics $G_{A'}$ and $G_{C'}$). Let $\phi\in\mcc$ denote the conic 
  passing through the point $B'$. 
 
{\bf Claim.} {\it The tangent line $l=T_{B'}\phi$ coincides with $T_{B'}\gamma$.}

\begin{proof} Let $L$ denote the geodesic through $B'$ tangent to 
$l$. Let $C_1$ and $A_1$ denote respectively the points of intersections 
$L\cap a$ and $L\cap c$. Both curves $\gamma$ and $\phi$ have tangent incidence property. Therefore, the three geodesics $AA'$, $BB'$, $CC'$ intersect at the same point 
denoted $X$, and the three geodesics $A'A_1$, $BB'$, $C'C_1$ intersect at the same point 
$Y$; both $X$ and $Y$ lie on the geodesic $BB'$. We claim that this is impossible, if 
$l\neq T_{B'}\gamma$  (or equivalently, if $L\neq b$). Indeed, let to the contrary, 
$L\neq b$. Let  us turn the geodesic $b$ continuously towards $L$ in the family of 
geodesics $b_t$ through $B'$, $t\in[0,1]$: $b_0=b$, $b_1=L$, the azimuth of 
the line $T_{B'}b_t$ turns monotonously (clockwise or counterclockwise), as $t$ 
increases. Let $A_t$, $C_t$ denote respectively the points of the intersections 
$b_t\cap c$ and $b_t\cap a$: $A_0=A$, $C_0=C$. Let $X_t$ denote the point of the intersection of the geodesics $A'A_t$ and $C'C_t$: $X_0=X$, $X_1=Y$. 
At the initial position, when $t=0$, the point $X_t$ lies on the fixed geodesic $BB'$. 
As $t$ increases from 0 to 1, the points $A$ and $C$ remain fixed, while 
the points $C_t$ and $A_t$  move monotonously, so that as $C_t$ moves 
towards (out from) $B$ along the geodesic $a$, the point $A_t$ moves out from 
(towards) $B$ along the geodesic $c$, see Fig. 5. 
In the first case, when $C_t$ moves towards 
$B$ and $A_t$ moves out from $B$, the point $X_t$ moves out of the geodesic $BB'$, 
to the half-plane bounded by $BB'$ that contains $A$, and its distance to $BB'$ 
increases. Hence, $Y=X_1$ does not lie on $BB'$. The second case is 
treated analogously. The contradiction thus obtained proves the claim.
\begin{figure}[ht]
  \begin{center}
   \epsfig{file=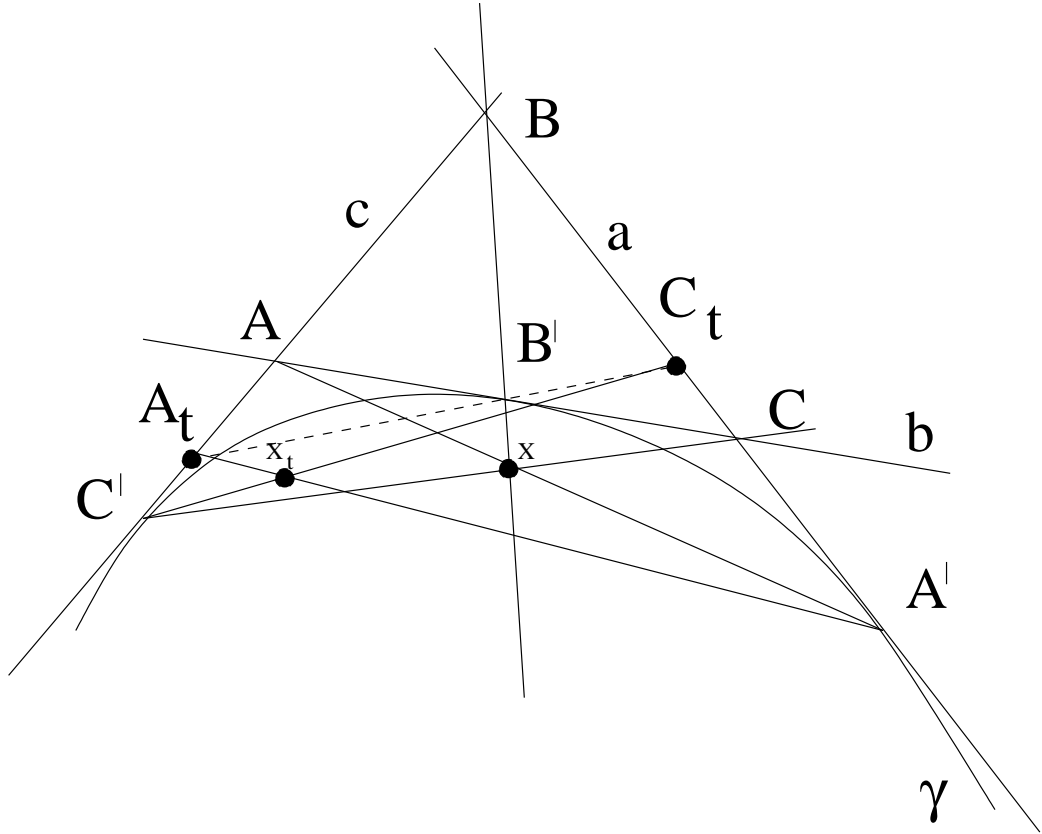}
    \caption{The intersection point $X_t$ moves away from the geodesic $BB'$.}
    \label{fig:0}
  \end{center}
\end{figure}
\end{proof} 

For every point $Q\in\Sigma$ such that the conic $\phi_Q\in\mcc$ passing through $Q$ 
is regular, set $l_Q:=T_Q\phi_Q$. The lines $l_Q$ form an analytic line field outside 
the union of three geodesics: $G_A'$, $G_C'$,  $A'C'$. Its 
 phase curves are the conics from the pencil $\mcc$. The curve $\gamma$ 
is also tangent to the latter line field, by the above claim. Hence, $\gamma$ is a conic. 
This proves Theorem  \ref{incon}.
 \end{proof}
 
 \begin{proof} {\bf of Theorem \ref{t-bir}.} Let $\gamma$ be a germ of $C^2$-smooth 
 curve with string Poritsky property on a  surface of constant curvature. Then it has 
 tangent incidence property, by Proposition \ref{propinc}. Therefore, it is a conic, 
 by Theorem \ref{incon}. Theorem \ref{t-bir} is proved.
 \end{proof}

\section{Case of outer billiards: proof of Theorem \ref{t-outer}}
Everywhere below in the present section $\Sigma$ is a simply connected 
complete Riemannian surface of constant curvature, and  
$\gamma\subset\Sigma$ is a germ of $C^2$-smooth curve at a point $O\in\Sigma$ 
with non-zero geodesic curvature.

\begin{proposition} \label{angco} Let $\Sigma$, $O$, $\gamma$ be 
as above, and let $\gamma$ have area Poritsky property. Then  there exists a continuous function 
$u:\gamma\to\rr_+$ such that for every 
$A,B\in\gamma$ close enough to $O$ the following statement holds. Let $\alpha$, 
$\beta$ denote the angles between the chord $AB$ and the curve $\gamma$ at the 
points $A$ and $B$ respectively. Then 
\begin{equation} \frac{\sin\alpha}{\sin\beta}=\frac{u(A)}{u(B)}.\label{psial}
\end{equation}
Let $t$, $s$ denote respectively the area Poritsky and length parameters of the curve $\gamma$. The above statement holds for 
the function 
 $$u:=t'_s=\frac{dt}{ds}.$$ 
\end{proposition}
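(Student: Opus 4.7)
The plan is to adapt the argument of Proposition \ref{coboundary} to the outer billiard setting, replacing the string-construction ingredients by their area-construction analogues. For each small $p>0$, fix the curve $\gamma_p$ from the area construction and let $s_p$ denote its natural length parameter. For $M=M(s_p)\in\gamma_p$, the tangent geodesic $\ell_M$ to $\gamma_p$ at $M$ meets $\gamma$ in two points, which I will call $A=A(M)$ and $B=B(M)$, ordered so that $T_p(A)=B$. By the very definition of the outer billiard on a constant curvature surface, $M$ is the midpoint of the geodesic arc $AB$, so
\begin{equation*}
|AM|=|BM|.
\end{equation*}
I will consider the maps $s_p\mapsto s_A(s_p)$ and $s_p\mapsto s_B(s_p)$, where $s_A,s_B$ denote the length parameter values on $\gamma$ of $A(M),B(M)$.

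Next I will derive the key kinematic formula
\begin{equation*}
\frac{ds_A}{ds_p}=\frac{\psi(|AM|)\,\kappa_p(M)}{\sin\alpha},\qquad\frac{ds_B}{ds_p}=\frac{\psi(|BM|)\,\kappa_p(M)}{\sin\beta},
\end{equation*}
where $\kappa_p$ is the geodesic curvature of $\gamma_p$ and $\psi$ is the function from (\ref{psix}). This is the direct analogue, for a moving tangent geodesic to $\gamma_p$, of formula (\ref{dersq}): as $M$ traverses $\gamma_p$ at unit speed, the tangent geodesic $\ell_M$ rotates around $M$ with angular speed $\kappa_p(M)$, and a point on $\ell_M$ at geodesic distance $d$ from $M$ is displaced orthogonally to $\ell_M$ with speed $\psi(d)\kappa_p(M)$, by the identity $r\Psi(x,v,r)=\psi(r)$ in (\ref{psix0}). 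Dividing by $\sin\alpha$ (resp.\ $\sin\beta$) converts this orthogonal motion into the tangential motion of $A$ (resp.\ $B$) along $\gamma$. Using $|AM|=|BM|$ cancels the $\psi$-factor and gives
\begin{equation*}
\frac{ds_B/ds_p}{ds_A/ds_p}=\frac{\sin\alpha}{\sin\beta}.
\end{equation*}

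Finally I will invoke the area Poritsky property. Since $T_p:\gamma\to\gamma$ acts in the Poritsky parameter $t$ as a translation $t\mapsto t+c_p$, we have $t(B(s_p))-t(A(s_p))=c_p$, a constant in $s_p$. Differentiating in $s_p$ yields
\begin{equation*}
\frac{dt}{ds}(A)\,\frac{ds_A}{ds_p}=\frac{dt}{ds}(B)\,\frac{ds_B}{ds_p},
\end{equation*}
and substituting the ratio computed above produces (\ref{psial}) with $u=dt/ds$. As $p$ varies in a small interval $(0,p_0)$ and $M$ varies along $\gamma_p$, the pair $(A,B)$ sweeps out all pairs of points in $\gamma$ sufficiently close to $O$ with $B$ on one side of $A$; the other side is covered by the symmetric role of $A$ and $B$ (the formula is invariant under swapping them). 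Continuity of $u$ follows from that of $dt/ds$, and positivity from the $C^1$ nature of the Poritsky parametrization.

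The main obstacle is the rigorous derivation of the kinematic formula for $ds_A/ds_p$ in the non-Euclidean constant-curvature setting, since one must carefully identify the angular derivative of the exponential map along $\ell_M$ with the factor $\psi(|AM|)$; this is exactly what Proposition \ref{pdersq} provides in the string setting, and the envelope geometry of $\gamma_p$ together with $r\Psi=\psi$ lets the same computation go through here, with the crucial simplification that the midpoint property $|AM|=|BM|$ eliminates any dependence on $\psi$ from the final ratio.
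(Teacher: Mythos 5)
Your proof is correct. It is the natural transplantation of the paper's proof of Proposition \ref{coboundary} (the string case) to the area setting: you parametrize by the envelope curve $\gamma_p$, use the midpoint property $|AM|=|BM|$ (which indeed follows from the definition of the outer billiard map $T_p$ together with $T_p$-invariance of $\gamma$), and the kinematic formula you need is exactly Proposition \ref{pdersq} / formula (\ref{dersq}) applied with $\gamma$ replaced by $\gamma_p$ and the transversal curve $c$ taken to be $\gamma$ itself, so the factor $\kappa_p(M)\psi(|AM|)$ cancels in the ratio. The paper's own proof of Proposition \ref{angco} reaches the same cancellation by a lighter route: it never introduces $\gamma_p$, its curvature, or formula (\ref{dersq}), but instead deforms the chord $AB$ directly within the family of chords cutting off constant area, observes that consecutive chords intersect at a point tending to the midpoint of $AB$ (by constancy of area and homogeneity), and concludes that the orthogonal displacements of $A(\tau)$ and $B(\tau)$ from the geodesic $AB$ are asymptotically equal, so that $\la(A(0),A(\tau))\slash\la(B(0),B(\tau))\to\sin\beta\slash\sin\alpha$; combined with the translation property of $t$ this gives (\ref{psial}). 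Your version buys a cleaner exact differential identity at the cost of invoking smoothness of $\gamma_p$ and its geodesic curvature (which holds for small $p$, as the paper remarks, so this is not a gap); the paper's version needs only asymptotics of the chord family and avoids the envelope altogether.
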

\begin{proof} Recall that for every $C,D\in\gamma$ by $\lambda(C,D)$ we denote 
the length of the arc $CD$ of the curve $\gamma$.  Fix $A$ and $B$ as above. Set $A(0)=A$, $B(0)=B$. 
For every small $\tau>0$ let  
$A(\tau)$ denote the point of the curve $\gamma$ 
such that $\la(A(\tau),A(0))=\tau$ and the curve $\gamma$ is oriented from $A(0)$ to 
$A(\tau)$. Let $B(\tau)$ denote the family of points of the curve $\gamma$ 
such that the area of the domain bounded by the chord $A(\tau)B(\tau)$ and 
the arc $A(\tau)B(\tau)$ of the curve $\gamma$ remains constant, independent on $\tau$. 
For every $\tau$ small enough the chord $A(\tau)B(\tau)$ intersects the chord $A(0)B(0)$ at 
a point $X(\tau)$ tending to the middle of the chord $A(0)B(0)$, see Fig. 6. This follows from 
constance of area and homogeneity (constance of curvature) of the surface $\Sigma$. 
 \begin{figure}[ht]
  \begin{center}
   \epsfig{file=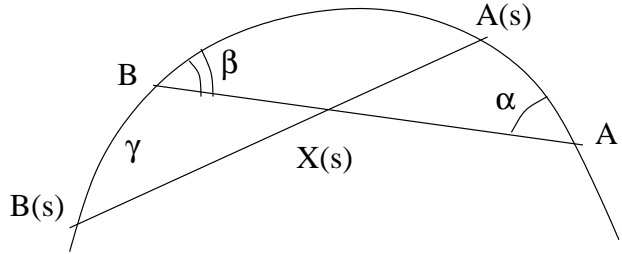}
    \caption{Curve $\gamma$ with area Poritsky property. The chords $AB$,  
    $A(\tau)B(\tau)$.}
    \label{fig:8}
  \end{center}
\end{figure} 
 One has 
$$t(A(\tau))-t(A(0))=t(B(\tau))-t(B(0)) \text{ for every } \tau \text{ small enough,}$$
by area Poritsky property. The above left- and right-hand sides are asymptotic to $u(A)\la(A(0),A(\tau))$ 
and $u(B)\la(B(0),B(\tau))$ respectively, as $\tau\to0$, with $u=\frac{dt}{ds}$. Therefore, 
\begin{equation} \frac{\la(B(0),B(\tau))}{\la(A(0),A(\tau))}\to\frac{u(A)}{u(B)}, \text{ as } s\to0.\label{larat}\end{equation}
The length $\la(A(0),A(\tau))$ is asymptotic to $\frac1{\sin\alpha}$ times  $\dist(A(\tau),B(0)A(0))$: the distance  
of the point $A(\tau)$ to the geodesic $X(\tau)A(0)=B(0)A(0)$. Similarly, $\la(B(0),B(\tau))\simeq\frac1{\sin\beta}\dist(B(\tau),B(0)A(0))$, as $\tau\to0$. The above distances of the points 
$A(\tau)$ and $B(\tau)$ to the geodesic $A(0)B(0)$ are asymptotic to each other, since the intersection point 
$X(\tau)$ of the chords $A(\tau)B(\tau)$ and $A(0)B(0)$ tends to the middle of the chord $A(0)B(0)$ and by homogeneity. This implies 
that the left-hand side in (\ref{larat}) tends to the ratio $\frac{\sin\alpha}{\sin\beta}$, 
as $\tau\to0$. This together with (\ref{larat}) proves (\ref{psial}).
\end{proof} 
\begin{proposition} \label{areat} 
Let $\Sigma$, $O$ and $\gamma$ be as at the beginning of the 
section. Let there exist a function $u$ on $\gamma$ that satisfies (\ref{psial}) for 
every $A,B\in\gamma$ close to $O$. Then $\gamma$ has tangent incidence property, 
see Definition \ref{dti}. 
\end{proposition}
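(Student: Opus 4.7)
The plan is to verify the Ceva identity (\ref{fce}) of Theorem \ref{ceva} for the cevians $AA'$, $BB'$, $CC'$ in the geodesic triangle whose vertices are $A = b\cap c$, $B = c\cap a$, $C = a\cap b$, and then invoke the addendum to that theorem to conclude concurrence. The hypothesis (\ref{psial}) will convert each pairwise sine-ratio relation into a product relation between the $\psi$-lengths of tangent segments cut off from the opposite vertex of $\triangle ABC$.

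First I would fix three subsequent points $A', B', C' \in \gamma$ close to $O$. The same convexity arrangement used in the proof of Proposition \ref{propinc} shows that $B'$ lies on the side $AC$ of $\triangle ABC$, while $A'$ and $C'$ lie on the extensions of the other two sides, so I am in the regime covered by the addendum to Theorem \ref{ceva}. For each pair, say $(A', B')$, I consider the geodesic triangle $CA'B'$: its sides $CA' \subset a$ and $CB' \subset b$ are tangent to $\gamma$, and the third side is the chord $A'B'$. Letting $\alpha$, $\beta$ denote the angles that this chord makes with $\gamma$ (equivalently, with $a$, $b$) at $A'$ and $B'$, the angles of $\triangle CA'B'$ at $A'$ and $B'$ are $\alpha$ and $\beta$ up to supplement, so the law of sines on a surface of constant curvature combined with (\ref{psial}) yields
\begin{equation*}
\frac{\psi(|CA'|)}{\psi(|CB'|)} \;=\; \frac{\sin\beta}{\sin\alpha} \;=\; \frac{u(B')}{u(A')},
\end{equation*}
i.e.\ $u(A')\,\psi(|CA'|) = u(B')\,\psi(|CB'|)$.

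The same argument applied to the pairs $(B', C')$ and $(A', C')$, with third vertices $A$ and $B$ respectively, produces
\begin{align*}
u(B')\,\psi(|AB'|) &= u(C')\,\psi(|AC'|),\\
u(A')\,\psi(|BA'|) &= u(C')\,\psi(|BC'|).
\end{align*}
Multiplying the three ratios $\psi(|CA'|)/\psi(|CB'|)$, $\psi(|AB'|)/\psi(|AC'|)$, $\psi(|BC'|)/\psi(|BA'|)$, the factors $u(A'), u(B'), u(C')$ cancel pairwise, leaving exactly the Ceva identity
\begin{equation*}
\frac{\psi(|AB'|)\,\psi(|CA'|)\,\psi(|BC'|)}{\psi(|B'C|)\,\psi(|A'B|)\,\psi(|C'A|)} \;=\; 1.
\end{equation*}

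Finally I would invoke the addendum to Theorem \ref{ceva} to conclude concurrence of $AA'$, $BB'$, $CC'$. In the Euclidean and spherical cases this is immediate; in the hyperbolic case it requires in addition that some two of the cevians meet in $\Sigma$, which follows from the same arrangement argument as in Proposition \ref{propinc} (the cevians $AA'$ and $BB'$ cross on the concave side of $\gamma$). I expect the main care to lie precisely in this last bookkeeping step: placing $A', B', C'$ correctly on sides versus extensions of $\triangle ABC$, and verifying the intersection condition in the negative-curvature case. Modulo these checks, the proof reduces to a direct application of the law of sines together with the hypothesis (\ref{psial}).
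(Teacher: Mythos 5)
Your proposal is correct and follows essentially the same route as the paper: apply the spherical/hyperbolic/Euclidean law of sines in each of the three triangles $CA'B'$, $AB'C'$, $BA'C'$, convert the sine ratios to ratios of $u$-values via (\ref{psial}), multiply so the $u$'s cancel to obtain the Ceva relation (\ref{fce}), and conclude by Theorem \ref{ceva} and its addendum. Your extra care about which points lie on sides versus extensions and about the intersection condition in the hyperbolic case matches the arrangement argument the paper carries out in the proof of Proposition \ref{propinc}.
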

\begin{proof} Let $A'$, $B'$, $C'$ be three subsequent points of the curve $\gamma$.  
Let $a$, $b$, $c$ denote respectively the geodesics tangent to $\gamma$ at these points.  
Let $A$, $B$, $C$ denote respectively the points of intersections $b\cap c$, $c\cap a$, 
$a\cap b$ (all the points $A'$, $B'$, $C'$, and hence $A$, $B$, $C$ are close enough to 
the base point $O$), as at Fig. 4. Let $\psi$ be the same, as in (\ref{psix}). One has 
\begin{equation}\frac{\sin\angle CA'B'}{\sin\angle CB'A'}=\frac{\psi(|CB'|)}{\psi(|CA'|)}=
\frac{u(A')}{u(B')}, 
\label{angles}\end{equation}
by (\ref{psial}) and Sine Theorem 
on the Euclidean plane and its analogues for unit sphere and hyperbolic plane applied 
to the geodesic triangle $CA'B'$, see \cite[p.215]{klein}, \cite[theorem 10.4.1]{sos}. 
Similar equalities hold for other pairs of points $(B',C')$, 
$(C',A')$. Multiplying all of them yields relation (\ref{fce}):
the ratios of values of the function $u$ at $A'$, $B'$, $C'$ cancel out. This together 
with Theorem \ref{ceva} and its addendum implies that $\gamma$ has tangent indicence 
property and proves Proposition \ref{areat}. 
\end{proof}

\begin{proof} {\bf of Theorem \ref{t-outer}.} Let $\gamma$ be a curve with area Poritsky property on a surface of constant curvature. Then it has tangent incidence property, 
by Propositions \ref{angco} and \ref{areat}. Hence, it is a conic, by Theorem \ref{incon}. 
Theorem \ref{t-outer} is proved. 
\end{proof}

\section{The function $L(A,B)$ and  the Poritsky--Lazutkin parameter.  
Proofs of Theorems \ref{lasyl}, \ref{ttk} and  Corollaries \ref{ttk2}, 
\ref{cttk2}}

\begin{proof} {\bf of Theorem \ref{lasyl}.} Let $g$ denote the metric. 
 Let $C=C_{AB}$ denote the point of intersection of the geodesics 
$G_A$ and $G_B$ tangent to $\gamma$ at the points $A$ and $B$ 
respectively. We will work in normal coordinates $(x,y)$ centered at $C$ 
and the corresponding polar coordinates $(r,\phi)$. 
\medskip

{\bf Claim 1.} {\it The length $s_A-s_B$ 
of the arc $AB$ of the curve $\gamma$ differs 
from its Euclidean length in the coordinates $(x,y)$ by a quantity 
$o((s_A-s_B)^3)$. The same statement also holds for the quantity 
$L(A,B)$.}

\begin{proof} 
It is known that 
the metric $g$ is $O(r^2)$-close to the Euclidean metric, and 
the polar coordinates are $g$-orthogonal. In the polar coordinates  $g$ has the same 
radial part $dr^2$, as the Euclidean metric $dr^2+r^2d\phi^2$, 
and their angular parts differ by a quantity   
$\Delta=O(r^2)r^2d\phi^2=O(r^4d\phi^2)$. Let us write the $g$-length of the arc 
$AB$ as the integral over its Euclidean length parameter of the $g$-norm  
of the Euclidean-unit tangent vector field to $\gamma$. The contribution 
of the above  difference $\Delta$ to the latter integral is bounded 
from above by the integral $I$ of a quantity $O(r^2\alpha)$, where $\alpha$ 
is the angle of a tangent vector $\dot\gamma(Q)$ with the 
radial line $CQ$. Set $\delta:=|s_A-s_B|$. 
The arc $AB$ lies in a 
$O(\delta)$-neighborhood of the point $C$. 
It is clear that the distance of the arc $AB$ to 
the origin $C$ is of order $O(\delta^2)$. Note that those points 
in the arc $AB$ where $\alpha$ is bounded away from zero are on distance 
$O(\delta^2)$ from the origin $C$. Therefore, $\alpha=o(1)$, as 
$\delta\to0$, uniformly on the complement of the arc $AB$ to the 
disk $D_{\delta^{\frac32}}$ 
of radius $\delta^{\frac32}$ centered at $C$. Hence, 
the above integral of $O(r^2\alpha)$ over the complement 
to the disk $D_{\delta^{\frac32}}$ is $o(\delta^3)$. The integral inside the disk 
is also $o(\delta^3)$, since its intersection with $\gamma$ has length of 
order $O(\delta^{\frac32})$, while the subintegral expression is 
$O(\delta^2)$. Finally, the upper bound $I$ for the contribution 
of the non-Euclidean 
angular part $\Delta$ is $o(\delta^3)$. This implies 
the statement of the claim for the $g$-length $\la(A,B)$, and hence, 
for the expression $L(A,B)$: the  $g$-lengths of the segments $AC$, 
$BC$ coincide with their Euclidean length by the definition of normal 
coordinates.
\end{proof}

{\bf Claim 2.} {\it Let  $\gamma\subset\rr^2$ be a curve with positive geodesic 
curvature. 
For every point $A\in\gamma$ consider the osculating circle $S_A$ 
at $A$ of the curve $\gamma$. For every $B\in\gamma$ close to $A$ 
let us consider the point $B'\in S_A$ closest to $B$ ($BB'\perp S_A$) 
and the corresponding expressions $\la(A,B')$, $L(A,B')=L_{S_A}(A,B')$ written 
for the circle $S_A$. One has}
$$\la(A,B')-\la(A,B)=o((s_A-s_B)^3), \ \ L(A,B')-L(A,B)=o((s_A-s_B)^3).$$

\begin{proof} Recall that we denote $\delta=|s_A-s_B|$. 
The lengths of the arcs $AB\subset\gamma$ 
and $AB'\subset S_A$ differ by a quantity $o(\delta^3)$. 
Indeed, the projection of the arc $AB$ 
to the arc $AB'$ along the radii of the circle $S_A$ 
has norm of derivative of order  $1+o(\delta^2)$. This is implied by the 
two following statements: 1) the distance between the 
source and the image is of order $o(\delta^2)$ (the circle is osculating); 
2) the slopes of the corresponding tangent lines differ by a quantity 
$o(\delta)$. The asymptotics $1+o(\delta^2)$ for the norm of 
projection implies that $\la(A,B)-\la(A,B')=o(\delta^3)$. 
Let us now show that the straightline parts of the expressions 
$L(A,B)$ and $L(A,B')$ also differ by a quantity $o(\delta^3)$. 
The tangent lines $T_B\gamma$ and 
$T_{B'}S_A$ pass through $o(\delta^2)$-close points 
$B$ and $B'$, and their slopes differ by a quantity $o(\delta)$, see  
the above statements 1) and  2). 
Note that  $BB'\perp T_{B'}S_A$. This implies that the distance between 
their points $C$ and $C'$ of intersection 
with the line $T_A\gamma$ is $o(\delta)$. Let $H$ denote 
the point of intersection of the line $T_{B'}S_A$ and its orthogonal 
line passing through $C$. The 
difference of the straighline parts of the expressions $L(A,B)$ and 
$L(A,B')$ is equal to $(|BC|-|B'H|)\pm(|CC'|-|C'H|)$. 
The second bracket is the difference of a cathet and a hypothenuse, 
both of order $o(\delta)$, in a right triangle with angle $O(\delta)$ 
between them. Hence, the latter difference is $o(\delta^3)$, since 
the cosine of the angle is $1+O(\delta^2)$. The first bracket 
is equal to the similar difference in another right triangle,  
with cathet $B'H$  and 
 hypothenuse being the segment $BC$ shifted by the 
vector $\vec{BB'}$; both are of order $O(\delta)$, and 
the  angle between them is $o(\delta)$. 
Hence, the first bracket is $o(\delta^3)$ (the cosine being now 
$1+o(\delta^2)$). Finally, the difference of the straightline parts 
of the expressions $L(A,B)$ and $L(A,B')$ is $o(\delta^3)$. 
The claim is proved.
\end{proof}

Claims 1 and 2 reduce  Theorem \ref{lasyl} to the case, when the metric 
is Euclidean and $\gamma$ is  a circle in $\rr^2$. Let $R$ denote 
its radius. Let $AB$ be its arc cut by a sector of small angle $\phi$.   
Then 
$$L(A,B)=R(2\tan(\frac{\phi}2)-\phi)\simeq\frac R{12}\phi^3=
\frac{\kappa^2}{12}|s_A-s_B|^3, \ \kappa=R^{-1}.$$
Uniformity of the latter asymptotics, as stated in Theorem \ref{lasyl}, 
follows by the above arguments from uniformity of intermediate asymptotics used in the proof. This proves Theorem \ref{lasyl}.
\end{proof}

\begin{proof} {\bf of Corollary \ref{ttk2}.} Let $C\in\Gamma_p$. 
Let $A=A(C)$, $B=B(C)\in\gamma$  denote the points such that 
the geodesics $AC$ and $BC$ are tangent to $\gamma$ at $A$ and $B$ 
respectively. We order them so that $B(C)=\mct_p(A(C))$. One has $L(A(C),B(C))=p$ for all $C\in\Gamma_p$, by the 
definition of string curve $\Gamma_p$. On the other hand, 
$L(A(C),B(C))\simeq\kappa^2(A(C))|s(A(C))-s(B(C))|^3$, as 
$C$ tends to a compact subarc $\wh\gamma\Subset\gamma$, 
by Theorem \ref{lasyl}. Therefore, all the quantities 
$\kappa^{\frac23}(A(C))|s(A(C))-s(B(C))|$ are uniformly 
asymptotically equivalent. Substituting 
$B(C)=\mct_p(A(C))$, we get  (\ref{ka32la}). Corollary 
\ref{ttk2} is proved.
\end{proof}

Corollary \ref{cttk2} follows immediately from Corollary \ref{ttk2}.

\begin{proof} {\bf of Theorem \ref{ttk}.} Let the curve $\gamma$ have 
string Poritsky property. Let $t$ denote its Poritsky parameter. 
Set $f:=\frac{dt}{ds}$.  For the proof of Theorem \ref{ttk} 
it suffices to show that $f\equiv\kappa^{\frac23}$ up to constant factor. 
Or equivalently, that for every $A,Q\in\gamma$ one has 
\begin{equation}
\frac{f(Q)}{f(A)}=\frac{\kappa^{\frac23}(Q)}{\kappa^{\frac23}(A)}.
\label{gqua}\end{equation}
Fix a small $p>0$. Set $B:=\mct_p(A)$, $R=\mct_p(Q)$. One has 
\begin{equation} t(B)-t(A)=t(R)-t(Q),\label{porrq}\end{equation}
by Poritsky property. On the other hand, the latter left- and right-hand sides 
are asymptotically equivalent respectively to $f(A)\la(A,B)$ and 
$f(Q)\la(Q,R)$. But 
$$\kappa^{\frac23}(A)\la(A,B)\simeq\kappa^{\frac23}(Q)\la(Q,R), 
\text{ as } p\to0,$$
by Corollary \ref{ttk2}. Substituting the two latter asymptotics to 
(\ref{porrq})   yields (\ref{gqua}). Theorem \ref{ttk} is proved.
\end{proof}

\section{Symplectic generalization of Theorem \ref{ttk}}

In Subsection 7.1 we give a background material 
on symplectic properties of the billiard ball reflection map. 
In Subsection 7.2 we introduce weakly billiard-like maps. We consider 
the so-called string type families of weakly  billiard-like maps, which 
generalize the family of billiard reflections from string construction curves 
 defined by a curve with string Poritsky property. We state 
Theorem \ref{tsympor}, which is a 
symplectic generalization  of Theorem \ref{ttk} ($C^6$-smooth case) to the string type 
billiard-like map families.  
Theorem \ref{tsympor} will be proved in Subsection 7.4. 
For its proof  we introduce an analogue of Lazutkin coordinates, 
the so-called modified Lazutkin coordinates, for weakly billiard-like maps (Subsection 7.3). In the same subsection 
we prove Theorem \ref{thlame}, which gives an asymptotic normal form for a 
weakly billiard-like map  in Lazutkin coordinates. We also prove  Lemma \ref{plog} 
on asymptotics of orbits in Lazutkin coordinates. 
 
 In Subsection 7.5 we show how to retrieve Theorem 
\ref{ttk} for $C^6$-smooth curves from Theorem \ref{tsympor}.

\def\mcj{\mathcal J}

The idea to extend Theorem \ref{ttk} to 
 a more general symplectic context was 
suggested by Sergei Tabachnikov.

\subsection{Symplectic properties of billiard ball map}
\def\mct{\mathcal T}

Here we recall a background material on symplecticity of billiard ball map. It can be found 
in \cite{ar2, ar3, mm, melrose1, melrose2, tab95}. 

Let $\Sigma$ be a surface with Riemannian metric. Let $\Pi:T\Sigma\to\Sigma$ denote the tautological projection. 
 Let us recall that  the {\it tautological 1-form} 
$\alpha$ on $T\Sigma$ (also called the {\it Liouville form}) 
is defined as follows: for every $(Q,P)\in T\Sigma$ with $Q\in\Sigma$ and $P\in T_Q\Sigma$ for every $v\in T_{(Q,P)}(T\Sigma)$ set 
\begin{equation}\alpha(v):=<P,\Pi_*v>.\label{alp}\end{equation} 
The differential 
$$\omega=d\alpha$$
of the 1-form $\alpha$ is the {\it canonical symplectic form} on $T\Sigma$. 

Let $O\in\Sigma$, and let $\gamma\subset\Sigma$ be a germ of regular oriented curve at $O$. 
Let us parametrize it by its natural length parameter $s$. The corresponding function $s\circ\Pi$ on $T\gamma$ 
will be also denoted by $s$.  For every $Q\in\gamma$ and $P\in T_Q\gamma$  set 
$$\dot\gamma(Q)=\frac{d\gamma}{ds}(Q):= \text{ the directing unit tangent vector to } 
\gamma \text{ at } Q,$$ 
\begin{equation}\sigma(Q,P):=<P,\dot\gamma(Q)>, \ \ y(Q,P):=1-\sigma(Q,P).
\label{y(q,p)}\end{equation}
The restriction to $T\gamma$ of the form $\omega$ is a symplectic form, which will be denoted by the same 
symbol $\omega$. 
\begin{proposition} \label{psy} (see \cite[formula (3.1)]{mm} in the Euclidean case). 
The coordinates $(s,y)$ on $T\gamma$ are symplectic: 
$\omega=ds\wedge dy$ on $T\gamma$.
\end{proposition}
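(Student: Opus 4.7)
The plan is to compute the restriction $\alpha|_{T\gamma}$ of the Liouville 1-form explicitly in the coordinates $(s,y)$, and then take its exterior derivative. Since pullback commutes with $d$, one has $\omega|_{T\gamma} = d(\alpha|_{T\gamma})$, so once $\alpha|_{T\gamma}$ is identified the result is immediate.

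First I would parametrize $T\gamma$. A point $(Q,P)\in T\gamma$ has $Q\in\gamma$ and $P\in T_Q\gamma$; since $\dot\gamma(Q)$ is a unit basis vector of the line $T_Q\gamma$, we can write $P=\sigma(Q,P)\,\dot\gamma(Q)$, where $\sigma=\sigma(Q,P)=\langle P,\dot\gamma(Q)\rangle$ as in (\ref{y(q,p)}). Thus $(s,\sigma)$, or equivalently $(s,y)$ with $y=1-\sigma$, is a global coordinate system on $T\gamma$.

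Next, for any tangent vector $v\in T_{(Q,P)}(T\gamma)$ I would compute $\Pi_* v$. Since $s$ parametrizes $\gamma$ by arclength, $\Pi_* v = ds(v)\,\dot\gamma(Q)$. Substituting into the definition (\ref{alp}) of the Liouville form,
\begin{equation*}
\alpha(v) = \langle P,\Pi_* v\rangle = \langle \sigma\,\dot\gamma(Q),\,ds(v)\,\dot\gamma(Q)\rangle = \sigma\,|\dot\gamma(Q)|^2\,ds(v) = \sigma\,ds(v),
\end{equation*}
because $|\dot\gamma(Q)|=1$. Hence $\alpha|_{T\gamma} = \sigma\,ds = (1-y)\,ds$.

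Finally, taking the exterior derivative,
\begin{equation*}
\omega|_{T\gamma} = d\alpha|_{T\gamma} = d\bigl((1-y)\,ds\bigr) = -dy\wedge ds = ds\wedge dy,
\end{equation*}
which is the desired formula. There is no real obstacle here: the only point requiring a moment of care is the observation that the unit-length condition on $\dot\gamma$ (i.e.\ the use of the natural length parameter) is what forces the factor $|\dot\gamma|^2=1$ and therefore gives $\alpha|_{T\gamma}$ the simple form $\sigma\,ds$ rather than a more general multiple of $ds$. Everything else is a direct unwinding of definitions.
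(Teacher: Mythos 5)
Your proof is correct and follows essentially the same route as the paper: both identify $\alpha|_{T\gamma}=\sigma\,ds$ in the coordinates $(s,\sigma)$ and then obtain $\omega=d\sigma\wedge ds=ds\wedge dy$ by exterior differentiation. You merely spell out in more detail the computation of $\Pi_*v$ and the role of the unit-length normalization of $\dot\gamma$, which the paper leaves implicit.
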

\begin{proof}
The proposition follows from the definition of the symplectic structure $\omega=d\alpha$, 
$\alpha$ is the same, as in (\ref{alp}): in local coordinates $(s,\sigma)$ one has 
$\alpha=\sigma ds$, thus, $\omega=d\sigma\wedge ds=ds\wedge dy$. 
\end{proof}

Let $V$ denote 
the Hamiltonian vector field on $T\Sigma$ with the Hamiltonian $||P||^2$: the field $V$ 
generates the geodesic flow. Consider the unit circle bundle over $\Sigma$: 
$$S=\mct_1\Sigma:=\{||P||^2=1\}\subset T\Sigma.$$
 It is known that for every point $x\in S$  the kernel of the restriction $\omega|_{T_xS}$ is 
the one-dimensional linear subspace spanned by the vector $V(x)$ of the field $V$. 
Each cross-section  $W\subset S$ to the field $V$ is identified with the (local) space 
of geodesics.  The symplectic structure 
$\omega$ induces a well-defined symplectic structure on $W$ called the 
{\it symplectic reduction.} 

\begin{remark} \label{holin} The symplectic reduction is invariant under  holonomy along orbits 
of the geodesic flow. Namely,  for every arc $AB$ of trajectory of the geodesic flow 
with endpoints $A$ and $B$ for every  two germs of 
cross-sections $W_1$ and $W_2$ through  $A$ and $B$ respectively the holonomy mapping 
$W_1\to W_2$, $A\mapsto B$ 
along the  arc $AB$ is a symplectomorphism. 
\end{remark}

Consider the local hypersurface 
$$\Gamma=\Pi^{-1}(\gamma)\cap S=(\mct_1\Sigma)|_{\gamma}\subset S.$$
At those points $(Q,P)\in\Gamma$, for which the vector $P$ is transverse to $\gamma$ 
the hypersurface $\Gamma$ is locally a cross-section for the restriction to $S$ of the geodesic flow. Thus, near the latter points 
the hypersurface $\Gamma$ carries a canonical 
symplectic structure given by the symplectic reduction. Set 
$$\mco_{\pm}:=(O,\pm\dot\gamma(O))\in\Gamma.$$
For every 
$(Q,P)\in \Gamma$ close enough to $\mco_{\pm}$ with $Q$ lying 
on the convex side from $\gamma$ the geodesic issued from the point 
$Q$ in the direction $P$ (and oriented by $P$) 
 intersects the curve $\gamma$ at two points $Q$ and $Q'$ 
 (which coincide if $P$ is tangent to $\gamma$). Let $P'$ denote the orienting unit tangent vector of the latter 
 geodesic at $Q'$. This 
 defines the germ at $\mco_{\pm}$ of involution
 \begin{equation}\beta:(\Gamma,\mco_{\pm})\to(\Gamma,\mco_{\pm}), \ \ \ \beta(Q,P)=(Q',P'), \ \ \beta^2=Id,\label{defb}
\end{equation}
 which will be called the {\it billiard ball geodesic correspondence.}

Consider the following open subset in $T\gamma$: the unit ball bundle
$$\mct_{\leq1}\gamma:=\{ (Q,P)\in T\gamma \ | \ ||P||^2\leq1\}.$$ 
Let $\pi:(T\Sigma)|_{\gamma}\to T\gamma$ denote the mapping acting by orthogonal projections 
$$\pi:T_Q\Sigma\to T_Q\gamma, \ Q\in\gamma.$$ 
It induces the following projection also denoted by $\pi$: 
\begin{equation}\pi:\Gamma\to \mct_{\leq1}\gamma.\label{pigt}\end{equation}

Let $\mcv$ denote a convex domain with boundary containing $\gamma$. 
Every point $(Q,P)\in\mct_{\leq1}\gamma$ has two $\pi$-preimages $(Q,w_{\pm})$  in $\Gamma$: the 
vector $w_+$ ($w_-$)  is directed inside (respectively, outside) the domain $\mcv$. 
The vectors $w_{\pm}$ coincide, 
if and only if $||P||=1$, and in this case they  lie in $T_Q\gamma$. 
 Thus, the mapping 
$\pi:\Gamma\to\mct_{\leq1}\gamma$ 
has two continuous inverse branches. Let $\mu_+:=\pi^{-1}:\mct_{\leq1}\gamma\to\Gamma$ 
denote the inverse branch sending $P$ to $w_+$, cf. \cite[section 2]{mm}.  The above mappings define 
the germ of mapping 
\begin{equation}\delta_+:=\pi\circ\beta\circ\mu_+:(\mct_{\leq1}\gamma,\mco_{\pm})\to(\mct_{\leq1}\gamma,\mco_{\pm}).
\label{d+}\end{equation}
Recall that $\Gamma$ carries a canonical symplectic structure given by the 
above-mentioned symplectic reduction (as a cross-section), and $T\gamma$ carries 
the standard symplectic structure: the restriction to $T\gamma$ 
of the form $\omega=ds\wedge dy$. 
\begin{theorem} \label{tsym} \cite[subsection 1.5]{tab95}, \cite{melrose1, melrose2, 
ar2, ar3}
The mappings $\beta$, $\pi$, and hence,  $\delta_+$ given by (\ref{defb})--(\ref{d+}) respectively are symplectic. 
\end{theorem}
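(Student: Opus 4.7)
The plan is to handle $\pi$ first, then $\beta$, and conclude for $\delta_+$ by composition. The unifying observation is the explicit computation of the reduced symplectic form on $\Gamma$, viewed (where transverse to $V$) as a cross-section to the geodesic flow on $S$. First I would compute $\alpha|_\Gamma$ directly from definition (\ref{alp}). For $v\in T_{(Q,w)}\Gamma$ the projection $\Pi_*v$ lies in $T_Q\gamma$ because $\Pi(\Gamma)=\gamma$, so $\Pi_*v = ds(v)\,\dot\gamma(Q)$, and with $\sigma=\sigma(Q,w)=\langle w,\dot\gamma(Q)\rangle$ one gets $\alpha(v)=\langle w,\Pi_*v\rangle=\sigma\,ds(v)$. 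Hence $\alpha|_\Gamma=\sigma\,ds$, and the reduced form is
\begin{equation*}
\omega|_\Gamma = d(\sigma\,ds) = d\sigma\wedge ds = ds\wedge dy,
\end{equation*}
using $y=1-\sigma$ from (\ref{y(q,p)}). This is the same coordinate expression as in Proposition \ref{psy} for the standard form on $T\gamma$.

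Second, $\pi$ sends $(Q,w)\in\Gamma$ to $(Q,\sigma\,\dot\gamma(Q))\in\mct_{\leq 1}\gamma$, and thus in the coordinates $(s,\sigma)$ (equivalently $(s,y)$) it acts as the identity on each of its two continuous sheets. Therefore $\pi^*(ds\wedge dy)=ds\wedge dy=\omega|_\Gamma$, so $\pi$ and its local inverse $\mu_+$ are symplectic. For $\beta$, the key point is that by construction $(Q,w)$ and $\beta(Q,w)=(Q',w')$ lie on the same orbit of the geodesic flow on $S$. Choose small neighborhoods $U_1\ni(Q,w)$ and $U_2\ni(Q',w')$ in $\Gamma$, both of which are transverse to $V$; then $\beta|_{U_1}\colon U_1\to U_2$ is exactly the holonomy along the orbits joining $U_1$ to $U_2$. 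By Remark \ref{holin} this holonomy is a symplectomorphism for the reduced symplectic forms on the two cross-sections, and the first paragraph identifies both of these reductions with $\omega|_\Gamma$. Hence $\beta^*\omega|_\Gamma=\omega|_\Gamma$. Finally $\delta_+=\pi\circ\beta\circ\mu_+$ is a composition of symplectomorphisms.

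The only subtle point is that at the marked points $\mco_{\pm}=(O,\pm\dot\gamma(O))$ the vector $w$ is tangent to $\gamma$, so $\Gamma$ is not transverse to the geodesic flow at $\mco_{\pm}$ and $\mu_+$ degenerates there; this is also exactly where the two sheets of $\pi$ collide. However, the statement of the theorem concerns germs at $\mco_{\pm}$, and on the open subset of $\Gamma$ where $w\notin T_Q\gamma$ (i.e.\ $y>0$) all three maps are smooth and the identities $\pi^*\omega=\omega|_\Gamma$, $\beta^*\omega|_\Gamma=\omega|_\Gamma$, $\delta_+^*\omega=\omega$ hold by the preceding arguments; continuity then extends them to the full germs. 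The main technical care in the proof is therefore bookkeeping the two branches of $\pi$ and verifying the cross-section hypothesis required by Remark \ref{holin}; all the rest is the short computation $\alpha|_\Gamma=\sigma\,ds$.
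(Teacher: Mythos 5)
Your proof is correct and follows essentially the same route as the paper: $\pi$ is symplectic because its pullback of the tautological $1$-form on $T\gamma$ equals $\alpha|_\Gamma$ (your computation $\alpha(v)=\sigma\,ds(v)$ makes this explicit), $\beta$ is symplectic by holonomy invariance of the symplectic reduction (Remark \ref{holin}), and $\delta_+$ is symplectic as a composition. Your extra care about the degeneracy at $\mco_\pm$ and the two branches of $\pi$ is a reasonable elaboration of details the paper leaves implicit, but it does not change the argument.
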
 

\begin{proof} Symplecticity of the mapping $\beta$ follows from the definition of symplectic reduction and its holonomy invariance 
(Remark \ref{holin}). Symplecticity of the projection $\pi$ follows from definition 
and the fact that the $\pi$-pullback of the tautological 
1-form $\alpha$ on $T\gamma$ is the restriction to $\Gamma$ of the tautological 
1-form on $T\Sigma$. This implies symplecticity of $\mu_+=\pi^{-1}$,  
 and hence, $\delta_+$. 
\end{proof}

Let $I:\Gamma\to\Gamma$ denote the reflection involution 
$$I:(Q,P)\mapsto (Q,P^*),$$
$$Q\in\gamma, \  P^*:= \text{ the vector symmetric to } P \text{ with respect to the line } T_Q\gamma.$$
\begin{proposition} \label{psmi} The involution $I$ preserves the 
tautological 1-form $\alpha$, and hence, is symplectic. 
 The involutions $I$ and $\beta$ are $C^r$-smooth germs of mappings $(\Gamma,\mco_{\pm})\to
(\Gamma,\mco_{\pm})$, if the  Riemannian metric  and the curve 
 $\gamma$ are $C^{r+1}$-smooth. 
The mapping $\delta_+$ is conjugated to their product 
\begin{equation}\wt\delta_+:=I\circ\beta=\mu_+\circ\delta_+\circ\mu_+^{-1}.\label{prinv0}\end{equation}
\end{proposition}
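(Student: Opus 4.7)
The plan is to verify the three claims in order: (i) $I^*\alpha=\alpha$; (ii) $C^r$-regularity of $I$ and $\beta$; (iii) the identity $\wt\delta_+ = I\circ\beta = \mu_+\circ\delta_+\circ\mu_+^{-1}$.

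For (i), fix $v\in T_{(Q,P)}\Gamma$. Because $\Gamma=\Pi^{-1}(\gamma)\cap S$, the projected vector $\Pi_*v$ lies in $T_Q\gamma$; and since $I$ fixes the base point, $\Pi_*(I_*v)=\Pi_*v$. The reflection $P^*$ of $P$ across the line $T_Q\gamma$ has the same orthogonal projection onto $T_Q\gamma$ as $P$, hence $\langle P^*,w\rangle=\langle P,w\rangle$ for every $w\in T_Q\gamma$. Taking $w=\Pi_*v$ gives $(I^*\alpha)(v)=\langle P^*,\Pi_*v\rangle=\langle P,\Pi_*v\rangle=\alpha(v)$, so $I^*\omega=I^*d\alpha=\omega$ and $I$ is symplectic.

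For (ii), the involution $I$ is defined pointwise by reflection across $T_Q\gamma$ (an algebraic operation depending on the metric and on the tangent line of $\gamma$), so its regularity matches that of the metric and $\gamma$; it is $C^r$ whenever both are $C^{r+1}$. For $\beta$, the geodesic vector field on $T\Sigma$ is assembled from Christoffel symbols (one derivative of the metric), hence is $C^r$ when the metric is $C^{r+1}$, and so is its flow. Near $\mco_\pm$ the map $\beta$ is the germ of return to $\Gamma$ along this flow, with $\mco_\pm$ itself a fixed point (the tangent geodesic is its own image); smoothness of $\beta$ as a germ therefore follows directly from smoothness of the flow, without invoking the square-root degeneration that appears only once one projects via $\pi$ (compare Proposition \ref{propk-2}).

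For (iii), the key observation is that $\mu_+\circ\pi=I$ on the outward-directed part of $\Gamma$. Indeed, for $(Q,P)\in\Gamma$ with $P$ pointing outside $\mcv$, the vector $P^*=I(Q,P)$ is a unit vector with the same tangential projection as $P$ but pointing inside $\mcv$, hence it coincides with $w_+$, i.e.\ with $\mu_+(\pi(Q,P))$. Now $\beta$ sends an inward-directed vector at $Q$ to an outward-directed vector at $Q'$ (the geodesic exits the convex domain), so $\beta\circ\mu_+$ always lands in the outward part of $\Gamma$ where the preceding identity applies. Consequently
\[
\mu_+\circ\delta_+\circ\mu_+^{-1} \;=\; \mu_+\circ\pi\circ\beta\circ\mu_+\circ\mu_+^{-1} \;=\; \mu_+\circ\pi\circ\beta \;=\; I\circ\beta \;=\; \wt\delta_+,
\]
which also exhibits $\delta_+$ as symplectically conjugate to $I\circ\beta$ through the symplectomorphism $\mu_+$ (Theorem \ref{tsym}).

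The only substantive point is the geometric bookkeeping in step (iii): correctly identifying which inverse branch of $\pi$ produces $I$ requires tracking inward- versus outward-pointing unit vectors through $\beta$, and the conjugacy collapses to a one-line identity once one notes that $\beta\circ\mu_+$ automatically lands in the outward half of $\Gamma$, on which $\mu_+\circ\pi$ acts as $I$.
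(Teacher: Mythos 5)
The paper offers no proof beyond ``follows immediately from definitions,'' and your parts (i) and (iii) supply correct details in exactly that spirit: the identity $\langle P^*,w\rangle=\langle P,w\rangle$ for $w\in T_Q\gamma$ together with $\Pi\circ I=\Pi$ gives $I^*\alpha=\alpha$, and the observation that $\mu_+\circ\pi=I$ on the outward-pointing half of $\Gamma$, combined with $\beta(\text{inward})\subset\text{outward}$, is the right way to collapse $\mu_+\circ\delta_+\circ\mu_+^{-1}$ to $I\circ\beta$.

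The gap is in part (ii), in your treatment of $\beta$. You describe $\beta$ near $\mco_{\pm}$ as the return map of the geodesic flow to $\Gamma$ and conclude that its smoothness ``follows directly from smoothness of the flow.'' But $\Gamma=\Pi^{-1}(\gamma)\cap S$ fails to be transverse to the geodesic flow precisely at the points $(Q,\pm\dot\gamma(Q))$: the flow vector at $(Q,P)$ projects to $P$, hence is tangent to $\Gamma$ exactly when $P\in T_Q\gamma$ --- that is, at $\mco_{\pm}$ itself, the base point of the germ. (The paper is explicit that $\Gamma$ is a cross-section only at points where $P$ is transverse to $\gamma$.) A return map to a section that is tangent to the flow at the base point is not automatically smooth there: if $h$ is a defining function of $\gamma$ and $F(Q,P,t):=h(\Pi(g^t(Q,P)))$, then $t=0$ is always a root of $F$, the root you want collides with it as $P$ becomes tangent, and the implicit function theorem cannot be applied to $F$ directly at $\mco_\pm$.

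What is missing is the division step: write $F(Q,P,t)=t\,G(Q,P,t)$ with $G(Q,P,0)=\partial_tF(Q,P,0)$, check that $G$ retains the regularity of $F$ (time-derivatives of a flow do not lose smoothness), and observe that $\partial_tG(Q,P,0)=\tfrac12\partial_t^2F(Q,P,0)\neq0$ by the nonvanishing of the geodesic curvature of $\gamma$, so that the second intersection time is a simple zero of $G$ and the implicit function theorem applies to $G$. Equivalently, one shows that the chord map $(Q,Q')\mapsto(Q,\ \text{direction at }Q\text{ of the geodesic }QQ')$ extends to a $C^r$ diffeomorphism across the diagonal. This use of the curvature hypothesis is the only nontrivial content of the regularity claim for $\beta$, and it is a different degeneration from the square-root behaviour of $\pi$ that you correctly set aside; dismissing both at once is where your argument would fail.
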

The proposition follows immediately from definitions.

 The billiard transformation $T$ of reflection from the curve $\gamma$ 
 acts on the space of oriented geodesics that intersect $\gamma$ and are close enough to the 
 geodesic tangent to $\gamma$ at $O$. Each of them intersects $\gamma$ at two points. To each oriented geodesic $G$ 
 we put into correspondence a point $(Q,P)\in\Gamma=(\mct_1\Sigma)|_{\gamma}$, where $Q$ is its first intersection point with 
 $\gamma$ (in the sense of the orientation of the geodesic $G$) and $P$ is the orienting unit vector tangent to $G$ at $Q$. 
 This is a locally bijective correspondence. 
 
 \begin{proposition} \label{bma} Let the  metric and the curve $\gamma$ be  
 $C^3$-smooth. The billiard mapping $T$ written as a mapping $\Gamma\to\Gamma$ via the above correspondence 
 coincides with $\wt\delta_+$. 
 Consider the coordinates $(s,\phi)$ on  $\Gamma$: 
 $s=s(Q)$ is the natural length parameter of a point $Q\in\gamma$; $\phi=\phi(Q,P)$ is the oriented angle of the vector $\dot\gamma(Q)$ 
 with a vector $P\in T_Q\Sigma$. 
 In the coordinates $(s,\phi)$ the mappings $I$, $\beta$ and $T=\wt\delta_+$ are $C^2$-smooth and take the form
 \begin{equation}I(s,\phi)=(s,-\phi), \ \beta(s,\phi)=(s+2\kappa^{-1}(s)\phi+O(\phi^2), -\phi+O(\phi^2)).\label{beti}\end{equation}
 \begin{equation} \wt\delta_+(s,\phi)=(s+2\kappa^{-1}(s)\phi+O(\phi^2), \phi+O(\phi^2)).\label{bilike}\end{equation}
 The asymptotics are uniform in $s$, as $\phi\to0$. In the coordinates 
 \begin{equation} (s,y),  \ \ \ y=1-\cos\phi,\label{ycos}\end{equation}
  see (\ref{y(q,p)}), the billiard mapping $T$ coincides with $\delta_+$ and takes the form
 \begin{equation}\delta_+(s,y)=(s+2\sqrt 2\kappa^{-1}(s)\sqrt y+O(y), y+O(y^{\frac32})).\label{deltsy}\end{equation}
\end{proposition}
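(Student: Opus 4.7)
The plan is to verify the two halves of the proposition in sequence: first the algebraic identity $T=\wt\delta_+=I\circ\beta$ together with basic smoothness, and then the leading-order asymptotics in the two coordinate systems. For the first half I would unwind the correspondence $G\leftrightarrow(Q,P)\in\Gamma$: given an oriented geodesic $G$, the point $Q$ is its first intersection with $\gamma$ in the orientation of $G$, and the orienting unit tangent $P$ at $Q$ therefore points into the convex side. Following $G$ forward, the next intersection $Q'$ carries the continued tangent vector $P'$, which by construction points outward, and $(Q',P')=\beta(Q,P)$. The billiard reflection replaces the outward arrival vector $P'$ by its mirror image $I(P')$, yielding the new ``first intersection'' data $(Q',I(P'))=I(\beta(Q,P))=\wt\delta_+(Q,P)$. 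Together with the conjugacy (\ref{prinv0}) this identifies $T$ with $\wt\delta_+$ on $\Gamma$ and with $\delta_+$ on $\mct_{\leq 1}\gamma$.

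The formula $I(s,\phi)=(s,-\phi)$ is immediate from the definition of $I$ as orthogonal reflection through $T_Q\gamma$. Smoothness of $I$, $\beta$ and $\wt\delta_+$ reduces to smoothness of $\beta$. With the metric and $\gamma$ being $C^{3}$, the Christoffel symbols are $C^{2}$ so the exponential map is $C^{2}$ in its initial data; away from $\phi=0$ the second intersection with $\gamma$ is transverse and the implicit function theorem gives $C^{2}$-dependence. To handle the tangential fibre $\phi=0$, where the second intersection collapses onto the first, I would pass to normal coordinates at a varying base point $Q\in\gamma$ with $x$-axis tangent to $\gamma$; by Proposition \ref{kappa=}, $\gamma$ is locally the graph $y=\tfrac{\kappa(s)}{2}x^{2}+o(x^{2})$, the geodesic issued at angle $\phi$ is a straight line up to a curvature perturbation of order $O(r^{2})$, and the defining equation for the second intersection factors out a simple zero at $x=0$, so the non-trivial root (and hence $\beta$) is a $C^{2}$ function of $(s,\phi)$ up to $\phi=0$.

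For the leading-order behaviour of $\beta$, the straight-line approximation to the geodesic in the normal chart hits the parabola $y=\tfrac{\kappa(s)}{2}x^{2}$ at $x\simeq 2\phi/\kappa(s)$, giving $s'-s=2\kappa^{-1}(s)\phi+O(\phi^{2})$ once Riemannian and Euclidean arc lengths are compared to third order, as in Claim~1 of the proof of Theorem~\ref{lasyl}. For the exit angle I would compare with the osculating circle at $Q$: on such a circle a chord meeting one tangent at angle $\phi$ meets the other tangent at angle $-\phi$ under the stated orientation conventions, and a $C^{3}$-perturbation from the osculating circle to $\gamma$ contributes only $O(\phi^{2})$, producing $\beta(s,\phi)=(s+2\kappa^{-1}(s)\phi+O(\phi^{2}),\,-\phi+O(\phi^{2}))$ uniformly in $s$ on compact subarcs; composition with $I$ flips the second component and yields the stated formula for $\wt\delta_+$.

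The passage to $(s,y)$ is then a substitution: $y=1-\cos\phi=\tfrac{\phi^{2}}{2}+O(\phi^{4})$, so $\phi=\sqrt{2y}\bigl(1+O(y)\bigr)$. The first component becomes $s+2\kappa^{-1}(s)\sqrt{2y}\bigl(1+O(\sqrt{y})\bigr)=s+2\sqrt{2}\,\kappa^{-1}(s)\sqrt{y}+O(y)$, and the second becomes $y'=1-\cos(\phi+O(\phi^{2}))=1-\cos\phi+O(\phi^{3})=y+O(y^{3/2})$, giving (\ref{deltsy}). I expect the principal technical obstacle to be the careful sign-and-orientation bookkeeping in the exit-angle computation, namely confirming that the outgoing tangent at $Q'$ genuinely makes angle $-\phi+O(\phi^{2})$ (rather than $\pi-\phi$ or $+\phi$) with $\dot\gamma(Q')$ under the chosen conventions, together with verifying that the $O(\phi^{2})$ remainders in both components are truly uniform in $s$ on compact subarcs under only $C^{3}$-regularity of the metric and curve.
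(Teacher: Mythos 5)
Your proposal is correct and follows essentially the same route as the paper: the paper's proof also reduces the chord-length formula $\lambda(Q,Q')=2\kappa^{-1}(Q)\phi+O(\phi^2)$ to its Euclidean analogue in normal coordinates centered at $Q$ via Proposition \ref{kappa=} and smoothness, and then obtains (\ref{deltsy}) by the substitution $y=\tfrac{\phi^2}{2}+O(\phi^4)$. You merely fill in more detail (the explicit exit-angle bookkeeping and the smoothness across $\phi=0$) than the paper, which disposes of those points by citing the definitions and Proposition \ref{psmi}.
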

\begin{proof} All the statements of the proposition except for the formulas follow from definition. Formula (\ref{beti}) follows 
from the definitions of the mappings $I$ and $\beta$:  a geodesic issued from a point $Q\in\gamma$ at 
a small angle $\phi$ with the tangent vector $\dot\gamma(Q)$ intersects 
$\gamma$ at a point $Q'$ such that $\lambda(Q,Q')=2\kappa^{-1}(Q)\phi+O(\phi^2)$. The latter formula 
follows from its Euclidean analogue (applied to the curve $\gamma$ 
represented in normal coordinates centered at $Q$), Proposition \ref{kappa=} 
 and smoothness. 
Formulas (\ref{beti}) and (\ref{prinv0}) imply (\ref{bilike}), which in its turn implies (\ref{deltsy}), since $y=\frac{\phi^2}2+O(\phi^4)$. 
\end{proof}

\subsection{Families of billiard-like maps with invariant curves. A symplectic version of Theorem \ref{ttk}}

In this and the next subsections we study the following class of area-preserving mappings generalizing the billiard mappings 
represented in the coordinates $(s,y)$, see (\ref{deltsy}). 
\begin{definition} \label{tdw} A {\it  weakly billiard-like  map} is 
a germ of mapping preserving the standard area form 
$dx\wedge dy$, 
$$F:(\rr\times\rr_{\geq0},(0,0))\to(\rr\times\rr_{\geq0},(0,0)),$$
\begin{equation}F=(f_1,f_2):(x,y)\mapsto(x+w(x)\sqrt y+O(y),y+O(y^{\frac32})),  \ w(x)>0,\label{fbm}\end{equation}
for which the $x$-axis is a line of fixed points and such that 
the variable change 
$$(x,y)\mapsto (x,\phi), \ y=\phi^2$$ 
conjugates $F$ with a $C^2$-smooth germ $\wt F(x,\phi)$. The above 
asymptotics are uniform in $x$, as $y\to0$.  If, in addition to the above 
assumptions, the latter mapping $\wt F$ 
is a product of two involutions:
$$\wt F=I\circ\beta, \ I(x,\phi)=(x,-\phi),$$
\begin{equation}\beta(x,\phi)=(x+w(x)\phi+O(\phi^2), -\phi+O(\phi^2)), \ \beta^2=Id,
\label{prinv}\end{equation}
then $F$ will be called {\it a (strongly) billiard-like  map.}
\end{definition}

\begin{example} \label{exdel} 
The mapping $\delta_+$ from (\ref{deltsy}) 
is strongly billiard-like in the coordinates 
$(s,y)$ with $w(s)=2\sqrt 2\kappa^{-1}(s)$, see  (\ref{prinv0}), (\ref{bilike}) 
and (\ref{deltsy}). 
\end{example}
The next definition generalizes the notion of curve 
with Poritsky property to weakly billiard-like maps. 

\begin{definition} A family $F_{\var}(x,y)$ of 
weakly billiard-like maps (\ref{fbm}) 
 depending on a parameter 
$\var\in[0,\var_0]$ is of {\it string type,} if the derivatives 
up to order 2 of the corresponding mappings 
$\wt F_\var(x,\phi)$ are continuous in $(x,\phi,\var)$ on a product 
$\{|x|\leq\delta_0\}\times[0,\phi_0]\times[0,\var_0]$ and  
there exist a $\delta\in(0,\delta_0]$ and a family $\gamma_\var$ of 
$F_\var$-invariant  graphs of continuous functions $h_\var:[-\delta,\delta]
\to\rr_{\geq0}$,
\begin{equation}\gamma_\var=\{ y=h_\var(x)\},\label{gae}
\end{equation}
 such that  $\gamma_\var$ converge to 
the $x$-axis: $h_{\var}(x)\to0$ uniformly 
on  $[-\delta,\delta]$.
\end{definition}

\begin{example} Let $\gamma\subset\Sigma$ be 
a germ of curve with positive geodesic curvature such that 
the corresponding string construction curves $\Gamma_p$ are $C^3$-smooth  
and their 3-jets depend continuously on the base points. 
(For example, this holds automatically in the case, when 
the curve $\gamma$ and the metric are $C^6$-smooth, see Theorem 
\ref{thsm}.) Then the family of  billiard 
reflection maps from the curves $\Gamma_p$ is a 
string type family. The invariant curves $\gamma_p$ from (\ref{gae}) are 
identified with one and the same curve in the space of oriented geodesics: 
the family of geodesics tangent to the curve $\gamma$ and oriented by its 
tangent vectors $\dot\gamma$. See Subsection 7.5 for more details.
\end{example}

The next theorem extends Theorem \ref{ttk} on 
coincidence of Poritsky and Lazutkin parameters to 
the string type families of weakly billiard maps.

\begin{theorem} \label{tsympor} Let $F_\var(x,y)$ be 
a string type family of weakly billiard maps. Let for every $\var$ 
small enough 
there  exist a continuous 
strictly increasing parameter $t_\var$ on $\gamma_\var$ 
in which $F|_{\gamma_\var}$ is a translation 
by $\var$-dependent constant,
\begin{equation}t_\var\circ F|_{\gamma_\var}=
t_\var+c(\var),\label{tefe}\end{equation}
such that the parameter $t_\var=t_\var(x)$ 
considered as a function of $x$  converges to a strictly increasing function 
$t_0(x)$ uniformly on $[-\delta,\delta]$, as $\var\to0$. Then 
\begin{equation} t_0=a X+b, \ 
X:=\int_0^xw^{-\frac23}(z)dz, \ 
a,b\equiv const.\label{t0por}\end{equation}
Here $w=w_0(x)$ is the function from (\ref{fbm}) 
corresponding to the mapping $F_\var$ with $\var=0$. 
\end{theorem}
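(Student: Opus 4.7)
The strategy parallels the proof of Theorem~\ref{ttk}: the Poritsky parameter $t_0$ is forced, up to an affine change, to coincide with the leading-order Lazutkin parameter $X(x)=\int_0^x w^{-2/3}(z)\,dz$ of the asymptotic dynamics. I would first apply Theorem~\ref{thlame} to pass to Lazutkin coordinates $(X,Y)$ adapted to $F_0$, with $X$ as above and $Y$ a symplectic conjugate variable vanishing on the $x$-axis (with leading behavior $Y \sim w^{1/3}(x)\sqrt{y}$). In these coordinates $F_\varepsilon$ is a small perturbation of the integrable twist $(X,Y)\mapsto (X+Y,Y)$, with remainder controlled uniformly in $\varepsilon$ by the continuity hypothesis on the family.

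The crucial second step is to use Lemma~\ref{plog} to show that in Lazutkin coordinates every orbit of $F_\varepsilon$ on $\gamma_\varepsilon$ has asymptotically constant $X$-step: more precisely, for every $P\in\gamma_\varepsilon$ and every $n$ such that $P, F_\varepsilon(P),\dots,F_\varepsilon^{n}(P)$ remain in a fixed compact subarc of $\gamma_\varepsilon$,
\[
X\bigl(F_\varepsilon^{n}(P)\bigr) - X(P) = n\,\Delta(\varepsilon)\bigl(1+o(1)\bigr)
\]
as $\varepsilon\to 0$, uniformly in $P$ and $n$, for a single step-length $\Delta(\varepsilon)\to 0$ depending only on $\varepsilon$. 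This is the analogue for weakly billiard-like families of Corollary~\ref{cttk2} and follows from Lemma~\ref{plog} combined with $F_\varepsilon$-invariance of $\gamma_\varepsilon$ and the fact that $\gamma_\varepsilon$ collapses to the $x$-axis.

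Now fix two points with abscissae $x_P < x_Q$ in $[-\delta,\delta]$, and let $P_\varepsilon, Q_\varepsilon\in\gamma_\varepsilon$ denote the points with these abscissae. For each small $\varepsilon$ choose the largest $n_\varepsilon\in\mathbb{N}$ with $X\bigl(F_\varepsilon^{n_\varepsilon}(P_\varepsilon)\bigr)\le X(x_Q)$; by Step~2 one has $n_\varepsilon\Delta(\varepsilon)\to X(x_Q)-X(x_P)$ as $\varepsilon\to 0$, and the abscissa of $F_\varepsilon^{n_\varepsilon}(P_\varepsilon)$ tends to $x_Q$. The Poritsky condition~(\ref{tefe}) yields $t_\varepsilon\bigl(F_\varepsilon^{n_\varepsilon}(P_\varepsilon)\bigr)-t_\varepsilon(P_\varepsilon) = n_\varepsilon c(\varepsilon)$, whose left side tends to $t_0(x_Q)-t_0(x_P)$ by the uniform convergence $t_\varepsilon\to t_0$. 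Hence the ratio $c(\varepsilon)/\Delta(\varepsilon)$ admits a limit $a$ (independent of the chosen pair), and
\[
t_0(x_Q) - t_0(x_P) = a\bigl(X(x_Q) - X(x_P)\bigr),
\]
which, setting $b:=t_0(0)$, is exactly the assertion $t_0 = aX + b$.

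\textbf{The main obstacle} is the uniform asymptotic equality of $X$-steps along orbits on $\gamma_\varepsilon$ claimed in Step~2. Proving it requires Lemma~\ref{plog} to deliver orbit asymptotics that are uniform in both $\varepsilon$ and the basepoint, and it relies on Theorem~\ref{thlame} providing a normal form whose error in the second component is sharper than a naive $O(Y^{3/2})$: otherwise the ratio of consecutive $X$-steps along a given orbit would only be bounded, not asymptotically $1$, and the comparison at the end would only yield $t_0/X$ bounded between two constants rather than constant. Once this uniform flatness of $F_\varepsilon|_{\gamma_\varepsilon}$ in Lazutkin coordinates is secured, the final telescoping and comparison argument is straightforward averaging.
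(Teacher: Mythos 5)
Your proposal is correct and follows essentially the same route as the paper: pass to the modified Lazutkin coordinates of Theorem \ref{thlame}, invoke Lemma \ref{plog} (with its addendum for uniformity in $\varepsilon$) to get that the $X$-steps along an orbit on $\gamma_\varepsilon$ are all asymptotically equal to a single quantity, and then compare the iterate count $n_\varepsilon$ against both the arithmetic progression $t_\varepsilon\circ F^j = t_\varepsilon + jc(\varepsilon)$ and the $X$-increments to force $t_0$ to be affine in $X$. The paper packages the final comparison as an equality of ratios of $t_0$- and $X$-lengths of two disjoint segments through the common ratio of iterate counts, rather than through the limit of $c(\varepsilon)/\Delta(\varepsilon)$, but this is the same counting argument, and you correctly identify the $o(Y^{3/2})$ sharpness of the normal form as the point that makes the steps asymptotically equal rather than merely comparable.
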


Theorem \ref{tsympor} is proved in Subsection 7.4. 

\subsection{Modified Lazutkin coordinates $(X,Y)$}

In the proof of Theorem \ref{tsympor} we use the following 
well-known theorem. 

\begin{theorem} \label{thlame} Let $F$ be a 
weakly billiard-like map $F$, and let $w(x)$ be the 
corresponding function in (\ref{fbm}). The transformation 
\begin{equation}
\mcl:(x,y)\mapsto(X,Y), \ \begin{cases} X(x)=\int_0^x
w^{-\frac23}(z)dz\\
Y(x,y):=w^{\frac23}(x)y\end{cases}\label{lazz}
\end{equation}
is symplectic and 
conjugates $F$ to  a mapping 
\begin{equation}F_L:(X,Y)\mapsto(X+\sqrt Y+O(Y), 
 Y+o(Y^{\frac32})), \ \text{ as } Y\to0.\label{flxy}\end{equation}
 The latter asymptotics are uniform in $X$. 
 The coordinates $(X,Y)$ will be called the {\bf modified Lazutkin 
 coordinates}. 
 \end{theorem}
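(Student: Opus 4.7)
The plan has three parts: (i) check symplecticity of $\mathcal L$ by a direct differential calculation; (ii) conjugate $F$ by $\mathcal L$ and expand to the required order in $\sqrt Y$; (iii) use area preservation, lifted to the $C^2$-smooth chart $(x,\phi)$ with $y = \phi^2$, to kill the would-be $Y^{3/2}$ coefficient in the second component. Symplecticity is immediate: from $dX = w^{-2/3}(x)\,dx$ and $dY = \tfrac{2}{3}w^{-1/3}(x)w'(x)y\,dx + w^{2/3}(x)\,dy$, the cross term drops and $dX\wedge dY = dx\wedge dy$.

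For the first component of $F_L$, I will use $\mathcal L^{-1}(X,Y) = (x, w^{-2/3}(x)Y)$, so $\sqrt y = w^{-1/3}(x)\sqrt Y$; substituting into $f_1(x,y) = x + w(x)\sqrt y + O(y)$ gives $f_1 - x = w^{2/3}(x)\sqrt Y + O(Y)$, and Taylor expanding $X(x') - X(x) = \int_x^{x'}w^{-2/3}$ yields $X' = X + \sqrt Y + O(Y)$ uniformly in $X$. This step is mechanical. For the second component, I will work in the $(x,\phi)$-chart, where by hypothesis $\tilde F$ is $C^2$ and fixes the $\phi=0$ axis pointwise. Taylor expansion at $\phi=0$ therefore reads
\begin{equation*}
\tilde F(x,\phi) = \bigl(x + w(x)\phi + a_2(x)\phi^2 + o(\phi^2),\ \phi + b_2(x)\phi^2 + o(\phi^2)\bigr)
\end{equation*}
with continuous coefficients $a_2,b_2$. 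Translating back to $(x,y)$ and carefully expanding $Y' = w^{2/3}(f_1)\,f_2$ (using $(\phi')^2 = \phi^2 + 2b_2\phi^3 + o(\phi^3)$ and $w^{2/3}(f_1) = w^{2/3}(X) + \tfrac{2}{3}w^{1/3}(X)w'(X)\sqrt Y + O(Y)$) gives
\begin{equation*}
Y' = Y + w^{-1/3}(X)\bigl[2b_2(X) + \tfrac{2}{3}w'(X)\bigr]Y^{3/2} + o(Y^{3/2}),
\end{equation*}
uniformly in $X$.

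The main obstacle is to show that the bracketed coefficient vanishes identically; this is the whole reason symplecticity is built into the definition of a weakly billiard-like map. Since $dx\wedge dy = 2\phi\,dx\wedge d\phi$, the lift $\tilde F$ preserves the degenerate form $\phi\,dx\wedge d\phi$, equivalently $\phi'\cdot J = \phi$ with $J = \det D\tilde F$. Writing $x' = x + u(x,\phi)$, $\phi' = v(x,\phi)$ and using $u(x,0)=v(x,0)=0$, $u_\phi(x,0)=w(x)$, $v_\phi(x,0)=1$, two applications of $\partial_\phi$ to $\phi'J = \phi$ at $\phi=0$ yield $v_{\phi\phi}(x,0) + 2J_\phi(x,0) = 0$. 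A short calculation gives $J_\phi(x,0) = w'(x) + v_{\phi\phi}(x,0)$, and with $v_{\phi\phi}(x,0) = 2b_2(x)$ this collapses to $3b_2(x) + w'(x) = 0$. Substituting $b_2 = -\tfrac{1}{3}w'$ annihilates the bracket, so $Y' = Y + o(Y^{3/2})$. Uniformity of the remainder in $X$ follows from uniformity of the $C^2$-Taylor remainder of $\tilde F$, which is part of Definition \ref{tdw}.
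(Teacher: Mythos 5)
Your proposal is correct and follows essentially the same route as the paper: the paper's preliminary proposition likewise writes $\wt f_2(x,\phi)=\phi+c(x)\phi^2+o(\phi^2)$ in the $C^2$-chart and uses unit Jacobian determinant to force $c(x)=-\tfrac13 w'(x)$, which is exactly your identity $3b_2+w'=0$ (you verify area preservation via $\phi'\cdot J=\phi$ in the $(x,\phi)$-chart, the paper via the determinant of $DF$ in $(x,y)$ — the same computation organized differently). The remaining substitutions for the $X$- and $Y$-components match the paper's line by line.
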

 A version of Theorem \ref{thlame}  is implicitly contained in \cite{laz, mm}. For 
 completeness of presentation, we 
 present its proof below. In its proof we use the following 
 proposition. 
 
 \begin{proposition} The $y$-component of 
 a weakly billiard-like map (\ref{fbm}) admits the following 
 more precise formula: 
 \begin{equation} f_2(x,y)=y-\frac23w'(x)y^{\frac32}+
 o(y^{\frac32}).\label{f2y}\end{equation}
 \end{proposition}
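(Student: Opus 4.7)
The plan is to exploit area preservation of $F$ together with the $C^2$-smoothness of its conjugate $\wt F$ in the $(x,\phi)$ coordinates (where $y=\phi^2$). First I would write $\wt F$ in its most general $C^2$ expansion compatible with (\ref{fbm}):
\begin{equation*}
\wt F(x,\phi) = \bigl(x + w(x)\phi + a(x)\phi^2 + o(\phi^2),\ \phi + b(x)\phi^2 + o(\phi^2)\bigr),
\end{equation*}
where $a$ and $b$ are continuous functions to be determined, and the $o(\phi^2)$ terms have derivatives of order $o(\phi)$ in $x$ by $C^2$-smoothness. The goal is to identify $b(x) = -\tfrac{1}{3}w'(x)$, from which (\ref{f2y}) follows immediately via $f_2 = \Phi^2$, squaring to obtain $\phi^2 - \tfrac{2}{3}w'(x)\phi^3 + o(\phi^3)$ and re-substituting $y = \phi^2$.

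Next I would pull back the area form under $(x,\phi) \mapsto (x,\phi^2)$, noting that $dx \wedge dy = 2\phi\, dx \wedge d\phi$. Area preservation of $F$ is therefore equivalent to $\wt F^{*}(2\phi\, dx \wedge d\phi) = 2\phi\, dx \wedge d\phi$, i.e.\ to the pointwise identity
\begin{equation*}
\Phi(x,\phi)\, \det D\wt F(x,\phi) = \phi,
\end{equation*}
where $\Phi$ denotes the second component of $\wt F$.

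Then I would compute $\det D\wt F$ by direct differentiation: the diagonal entries contribute $(1+w'(x)\phi)(1+2b(x)\phi)$ up to $O(\phi^2)$, while the off-diagonal entries produce only $O(\phi^2)$ terms (because $\partial_x \Phi = b'(x)\phi^2 + o(\phi^2)$ starts at order $\phi^2$). Expanding gives $\det D\wt F = 1 + (w'(x) + 2b(x))\phi + O(\phi^2)$. Multiplying by $\Phi = \phi + b(x)\phi^2 + o(\phi^2)$ and collecting the $\phi^2$-coefficient of $\Phi \det D\wt F$ yields $w'(x) + 3b(x)$, which must vanish to match the right-hand side $\phi$. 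Hence $b(x) = -\tfrac{1}{3}w'(x)$, and squaring completes the proof.

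This is essentially a bookkeeping argument, so there is no real obstacle; the only delicate point is justifying that the $o(\phi^2)$ remainder terms genuinely contribute only to orders higher than $\phi^2$ in the Jacobian identity, which is guaranteed by the hypothesis that $\wt F$ is $C^2$ (so the remainders are differentiable with $o(\phi)$ derivatives in both variables).
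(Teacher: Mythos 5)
Your proposal is correct and follows essentially the same route as the paper: both arguments extract the coefficient of $\phi^2$ in the second component of $\wt F$ and pin it down by forcing the Jacobian determinant to equal $1$, yielding $w'(x)+3b(x)=0$ and hence $b=-\tfrac13 w'$. The only cosmetic difference is that you compute the Jacobian of $\wt F$ in the $(x,\phi)$ chart against the weighted form $2\phi\,dx\wedge d\phi$, while the paper differentiates $F$ directly in $(x,y)$; one small precision worth making is that the needed bounds on the remainders come from Taylor-expanding the first partial derivatives themselves (e.g.\ $\partial_x\Phi(x,\phi)=o(\phi)$ since $\partial_x\Phi(x,0)=\partial_\phi\partial_x\Phi(x,0)=0$), rather than from differentiating the $o(\phi^2)$ remainder, which $C^2$-smoothness alone does not license.
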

 \begin{proof}
Indeed, consider the asymptotic Taylor expansion of the $C^2$-smooth lifting $\wt f_2(s,\phi)$ of the 
mapping $f_2$:
$$\wt f_2(x,\phi)=\phi+c(x)\phi^2+o(\phi^2); \ 
\phi=\sqrt y, \ \wt f_2=\sqrt f_2.$$
Hence, 
\begin{equation}f_2(x,y)=\wt f_2^2(x,\phi)= 
y(1+c(x)\sqrt y+o(\sqrt y))^2=y+
2c(x) y^{\frac32}+o(y^{\frac32}),\label{f22}\end{equation}
$$\frac{\partial f_2}{\partial y}(x,y)=\frac1{\sqrt y}\wt f_2
\frac{\partial \wt f_2}{\partial\phi}(s,\phi)=1+3c(x)\sqrt y+
o(\sqrt y).$$
This together with analogous calculations of the other 
partial derivatives, 
$$\frac{\partial f_1}{\partial x}=1+w'(x)\sqrt y+O(y),$$
$$\frac{\partial f_1}{\partial y}=O(y^{-\frac12}), 
\frac{\partial f_2}{\partial x}=2\wt f_2(x,\phi)\frac{\partial\wt f_2}{\partial x}(x,\phi)=o(\phi^2)=o(y),$$
shows that the determinant of the Jacobian matrix 
of the mapping $F(x,y)$ equals $1+(w'(x)+3c(x))\sqrt y+
o(\sqrt y)$. But it should be equal to 1, by symplecticity. 
Therefore, $c(x)=-\frac13w'(x)$. This together with 
(\ref{f22}) proves the proposition.
\end{proof}
 
 \begin{proof} {\bf of Theorem \ref{thlame}.} 
 Symplecticity of the transformation 
 $\mcl$ follows from definition. Let us show that 
 it conjugates $F$ to a mapping $F_L$ as in (\ref{flxy}). 
 One has 
 $$X\circ F(x,y)=X+\int_x^{x+w(x)\sqrt y+O(y)}
 w^{-\frac23}(z)dz$$
 \begin{equation}=X+w(x)w^{-\frac23}(x)\sqrt y +O(y)=
 X+\sqrt Y+O(Y),\label{xcircf}\end{equation}

$$Y\circ F(x,y)=w^{\frac23}(f_1(x,y))f_2(x,y)$$
$$=
w^{\frac23}(x+w(x)\sqrt y+o(\sqrt y))y(1-
\frac23w'(x)y^{\frac12}+o(y^{\frac12})).$$
Substituting the expressions $y=Yw^{-\frac23}(x)$ and 
$$w^{\frac23}(x+w(x)\sqrt y+o(\sqrt y))=
w^{\frac23}(x)+\frac23w^{-\frac13}(x)w'(x)w(x)\sqrt y+o(\sqrt y)$$
$$=w^{\frac23}(x)(1+\frac23w'(x)\sqrt y+o(\sqrt y))$$
to the above formula yields
$$Y\circ F(x,y)=Y(1+\frac23w'(x)y^{\frac12}+o(y^{\frac12}))
(1-\frac23w'(x)y^{\frac12}+o(y^{\frac12}))=Y+o(Y^{\frac32}).
$$
This together with (\ref{xcircf}) proves 
Theorem \ref{thlame}.
 \end{proof}
 
 \begin{lemma} \label{plog} Let $F(x,y)$ be a weakly billiard-like map, 
 and let $(X,Y)$ be the corresponding modified Lazutkin 
 coordinates; we write $F$ in the coordinates $(X,Y)$. 
 Let the mapping be defined 
 and the asymptotics 
 from Theorem \ref{thlame} hold for $(X,Y)\in V_{\Delta,\eta}:=[-\Delta,\Delta]\times[0, \eta]$, $0<\eta<1$.   For every $\delta>0$, $\delta<\Delta$, 
 and every $q_0\in V_{\delta,\eta}$ set 
 \begin{equation} 
 q_j:=F^j(q_0), \ m=m(q_0):=\max\{ j\in\nn \ | \ 
 q_i\in V_{\delta,\eta} \text{ for every } i\leq j\}.\label{defmq0}\end{equation}
There exist  positive non-decreasing functions 
  $\alpha=\alpha(Y)$, $\beta=\beta(Y)$ in small $Y>0$, $\alpha(Y),\beta(Y)\to0$, as $Y\to0$, 
   such that for every $\wt\eta\in(0,\eta)$ small enough for every $q_0\in V_{\delta,\wt\eta}$ the following statements hold:
   
   1)  One has 
    \begin{equation} |\ln\left(\frac{Y(q_j)}{Y(q_0)}\right)|<\alpha(Y(q_0)) \text{ for every  
 } j\leq m(q_0).\label{logy}\end{equation}
 
 2) The number $m(q_0)$ is exactly 
 the biggest number $m$ such that $X(q_j)\leq\delta$ 
 for every $j\leq m$. 
 
 3) For every $j=1,\dots,m(q_0)$ one has 
 $$e^{-\beta(Y(q_0))}Y^{\frac12}(q_0)
 \leq X(q_j)-X(q_{j-1})\leq e^{\beta(Y(q_0))}Y^{\frac12}(q_0),$$ 
 \begin{equation} je^{-\beta(Y(q_0))}Y^{\frac12}(q_0)
 \leq X(q_j)-X(q_0)\leq je^{\beta(Y(q_0))}Y^{\frac12}(q_0).
 \label{ineqxq}\end{equation}
 \end{lemma}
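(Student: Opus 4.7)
The plan is a bootstrap argument in the modified Lazutkin coordinates $(X,Y)$, exploiting that along an orbit of $F_L$ the $Y$-coordinate drifts very slowly while the $X$-coordinate advances by roughly $\sqrt{Y}$ per step. The two functions $\alpha$ and $\beta$ of the conclusion will measure, respectively, the cumulative logarithmic drift of $Y$ and the resulting spread of the $X$-increments; both tend to $0$ as $Y\to 0$ thanks to the $o(\cdot)$ remainders in (\ref{flxy}).

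First, I would rewrite the $Y$-asymptotic in (\ref{flxy}) as
$$Y(F_L(X,Y)) \;=\; Y\bigl(1+\rho(X,Y)\bigr), \qquad \rho(X,Y)=o(Y^{1/2}),$$
uniformly in $X\in[-\Delta,\Delta]$, and set $\psi(Y):=\sup\{|\rho(X,Y')|:|X|\le\Delta,\;0<Y'\le Y\}$, which is still $o(Y^{1/2})$ and monotone. For an orbit starting at $q_0\in V_{\delta,\wt\eta}$ write $Y_j=Y(q_j)$, $X_j=X(q_j)$, $g_j=\ln(Y_j/Y_0)$. Applying $\ln(1+t)=t+O(t^2)$ to $Y_{j+1}/Y_j=1+\rho(q_j)$ gives $|g_{j+1}-g_j|\le C\psi(Y_j)$.

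Second, I would run the bootstrap. Let $M\le m(q_0)$ be the first index with $|g_j|>1$; while $j\le M$ one has $Y_j\in[e^{-1}Y_0,eY_0]$. The $X$-part of (\ref{flxy}) gives $X_{j+1}-X_j=\sqrt{Y_j}+O(Y_j)$, which for $Y_0$ below some threshold lies in $[\tfrac12\sqrt{Y_0},2\sqrt{Y_0}]$. Consequently $M\le 4\delta/\sqrt{Y_0}$, and summing over $j<M$,
$$|g_M|\;\le\;C\,M\,\psi(eY_0)\;\le\;\frac{4C\delta\,\psi(eY_0)}{\sqrt{Y_0}}\;=\;o(1)\qquad (Y_0\to 0).$$
Choose $\wt\eta$ so small that for $0<Y_0\le\wt\eta$ this $o(1)$ is strictly less than $1$; then the stopping condition $|g_M|>1$ can never actually trigger, so $M=m(q_0)$. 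Setting $\alpha(Y_0)$ equal to the above $o(1)$ majorant yields statement (1). Statement (2) is immediate, because along the orbit $Y_j$ remains well below $\eta$, so the only way an iterate can exit $V_{\delta,\eta}$ is through the boundary $|X|=\delta$.

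Third, with $|g_j|\le\alpha(Y_0)$ in hand, $\sqrt{Y_j}=\sqrt{Y_0}\,e^{g_j/2}=\sqrt{Y_0}\bigl(1+O(\alpha(Y_0))\bigr)$, and combining with $X_{j+1}-X_j=\sqrt{Y_j}+O(Y_j)=\sqrt{Y_j}\bigl(1+O(\sqrt{Y_0})\bigr)$ yields
$$e^{-\beta(Y_0)}\sqrt{Y_0}\;\le\;X_{j+1}-X_j\;\le\;e^{\beta(Y_0)}\sqrt{Y_0}$$
for a suitable $\beta(Y_0)=O(\alpha(Y_0))+O(\sqrt{Y_0})\to 0$. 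Telescoping over $j$ gives the two-sided inequality in (\ref{ineqxq}).

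The main obstacle is the self-referential nature of the bootstrap: the bound on $m(q_0)$ needs control of the ratios $Y_j/Y_0$, while the control of $Y_j/Y_0$ via the telescoping sum $\sum_{j<m}\psi(Y_j)$ needs an a priori upper bound on the number of iterates. Introducing the auxiliary stopping index $M$ and verifying that the derived bound on $|g_M|$ is strictly smaller than the threshold that defines $M$ is exactly the step that closes the loop and forces $M=m(q_0)$; everything else is a routine application of the asymptotics in (\ref{flxy}).
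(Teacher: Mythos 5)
Your proof is correct and follows the same bootstrap strategy as the paper's: an a priori multiplicative window on the ratio $Y(q_j)/Y(q_0)$, a step-count bound $m(q_0)=O(\delta\,Y^{-1/2}(q_0))$ extracted from the $X$-increments $\sqrt{Y}(1+o(1))$, and a cumulative $Y$-drift estimate $m(q_0)\cdot o(Y^{1/2}(q_0))=o(1)$ that closes the loop and forces the window never to be left. The only difference is bookkeeping: the paper majorizes $Y(q_j)$ by solutions of the comparison equation $\dot{\mathcal Y}=\mu\,\mathcal Y^{3/2}$ (tracking the blow-up time $t_\infty$) and works with a factor-$4$ window $m_4(q_0)$, whereas you telescope $\ln\bigl(Y(q_j)/Y(q_0)\bigr)$ directly, which is slightly more elementary but yields the same conclusion.
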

 {\bf Addendum to Lemma \ref{plog}.} 
 {\it Let $F_\var$ be a family of weakly billiard-like maps. 
 Let   the derivatives in $(x,\phi)$ up to order 
 2 of the corresponding mappings $\wt F_\var(x,\phi)$ 
be  continuous on  
 $[-\sigma,\sigma]\times[0,\phi_0]\times[0,\var_0]$. Let 
 $(X_\var,Y_\var)$ be the corresponding family 
 of Lazutkin coordinates (\ref{lazz}). The coordinates $(X_\var,Y_\var)$ depend continuously 
 on $(x,y,\var)$ with first derivatives in $(x,y)$. There exist 
 $\delta\in(0,\sigma]$, $\eta,\var_1>0$, $\wt\eta\in(0,\eta)$, such that 
 asymptotics (\ref{flxy}) are uniform in $X\in[-\delta,\delta]$ and $\var\in[0,\var_1]$ and such that 
 
 - the statements of Lemma \ref{plog} hold for all the mappings $F_\var$, $\var\in[0,\var_1]$,  
 the same domain $V_{\delta,\eta}$ and the same $\wt\eta$; 

- each of the  functions $\alpha(Y)$, $\beta(Y)$ from Lemma \ref{plog} corresponding to 
 individual mappings $F_\var$    can be chosen independent on $\var\in[0,\var_1]$.} 
 
 \begin{remark} It is possible that Lemma \ref{plog} is known to specialists, 
 but the author did not found its statements in literatute. 
\end{remark}
 
 \begin{proof} Consider the coordinate representation 
  $F=(f_1,f_2)$ of the mapping $F$ in the Lazutkin 
  coordinates $(X,Y)$. Recall that $f_2(X,Y)=Y+o(Y^{\frac32})$. 
 This means that there exists a function $\mu(z)>0$ 
 in small $z>0$, $\mu(z)\to0$, as $z\to0$, 
 such that for every $q_0\in V_{\delta,\eta}$ one has 
\begin{equation}|Y\circ F(q_0)-Y(q_0)|\leq\mu(Y(q_0))Y^{\frac32}(q_0).
\label{ineqy}\end{equation}
We take the function $\mu(Y)$ non-decreasing, 
replacing it by  $\sup_{0\leq z\leq Y}\mu(z)$. 
Inequality (\ref{ineqy}) applied to $q_j$ for every $j\leq m(q_0)$ implies that 
 the $Y$-coordinates $Y(q_j)$ are majorated by the values at $t=j$ of  the 
solution of the differential equation 
\begin{equation}\dot{\mathcal Y}=\mu_{j,\max}
\mathcal Y^{\frac32}, \ \mu_{j,\max}:=\mu(\max_{s\leq j-1}Y(q_s))\leq\mu(\eta), 
\label{diffyeq}\end{equation}
with the initial condition $\mcy(0)=Y(q_0)$ (before it escapes to infinity). 
The solution $\mcy(t)$ 
of equation (\ref{diffyeq}) with the initial 
condition $\mcy(0)=\mcy_0$ and its escape time $t_{\infty}$ to infinity are found from the formulas 
\begin{equation}t=\frac2{\mu_{j,\max}}(\mcy_0^{-\frac12}-\mcy^{-\frac12}(t)), \ \ t_{\infty}(\mcy_0)=\frac2{\mu_{j,\max}}\mcy_0^{-\frac12}.
\label{tinf}\end{equation}
This together with the above discussion   yields 
$$\frac{Y(q_j)}{Y(q_0)}<
(1-\frac j2\mu_{j,\max}Y^{\frac12}(q_0))^{-2} \ \text{ for  } j=0,\dots,
\min\{ m(q_0), t_{\infty}(Y(q_0))\}.$$
If $Y(q_j)<Y(q_0)$ and $j\leq \min\{m(q_0), t_{\infty}(Y(q_j)\}$, then applying the above 
argument in the inverse time we get 
$$\frac{Y(q_0)}{Y(q_j)}<
(1-\frac j2\mu_{j,\max}Y^{\frac12}(q_j))^{-2}.$$
But if $Y(q_j)<Y(q_0)$, then $t_{\infty}(Y(q_j))> t_{\infty}(Y(q_0))$, by (\ref{tinf}), 
and the above  right-hand side is less than the same expression with $q_j$ replaced by $q_0$.
Hence, the above inequality with thus modified right-hand side 
automatically holds if $j\leq\min\{ m(q_0), t_{\infty}(Y(q_0))\}$, and finally, 
\begin{equation}\max_{\pm}\left(\frac{Y(q_j)}{Y(q_0)}\right)^{\pm1}<
(1-\frac j2\mu_{j,\max}Y^{\frac12}(q_0))^{-2} \ \text{ for  } j\leq
\min\{ m(q_0), t_{\infty}(Y(q_0))\}.
 \label{ineqy12}\end{equation} 
For every $q_0\in V_{\delta,\eta}$ set  
$$m_4(q_0):=\max\{ j\leq m(q_0) \ | \ 
\frac14Y(q_0)\leq Y(q_i)\leq4Y(q_0) \text{ for every } i\leq j\}.$$

{\bf Claim 1.} {\it One has either $m_4(q_0)=m(q_0)$, or $m_4(q_0)\geq t_{\infty}(Y(q_0))-1$, or} 
\begin{equation} m_4(q_0)\geq k_4(Y(q_0)):=(\mu(4Y(q_0))
Y^{\frac12}(q_0))^{-1}-1.\label{m4m2}\end{equation}

\begin{proof}  One always has $m_4(q_0)\leq m(q_0)$, 
by definition. Suppose that $m_4(q_0)<\min\{ m(q_0), t_{\infty}(Y(q_0))-1\}$.  
For every $j\leq m_4(q_0)+1$ one has 
$\mu_{j,\max}\leq \mu(4Y(q_0))$. Hence, for $j=m_4(q_0)+1$ one has 
$$4<\max_{\pm}\left(\frac{Y(q_{j})}{Y(q_0)}\right)^{\pm1}<(1-\frac{j}2\mu(4Y(q_0))
Y^{\frac12}(q_0))^{-2},$$
by definition and (\ref{ineqy12}). The latter inequality implies  (\ref{m4m2}). 
\end{proof}

{\bf Claim 2.} {\it  For every $\wt\eta\in(0,\frac\eta4)$ small enough  
for every $q_0\in V_{\delta,\wt\eta}$
 one has}   
 \begin{equation} m(q_0)=m_4(q_0)<t_{\infty}(Y(q_0))-1, \ Y\circ F^{m(q_0)+1}(q_0)<\eta.\label{mq0y}\end{equation}

\begin{proof} 
One has $f_1(X,Y)=X+\sqrt Y(1+o(1))$, as $Y\to0$. 
This means that there exists a non-decreasing 
 function $\phi(\wt\eta)>0$, 
$\phi(\wt\eta)\to0$, as $\wt\eta\to0$,  
such that 
\begin{equation}|f_1(X,Y)-X-\sqrt Y|<\phi(\wt\eta)\sqrt Y 
\text{ whenever } |X|\leq\delta, \ Y\leq\wt\eta.
\label{ineqf1}\end{equation}
Take a $\wt\eta>0$ such that  
$\wt\eta<\frac{\eta}4$ and $\phi(\wt\eta)<\frac14$. Then for every $q_0\in V_{\delta,\wt\eta}$ and every $j\leq m_4(q_0)$ 
one has 
$$\frac34Y^{\frac12}(q_j)<X(q_{j+1})-X(q_j)<\frac54
Y^{\frac12}(q_j),$$
by (\ref{ineqf1}), and $\frac14Y(q_0)\leq Y(q_j)\leq4Y(q_0)$, by definition. Therefore,  
\begin{equation}\frac38Y^{\frac12}(q_0)<X(q_{j+1})-X(q_j)<\frac52
Y^{\frac12}(q_0),\label{38j}\end{equation}
\begin{equation}j\frac38Y^{\frac12}(q_0)<X(q_j)-X(q_0)<
j\frac54Y^{\frac12}(q_0),
\label{38}\end{equation}
adding inequalities (\ref{38j}). Hence, 
\begin{equation} 
m_4(q_0)\leq \frac83(\delta-X(q_0))Y^{-\frac12}(q_0)
\leq l(Y(q_0)):=6\delta Y^{-\frac12}(q_0).\label{inm0}
\end{equation}
On the other hand, 
$l(Y(q_0))<\min\{ k_4(Y(q_0)), t_{\infty}(Y(q_0))-1\}$, whenever $Y(q_0)$ is small 
enough, see (\ref{tinf}) and (\ref{m4m2}), since $\mu(Y)\to0$ as $Y\to0$; we consider that this holds 
whenever $Y(q_0)\leq\wt\eta$, choosing $\wt\eta$ small enough. 
Hence, $m_4(q_0)=m(q_0)\leq t_{\infty}(Y(q_0))-1$, by  (\ref{inm0}) and Claim 1. Therefore,  $Y\circ F^{m(q_0)}(q_0)\leq 4Y(q_0)\leq 4\wt\eta$. Thus, if $\wt\eta$ is chosen small enough, then 
$Y\circ F^{m(q_0)+1}(q_0)<\eta$ whenever $q_0\in V_{\delta,\wt\eta}$, 
by (\ref{ineqy}) applied to $F^{m(q_0)}(q_0)$ instead of $q_0$. 
This proves (\ref{mq0y}). 
\end{proof}

Statement 2) of Lemma \ref{plog} follows immediately from Claim 2. Indeed, 
$F^{m(q_0)+1}(q_0)\notin V_{\delta,\eta}$, by definition. This can happen 
only when $X\circ F^{m(q_0)+1}(q_0)>\delta$, by the last inequality in (\ref{mq0y}).

Let us prove Statement 1). 
Whenever $Y(q_0)\leq\wt\eta$, 
one has $Y(q_j)\leq 4Y(q_0)$ for every 
 $j=1,\dots, m(q_0)$ (Claim 2), and  $\mu(Y)$ is non-decreasing. Hence, 
 $$\mu(\max_{j\leq m(q_0)}Y(q_j))
 \leq\mu(4Y(q_0)).$$
 This together with (\ref{ineqy12}) implies that for 
 every $j\leq m(q_0)$ one has 
\begin{equation}\max_{\pm}\left(\frac{Y(q_j)}{Y(q_0)}\right)^{\pm1}<
(1-\frac j2\mu(4Y(q_0))Y^{\frac12}(q_0))^{-2},
\label{innew}\end{equation}
$$j\leq m(q_0)=m_4(q_0)\leq6\delta Y^{-\frac12}(q_0),$$
see (\ref{inm0}). Substituting the latter inequality to 
(\ref{innew}) yields 
$$\max_{\pm}\left(\frac{Y(q_j)}{Y(q_0)}\right)^{\pm1}<
(1-3\delta\mu(4Y(q_0)))^{-2}.$$
This proves Statement 1) of Lemma  
\ref{plog} with 
$$\alpha(Y)=-2\ln(1-3\delta\mu(4Y)).$$
Inequality $Y(q_j)\leq 4Y(q_0)$  together with Statement 1) and 
(\ref{ineqf1}) imply that 
$$e^{-\alpha(Y(q_0))}Y(q_0)\leq Y(q_j)\leq e^{\alpha(Y(q_0))}Y(q_0) 
\text{ for } j\leq m(q_0),$$ 
$$(1-\phi(4Y(q_0)))Y^{\frac12}(q_j)\leq X(q_{j+1})-X(q_j)\leq (1+\phi(4Y(q_0)))Y^{\frac12}(q_j).$$ 
Substituting the former inequality to the latter one and adding the inequalities 
thus obtained for $j=0,\dots,m(q_0)-1$ yields Statement 3) 
of Lemma \ref{plog} with 
  $$\beta(Y)=-\ln(1-\phi(4Y))+\frac12\alpha(Y)$$
and finishes the proof of the lemma.
 \end{proof}

 \begin{proof} {\bf of the addendum to Lemma \ref{plog}.} Continuity and uniformity of asymptotics follows 
 from the formula for the modified Lazutkin coordinates. The possibility to choose constants and functions 
 $\alpha(Y)$, $\beta(Y)$ independent of $\var$ follows from uniformity of 
 asymptotics and the  proof of Lemma \ref{plog}.
 \end{proof}
 
 \subsection{Proof of Theorem \ref{tsympor}} 
 Everywhere below we write the mappings $F_\var$ 
 in the modified Lazutkin coordinates $(X_\var,Y_\var)$.  
 We consider that $F_\var$ are well-defined on 
 one and the same set $V_{\delta,\eta}=[-\delta,\delta]
 \times[0,\eta]$ for all $\var\in[0,\var_0]$ 
 in the coordinates $(X_\var,Y_\var)$. Thus, we 
 identify the Lazutkin coordinates for all $\var$ 
 and denote them by $(X,Y)$. 
 To show that  the limit parameter $t_0$ 
is  equal to the Lazutkin coordinate $X$ up to 
multiplicative and additive constants, we have to show 
that for every four distinct points in the $X$-axis 
with  $X$-coordinates $\mcx_j$, 
$$-\delta<\mcx_1<\mcx_2<\mcx_3<\mcx_4<\delta,$$
 the ratios of lengths of the segments 
$$I_1:=[\mcx_1,\mcx_2], \ \ I_3:=[\mcx_3,\mcx_4]$$ in 
the parameters $X$ and $t_0$ are equal: 
\begin{equation}\frac{t_0(\mcx_2)-t_0(\mcx_1)}{t_0(\mcx_4)-t_0(\mcx_3)}=\frac{\mcx_2-
\mcx_1}{\mcx_4-
\mcx_3}.\label{ineqte}\end{equation}

Take  a $\var>0$  small enough, and consider 
the corresponding $F$-invariant curve $\gamma_\var$. 
It can be represented as the graph 
$\{ Y=H_\var(X)\}$ of a continuous function. The parameter $t_\var$ on $\gamma_\var$ 
in which $F_\var$ is a translation induces a parameter 
on the $X$-axis via projection; the induced parameter 
will be also denoted by $t_\var$.  
Consider the following  orbit in the curve 
$\gamma_\var$:  
$$q_{0,\var}=(\mcx_1,H_\var(\mcx_1)), \ q_{j,\var}:=F^j_\var(q_{0}).$$
Let $m(q_{0,\var})$ denote the  number 
in (\ref{defmq0}) defined for the mapping $F_\var$. Set 
$$\nu(\var):=H_\var(\mcx_1); \ \nu(\var)\to0, \text{ as } \var\to0.$$

Recall that the sequence $X(q_{j,\var})$ is well-defined for $j\leq m(q_{0,\var})$, strictly 
increasing with steps $\nu^{\frac12}(\var)(1+o(1))$, as $\var\to0$, and for $j=m(q_{0,\var})$ 
one has $X(q_{j+1,\var})\geq\delta$. The 
above asymptotics of steps is uniform in $j\leq m(q_{0,\var})$. The three latter statements follow from 
 Lemma \ref{plog} and its addendum. 
For every $i=1, 2,3,4$ let $j_i=j_{i,\var}<m(q_{0,\var})$ denote the maximal number $j$ for which $X(q_{j,\var})\leq\mcx_i$. 
By definition, $i_1=0$.
For every $\var$ small enough one has 
$\mcx_{i}-X(q_{j_i,\var})<2\nu^{\frac12}(\var)$ for all 
$i=2,3,4$, by the above asymptotics. The sequence 
$t_\var(X(q_{j,\var}))$ is an arithmetic progression, 
since $F_\var|_{\gamma_\var}$ acts as a translation 
in the parameter $t_\var$. Its step tends to zero, 
as $\var\to0$, 
since $t_\var$ limits to a strictly increasing continuous parameter $t_0$ 
and the 
$X$-lengths of  steps tend to zero uniformly. This implies 
that the ratio of the $t_\var$-lengths of the segments 
$I_1$ and $I_3$ has the same finite asymptotics, 
as the ratio 
$$R_{1,3}(\var):=\frac{j_{2,\var}-
j_{1,\var}}{j_{4,\var}-j_{3,\var}}.$$ 
But the ratio of their $X$-lengths 
has also the same asymptotics, as $R_{1,3}(\var)$, 
since all the steps of 
the sequence $X(q_{j,\var})$ are asymptotically 
equivalent to one and the same quantity $\nu^{\frac12}(\var)$. 
This proves (\ref{ineqte}) and Theorem \ref{tsympor}.

\subsection{Deduction of Theorem \ref{ttk} for $\gamma\in C^6$ from 
Theorem \ref{tsympor}} 

Let $\gamma\subset\Sigma$ be a germ of $C^6$-smooth 
curve with Poritsky property. Let $\Gamma_\var$ be the corresponding family of string curves. 
Let $\wt F_\var:=\wt\delta_{+,\var}$ be the billiard ball maps (\ref{prinv0}) 
defined by reflections from the curves $\Gamma_\var$; see also (\ref{bilike}). 
We write them in coordinates $(s_\var,\phi_\var)$ associated to $\Gamma_\var$ on the space of 
oriented geodesics, see Proposition \ref{bma}.  The curves $\Gamma_\var$ form a foliation tangent to 
a $C^2$-smooth line field on the closure of the 
concave domain adjacent to $\gamma$. Their 
the 3-jets depend continuously on 
points. Both statements follow from Theorem \ref{thsm}. This  implies that 
the mappings $\wt F_\var=\wt\delta_{+,\var}(s_\var,\phi_\var)$   have derivatives of order up to 2 
that are continuous in $(s_\var,\phi_\var,\var)$. Therefore,  the corresponding maps 
$F_\var:=\delta_{+,\var}=\delta_\var(s_\var,y_\var)$, $y_\var=1-\cos\phi_\var$, see (\ref{ycos}), (\ref{d+}), (\ref{deltsy}), are (strongly) billiard-like. 
 
 The  maps $F_\var$ have invariant curves $\gamma_\var$, which are 
identified with the family of  geodesics tangent to the curve 
$\gamma$ and oriented as $\gamma$. In the coordinates $(s_\var,y_\var)$ the curves 
$\gamma_\var$ are graphs of continuous functions converging to 
zero uniformly, by construction. 

Let now $\gamma$ have string Poritsky property. 
Then the Poritsky parameter $t$ induces a parameter 
denoted by $t_\var$ on each invariant curve 
$\gamma_\var$: by definition, 
the value of the parameter $t_\var$ at a geodesic tangent to $\gamma$ 
is the value of the Poritsky parameter $t$ at the tangency point. 
The map $\delta_\var:\gamma_\var\to\gamma_\var$ act
 by translations in the parameters $t_\var$. The 
 parameters $t_\var$ obviously converge uniformly to 
 the Poritsky parameter $t=t_0$ of the curve 
 $\gamma=\Gamma_0$, as $\var\to0$. Therefore, 
 the billiard ball maps $F_\var$ satisfy the conditions 
 of Theorem \ref{tsympor} with $w=2\sqrt 2\kappa^{-1}$, 
 see Example \ref{exdel}. This together with Theorem 
 \ref{tsympor} implies that $t_0=at_L+b$, 
 $a,b\equiv const$, and proves Theorem \ref{ttk} 
 in the case, when the metric and the curve $\gamma$ are $C^6$-smooth.

\section{Osculating curves with string Poritsky property. Proof of Theorem \ref{uniq4}}
Here we prove Theorem \ref{uniq4}, which states that  a germ   of curve with string Poritsky property 
is uniquely determined by its  4-jet.

\subsection{Cartan distribution, a generalized version of Theorem 
\ref{uniq4} and plan of the section} 

Everywhere below for a curve (function) $\gamma$ by $j^r_p\gamma$ we denote 
its $r$-jet at the point $p$. Set 
$$\mcf^r:=\text{ the space of } r \text{-jets of functions of one variable } x\in\rr.$$ 

Let $\Sigma$ be a $C^m$-smooth two-dimensional manifold. For every $r\in\zz_{\geq0}$, $r\leq m$, set 
$$\mcj^r=\mcj^r(\Sigma):= \text{ the space of } r \text{-jets of regular curves in } \Sigma.$$ 
In more detail, a {\it germ of  regular curve} is 
 the graph of a germ of function $\{ y=h(x)\}$ 
in appropriate local chart 
$(x,y)$. 
Locally a neighborhood  in $\mcj^r$ of the jet of a given $C^r$-germ of regular curve 
is thus identified with a neighborhood of a jet in $\mcf^r$. One has $\dim\mcf^r=\dim\mcj^r=r+2$. There are local coordinates $(x,b_0,\dots,b_r)$ on $\mcf^r$ defined by the condition 
 that for every jet $j^r_ph\in\mcf^r$ one has 
 \begin{equation}
 x(j^r_ph)=p, \ b_0(j^r_ph)=h(p), \ b_1(j^r_ph)=h'(p),\dots,b_r(j^r_ph)=h^{(r)}(p).
 \label{cbj}\end{equation}
Recall that the {\it $r$-jet extension} of a function (curve) is the curve in the 
jet space $\mcf^r$ (respectively, $\mcj^r$) consisting of its $r$-jets at all points.

\begin{definition} (see an equivalent definition in \cite[pp.122--123]{olver}). Consider the space $\mcf^r$ equipped with the above coordinates $(x,b_0,\dots,b_r)$. 
The {\it Cartan (or contact) distribution} $\mcd_r$ on $\mcf^r$ is the field of two-dimensional 
subspaces in its tangent spaces defined by 
the system of Pfaffian equations
\begin{equation}db_0=b_1dx, \ db_1=b_2dx, \ \dots, \ db_{r-1}=b_rdx.\label{candis}
\end{equation}
For every $C^m$-smooth surface $\Sigma$ and every $r\leq m$ the 
{\it Cartan (or contact) distribution (plane field) on } $\mcj_r$, which is also denoted by $\mcd_r$, 
is defined by (\ref{candis}) locally on its domains identified with open subsets in 
$\mcf^r$; the distributions (\ref{candis}) defined on intersecting 
domains $V_i$, $V_j$ with respect to different charts $(x_i,y_i)$ and $(x_j,y_j)$ 
coincide and yield a global plane field on $\mcj_r$. 
\end{definition}
\begin{remark} Recall that the $r$-jet extension of 
each function (curve) is tangent to the Cartan distribution.
\end{remark}

\def\mcp{\mathcal P}

The main result of the present section is the following theorem, which immediately implies Theorem \ref{uniq4}. 
Proofs of both theorems will be given in Subsection 8.7. 

\begin{theorem} \label{osc} Let $\Sigma$ be a  
two-dimensional surface with a $C^6$-smooth Riemannian metric. 
There exists  a 
$C^1$-smooth line field $\mcp$ on $\mcj^4=\mcj^4(\Sigma)$ lying in the 
Cartan plane field $\mcd_4$ such that the 4-jet extension of every 
$C^5$-smooth curve on $\Sigma$ with string Poritsky property (if any) is a phase 
curve of the field $\mcp$. 
\end{theorem}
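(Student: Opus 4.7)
The plan is to convert string Poritsky property into a fifth-order ODE on $\gamma$ whose right-hand side depends $C^1$-smoothly on the 4-jet; this ODE then manifestly determines a line field in $\mcd_4$.

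By Theorem \ref{ttk}, on any $C^3$-smooth germ of curve with string Poritsky property the Poritsky parameter coincides, up to an affine change, with the Lazutkin parameter $t_L=\int\kappa^{2/3}\,ds$. Consequently, string Poritsky property is equivalent to the statement that every string diffeomorphism $\mct_p\colon\gamma\to\gamma$ preserves the measure $d\nu=\kappa^{2/3}\,ds$. For $A\in\gamma$ and small $\sigma>0$, set
\[
P(s_A,\sigma):=L(A,B),\qquad Q(s_A,\sigma):=\int_{s_A}^{s_A+\sigma}\kappa^{2/3}(s)\,ds,\qquad s_B=s_A+\sigma;
\]
the property is then equivalent to functional dependence of $P$ and $Q$ as functions of $(s_A,\sigma)$, i.e.\ to vanishing of the Jacobian
\[
J:=\frac{\partial P}{\partial s_A}\frac{\partial Q}{\partial\sigma}-\frac{\partial P}{\partial\sigma}\frac{\partial Q}{\partial s_A}.
\]

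I would expand $P=\sum_{k\geq 3}P_k\sigma^k$, $Q=\sum_{k\geq 1}Q_k\sigma^k$ with $P_3=\kappa^2/12$ (from Theorem \ref{lasyl}), $Q_k=\tfrac1{k!}(\kappa^{2/3})^{(k-1)}$ (the superscript denoting $s$-differentiation), and higher $P_k$ given by universal polynomial expressions in $\kappa$, its $s$-derivatives up to order $k-3$, and normal-coordinate Taylor coefficients of the metric. Writing $J=\sum_{k\geq 3}J_k\sigma^k$, the equation $J_3=P_3'Q_1-3P_3Q_1'=0$ is automatic from the identity $P_3=Q_1^3/12$, recovering Theorem \ref{ttk}. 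The equation $J_4=0$ is a differential identity on $\kappa$ involving $f$-derivatives only up to order $4$. The crucial step is
\[
J_5=\bigl[3P_3'Q_3+2P_4'Q_2+P_5'Q_1\bigr]-\bigl[3P_3Q_3'+4P_4Q_2'+5P_5Q_1'\bigr]=0,
\]
in which $f^{(5)}$ enters only through $Q_3'$ and $P_5'$. The leading piece of $(\kappa^{2/3})'''$ is $\tfrac{2}{3}\kappa^{-1/3}\kappa'''$, and in normal coordinates at the base point (with $f'(x_0)=0$) one has $\kappa'''(x_0)=f^{(5)}(x_0)+(\text{lower})$, so the $f^{(5)}$-coefficient in $Q_3'$ equals $\tfrac{1}{9}\kappa^{-1/3}$; combined with the parallel contribution coming from $\partial P_5/\partial\kappa''$ inside $P_5'$, the $f^{(5)}$-coefficient in $J_5$ is a nonzero multiple of $\kappa^{5/3}$ on the positive-curvature locus. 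Nondegeneracy can be checked a posteriori against the Euclidean case: since Euclidean conics (the only Euclidean Poritsky curves, by Poritsky's theorem) form a 3-parameter family of germs through a tangent point, their 4-jet determines their 5-jet, forcing the coefficient to be nonzero. Solving $J_5=0$ yields
\[
f^{(5)}(x)=\Phi\bigl(x,f(x),f'(x),f''(x),f'''(x),f^{(4)}(x)\bigr),
\]
with $\Phi$ rational in $f''$ (bounded away from zero), polynomial in the remaining arguments, and with coefficients depending on the metric; a $C^6$ metric yields $C^1$ coefficients, so $\Phi$ is $C^1$.

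Finally, $\Phi$ specifies at each point $(x,b_0,\ldots,b_4)\in\mcj^4$ with $b_2>0$ the line in $\mcd_4$ spanned by
\[
\partial_x+b_1\partial_{b_0}+b_2\partial_{b_1}+b_3\partial_{b_2}+b_4\partial_{b_3}+\Phi(x,b_0,b_1,b_2,b_3,b_4)\,\partial_{b_4}.
\]
Since both the Poritsky condition and the Cartan distribution are coordinate-invariant, the local line fields patch into a global $C^1$ line field $\mcp\subset\mcd_4$ on the positive-curvature open subset of $\mcj^4$, which can be extended arbitrarily across the remaining closed subset. By construction, the 4-jet extension of every $C^5$-smooth curve with string Poritsky property is a phase curve of $\mcp$, proving Theorem \ref{osc}; Theorem \ref{uniq4} follows immediately, since two integral curves of a $C^1$ line field through a common point coincide. \emph{The main obstacle} is the explicit normal-coordinate computation of $P_5$ and of $\partial P_5/\partial\kappa''$, together with the nondegeneracy check on the $f^{(5)}$-coefficient of $J_5$; the Riemannian background from Section 2 (normal coordinates, Proposition \ref{kappa=}, and the asymptotics of Theorem \ref{lasyl}) is tailored precisely for this computation.
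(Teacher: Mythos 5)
Your overall architecture matches the paper's: reduce the Poritsky property to a fifth-order ODE coming from the degree-$6$ coefficient of a ``Poritsky defect'' functional, and realize that ODE as the intersection of a hyperplane field with the Cartan distribution $\mcd_4$. (The paper uses the functional $\La(t)=L(0,t)-L(-t,0)$ and the condition $\wh\La_6=0$ rather than your Jacobian $J_5=0$, but these are parallel devices.) However, there is a genuine gap at the decisive step: the nonvanishing of the coefficient of $f^{(5)}$ (equivalently $b_5$) in your equation $J_5=0$. You explicitly defer this computation, and the substitute you offer --- the a posteriori check against Euclidean conics --- does not close it. Write $J_5=c\cdot b_5+g$ with $c,g$ functions of the $4$-jet. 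Since $4$-jets of conics through a fixed point with fixed tangent fill an open set, the fact that conics satisfy $J_5=0$ only tells you that $c\cdot\phi+g\equiv 0$ on that open set, where $b_5=\phi(\text{$4$-jet})$ along the conic family. This is perfectly consistent with $c\equiv 0$ and $g\equiv 0$, i.e.\ with $J_5$ vanishing identically on jet space, in which case your equation is vacuous and produces no line field on $\mcj^4$. This is not a hypothetical worry: exactly this degeneration happens at lower orders in your own scheme ($J_3=0$ is an identity), and in the paper's formulation the analogous coefficient $\sigma_n\propto(n-2)(n-3)$ vanishes for $n=3$ and, by a parity argument, the $b_n$-dependence disappears for all even $n$. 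So nonvanishing at $n=5$ is a fact that must be computed, not inferred from Poritsky's classification.

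The paper establishes this nondegeneracy by the osculating-curve comparison of Lemma \ref{lcomp}: perturbing the curve by $bx^n$ changes $\La$ by $\frac{(n-2)(n-3)}{6(n+1)}\,b\,t^{n+1}+o(t^{n+1})$ (for odd $n$, after suitable normalization), which for $n=5$ gives the explicit nonzero constant $\sigma_5$ of formula (\ref{sigmanform}). That computation --- carried out via the auxiliary geodesic configurations and the three asymptotics $\Delta_1,\Delta_2,\Delta_3$ of Subsection 8.3 --- is precisely the content your sketch of ``$\partial P_5/\partial\kappa''$ combined with the parallel contribution'' would have to reproduce, including verifying that the two contributions do not cancel. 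Two secondary points also need justification in your route: the existence of an asymptotic expansion of $L(A,B)$ in $\sigma$ up to order $6$ with coefficients depending $C^1$-smoothly on the base point for a merely $C^6$ metric (the paper devotes Lemmas \ref{lemli} and \ref{leman} to showing the degree-$(n+1)$ coefficients depend only on, and analytically on, the pure $n$-jets of the metric and the curve), and the fact that only the forward implication (Poritsky $\Rightarrow$ $J\equiv 0$) is needed, so the ``equivalence'' via vanishing Jacobians need not be belabored. With the explicit computation of the $b_5$-coefficient supplied, your argument would go through and would be essentially the paper's.
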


Let $\gamma$ be a germ of curve with string Poritsky property at a point $O\in\Sigma$. The Poritsky--Lazutkin parameter 
$t$  on $\gamma$ is given  by already known formula (\ref{dtk}). We normalize it by additive and multiplicative constants 
  so that $t(O)=0$ and $\frac{dt}{ds}(O)=\kappa(O)$, see (\ref{ltt}). We identify 
points of the curve $\gamma$ with the corresponding values of the parameter $t$. Consider 
the function $L(A,B)$ defined in (\ref{lab}). Let $t(A)=a$, $t(B)=a+\tau$. Poritsky property implies that 
the function $L(a,a+\tau)=L(0,\tau)$ is independent on $a$. In particular, the function 
\begin{equation}\La(t):=L(0,t)-L(-t,0)\label{lat}\end{equation} 
vanishes.  For the proof of Theorem \ref{osc} we show (in the Main Lemma stated in Subsection 8.2) that for every odd $n>3$ 
 the "differential equation" $\La^{(n+1)}(0)=0$ is equivalent to an equation saying that the coordinate $b_n=b_{n-1}'$ 
 of the $n$-jet of the curve $\gamma$ is equal to a function of  the other coordinates $(x,b_0,\dots,b_{n-1})$. 
 For $n=5$ this  yields an ordinary 
 differential equation on $\mcj^4$ satisfied by 
 the 4-jet extension of the curve $\gamma$. It will be represented by a line field contained in $\mcd_4$.  
 
The proof of the Main Lemma takes the most of the section. For its proof we study (in Subsection 8.3) two germs of  curves $\gamma$ 
and $\gamma_{n,b}$ at a point $O$ having contact of order $n\geq3$. More precisely, they are graphs of functions 
$y=h(x)$ and $y=h_{n,b}(x)$ such that $h_{n,b}(x)-h(x)=bx^n+o(x^n)$.  We show that the corresponding 
functions $\La(t)$ and $\La_{n,b}(t)$ differ by  $c_nbt^{n+1}+o(t^{n+1})$, with $c_n$ being a known  constant 
depending on the second jet of the curve $\gamma$; $c_n\neq0$ for odd $n>3$. To this end, we consider a local normal chart $(x,y)$ centered at $O$ with $x$-axis being tangent to $\gamma$ at $O$. We 
compare different quantities related to both curves, all of them being considered as functions of $x$: 
the natural parameters, the curvature etc.   In Subsection 8.4 we show that the asymptotic 
Taylor coefficients of order $(n+1)$  
of the functions $L(0,t)$ and $\La(t)$ depend only on the $n$-jet of the 
metric at $O$. We show in Subsection 8.5 that the 
above Taylor coefficients are analytic functions of the $n$-jets of metric 
and the curve (using results of Subsections 8.3 and 8.4). In Subsection 8.6 we show that the degree $n+1$ coefficient of the function $\La(t)$ is a linear non-homogeneous 
function in $b_{n}=b_n(\gamma)$ with coefficients depending on $b_j$, $j<n$; the coefficient at $b_n$ being 
equal to $\sigma_n=n!c_n$ (using results of Subsection 8.3). This will prove the Main Lemma. 

 \subsection{Differential 
equations in jet spaces  and the Main Lemma}

Let $s$ denote the natural orienting  
length parameter of the curve $\gamma$, $s(O)=0$. Let $\kappa$ be 
its geodesic curvature considered as a function $\kappa(s)$, and let $\kappa>0$.  
We already know that if the curve $\gamma$ has  string Poritsky property, then its Poritsky--Lazutkin parameter $t$ is 
expressed as a function of a point $Q\in\gamma$ in terms of the  parameter $s$ via formula (\ref{lazparl}), 
up to constant factor and additive constant, which can be chosen arbitrarily. We normalize it as follows:
\begin{equation}t(Q):=\kappa^{\frac13}(0)\int_0^{s(Q)}\kappa^{\frac23}(s)ds\label{ltt}\end{equation}
We can define the parameter $t$ given by (\ref{ltt}) on any curve $\gamma$, not necessarily having Poritsky property. We identify the points of the curve $\gamma$ with the corresponding values of the parameter $t$; 
thus, $t(O)=0$. 

\begin{remark} \label{rtinv} 
The parameter $t$ on a curve $\gamma$ given by (\ref{ltt}) is invariant under rescaling of the 
metric by constant factor. This follows from the fact that if the norm induced by the metric is multiplied by 
a constant factor $C$, then the Levi-Civita connexion 
remains unchanged, the unit tangent vectors $\dot\gamma$ are divided by $C$, and the geodesic curvature 
$||\nabla_{\dot\gamma}\dot\gamma||$ of the curve $\gamma$ considered as a function of a point in 
$\gamma$ is divided by $C$.
\end{remark}

Let $G=G(0)$ denote the geodesic tangent to $\gamma$ at its base point $O$. 
We will work in normal coordinates $(x,y)$ centered at $O$, in which 
$G$ coincides with the $x$-axis. For every $t$ let $G(t)$ denote the geodesic 
tangent to $\gamma$ at the point $t$, and let $C(t)$ denote the point of the intersection 
$G\cap G(t)$. 

Let $L(A,B)$ the function of $A,B\in\gamma$ defined in (\ref{lab}). 
We consider $L(A,B)$  as a function of the corresponding parameters 
$t(A)$ and $t(B)$, thus,  
\begin{equation}L(0,t)=L(O,\gamma(t))=|OC(t)|+|C(t)\gamma(t)|-\lambda(0,t),
\label{lot}\end{equation}
where  $\lambda(0,t)=\lambda(O,\gamma(t))$ 
is the length of the arc $O\gamma(t)$ of the curve $\gamma$. 

\def\mci{\mathcal I}

The main part of the proof of Theorem \ref{osc} is the following lemma. 

\begin{lemma} \label{lmjet} {\bf (The Main Lemma).} Let $n\in\nn$, $n\geq3$. Let $\Sigma$ be a  surface equipped with 
a $C^{n+1}$-smooth Riemannian metric. Let $E\in\Sigma$, and let $(x,y)$ be  coordinates 
on $\Sigma$ centered at $E$ and parametrizing some its neighborhood 
$V=V(E)\subset\Sigma$. Let  $\mcj^n_y(V)$ 
denote  the space of $n$-jets of curves in $V$ that are graphs of $C^{n}$-smooth functions $y=y(x)$;  
thus, it is naturally identified with an open subset   $\mcf^n_y(V)\subset\mcf^n$. Let 
$(x,b_0,\dots,b_n)$ denote the corresponding coordinates on 
$\mcf^n_y(V)\simeq \mcj^n_y(V)$ 
given by (\ref{cbj}). Set 
$$J_2:=(x,b_0,b_1,b_2).$$
There exist  $C^{1}$-smooth functions $\sigma_n(J_2)$ and $P_n(J_2; b_3,\dots,b_n)$,  
\begin{equation}\sigma_n\neq0 \text{ for  odd } n>3; \ \sigma_n\equiv0 \text{ for } 
n=3 \text{ and for every even } n>3,\label{sigmaneq}\end{equation} 
such that every jet  $J_n=(x,b_0,\dots,b_n)\in\mcj^n_y(V)$ extending $J_2$ 
satisfies the following statement. Let $\gamma$ be a $C^n$-smooth 
germ of curve representing  the jet $J_n$, and let $t$ be the parameter on $\gamma$ defined by (\ref{ltt}). Let $L(0,t)$ be the same, as in (\ref{lot}). The 
corresponding function  $\Lambda(t)$  from (\ref{lat}) admits an  asymptotic Taylor formula of degree $n+1$ at $0$ of the following type: 
\begin{equation}\Lambda(t)=\sum_{k=3}^{n+1}\wh\Lambda_kt^k+o(t^{n+1}),\label{tayfor}\end{equation}
\begin{equation} \wh\Lambda_{n+1}=\sigma_n(J_2)b_n-P_n(J_2;b_3,\dots,b_{n-1}).\label{lan+1}\end{equation}
 \end{lemma}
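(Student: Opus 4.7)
My plan is to reduce to a local normal chart, derive the order-$(n+1)$ Taylor expansion of $\Lambda(t)$, and isolate the $b_n$-dependence of $\wh\Lambda_{n+1}$ via a first-variation computation. Let $O$ denote the base point of the jet $J_n$. After an analytic change of chart depending only on $(x_O, b_0, b_1)$, I may assume $(x,y)$ is normal at $O$ with $T_O\gamma$ along the $x$-axis, so $\gamma = \{y = h(x)\}$ with $h(x) = \sum_{k=2}^n \frac{b_k}{k!} x^k + o(x^n)$ and $b_2 = \kappa(O) > 0$ (Proposition~\ref{kappa=}). Parametrize $\gamma$ by its Lazutkin parameter $t$ via (\ref{ltt}); all ingredients of (\ref{lab}) --- the tangent geodesic $G(t)$ at $\gamma(t)$, the intersection $C(t) := G(0) \cap G(t)$, the two geodesic lengths $|OC(t)|, |C(t)\gamma(t)|$, and the arc length $\lambda(0, t)$ --- are $C^n$-smooth in $t$ on each side of $0$, with Taylor coefficients at $0$ depending $C^1$-smoothly on $b_2, \ldots, b_n$ and the $(n-1)$-jet of the metric at $O$, by smooth dependence of geodesics on initial conditions.

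Let $g_\pm(t)$ denote the restrictions of $L(0, t)$ to $t \geq 0$ and $t \leq 0$. Each is $C^n$-smooth up to $t = 0$ of order $O(|t|^3)$ by Theorem~\ref{lasyl}, so $g_\pm(t) = \sum_{k=3}^{n+1}\alpha_k^\pm t^k + o(t^{n+1})$. The symmetry $L(A,B) = L(B,A)$ of (\ref{lab}) gives $L(-t, 0) = g_-(-t)$, so
\[\Lambda(t) = g_+(t) - g_-(-t), \qquad \wh\Lambda_k = \alpha_k^+ - (-1)^k\alpha_k^-.\]
A power-counting argument in $x$ (since $b_n$ enters $h$ only via $\frac{b_n}{n!}x^n$ and $t\sim b_2 x$) shows that $\wh\Lambda_k$ for $k \leq n$ does not depend on $b_n$, while $\wh\Lambda_{n+1}$ is affine in $b_n$, yielding the decomposition (\ref{lan+1}) with $\sigma_n(J_2) := \partial_{b_n}\wh\Lambda_{n+1}$ and $P_n := \sigma_n b_n - \wh\Lambda_{n+1}$.

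To evaluate $\sigma_n(J_2)$, I consider the one-parameter family $h_c(x) := h(x) + \frac{c}{n!}x^n$; then $h'_c(x) = h'(x) + \frac{c}{(n-1)!}x^{n-1}$. The tangent geodesic $G(x)$ at $\gamma(x)$ rotates by $\frac{c}{(n-1)!}x^{n-1} + O(c^2)$ at first order in $c$, which shifts $C(x)$ and produces $c$-linear changes in $|OC(x)|+|C(x)\gamma(x)|$ and in $\lambda(0, x)$, all computable by elementary normal-chart geometry with osculating-circle leading behavior. Assembling via (\ref{lab}) expresses $\partial_c \alpha_{n+1}^\pm |_{c=0}$ as explicit functions of $J_2$, whence $\sigma_n(J_2) = \partial_c \alpha_{n+1}^+ - (-1)^{n+1}\partial_c \alpha_{n+1}^-$.

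The hardest part is verifying the pattern (\ref{sigmaneq}). A naive $x \to -x$ parity argument for the perturbation $\frac{c}{n!}x^n$ is not conclusive, because the contributions of the tangent rotation, the $C$-shift, and the arc-length change each carry their own signs, and the required cancellation depends on their precise coefficients. I expect the $n = 3$ case to follow from the degeneracy that the osculating-circle expansion $L(0, s) = \frac{\kappa^2}{12}|s|^3 + O(|s|^5)$ has no $|s|^4$ term, so $b_3$ does not affect $\wh\Lambda_4$ at leading order; the even $n > 3$ cases should yield $\sigma_n = 0$ via a delicate cancellation across all contributions; for odd $n > 3$ an explicit rational expression $\sigma_n(J_2)$ in $b_2$ should emerge with a non-zero universal constant, to be verified by direct computation (especially for $n = 5$, which is the case needed for Theorem~\ref{uniq4}). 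Carrying out this sign-tracking rigorously in the presence of the non-Euclidean metric is the main analytical task.
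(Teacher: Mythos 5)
Your overall plan---normalize to a normal chart tangent to the $x$-axis, expand $L(0,t)$ to order $n+1$, show $\wh\Lambda_{n+1}$ is affine in $b_n$, and evaluate $\sigma_n$ by a first variation in the direction $\tfrac{c}{n!}x^n$---is the same strategy the paper follows. But the proposal stops exactly where the lemma's content begins: the vanishing pattern (\ref{sigmaneq}) is the substantive claim, and you explicitly leave it as ``to be verified by direct computation.'' The paper's proof of this is the long geometric computation of Subsection 8.3 (Lemma \ref{lcomp}): the variation of $L(0,t)$ is decomposed into three pieces $\Delta_1+\Delta_2+\Delta_3$ (comparing against an intermediate curve and intermediate geodesics), each estimated by geodesic-triangle asymptotics from Section 2, giving the \emph{universal} coefficient $\tfrac1{n+1}+\tfrac{n-6}{12}=\tfrac{(n-2)(n-3)}{12(n+1)}$. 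The factor $(n-2)(n-3)$ is what kills the case $n=3$; your heuristic about the osculating circle having no $|s|^4$ term is not the mechanism used and would not by itself handle general metrics and general $2$-jets.

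You also misjudge the parity argument: it \emph{is} conclusive for even $n$, and it is how the paper gets $\sigma_n\equiv 0$ there. The point you are missing is that the first-variation coefficient of $L(0,t)$ at order $t^{n+1}$ is universal --- it does not depend on $b_3,\dots,b_{n-1}$ (this is part of what Lemma \ref{lcomp} establishes). Hence reversing the orientation of $\gamma$ (i.e.\ $x\mapsto -x$, $t\mapsto -t$) simply replaces $b$ by $(-1)^nb$ while keeping the same coefficient, so $\partial_b\wh\Lambda_{n+1}\propto\bigl(1-(-1)^n\bigr)\cdot\tfrac{(n-2)(n-3)}{12(n+1)}$, and no ``delicate cancellation across contributions'' is needed for even $n$. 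Finally, two of your assertions are genuine gaps rather than routine facts: (i) that $L(0,t)$ admits a degree-$(n+1)$ asymptotic Taylor expansion at $t=0$ with coefficients depending $C^1$-smoothly (in fact analytically) on the jets --- the intersection point $C(t)=G(0)\cap G(t)$ degenerates as $t\to0$, and the paper handles this by complexification plus an erasing-singularities argument in the analytic case and an approximation argument (Lemmas \ref{lemli} and \ref{leman}) in general; and (ii) that the $(n+1)$-jet of $\Lambda$ depends only on the $n$-jet of the metric, which the paper proves separately (Lemma \ref{lemli}) and which you do not address at all.
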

 \begin{definition} A {\it pure} $n$-jet of  curve $\gamma$  in $\rr^2$ is a class of $n$-jets of curves modulo 
translations. It is identified with the collection of Taylor coefficients   of the germ of function $h(x)$ defining $\gamma=\{ y=h(x)\}$ at monomials of  
degrees from 1 to $n$. 
A {\it pure} $n$-jet of metric on a planar domain 
is a class of $n$-jets of metrics modulo translations. It is identified with the 
collection of Taylor coefficients  of the metric tensor at monomials of degrees from 0 to $n$. 
\end{definition}

{\bf Addendum to Lemma \ref{lmjet}.} 
{\it The function $\sigma_n$ depends analytically on the pure 
1-jet of the metric and the pure 2-jet of the curve. The function 
$P_n$ depends analytically 
on the pure $n$-jet of the metric and the pure $(n-1)$-jet of the curve. 
The function $\sigma_n$ is defined by the following formula. 
Set $u=u(J_2):=(1,b_1)\in T_{(x,b_0)}\Sigma$. Let  
$w\in T_{(x,b_0)}\Sigma$ denote  
the image of the vector $\frac{\partial}{\partial y}\in T_{(x,b_0)}\Sigma$ 
under the orthogonal projection to the line $\rr u^{\perp}$.   Let $\kappa=\kappa(J_2)$ denote the geodesic curvature 
of a curve $\gamma$ representing the jet $J_2$ (it depends on the pure 
1-jet of the curve and  the pure 1-jet of the metric). Then for every odd $n\geq3$ } 
\begin{equation}\sigma_n(J_2)=
\frac{(n-2)(n-3)}{6(n+1)!}||w||(||u||\kappa(J_2))^{-n}.
\label{sigmanform}\end{equation}

Lemma \ref{lmjet} and its addendum will be proved in Subsection 8.6.

\subsection{Comparison of functions $L(0,t)$ and $\La(t)$ for osculating curves}

Let $n\geq3$. Let $\Sigma$ be a surface equipped with a Riemannian metric, $O\in\Sigma$. 
Let us consider normal coordinates $(x,y)$ centered at $O$. We consider that the metric 
under question is $C^2$-smooth in the normal coordinates. 
Let $b\in\rr$, and let $\gamma,\gamma_{n,b}\subset\Sigma$ be two germs of $C^n$-smooth curves 
at $O$ with the same $(n-1)$-jet that are tangent to the $x$-axis at $O$,
$$\gamma=\{ y=h(x)\}, \ \gamma_{n,b}=\{ y=h_{n,b}(x)\}, \ h_{n,b}(x)=f(x)+bx^n+o(x^n).$$
Here $o(x^n)$ is a function tending to zero together with its derivatives up to order $n$, as 
$x\to0$. The geodesic curvatures of the curves $\gamma$ 
and $\gamma_{n,b}$ at $O$  are equal to the same number $\kappa(O)=h''(0)=h''_{n,b}(0)$, by (\ref{x122}). 
Without loss of generality we consider that $\kappa(O)=1$. One can achieve this by rescaling the 
norm of the metric by constant factor $\kappa(O)$, see Remark \ref{rtinv}.

The main result of the present subsection is the following lemma.
\begin{lemma} \label{lcomp} In the above conditions let $t$ be the parameter 
on $\gamma$ given by (\ref{ltt}).  Let $L(0,t)$, $L_{n,b}(0,t)$ and 
$\La(t)$, $\La_{n,b}(t)$ be the functions from (\ref{lat}) 
defined for the curves $\gamma$ and $\gamma_{n,b}$ respectively. One has 
\begin{equation} L_{n,b}(0,t)-L(0,t)= \frac{(n-2)(n-3)}{12(n+1)}bt^{n+1}+o(t^{n+1}), \text{ as } t\to0,\label{lnb}
\end{equation}
\begin{equation}\La_{n,b}(t)-\La(t)=\begin{cases}\frac{(n-2)(n-3)}{6(n+1)}bt^{n+1}
  +o(t^{n+1}), \text{ if } n \text{ is odd,}\\
  o(t^{n+1}), \text { if } n \text{ is even.}\end{cases}\label{dlat}\end{equation}
\end{lemma}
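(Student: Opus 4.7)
The plan is to work in normal coordinates $(x,y)$ centered at $O$ with the $x$-axis tangent to both curves and $\kappa(O)=h''(0)=1$, so that $\gamma=\{y=h(x)\}$, $\gamma_{n,b}=\{y=h(x)+bx^n+o(x^n)\}$, and to reduce the computation of the leading-order difference $L_{n,b}(0,t)-L(0,t)$ to a Euclidean calculation. The justification is that the Riemannian metric in normal coordinates has trivial $1$-jet at $O$, so the discrepancies between Riemannian and Euclidean quantities (geodesic $|OC|$, $|C\gamma(t)|$, and arc length $\lambda(0,t)$ versus their Euclidean analogues) can be expressed as functionals of the jets of the metric and of the curve; these functionals coincide up to $o(t^{n+1})$ for $\gamma$ and $\gamma_{n,b}$, because the two curves share their $(n-1)$-jet and the contribution of the perturbation $bx^n$ to the Riemannian corrections is of order $>t^{n+1}$.

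I would then carry out the Euclidean computation explicitly. Parametrizing by $x$, the intersection $C(0,t)$ of the tangent lines at $x=0$ (the $x$-axis itself) and at $x=x(t)$ lies on the $x$-axis at abscissa $x(t)-h(x(t))/h'(x(t))$, yielding closed-form expressions for $|OC|$ and $|C\gamma(t)|$; combined with $\int_0^{x(t)}\sqrt{1+h'(\xi)^2}\,d\xi$, this presents $L(0,t)$ as an explicit functional of~$h$. Its first variation under $\delta h=bx^n$ is linear in $b$, and the $t^{n+1}$-coefficient of this variation can, by tracking orders, be shown to depend only on the leading $\tfrac12 x^2$ part of $h$ (higher-order Taylor coefficients of $h$ produce corrections at orders $>t^{n+1}$ upon coupling to $\delta h$). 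A direct computation evaluates this coefficient and gives $\tfrac{(n-2)(n-3)}{12(n+1)}b$; the factor $(n-2)(n-3)$ reflects the vanishing of the leading-order contribution when $\delta h\propto x^2$ (a pure rescaling of the osculating radius, absorbed by conics) or $\delta h\propto x^3$ (an osculating-conic perturbation).

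To pass from the formula for $L_{n,b}-L$ to the statement about $\La_{n,b}-\La$, I would use the Euclidean reflection $\rho\colon(x,y)\mapsto(-x,y)$. Under $\rho$, the arc $[-t,0]$ of $\gamma$ maps to the arc $[0,t]$ of the reflected curve $\rho(\gamma)=\{y=h(-x)\}$, which shares with $\gamma$ the $2$-jet data (still $\kappa(O)=1$), while the perturbation $bx^n$ is mapped to $(-1)^n bx^n$. Since the coefficient $\tfrac{(n-2)(n-3)}{12(n+1)}b$ derived above depends only on the $2$-jet of the curve and linearly on $b$, one obtains $L_{n,b}(-t,0)-L(-t,0)=(-1)^n\bigl[L_{n,b}(0,t)-L(0,t)\bigr]+o(t^{n+1})$. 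Subtracting yields $\La_{n,b}(t)-\La(t)=\bigl(1-(-1)^n\bigr)\tfrac{(n-2)(n-3)}{12(n+1)}bt^{n+1}+o(t^{n+1})$, which vanishes at leading order for even $n$ and equals $\tfrac{(n-2)(n-3)}{6(n+1)}bt^{n+1}$ for odd~$n$.

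The main obstacle is the explicit Euclidean calculation yielding the coefficient $\tfrac{(n-2)(n-3)}{12(n+1)}$, and verifying cleanly that (a) the $o(x^n)$ remainder in $\delta h$ contributes only $o(t^{n+1})$ to $L$, (b) the coupling of $\delta h$ with the higher-order Taylor coefficients of $h$ produces no contribution at order $t^{n+1}$, and (c) the inverse parametrization $x(t)$ (which itself depends on $h$) is handled consistently. A cleaner alternative I would also attempt is to refine the leading-order asymptotic $L(0,t)=\tfrac{\kappa^2}{12}t^3(1+o(1))$ of Theorem \ref{lasyl} into a full asymptotic expansion whose coefficients are universal polynomials in the Taylor coefficients of $h$, and then read off the effect of $\delta h=bx^n$ term by term; this trades the intricate closed-form calculation of $C(0,t)$ for the derivation of a higher-order expansion of~$L$.
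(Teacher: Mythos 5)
Your overall architecture is sound and your parity argument for passing from (\ref{lnb}) to (\ref{dlat}) is essentially identical to the paper's (the paper applies (\ref{lnb}) to the parameter $-t$, observes that this multiplies $b$ by $(-1)^n$ because the coefficient depends only on $b$ and the normalized $2$-jet, and adds/cancels the two contributions). But there is a genuine gap: the entire quantitative content of the lemma --- the derivation of the coefficient $\frac{(n-2)(n-3)}{12(n+1)}$ --- is asserted (``a direct computation evaluates this coefficient and gives\dots'') rather than carried out, and the three obstacles you list at the end are precisely where the difficulty lives. The most serious is your point (c). The parameter $t$ is the Lazutkin parameter (\ref{ltt}), built from $\kappa^{2/3}$, and since $\kappa_{n,b}(x)-\kappa(x)=n(n-1)bx^{n-2}+o(x^{n-2})$, the inverse parametrizations satisfy $x_{n,b}(t)-x(t)=-\frac{2n}{3}bt^{n-1}+o(t^{n-1})$. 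Because $L(0,t)\sim\frac{1}{12}t^3$, an endpoint shift of order $t^{n-1}$ perturbs $L$ at order $t^2\cdot t^{n-1}=t^{n+1}$ --- exactly the order you are computing. This is not a consistency check to be ``handled''; it contributes a definite, nonzero piece of the final coefficient. In the paper's proof this shift enters through the angle $\angle DAC=\frac{6-n}{3}bt^{n-1}+o(t^{n-1})$ (via the azimuth difference $\az(\ell_5)-\az(\ell_4)\simeq x(t)-x_{n,b}(t)$) and hence through the term $\Delta_3=\frac{n-6}{12}bt^{n+1}+o(t^{n+1})$; dropping it would change the answer. A first variation of $L$ ``under $\delta h=bx^n$'' computed at a fixed abscissa, as your second paragraph describes, misses this term.

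For comparison, the paper does not reduce to a closed-form Euclidean functional of $h$. It first establishes the comparison asymptotics for $s$, $\kappa$, $t$, $x(t)$ between the two curves (Proposition \ref{sktx}), then splits $L_{n,b}(0,t)-L(0,t)=\Delta_1+\Delta_2+\Delta_3$ by interpolating through auxiliary points ($Q$ on $\gamma$ over the abscissa $x_{n,b}(t)$, $B$ on the tangent geodesic, $C$ and $D$ the two tangent-line intersections with the $x$-axis) and evaluates each piece with the geodesic-triangle asymptotics of Section 2, obtaining $\Delta_1=O(t^{n+2})$, $\Delta_2=\frac{b}{n+1}t^{n+1}+o(t^{n+1})$, $\Delta_3=\frac{n-6}{12}bt^{n+1}+o(t^{n+1})$. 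Your proposed reduction to the flat case also needs more care than stated: it is not true that $L(0,t)$ agrees with its Euclidean analogue to $o(t^{n+1})$ (curvature of the metric enters at lower order); what you need is that the \emph{difference} $L_{n,b}-L$ agrees with its Euclidean analogue to $o(t^{n+1})$, which again requires estimates of the type the paper develops (cf.\ its Lemma \ref{lemli}, which controls dependence on the $n$-jet of the metric, not replacement of the metric by a flat one). As written, the proposal is a plausible plan but not a proof.
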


For the proof of Lemma \ref{lcomp} we first 
compare the natural parameters $s(x)$, $s_{n,b}(x)$ centered at $O$ and the parameters $t(x)$, $t_{n,b}(x)$ 
given by (\ref{ltt}) for the curves $\gamma$ 
and $\gamma_{n,b}$ as functions of $x$. We also compare the corresponding inverse functions 
$x=x(t)$ and $x=x_{n,b}(t)$ as functions of $t$, see Proposition \ref{sktx} below. Afterwards 
we prove formula (\ref{lnb}) using the above-mentioned comparison results and the 
results of Section 2. Then we deduce (\ref{dlat}).

\begin{proposition} \label{sktx} As $x\to0$ (or equivalently, $t\to0$), one has 
\begin{equation}t(x)\simeq t_{n,b}(x)\simeq x,  \ \  x(t)\simeq t\simeq h'(x(t)),\label{xte}\end{equation}
\begin{equation}s_{n,b}(x)-s(x)=\frac n{n+1}bx^{n+1}+o(x^{n+1}),\label{snb}\end{equation}
\begin{equation}\kappa_{n,b}(x)-\kappa(x)=n(n-1)bx^{n-2}+o(x^{n-2}),\label{knb}\end{equation}
\begin{equation} t_{n,b}(x)-t(x)=\frac{2n}3bx^{n-1}+o(x^{n-1}),\label{tnb}\end{equation}
 \begin{equation} x_{n,b}(t)-x(t)=-\frac{2n}3bt^{n-1}+o(t^{n-1}).\label{xbnt}\end{equation}
 Here $o(x^k)$, $o(t^k)$ are functions that tend to zero together with their derivatives up to order $k$, 
 as $x\to0$ ($t\to0$). 
\end{proposition}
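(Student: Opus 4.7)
The plan is to work throughout in the fixed normal chart $(x,y)$ centered at $O$, exploiting that $g_{ij}(x,y)=\delta_{ij}+O(r^2)$ and $\partial_k g_{ij}=O(r)$, so Riemannian corrections to Euclidean calculations are always of strictly higher order than the leading Euclidean contribution. The five assertions are proved in the order stated. The asymptotic (\ref{xte}) is immediate: $h'(0)=0$ together with the triviality of the $1$-jet of $g$ at $O$ give $s'(0)=1$, and combined with $\kappa(O)=1$ formula (\ref{ltt}) yields $t'(0)=\kappa^{2/3}(0)s'(0)=1$, so $t(x)\simeq t_{n,b}(x)\simeq x$; inverting and using $h''(0)=1$ gives $x(t)\simeq t\simeq h'(x(t))$.

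For (\ref{snb}), write the length integrand $\sqrt{g_{11}+2g_{12}h'+g_{22}(h')^2}$ as $I(\xi,h(\xi),h'(\xi))$ and compute the difference $I(\xi,h_{n,b},h'_{n,b})-I(\xi,h,h')$ by Taylor expansion in the arguments $(y,v)$. The leading contribution comes from the $v$-derivative: at the evaluation point $I_v=(g_{12}+g_{22}h')/I=\xi+O(\xi^2)$, so $I_v(h'_{n,b}-h')=\xi\cdot nb\xi^{n-1}+o(\xi^n)=nb\xi^n+o(\xi^n)$. The $y$-variation $I_y(h_{n,b}-h)$ is $O(\xi)\cdot O(\xi^n)=O(\xi^{n+1})$ since $\partial_y g_{ij}=O(\xi)$, and higher-order terms are $O(\xi^{2n-2})$, both negligible for $n\geq 3$. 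Integration yields the stated formula.

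For (\ref{knb}), decompose $\kappa=\kappa_e+R$, where $\kappa_e=h''/(1+(h')^2)^{3/2}$ is the Euclidean curvature of the graph in the chart and $R$ is the Riemannian correction, a smooth function of $(y,h',h'')$ and the metric's $1$-jet at $(x,y)$ that vanishes identically when this $1$-jet is Euclidean. A direct computation with $h''_{n,b}-h''=n(n-1)bx^{n-2}+o(x^{n-2})$ and $(1+(h')^2)^{3/2}=1+O(x^2)$ gives $\kappa_e^{n,b}(x)-\kappa_e(x)=n(n-1)bx^{n-2}+o(x^{n-2})$. For $R$, Hadamard's lemma expresses it as a linear combination of the metric deviations $g_{ij}-\delta_{ij}=O(x^2)$ and $\partial_k g_{ij}=O(x)$, whence the partials $\partial R/\partial y,\partial R/\partial h',\partial R/\partial h''$ are all $O(x)$ at $(x,h(x))$; multiplying by the respective argument changes $O(x^n)$, $O(x^{n-1})$, $O(x^{n-2})$ gives $R_{n,b}(x)-R(x)=O(x^{n-1})=o(x^{n-2})$, proving (\ref{knb}). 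Then (\ref{tnb}) follows from
\begin{equation*}
t_{n,b}(x)-t(x)=\int_0^x\big[(\kappa_{n,b}^{2/3}-\kappa^{2/3})s'_{n,b}+\kappa^{2/3}(s'_{n,b}-s')\big]\,d\xi,
\end{equation*}
where, via $\kappa(0)=1$ and (\ref{knb}), the first summand equals $\tfrac23(\kappa_{n,b}-\kappa)+o(\xi^{n-2})=\tfrac{2n(n-1)}{3}b\xi^{n-2}+o(\xi^{n-2})$, while the second is $O(\xi^n)$ by differentiating (\ref{snb}); integration gives $\tfrac{2n}{3}bx^{n-1}+o(x^{n-1})$. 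Finally, (\ref{xbnt}) follows by inverting: from $t_{n,b}(x_{n,b}(t))=t=t(x(t))$ and Taylor expansion of $t_{n,b}$ about $x(t)$, the increment $\Delta(t):=x_{n,b}(t)-x(t)$ satisfies $(t_{n,b}-t)(x(t))+t'_{n,b}(x(t))\Delta+o(\Delta)=0$; substituting $x(t)\simeq t$, (\ref{tnb}) and $t'_{n,b}(0)=1$ yields $\Delta(t)=-\tfrac{2n}{3}bt^{n-1}+o(t^{n-1})$.

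The main technical obstacle is (\ref{knb}): one cannot simultaneously use normal coordinates centered at each point $(x,h(x))$, so the geodesic-curvature comparison must be performed in the fixed chart, and the Riemannian correction $R$ must be controlled by exploiting its vanishing on Euclidean metric data (via Hadamard's lemma) rather than by a pointwise change of coordinates.
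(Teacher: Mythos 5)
Your proof is correct and follows essentially the same route as the paper: both work in the fixed normal chart centered at $O$, extract the leading term $nbx^n$ in the difference of speeds to obtain (\ref{snb}), separate the Euclidean second-derivative part of the geodesic curvature from the Christoffel correction to obtain (\ref{knb}), and then integrate $\kappa^{2/3}\,ds$ and invert to get (\ref{tnb}) and (\ref{xbnt}). The only differences are cosmetic — you Taylor-expand the length integrand and invoke Hadamard's lemma for the curvature correction where the paper uses the cosine theorem on tangent vectors and freezes the Christoffel symbols at $\gamma(x)$ — and all leading coefficients come out identically.
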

\begin{proof} Formulas (\ref{xte}) 
follow from (\ref{ltt}), since $\kappa(O)=1$. In the parametrizations $\gamma=\gamma(x)$, $\gamma_{n,b}=\gamma_{n,b}(x)$ one has 
\begin{equation}s(x)=\int_{0}^x||\dot\gamma(u)||du, \ s_{n,b}(x)=\int_{0}^x||\dot\gamma_{n,b}(u)||du. 
\label{snnb}\end{equation}
We claim that 
\begin{equation}||\dot\gamma_{n,b}(x)||-||\dot\gamma(x)||=nbx^n+o(x^{n}).\label{asdif}\end{equation}
Indeed, let us identify the tangent spaces $T_{(x,y)}\Sigma$ at different points  $(x,y)$ by translations. 
One has $\dot\gamma(x),\dot\gamma_{n,b}(x)=(1,x+o(x))$, 
\begin{equation} v(x):=\dot\gamma_{n,b}(x)-\dot\gamma(x)=
(0,nbx^{n-1}+o(x^{n-1})):\label{defvas}\end{equation}
$h'(x)\simeq x$, since $h''(0)=\kappa(O)=1$, by assumption. The metric has trivial 1-jet at the base point $O$. 
Therefore, the difference of metric tensors at the $O(x^n)$-close points 
$\gamma(x)$, $\gamma_{n,b}(x)$, which are $O(x)$-close to $O$, is $O(x^{n+1})$.  
Hence, it suffices to prove (\ref{asdif}) for the  vector $\dot\gamma_{n,b}(x)$ 
  being translated to the point $\gamma(x)$. The Euclidean angle $\alpha$ between 
the vectors $v(x)$ and $\dot\gamma(x)$ is $\frac{\pi}2-x+o(x)$, by (\ref{defvas}). Therefore, the angle 
between them in the metric of the tangent plane $T_{\gamma(x)}\Sigma$ has the same 
asymptotics. Hence, 
$$||\dot\gamma_{n,b}(x)||^2=||v+\dot\gamma(x)||^2=
||\dot\gamma(x)||^2+2nbx^n+o(x^n),$$
by Cosine Theorem. The latter formula together with the obvious formula $||\dot\gamma(x)||=1+O(x)$ imply (\ref{asdif}), which 
together with (\ref{snnb}) implies (\ref{snb}). 

Let us prove (\ref{knb}).  
The Christoffel symbols at the $O(x^n)$-close points $\gamma(x)$ 
and $\gamma_{n,b}(x)$ are $O(x^n)$-close, as in the above discussion. Therefore, the difference 
$\kappa_{n,b}(x)-\kappa(x)$ is equal up to $O(x^n)$ to the same difference, where each curvature is calculated in the 
metric (Christoffel symbols) of the point $\gamma(x)$. The difference of the Christoffel  parts of 
the curvatures, which are quadratic in the vectors $\frac1{||\dot\gamma(x)||}\dot\gamma(x)$, 
$\frac1{||\dot\gamma_{n,b}(x)||}\dot\gamma_{n,b}(x)$, is $O(||v(x)||)=O(x^{n-1})$, by (\ref{asdif}).  
The difference of their second derivative terms  is equal to 
$h_{n,b}''(x)-h''(x)+O(x^n)=n(n-1)bx^{n-2}+
o(x^{n-2})$, by definition and (\ref{asdif}). This together with the above discussion implies (\ref{knb}). 

Let us prove (\ref{tnb}). One has 
$$t_{n,b}(x)-t(x)=\int_0^x(\kappa_{n,b}^{\frac23}(u)||\dot\gamma_{n,b}(u)||-\kappa^{\frac23}(u)||\dot\gamma(u)||)
du$$
$$\simeq\int_0^x(\kappa_{n,b}^{\frac23}(u)-\kappa^{\frac23}(u))||\dot\gamma(u)||du+O(x^n),$$
by definition and (\ref{asdif}). The latter right-hand side is asymptotic to 
$\frac23\int_0^xn(n-1)bu^{n-2}du=\frac{2n}3bx^{n-1}$, by (\ref{knb}). This proves (\ref{tnb}). 

Formula (\ref{xbnt}) follows from (\ref{tnb}). Proposition \ref{sktx} is proved.
\end{proof}

In the proof of formula (\ref{lnb}) we use the following notations:
$$P=P(t):=\gamma(t), \ Q=Q(t):=(x_{n,b}(t),h(x_{n,b}(t)))\in\gamma, \ 
A=A(t):=\gamma_{n,b}(t),$$
$$G(t):= \text{ the geodesic tangent to } \gamma \text{ at } P, \ G(0)=\text{ the } x-\text{axis,}$$
$$C=C(t):=G(t)\cap G(0), \ V=V(t):=\{ x=x_{n,b}(t)\},\  B=B(t):=  G(t)\cap V,$$
$$G_{n,b}(t):=\text{ the geodesic tangent to } \gamma_{n,b} \text{ at } A, \ 
D=D(t):=G_{n,b}(t)\cap G(0),$$
see Fig. 7. By definition,  $Q=Q(t)=\gamma\cap V$. 
\begin{figure}[ht]
  \begin{center}
   \epsfig{file=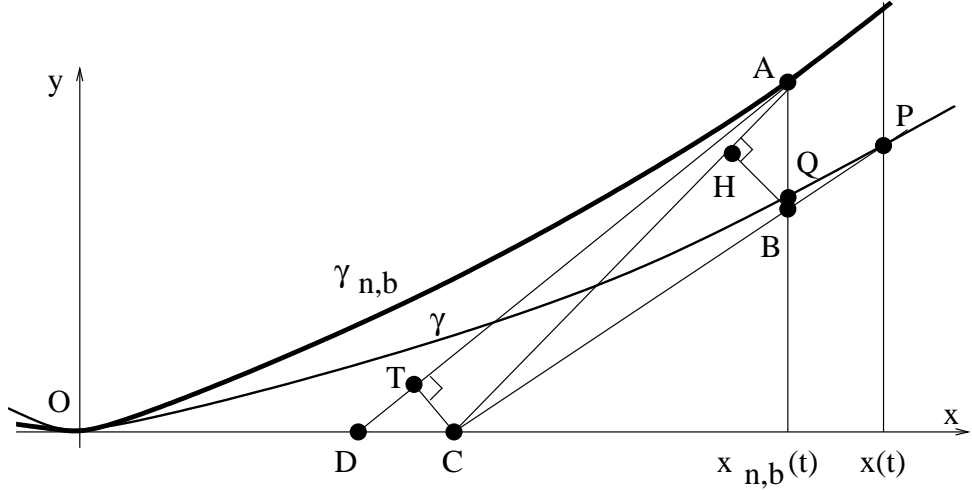}
    \caption{Auxiliary geodesics for calculation of the asymptotic of the difference $L_{n,b}(0,t)-L(0,t)$.}
    \label{fig:0}
  \end{center}
\end{figure}
 In what follows for any two points $E,F\in\Sigma$ close to $O$ 
the length of the geodesic segment connecting $F$ to $E$ will be denoted by $|EF|$. 
By definition, 
\begin{equation}L(0,t)=|OC|+|CP|-\la(O,P), \ L_{n,b}(0,t)= |OD|+|DA|-\la(O,A).\label{ltform}
\end{equation}
Recall that $\la(O,A)$, $\la(O,P)$ are lengths of arcs $OA$ and $OP$ of the curves 
$\gamma_{n,b}$ and $\gamma$ respectively. Set 
$$L_1=L_1(t):=|OC|+|CB|-\la(O,Q), \ L_2=L_2(t):=|OC|+|CA|-\la(O,A),$$
$$\Delta_1=\Delta_1(t):=L_1(t)-L(0,t)=\la(Q,P)-|BP|,$$
$$ \Delta_2=\Delta_2(t):=L_2(t)-L_1(t),$$
$$\Delta_3=\Delta_3(t):=L_{n,b}(0,t)-L_2(t):$$
\begin{equation} L_{n,b}(0,t)-L(0,t)=\Delta_1(t)+\Delta_2(t)+\Delta_3(t).\label{delty}
\end{equation}

In what follows we find   asymptotics of each $\Delta_j$.

\begin{proposition} \label{de1} One has 
\begin{equation}\Delta_1(t)=O(t^{2n-1})=O(t^{n+2}) \text{ whenever } n\geq3.
\label{d1}\end{equation}
\end{proposition}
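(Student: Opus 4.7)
The plan is to recognize $\Delta_1(t) = \lambda(Q,P) - |BP|$ as exactly the quantity controlled by Proposition \ref{ptric}, applied to a suitable curvilinear triangle with vertices $P = P(t)$, $B = B(t)$, $Q = Q(t)$. I will take the geodesic side to be the tangent geodesic arc $PB \subset G(t)$, the curvilinear side $PQ$ to be the arc of $\gamma$, and the third side $BQ$ to be the short vertical segment of the coordinate line $V = \{x = x_{n,b}(t)\}$. By construction, $G(t)$ is tangent to $\gamma$ at $P$, so the geodesic side $PB$ is tangent at $P$ to the curvilinear side $PQ$, and both $\gamma$ and $V$ are $C^3$-smooth with uniformly bounded geodesic curvature near $O$, so the hypotheses of Proposition \ref{ptric} are satisfied.

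Next I would estimate the two parameters $\varepsilon := |PB|$ and $\theta := \frac{\pi}{2} - \angle PBQ$ appearing in the conclusion of that proposition. From Proposition \ref{sktx}, $x_{n,b}(t) - x(t) = -\frac{2n}{3}bt^{n-1} + o(t^{n-1})$, so the horizontal separation between $P$ and $B$ is $O(t^{n-1})$. Since the normalization $\kappa(O)=1$ forces $h'(x(t)) = t + o(t)$, and Proposition \ref{pro1a} shows that $G(t)$ has Euclidean curvature $O(\operatorname{dist}(G(t),O)) = O(t)$ in normal coordinates, so it deviates from its tangent line at $P$ by a quantity of order $t \cdot \varepsilon^2$, we obtain $\varepsilon = O(t^{n-1})$ and the tangent direction of $G(t)$ at $B$ makes angle $t + o(t)$ with the horizontal, hence angle $\frac{\pi}{2} - t + o(t)$ with the vertical direction $BQ$. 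This yields $\theta = t + o(t) = O(t)$.

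Plugging into Proposition \ref{ptric} then gives
$$\lambda(Q,P) - |BP| = O(\varepsilon^3) + O(\varepsilon^2\theta) = O(t^{3n-3}) + O(t^{2n-1}) = O(t^{2n-1}),$$
and since $n \geq 3$ implies $2n - 1 \geq n + 2$, this is also $O(t^{n+2})$, which is the claim. The only point that needs care is the estimate $\theta = O(t)$: one must verify that the angle between the tangent geodesic $G(t)$ at $B$ and the vertical segment $BQ$ differs from $\frac{\pi}{2}$ by precisely order $t$ and not a smaller power; this relies on the normalization $\kappa(O)=1$ forcing $h'(x(t))$ to be of order $t$, together with the control of geodesics in normal coordinates provided by Proposition \ref{pro1a}.
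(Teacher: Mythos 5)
Your argument is exactly the paper's proof: both apply Proposition \ref{ptric} to the curvilinear triangle $QPB$ with geodesic side $PB$, the arc $PQ\subset\gamma$ tangent to it at $P$, and the vertical segment $BQ$, with $\var=|PB|=O(x_{n,b}(t)-x(t))=O(t^{n-1})$ from (\ref{xbnt}) and $\theta=O(t)$, giving $O(t^{3n-3})+O(t^{2n-1})=O(t^{2n-1})$. The only difference is that you spell out the (upper-bound-only) estimate $\theta=O(t)$ in more detail than the paper does; the extra worry about $\theta$ not being of smaller order is unnecessary, since only an upper bound on $\theta$ is used.
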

\begin{proof}  In the curvilinear triangle $QPB$ with $QP\subset\gamma$, 
$PB$ being geodesic and $QB$ vertical segment one has 
$|PB|=O(x_{n,b}(t)-x(t))=O(t^{n-1})$, by (\ref{xbnt}). Its 
angle at $Q$ is $\frac{\pi}2+O(t)$.  Therefore, by (\ref{tricurve}), 
$$\Delta_1=\la(Q,P)-|PB|=O(|PB|^3)+O(t|PB|^2)=O(t^{3n-3})+O(t^{2n-1}).$$
The latter right-hand side is $O(t^{2n-1})=O(t^{n+2})$, since $n\geq3$.
\end{proof}
\begin{proposition} \label{de2} One has 
\begin{equation}\Delta_2(t)=\frac b{n+1}t^{n+1}+o(t^{n+1}).\label{d2}\end{equation}
\end{proposition}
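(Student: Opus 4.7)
The plan is to split $\Delta_2(t) = (|CA|-|CB|) - (\la(O,A)-\la(O,Q))$ and handle each piece to order $t^{n+1}$. Since $A=(x_{n,b}(t),h_{n,b}(x_{n,b}(t)))$ and $Q=(x_{n,b}(t),h(x_{n,b}(t)))$ share the vertical line $V=\{x=x_{n,b}(t)\}$, one has $\la(O,Q)=s(x_{n,b}(t))$ and $\la(O,A)=s_{n,b}(x_{n,b}(t))$, so (\ref{snb}) applied at $x_{n,b}(t)\simeq t$ immediately gives
\begin{equation*}
\la(O,A)-\la(O,Q)=\tfrac{n}{n+1}\,b\,t^{n+1}+o(t^{n+1}).
\end{equation*}

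For the chord-length difference, I would use that $A$ and $B$ lie on $V$ and differ only by a vertical Euclidean displacement $d:=y(A)-y(B)$, and first establish $d=b\,t^n+o(t^n)$. To do this I would write $y(A)=h(x_{n,b}(t))+b\,x_{n,b}(t)^n+o(t^n)$, and compute $y(B)$ as the value at $x=x_{n,b}(t)$ of the geodesic $G(t)$ tangent to $\gamma$ at $P=(x(t),h(x(t)))$: by Proposition \ref{pro1a} the Euclidean curvature of $G(t)$ in normal coordinates is $O(\dist(G(t),O))=O(t)$, so over the horizontal span $|x_{n,b}(t)-x(t)|=O(t^{n-1})$ of (\ref{xbnt}) the geodesic $G(t)$ deviates from its Euclidean tangent at $P$ by $O(t^{2n-1})$; Taylor expansion of $h$ then gives $y(B)=h(x(t))+h'(x(t))(x_{n,b}(t)-x(t))+O(t^{2n-1})$, from which the expression for $d$ follows via $h(x_{n,b}(t))-h(x(t))-h'(x(t))(x_{n,b}(t)-x(t))=O(t^{2n-2})$. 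Next I would apply the first-variation formula for the distance from the fixed point $C$, namely $|CA|-|CB|=\langle\vec{BA},\vec{u}_B\rangle_g+O(d^2/|CB|)$, where $\vec{u}_B$ is the unit tangent to $G(t)$ at $B$ pointing away from $C$. Since $h'(x(t))\simeq t$ (using the normalization $\kappa(O)=1$) and the direction of $G(t)$ rotates by $O(t^n)$ between $P$ and $B$, the $y$-component of $\vec u_B$ is $\sin\theta_B=t+O(t^2)$; the Riemannian-vs-Euclidean metric discrepancy at points at distance $O(t)$ from $O$ is of relative order $O(t^2)$. Combining these estimates gives $|CA|-|CB|=dt+o(t^{n+1})=b\,t^{n+1}+o(t^{n+1})$.

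Subtracting the two asymptotics then yields $\Delta_2(t)=\bigl(1-\tfrac{n}{n+1}\bigr)\,b\,t^{n+1}+o(t^{n+1})=\tfrac{b}{n+1}\,t^{n+1}+o(t^{n+1})$, which is (\ref{d2}). The principal technical difficulty is the bookkeeping of error terms from three simultaneous sources: the deviation of the metric from Euclidean in normal coordinates (relative $O(t^2)$), the non-straightness of $G(t)$ (Euclidean curvature $O(t)$), and the Taylor remainder of $h$ about $x(t)$. All three errors must be checked to be $o(t^{n+1})$; the borderline case is $n=3$, where the Taylor remainder $\tfrac12 h''(x(t))(x_{n,b}(t)-x(t))^2$ sits at the critical order $t^4=t^{n+1}$, but it only enters $|CA|-|CB|$ multiplied by $\sin\theta_B\simeq t$ and therefore contributes only $O(t^5)=o(t^{n+1})$.
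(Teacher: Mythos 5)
Your proof is correct and follows essentially the same route as the paper: the same split of $\Delta_2$ into $(|CA|-|CB|)-(\la(O,A)-\la(O,Q))$, the same use of (\ref{snb}) for the arc-length part, and the same key asymptotics $d\simeq bt^n$ and $|CA|-|CB|\simeq bt^{n+1}$. Your first-variation-of-distance argument for $|CA|-|CB|$ is just a repackaging of the paper's right-triangle decomposition via the foot $H$ of the perpendicular from $B$ to $CA$ (the term $\langle\vec{BA},\vec u_B\rangle\simeq t\,d$ is the paper's $|AH|\simeq t|AB|$, and your $O(d^2/|CB|)$ correction is the paper's $|HC|-|BC|=O(t^{n+2})$).
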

\begin{proof} By definition,
$$\Delta_2(t)=|OC|+|CA|-\la(O,A)-(|OC|+|CB|-\la(O,Q))$$
\begin{equation}=
(|CA|-|CB|)-(\la(O,A)-\la(O,Q)),\label{d2ca}\end{equation}
\begin{equation}\la(O,A)-\la(O,Q)=s_{n,b}(x_{n,b}(t))-s(x_{n,b}(t))=\frac{nbt^{n+1}}{n+1}+o(t^{n+1}),\label{saq}\end{equation}
by (\ref{snb}) and (\ref{xte}). To find the asymptotics of the difference 
$|CA|-|CB|$, 
let us consider the height denoted by $BH$ of the geodesic triangle $ABC$, which splits it 
into two triangles $ABH$ and $CBH$, see Fig. 7. We use the following asymptotic formula for  
lengths of their sides:
\begin{equation} |AB|\simeq bt^n+o(t^n)\simeq |BH|,\label{abas}\end{equation}
\begin{equation} |CB|\simeq |CP|\simeq |CA|\simeq\frac t2\label{t2}\end{equation}
\begin{equation} |AH|\simeq bt^{n+1}+o(t^{n+1})\simeq |AC|-|BC|.\label{ahc}\end{equation}
\begin{proof} {\bf of (\ref{abas}).} The Euclidean distance in the coordinates 
$(x,y)$ 
between the points $A$ and $Q$ is  
$bx^n_{n,b}(t)+o(x^n_{n,b}(t))= bt^n+o(t^n)$, by construction. Therefore, the distance between them in the metric $g$ is asymptotic 
to the same quantity, since $g$ is Euclidean on $T_O\Sigma$. The Euclidean distance 
between the points $Q$ and $B$ is of order $O((x(P)-x(B))^2)\simeq O(t^{2(n-1)})=O(t^{n+1})$, 
by (\ref{xbnt}) and since $n\geq3$: $2(n-1)\geq n+1$ for $n\geq3$. The two latter 
statements together imply that $|AB|= bt^n+o(t^n)$; this is the first asymptotics in (\ref{abas}).
 
In  the proof of the second asymptotics in (\ref{abas}) and in what follows 
we use the two next claims.

{\bf Claim 1.} {\it The azimuths of the tangent vectors of the geodesic arcs $CA$, $CP$, $DA$  
at all their points  are uniformly asymptotically equivalent to  $t=t(P)$, as $t\to0$.}

\begin{proof}  Let us prove the above statement  for the geodesic arc $CP$; 
the proof  for the arcs $CA$ and $DA$ is analogous. The slope of 
the tangent vector to the curve $\gamma$ at the point $P$ is asymptotic to 
$x(P)=x(t)\simeq t$, and it is equal to the slope of the tangent vector of the geodesic 
$CP$ at $P$. On the other hand, let us 
apply formula (\ref{azim})  to the geodesic arc $\alpha=CP$: its right-hand side is 
a quantity of order $O(t)$. The length of the arc $CP$ is $O(t)$. 
Hence, the difference between the azimuths of tangent 
vectors at any two points of the geodesic arc $CP$ is of order $O(t^2)$. This proves the 
claim.
\end{proof}

{\bf Claim 2.} {\it The angle $A$ of the geodesic triangle $ABH$ is asymptotic to 
$\frac{\pi}2-t+O(t^2)$. Its angle $B$ is asymptotic to $t+O(t^2)$, 
and $|AH|\simeq t|AB|$.}

\begin{proof}
The  first statement of the 
claim follows from Claim 1 applied to $CA$ and the fact that the slopes of 
the tangent vectors to the geodesic arc $BA$ are uniformly 
$O(|BA|)=O(t^n)$-close to $\frac{\pi}2$. 
This follows from the second formula in (\ref{labu}) and 
formula (\ref{azim}) applied to the geodesic arc $BA$. 
The second statement 
of the claim follows from the first one and  (\ref{abc}).
\end{proof}

The first statement of Claim 2 implies that $|AB|\simeq |HB|$, which yields
 the second asymptotics in (\ref{abas}). Formula (\ref{abas}) is proved.
 \end{proof}
 
 \begin{proof} {\bf of (\ref{t2}).} The asymptotics $|CP|\simeq\frac{x(P)}2\simeq\frac t2$ 
 follows from Claim 1 and the fact that the height of the point $P$ over the $x$-axis is 
 asymptotic to $\frac{x^2(P)}2\simeq \frac{t^2}2$. The other asymptotics in (\ref{t2}) 
 follow from the above one, formula (\ref{abas}) and 
  the fact that $|BP|=O(t^{n-1})$ (follows from (\ref{xbnt})).\end{proof}
 
 \begin{proof} {\bf of (\ref{ahc}).} The geodesic triangle $ABH$ has right angle at $H$. 
 This together with Claim 2 and (\ref{abas}) 
 implies the first asymptotic formula in (\ref{ahc}). In the proof of the second formula in (\ref{ahc}) we use 
 the following claim. 
 
 {\bf Claim 3.} {\it The angle $\phi:=\angle BCH$ equals $2bt^{n-1}+o(t^{n-1})$.}
 
 \begin{proof} The triangle $BCH$ has right angle at $H$,
  $|BH|=bt^n+o(t^n)$, $|BC|\simeq\frac t2,$ 
 by (\ref{abas}) and (\ref{t2}). Hence,  
 $\phi\simeq |BH|\slash\frac t2=2bt^{n-1}+o(t^{n-1})$. 
  \end{proof}
  
  Now let us prove the second asymptotic formula in (\ref{ahc}).  One has 
  $$|HC|-|BC|\simeq \frac12BC\phi^2,$$
   by formula (\ref{abc}) applied to the 
  family of triangles $BCH$. The right-hand side in the latter formula is  $b^2t^{2n-1}+o(t^{2n-1})=O(t^{n+2})$, by (\ref{t2}) and Claim 3 and since 
  $2n-1\geq n+2$ for $n\geq3$. Thus, 
  \begin{equation}|HC|-|BC|=O(t^{n+2}),\label{hcbc}\end{equation} 
  $$|AC|-|BC|=(|HC|-|BC|)+|AH|=|AH|+O(t^{n+2})=bt^{n+1}+o(t^{n+1}),$$
  by the first formula in (\ref{ahc}) proved above. Formula (\ref{ahc}) is proved. 
  \end{proof}
 Substituting formulas (\ref{saq}) and (\ref{ahc}) to (\ref{d2ca}) yields
$$\Delta_2(t)=bt^{n+1}-\frac n{n+1}bt^{n+1}+o(t^{n+1})=\frac b{n+1}t^{n+1}
+o(t^{n+1}).$$
Proposition \ref{de2} is proved.
 \end{proof} 
 \begin{proposition} \label{de3} One has 
 \begin{equation}\Delta_3(t)= \frac{n-6}{12}bt^{n+1}+o(t^{n+1}).\label{d3}\end{equation}
 \end{proposition}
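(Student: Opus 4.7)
The plan is to write $\Delta_3 = \Phi(x_D, A) - \Phi(x_C, A)$ with $\Phi(\xi, A) := |O(\xi,0)| + |(\xi,0)\,A|$, and Taylor-expand in $\xi$ around $x_C$. Since the $x$-axis is a geodesic through $O$ parametrized by arc length in the normal chart, $|O(\xi,0)| = \xi$, so its $\xi$-derivative is $1$; and by the first variation of geodesic length, $\partial_\xi |(\xi,0)\,A| = -\cos\phi_\xi$, where $\phi_\xi$ is the Riemannian angle at $(\xi,0)$ between the positive $x$-direction and the geodesic segment to $A$. Therefore
\[
\Delta_3 = (1 - \cos\phi_C)(x_D - x_C) + \tfrac12\Phi_{\xi\xi}(x_C, A)(x_D - x_C)^2 + O\bigl((x_D - x_C)^3\bigr).
\]

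I would then estimate both factors. The tangent geodesic $CP$ has Euclidean slope $\theta_P = h'(x_P) = t + O(t^2)$ (using $\kappa(O) = h''(0) = 1$ and $x_P = x(t) = t + O(t^2)$), hence $\phi_{CP} = t + O(t^2)$; moving from $P$ to $A$ perturbs this angle by $O(bt^{n-1})$, so $1 - \cos\phi_C = t^2/2 + O(t^3) + O(bt^n)$. For the displacement $x_D - x_C$, I would use the Euclidean straight-line intercept formulas $x_C \approx x_P - y_P/\theta_P$ and $x_D \approx x_A - y_A/\theta_A$ in the normal chart. Combining Proposition~\ref{sktx} with Taylor expansion of $h_{n,b}(x) = h(x) + bx^n + o(x^n)$ and its derivative around $x_P$ yields
\[
y_A - y_P = -\tfrac{2n-3}{3}\,bt^n + o(t^n), \qquad \theta_A - \theta_P = \tfrac{n}{3}\,bt^{n-1} + o(t^{n-1}),
\]
from which
\[
x_D - x_C = (x_A - x_P) - \bigl(y_A/\theta_A - y_P/\theta_P\bigr) + o(t^{n-1}) = -\tfrac{2n}{3}\,bt^{n-1} + \tfrac{5n-6}{6}\,bt^{n-1} + o(t^{n-1}) = \tfrac{n-6}{6}\,bt^{n-1} + o(t^{n-1}).
\]

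Multiplying the two leading estimates gives the main term $\tfrac{t^2}{2}\cdot\tfrac{n-6}{6}\,bt^{n-1} = \tfrac{n-6}{12}\,bt^{n+1}$, and the remaining contributions all lie in $o(t^{n+1})$: the quadratic-in-$(x_D - x_C)$ term is bounded by $O(\Phi_{\xi\xi})\cdot O(b^2 t^{2n-2}) = O(t)\cdot O(b^2 t^{2n-2}) = O(b^2 t^{2n-1})$, which is $o(t^{n+1})$ for $n \geq 3$, while the $O(t^3) + O(bt^n)$ remainders in $1 - \cos\phi_C$ multiplied by $x_D - x_C = O(bt^{n-1})$ contribute $O(bt^{n+2}) + O(b^2 t^{2n-1})$, again $o(t^{n+1})$. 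The main technical obstacle will be justifying the Euclidean straight-line intercept formulas: the curvature of the metric in normal coordinates perturbs each individual intercept $x_C, x_D$ by $O(t^2)$, but since the Christoffel symbols are $O(t)$ throughout the $O(t)$-neighborhood of $O$ traversed by the geodesics $G(t)$ and $G_{n,b}(t)$, a Jacobi-field analysis should show that the \emph{difference} of these perturbations is $O(bt^{n+1}) = o(t^{n-1})$, and therefore contributes $o(t^{n+1})$ to $\Delta_3$ after multiplication by the $O(t^2)$ factor $1 - \cos\phi_C$.
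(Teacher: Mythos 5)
Your decomposition is genuinely different from the paper's, but it converges on the same intermediate quantities and the correct answer. The paper works inside the geodesic triangle $ACD$: it drops the height $CT$ onto $DA$, computes the angle $\angle DAC=\frac{6-n}{3}bt^{n-1}+o(t^{n-1})$ by a chain of azimuth comparisons controlled by (\ref{azim}) (its Claim 4), and extracts the main term as the hypotenuse-minus-cathetus defect $DT-DC\simeq-\frac12 DC\cdot(\angle TDC)^2$ in the thin right triangle $CDT$ via (\ref{abc}). You instead write $\Delta_3=\Phi(x_D,A)-\Phi(x_C,A)$ and invoke the first variation of arc length, so the main term is $(1-\cos\phi_C)(x_D-x_C)$. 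These are the same final arithmetic, namely $\frac12(\text{incidence angle})^2\times(\text{oriented length of }CD)$: your $x_D-x_C=\frac{n-6}{6}bt^{n-1}+o(t^{n-1})$ is exactly $-DC$ from the paper's Claim 5, your $\phi_C\simeq t$ is its $\angle TDC\simeq t$, and your intermediate asymptotics for $y_A-y_P$, $\theta_A-\theta_P$ and $x_D-x_C$ all check against Proposition \ref{sktx}, (\ref{xbnt}) and Claims 4--5. Your error bookkeeping for the Taylor expansion of $\Phi$ ($\Phi_{\xi\xi}=O(t)$, cross terms $O(bt^{n+2})+O(b^2t^{2n-1})$) is also correct. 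What the first-variation route buys is a clean separation of "displacement of the foot point along the axis" from "angle at the foot point"; what it costs is concentrated in the step you defer.

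That deferred step is the one genuine gap: the Euclidean intercept formulas $x_C\approx x_P-y_P/\theta_P$, $x_D\approx x_A-y_A/\theta_A$ must be justified to precision $o(t^{n-1})$ \emph{in the difference}. The individual corrections are in fact $O(t^3)$ rather than your $O(t^2)$ — by (\ref{kappas}) the geodesics have Euclidean curvature $O(t^2)$ in the normal chart, hence deviate from their tangent lines by $O(t^4)$ over length $O(t)$, shifting each intercept by $O(t^4)/t=O(t^3)$ — and either figure fails to be $o(t^{n-1})$ for $n\geq4$, so the cancellation is genuinely load-bearing, not a cosmetic remark. Your Jacobi-field sketch is the right idea (the two geodesic arcs are $O(t^{n-1})$-close in position and direction, so the difference of their Euclidean-curvature profiles is $O(t^{n-1})$, integrating to an $O(t^{n})=o(t^{n-1})$ discrepancy in the intercepts), but it must actually be carried out: this is precisely where the paper spends its effort, replacing the intercept formula by the intrinsic estimates (\ref{azim}) and (\ref{abc}). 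A second point to make explicit: the first-variation formula requires the \emph{Riemannian} angle $\phi_C$, and since the metric in normal coordinates is $\delta+O(t^2)$ at distance $O(t)$ from $O$, the naive distortion of $\cos\phi_C$ is $O(t^2)$ — the same order as $1-\cos\phi_C$ itself. The error improves to $O(t^3)$ only because the two vectors at $C$ are $O(t)$-close, so the distortion acts on them almost identically; this is the phenomenon the paper packages into Proposition \ref{rtri} and should be stated rather than assumed.
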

 \begin{proof} Recall that 
 $$\Delta_3(t)=L_{n,b}(0,t)-L_2(t)=|OD|+|DA|-\la(O,A)-(|OC|+|CA|-\la(O,A))$$
 \begin{equation}=|DA|-(DC+|CA|).\label{d3r}\end{equation}
 Here $DC$  is the "oriented length" $DC:=|OC|-|OD|$. 
 
  Let  $CT$ denote the height of the geodesic triangle $DCA$. To find an asymptotic formula for the right-hand side in (\ref{d3r}), we first find asymptotics of the 
 length of the height $CT$ and the angle $\angle DAC$.

 {\bf Claim 4.} {\it Let $\alpha:=\angle DAC$ denote the oriented angle between the geodesics $AD$ and $AC$: it is said to be positive, if 
 $D$ lies between $O$ and $C$, as at Fig.7. 
 One has $\alpha=\frac{6-n}3bt^{n-1}+o(t^{n-1})$.}
 
 \begin{proof} Consider the following 
 tangent lines of the geodesic arcs $AD$, $AC$, $BC$, $CP$ and the curve $\gamma$:
 $$\ell_1:=T_AAD=T_A\gamma_{n,b}, \ \ell_2:=T_AAC, \ \ell_3:=T_BBC,$$
 $$\ell_4:=T_Q\gamma, \ \ell_5:=T_PCP=T_P\gamma.$$
 We orient all these lines "to the right". One has 
 \begin{equation}\alpha\simeq\az(\ell_2)-\az(\ell_1),\label{diffl}\end{equation}
 by definition and since the Riemannian metric at the point $A$ written in the normal coordinates $(x,y)$  tends to the Euclidean one, as $t\to0$. Let us find asymptotic 
 formula for the above difference of azimuths by comparing azimuths of appropriate 
 pairs of lines $\ell_1,\dots,\ell_5$. One has 
 $$\az(\ell_4)-\az(\ell_1)=-nbt^{n-1}+o(t^{n-1}),$$
 since the above azimuth difference  is asymptotically equivalent to the 
 difference of the derivatives of the functions $h(x)$ and 
 $h_{n,b}(x)=h(x)+bx^n+o(x^{n})$ 
 at the same point $x=x(B)\simeq t$: hence, to $-nbx^{n-1}+o(x^{n-1})$. One has 
 $$\az(\ell_5)-\az(\ell_4)\simeq h'(x(t))-h'(x_{n,b}(t))\simeq x(t)-x_{n,b}(t)=
 \frac{2n}3bt^{n-1}+o(t^{n-1}),$$
 by (\ref{xbnt}) and since the function $h'(x)\simeq x$ has unit derivative at $0$,
 $$\az(\ell_3)-\az(\ell_5)=O(t(x(B)-x(P)))=O(t(x_{n,b}(t)-x(t)))=O(t^n),$$
  by (\ref{azim}) and (\ref{xbnt}),  
  $$\az(\ell_2)-\az(\ell_3)\simeq\angle BCA=2bt^{n-1}+o(t^{n-1}),$$
  by (\ref{azas}), (\ref{azim}) and Claim 3. The right-hand sides of the 
  above asymptotic formulas for azimuth differences are all of order $t^{n-1}$, 
  except for one, which is  $O(t^{n})$. Summing up all of them yields the statement of Claim 4:
 $$\alpha\simeq\az(\ell_2)-\az(\ell_1)=\frac{6-n}3bt^{n-1}+o(t^{n-1}).$$
  \end{proof}
  
  {\bf Claim 5.} {\it In the right triangle\footnote{We treat the lengths of sides of the  triangle 
  $CDT$ as oriented lengths (without module sign): we take them with the sign equal to $\sign(\alpha)$, 
  where $\alpha$ is the same, as in Claim 4.} 
  $CDT$ \ $\angle TDC\simeq t$, 
  $CT=\frac{6-n}6bt^n+o(t^n)$,}
  \begin{equation}CD\simeq DT=\frac{6-n}6bt^{n-1}+o(t^{n-1}), \ \ CD-DT=\frac{6-n}{12}bt^{n+1}+o(t^{n+1}).\label{dct}\end{equation}
  
  \begin{proof} The angle asymptotics follows from Claim 1. 
  The length asymptotics for the side $CT$ is found via the adjacent right triangle $ACT$, from the 
  formula $CT\simeq AC\angle CAT$ after substituting   $\angle CAT=\frac{6-n}3bt^{n-1}+o(t^{n-1})$ (Claim 4) and 
  $AC\simeq \frac t2$, see (\ref{t2}). This together with 
   formula (\ref{abc}) applied to 
  the right triangle $CDT$ implies  (\ref{dct}).  
  \end{proof}
   
   Now let us prove formula (\ref{d3}). Recall that 
   \begin{equation}\Delta_3(t)=|DA|-(DC+|CA|)=(DT-DC)+(|AT|-|AC|),\label{d3n}\end{equation} 
   see (\ref{d3r}). One has $DT-DC=\frac{n-6}{12}bt^{n+1}+o(t^{n+1})$, 
   by (\ref{dct}), and 
   $|AT|-|AC|=O(t^{n+2})$, analogously to formula (\ref{hcbc}). Substituting the 
   two latter formulas to (\ref{d3n}) yields to (\ref{d3}). Proposition \ref{de3} 
   is proved.
   \end{proof} 
   
   \begin{proof} {\bf of Lemma \ref{lcomp}.} Let us prove formula (\ref{lnb}). 
  Summing up formulas (\ref{d1}), (\ref{d2}), (\ref{d3})  and substituting 
  their sum to (\ref{delty}) yields to (\ref{lnb}):
  $$L_{b,n}(t)-L(0,t)=\Delta_1(t)+\Delta_2(t)+\Delta_3(t)=\frac b{n+1}t^{n+1}+
  \frac{n-6}{12}bt^{n+1}+o(t^{n+1})$$
  $$=(\frac1{n+1}+\frac{n-6}{12})bt^{n+1}+o(t^{n+1})=\frac{(n-2)(n-3)}{12(n+1)}bt^{n+1}
  +o(t^{n+1}).$$

 Let us prove formula (\ref{dlat}). Taking curves $\gamma$ and $\gamma_{n,b}$ with opposite 
  parameter $-t$ results in multiplying the coefficient $b$ by $(-1)^n$.  
  Therefore, applying formula  (\ref{lnb}) to the parameter $-t$  yields to 
  \begin{equation} L_{n,b}(-t,0)-L(-t,0)=(-1)^n\frac{(n-2)(n-3)}{12(n+1)}bt^{n+1}
  +o(t^{n+1}).\label{sta}\end{equation}
 Thus, for odd (even) $n$ the main asymptotic terms in (\ref{sta}) and (\ref{lnb}) are opposite (respectively, coincide). 
Hence, in the expression
 $$\Lambda_{n,b}(t)-\Lambda(t)=(L_{n,b}(0,t)-L(0,t))-(L_{n,b}(-t,0)-L(-t,0))$$
 they are added (cancel out),  and we get (\ref{dlat}). Lemma \ref{lcomp} is proved.
  \end{proof}

  \subsection{Dependence of functions $L(0,t)$ and $\La(t)$ on the metric}
  Here we prove the following lemma, which shows that the  $(n+1)$-jets  
   of the quantities $L(0,t)$ and $\La(t)$ depend  
  only on the $n$-jet of the metric.
  \begin{lemma} \label{lemli} Let $n\geq3$, $\Sigma$ be a two-dimensional 
  surface.  
  Let $O\in\Sigma$, and  let 
  $\gamma\subset\Sigma$ be a germ of $C^n$-smooth curve at $O$. 
Let $g$ and $\wt g$ be two $C^n$-smooth Riemannian metrics on $\Sigma$ having the 
same $n$-jet at $O$: $\wt g(q)-g(q)=o(\dist^n(q,O))$, as $q\to O$. Then the differences 
$L_{\wt g}(0,t)-L_g(0,t)$, $\La_{\wt g}(t)-\La_g(t)$ of quantities $L(0,t)$ and $\La(t)$ 
defined by the metrics $\wt g$ and $g$ are $o(t^{n+1})$.  
\end{lemma}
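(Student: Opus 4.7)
Interpolate $g$ and $\wt g$ by $g_s := (1-s)g + s\wt g$, $s\in[0,1]$, and bound $\frac{d}{ds}L_{g_s}(0,t) = o(t^{n+1})$ uniformly in $s$, then integrate.  The hypothesis combined with $C^n$-smoothness gives $D^\alpha(\wt g - g)(q) = o(\dist(q,O)^{n-|\alpha|})$ for all $|\alpha|\le n$; in particular, the Christoffel symbols of $g_s$ satisfy $\Gamma_{g_s} - \Gamma_g = o(\dist(\cdot,O)^{n-1})$ uniformly in $s$.  Fix a smooth coordinate chart at $O$ with $x$-axis tangent to $\gamma$.  For $t$ small, the two tangent geodesics $G_O^{g_s}$, $G_{\gamma(t)}^{g_s}$, their intersection $C(s)$, and the arc $\gamma|_{[0,t]}$ all lie in the ball $B(O, Kt)$, with the two geodesic chords and the arc each of length $O(t)$.

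\textbf{Computing $\frac{d}{ds}L_{g_s}(0,t)$.}  With $h:=\wt g-g$, the arc-length contribution is
\[
\frac{d\lambda_{g_s}(0,t)}{ds} \;=\; \tfrac12\int_0^t \frac{h(\dot\gamma,\dot\gamma)}{\|\dot\gamma\|_{g_s}}\,d\tau \;=\; o(t^{n+1}),
\]
since $h(\gamma(\tau)) = o(\tau^n)$ and the interval has length $O(t)$.  Let $\alpha_s$, $\beta_s$ be the $g_s$-geodesics from $O$ and from $\gamma(t)$ to $C(s)$, of $g_s$-lengths $\ell_s, m_s$.  The first variation of length with a fixed initial and a moving terminal endpoint $C(s)$, under a varying metric, gives
\[
\frac{d(\ell_s+m_s)}{ds} \;=\; \tfrac12\!\int_{\alpha_s}\! h(\dot\alpha_s,\dot\alpha_s)\,dr + \tfrac12\!\int_{\beta_s}\! h(\dot\beta_s,\dot\beta_s)\,dr + g_s\bigl(\dot\alpha_s(\ell_s)+\dot\beta_s(m_s),\,\tfrac{dC(s)}{ds}\bigr).
\]
Each integral is bounded by $O(\sup|h|)\cdot O(t) = o(t^n)\cdot O(t) = o(t^{n+1})$.

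\textbf{The boundary term.}  The vectors $\dot\alpha_s(\ell_s)$ and $\dot\beta_s(m_s)$ are the unit tangents at $C(s)$ of the two tangent geodesics, both pointing outward from their respective base points; the inward tangents (toward $O$ and toward $\gamma(t)$) are approximately $-\dot\gamma(0)$ and $+\dot\gamma(0)$, so the outward ones are nearly antiparallel.  Quantitatively, the angle between the tangent geodesics at $C(s)$ is $O(t)$ (Proposition \ref{gangle}), whence $\|\dot\alpha_s(\ell_s)+\dot\beta_s(m_s)\| = O(t)$.  To bound $dC(s)/ds$, write the two geodesics in the chart as graphs of the reference abscissa; ODE perturbation theory applied to the geodesic equation with forcing $\Gamma_{g_s}-\Gamma_g = o(\dist^{n-1})$ along paths of size $O(t)$ yields $G_O^{g_s} - G_O^g = o(t^{n+1})$ and analogously for $G_{\gamma(t)}^{g_s}$, uniformly in $s$.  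These two graphs meet transversally at angle $O(t)$, so the implicit function theorem gives $|dC(s)/ds| = o(t^{n+1})/O(t) = o(t^n)$ uniformly in $s$.  Thus the boundary term is $O(t)\cdot o(t^n) = o(t^{n+1})$.

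\textbf{Conclusion and main obstacle.}  Combining the above, $\frac{d}{ds}L_{g_s}(0,t) = o(t^{n+1})$ uniformly in $s\in[0,1]$; integrating over $s$ yields $L_{\wt g}(0,t)-L_g(0,t) = o(t^{n+1})$, and the identical argument for $L(-t,0)$ with $\La(t) = L(0,t)-L(-t,0)$ gives the companion bound for $\La$.  (A minor further check confirms compatibility of the Poritsky parameters $t_g, t_{\wt g}$: they differ, as functions of the reference parameter $\tau$, by $o(\tau^n)$, which when substituted into $L_g(0,\cdot)$ of size $O(\tau^3)$ and derivative $O(\tau^2)$ generates only an auxiliary error $O(\tau^2)\cdot o(t^n) = o(t^{n+2})$, absorbed by our bound.)  The crux is the cancellation $\|\dot\alpha_s(\ell_s)+\dot\beta_s(m_s)\| = O(t)$ rather than $O(1)$: this gain of one factor of $t$ in the coefficient of $dC(s)/ds$ in the boundary term is precisely what converts the naive $O(1)\cdot o(t^n) = o(t^n)$ into the required $o(t^{n+1})$, and it reflects the geometry of two tangent geodesics to $\gamma$ at nearby points being nearly parallel at their intersection.
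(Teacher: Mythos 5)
Your proof is correct in substance and reaches the result by a genuinely different route: a continuous interpolation $g_s=(1-s)g+s\wt g$ combined with the first-variation formula for length, versus the paper's direct comparison, which splits $L_g(0,t)-L_{\wt g}(0,t)$ into three discrete differences $\Delta_1+\Delta_2+\Delta_3$ (endpoint reparametrization; change of lengths of fixed arcs and segments under the change of metric; displacement of the intersection point from $D$ to $\wt D$). The two arguments isolate exactly the same three sources of error, and both hinge on the same cancellation at the intersection point: in your version it is the fact that the endpoint contribution to $\frac{d}{ds}(\ell_s+m_s)$ equals $g_s\bigl(\dot\alpha_s(\ell_s)+\dot\beta_s(m_s),\tfrac{dC}{ds}\bigr)$ with $\|\dot\alpha_s(\ell_s)+\dot\beta_s(m_s)\|=O(t)$, the two outward unit tangents at $C$ being antiparallel up to the angle $O(t)$ of Proposition \ref{gangle}; in the paper it is the case analysis of its Claim 4, where the perpendicular feet $H,M$ of $\wt D$ on the $g$-geodesics satisfy $|\wt DH|,|\wt DM|=o(t^{n+1})$ and the signed combination $\pm|DH|\pm|DM|$ cancels to $o(t^{n+1})$ even though each term separately is only $o(t^n)$. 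Your packaging makes the cancellation structural and avoids the sign discussion, at the price of invoking variational formulas and smooth dependence on $s$; the paper's version stays entirely within the elementary triangle comparisons of Section 2. The one step you should write out is the endpoint-compatibility remark at the end: the asserted bound ``derivative $O(\tau^2)$'' for $L$ as a function of the moving tangency point is true but does not follow from $L=O(t^3)$ alone (the $o(1)$ in Theorem \ref{lasyl} is not differentiated); it follows from the same first-variation computation --- $dL/ds_B=g(T_A+T_B,\dot C)$ with $\dot C$ directed along the fixed geodesic $G_A$, so the pairing gives $|\dot C|(1-\cos O(t))=O(t^2)$ --- or from the triangle estimates the paper uses for its $\Delta_1$. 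With that supplied, the argument is complete.
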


\begin{proof} Let $s$, $\wt s$, $t$, $\wt t$, $\kappa$, $\wt\kappa$ denote the natural and Lazutkin parameters centered at $O$, see (\ref{ltt}), 
and the geodesic curvature of the curve $\gamma$ defined by the metrics $g$ and $\wt g$ 
respectively. Let us rescale the metric by a constant factor so that $\kappa(O)=1$. Then $\wt\kappa(O)=1$, 
since $n\geq3$. Fix a coordinate system $(x,y)$ centered at $O$ so that the $x$-axis is 
tangent to the curve $\gamma$ at $O$ and 
$||\frac{\partial}{\partial x}(O)||=1$. 
Consider $x$ as a local parameter on $\gamma$. 
We consider the above quantities as functions of $x$; $s(0)=\wt s(0)=t(0)=\wt t(0)=0$. 

Let $x(t)$, $\wt x(t)$ denote the functions inverse to $t(x)$ and $\wt t(x)$ respectively. 
Let $\gamma(t)$, $\wt\gamma(t)$ 
denote the points of the curve $\gamma$ with $x$-coordinates $x(t)$ and $\wt x(t)$ respectively. Let now $s(t)$ and $\wt s(t)$ denote the natural length parameters of the metrics $g$ and $\wt g$, 
now considered as functions of the parameter $t$ defined by the  metric 
under question ($g$ or $\wt g$). 

\begin{proposition} One has $t\simeq x\simeq\wt t\simeq s\simeq\wt s$, 
\begin{equation}\wt s(x)-s(x)=o(x^{n+1}), \ \wt\kappa(x)-\kappa(x)=o(x^{n-1}), \ \wt t(x)-t(x)=o(x^n),\label{wtsk}\end{equation}
\begin{equation}\wt x(t)-x(t)=o(t^{n}), \ \dist(\gamma(t),\wt\gamma(t))=o(t^n),\label{wxt}\end{equation} 
\begin{equation} \wt s(t)-s(t)=o(t^n), \  \wt s'(t)-s'(t)=o(t^{n-1}).\label{wstt}\end{equation}
\end{proposition}
\begin{proof} The asymptotic equivalences follow from (\ref{ltt}). The first formula in (\ref{wtsk}) is obvious. The second one holds by definition and since the Christoffel symbols of 
the two metrics differ by a quantity $o(x^{n-1})$. The third formula follows from the second one. Formula (\ref{wxt}) follows from the third formula in (\ref{wtsk}). 
Formula (\ref{wstt}) follows from (\ref{wtsk}) and (\ref{wxt}). 
\end{proof}

Fix a small value $t\in\rr$, say, $t>0$.  Set 
$$P=\gamma(t), \ A=\wt\gamma(t).$$
Let $C$ ($\wt C$) be the point of intersection of the $g$-(respectively, $\wt g$-) geodesics $G(P)$, $G(O)$ tangent to $\gamma$ at $P$ and $O$.
Let $D$ ($\wt D$) be the analogous  points of intersection of the geodesics tangent to $\gamma$ at $A$ and $O$. See Fig. 8a). The distance (arc length) 
between points $E$ and $F$ in a metric $h$ will be denoted by $|EF|_h$ 
(respectively, $\la_h(E,F)$). One has 
$$ 
L_g(0,t)=|OC|_g+|CP|_g-\la_g(O,P), \ L_{\wt g}(0,t)=|O\wt D|_{\wt g}+|\wt DA|_{\wt g}-\la_{\wt g}(O,A),$$
by definition.   Set 
\begin{equation} \Delta_1(t):=|OC|_g+|CP|_g-|OD|_g-|DA|_g-(\la_g(O,P)-\la_g(O,A));
\label{de1g}\end{equation}
\begin{equation}\Delta_2(t):=(|OD|_g-|OD|_{\wt g})+(|DA|_g-|DA|_{\wt g})-
(\la_g(O,A)-\la_{\wt g}(O,A));\label{de2g}\end{equation}
\begin{equation}\Delta_3(t):= (|OD|_{\wt g}-|O\wt D|_{\wt g})+(|DA|_{\wt g}-|\wt DA|_{\wt g}).
\label{de3g}\end{equation}
One has 
\begin{equation}L_g(0,t)-L_{\wt g}(0,t)=\Delta_1+\Delta_2+\Delta_3.\label{deltygg}\end{equation}

{\bf Claim 1.} {\it One has $\Delta_1(t)=o(t^{n+1})$.}

\begin{proof} Let us introduce the point $B$ of intersection 
of the $g$-geodesic $PC$  with the vertical line  through $A$, 
see Fig. 8a): 
$x(B)=\wt x(t)$. One has 
\begin{equation} \Delta_1=(|OC|_g+|CB|_g-|OD|_g-|DA|_g)+(|BP|_g-\la_g(A,P)).\label{de1oc}
\end{equation}
Consider the curvilinear triangle $APB$ formed by the arc $AP$ of the curve 
$\gamma$, the $g$-geodesic $PB$ and the vertical segment $BA$. 
Its sides $AP$ and $BA$ have $g$-length $o(t^n)$, 
by definition and (\ref{wxt}).  Its angle  $B$  is $\frac{\pi}2+O(\wt x(t))=\frac{\pi}2+O(t)$, as in Claim 1 in Subsection 8.3. 
This together with (\ref{tricurve}) implies that the second bracket in (\ref{de1oc}) is $o(t^{n+1})$. Let us prove the same statement for the first bracket. It is 
equal to 
\begin{equation}|DC|_g+|CA|_g-|DA|_g+(|CB|_g-|CA|_g)=DC_g+|CA|_g-|DA|_g+o(t^{n+1}),
\label{1stbr}\end{equation}
since $||CB|_g-|CA|_g|\leq|BA|=O((x(P)-x(B))^2)=o(t^{n+1})$. 
Here $DC_g$ is the oriented length $|OC|_g-|OD|_g$. One has 
\begin{equation}DC_g+|CA|_g-|DA|_g=o(t^{n+1}).\label{diffan}\end{equation}
Indeed, consider the height $CT$ of the triangle $ADC$, 
which splits it into two triangles. One has $\angle CAD=O(x(A)-x(P))=o(t^n)$, as in the proof of Claim 4 in the previous subsection. 
This together with right triangle arguments using (\ref{abc})  analogous to those from the 
proof of Claim 5 (Subsection 8.3) 
implies (\ref{diffan}). Substituting (\ref{diffan}) to (\ref{1stbr}) and then substituting everything to (\ref{de1oc}) 
yields $\Delta_1(t)=o(t^{n+1})$. Claim 1 is proved.
\end{proof}

{\bf Claim 2.}  {\it One has $\Delta_2(t)=o(t^{n+1})$.}

\begin{proof} All the points in (\ref{de2g}) are $O(t)$-close to $O$. 
The $g$- and $\wt g$-distances between any two points (which will be denoted by $E$ 
and $F$)  differ by a quantity $o(t^{n+1})$. 
Indeed, the $\wt g$-length of the $g$-geodesic segment $EF$  differs 
from its $g$-length by  $o(t^{n+1})$, since the metrics differ by $o(t^n)$. 
The distance $|EF|_{\wt g}$ is no greater than the latter $\wt g$-length, 
and hence, no greater than $|EF|_g+o(t^{n+1})$. Applying the same 
arguments to interchanged metrics yields that the above distances 
differ by $o(t^{n+1})$. Similarly, $\la_g(O,A)-\la_{\wt g}(O,A)=o(t^{n+1})$. 
This proves the  claim.
\end{proof}

\begin{figure}[ht]
  \begin{center}
   \epsfig{file=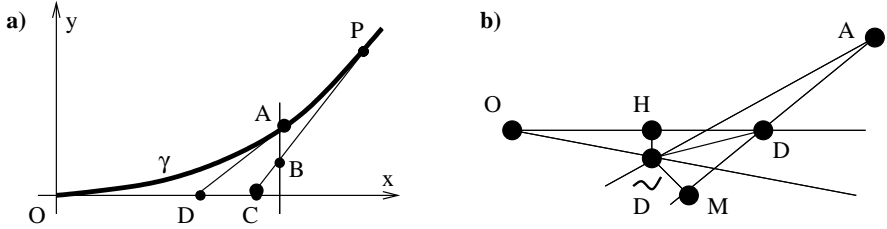}
    \caption{The curve $\gamma$, points $P$, $A$, $C$, $D$, $B$ (fig. a)).  
    The points $\wt D$, $H$, $M$; case 2) (fig. b)).}
    \label{fig:01}
  \end{center}
\end{figure}

Let $H$ and $M$ denote the points in the $\wt g$-geodesics $OD$ and $DA$ respectively that are $\wt g$-closest 
to $\wt D$: $\wt DH\perp OD$;  $\wt DM\perp DA$; see Fig. 8b). 

{\bf Claim 3.} {\it One has $|\wt DH|_{\wt g}=o(t^{n+1})$, $|\wt DM|_{\wt g}=o(t^{n+1})$.}

\begin{proof} The $\wt g$-geodesic $O\wt D$ is tangent to the $g$-geodesic $OD$ at $O$, and 
the metrics $g$ and $\wt g$ have the same $n$-jet at $O$. Therefore, their Christoffel symbols 
have the same $(n-1)$-jet, and hence, their difference is asymptotically dominated 
by the $g$-distance of $O$ in power $n-1$. This together with the equation of geodesics implies that the 
azimuths of the unit vectors tangent to both latter geodesics (as functions of the natural parameter 
based at $O$) differ by a quantity asymptotically dominated by $n$-th power to the $g$-distance to $O$. 
Therefore, the distance (in any metric) between points of the geodesics corresponding to the same natural parameter value  
is asymptotically dominated by the above distance in power $n+1$. Thus,   the distance 
of the point $\wt D$ to the geodesic $OD$ is $o(t^{n+1})$. Analogously, 
the same statement holds for 
distance to the $g$-geodesic $DA$. This proves the claim. 
\end{proof}

{\bf Claim 4.}  {\it One has $\Delta_3(t)=o(t^{n+1})$.} 

\begin{proof}  All the distances below are measured 
in the metric $\wt g$. One has 
\begin{equation}|O\wt D|-|OH|=O(\frac{|\wt D H|^2}{|O\wt D|})=o(t^{2n+1})=o(t^{n+1}),\label{o2n1}
\end{equation}
\begin{equation}|A\wt D|-|AM|=O(\frac{|\wt D M|^2}{|A\wt D|})=o(t^{2n+1})=o(t^{n+1}),\label{o2n2}
\end{equation}
by  (\ref{abc}) (applied to  the right $\wt g$-triangles $O\wt D H$ and $A\wt DM$) and Claim 3,
\begin{equation}|OD|-|OH|=\pm|DH|, \ |AD|-|AM|=\pm|DM|,\label{pmpm}
\end{equation}
see the cases of signs (which do not necessarily 
coincide) below. Taking sum of  equalities (\ref{pmpm}) and its difference 
with (\ref{o2n1}), (\ref{o2n2}) yields
\begin{equation}
\Delta_3(t)=(\pm)|DH|\pm|DM|+o(t^{n+1}).\label{de4g}\end{equation}

Case 1). In the right triangle $D\wt D H$ the angle $D$ is bounded from below (along some sequence 
of parameter values $t$ converging to 0). Then the same statement holds in the right triangle 
$\wt D MD$, since the angle between the geodesics 
$DA$ and $OD$ tends to 0 as $O(t)$. This implies that  $|DH|=O(|\wt D H|)=o(t^{n+1})$, 
and $|DM|=O(|\wt DM|)=o(t^{n+1})$, by Claim 3. This together with (\ref{de4g}) implies Claim 4 
(along the above sequence)

Case 2). In the right tringle $D\wt D H$ the angle $D$ tends to zero along some sequence of 
parameter values $t$ converging to 0, see Fig. 8b). 
Then the same holds in $\wt D MD$. 
In this case the signs in (\ref{de4g}) are different. For example, if $H$ lies between $O$ and $D$,  
then the angle $\angle\wt D D A$ is obtuse and $D$ lies between $M$ and $A$. The opposite case is treated analogously.  Let us denote 
$$\alpha(t):=\angle\wt D DH, \ \beta(t):=\angle\wt D DM; \ \alpha(t),\beta(t)\to0 \text{ as } t\to0.$$ 
Applying (\ref{abc}) to the above right triangles together with Claim 3 yields
$$|\wt D D|-|DH|=O(\alpha(t)|\wt DH|)=o(t^{n+1}),$$
$$ |\wt D D|-|DM|=O(\beta(t)|\wt DM|)=o(t^{n+1}).$$
Hence, $|DH|-|DM|=o(t^{n+1})$. 
This together with (\ref{de4g}) implies the asymptotics of Claim 4 (along the above sequence). Claim 4 is proved.
\end{proof}
 
 Claims 1, 2 and 4 together with (\ref{deltygg}) imply the statement of Lemma \ref{lemli} on the function $L$. 
 In its turn, it implies the same statement on $\La$. 
\end{proof}

\subsection{Taylor coefficients of $\La(t)$: analytic 
dependence on jets}

\begin{lemma} \label{leman} Let $(x,y)$ be  coordinates on 
a neighborhood of a point $O\in\Sigma$. 
Let a metric on $\Sigma$ be $C^n$-smooth, $n\geq3$, and let $\gamma$  be a germ of 
$C^n$-smooth curve on $\Sigma$ at $O$. Then the corresponding functions $L(0,t)$, $\La(t)$ are $O(t^3)$. They admit 
asymptotic Taylor expansions up to  $t^{n+1}$. Their coefficients at $t^{n+1}$ 
 are analytic functions of 
the pure $n$-jets of the metric and the curve $\gamma$. \end{lemma}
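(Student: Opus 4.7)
\textbf{Proof plan for Lemma~\ref{leman}.}

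The bound $L(0,t) = O(t^3)$ (hence also $\Lambda(t) = O(t^3)$) is immediate from Theorem~\ref{lasyl} together with the normalization (\ref{ltt}): since $t \asymp \kappa(O) s$ at $O$, one has $L(0,t) \asymp \frac{1}{12\kappa(O)} t^3$. To exhibit an asymptotic Taylor expansion up to $t^{n+1}$ with coefficients depending analytically on the pure $n$-jets of the metric and the curve, the strategy is to reduce to real-analytic data by truncating at order $n$, and then to exploit the analytic dependence of the exponential map on its arguments and on the data.

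Fix normal coordinates $(x,y)$ at $O$ with the $x$-axis tangent to $\gamma$. Let $g^{(n)}$ be the metric whose coordinate tensor components are the degree-$n$ Taylor polynomials of the components of $g$ at $0$, and let $\gamma^{(n)} = \{y = P_n(x)\}$, where $P_n$ is the degree-$n$ Taylor polynomial of $h$. Then $g$ and $g^{(n)}$ share the same $n$-jet at $O$, so by Lemma~\ref{lemli} replacing $g$ by $g^{(n)}$ changes $L(0,t)$ by $o(t^{n+1})$. Similarly $\gamma$ and $\gamma^{(n)}$ share the same $n$-jet, so Lemma~\ref{lcomp} applied with $b = 0$ shows that replacing $\gamma$ by $\gamma^{(n)}$ introduces a further $o(t^{n+1})$ error. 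It therefore suffices to prove both the existence of the Taylor expansion and the analytic dependence on the jet data in the case of polynomial (hence real-analytic) metric and curve, whose coefficients are precisely the pure $n$-jet data and can be treated as independent parameters.

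For real-analytic data, the exponential map $\mathrm{Exp}_Q(v)$ is jointly real-analytic in $(Q,v)$ and in the data. The intersection $C(t) = G(0) \cap G(t)$ of the two tangent geodesics is characterized by
\[
\mathrm{Exp}_{\gamma(0)}(r_0 \dot\gamma(0)) = \mathrm{Exp}_{\gamma(t)}(-r_t v(t)),
\]
where $v(t)$ is the unit tangent to $\gamma$ at $\gamma(t)$ and $r_0, r_t > 0$ are both $O(t)$. The scaling $r_0 = t u_0$, $r_t = t u_t$ desingularizes the limit $t \to 0$: dividing the $x$- and $y$-components of this vector equation by $t$ and by $t^2$ respectively yields, in normal coordinates, a real-analytic system $F(u_0, u_t, t; \mathrm{jet}) = 0$ whose Jacobian in $(u_0, u_t)$ at $t = 0$ is non-degenerate, with leading-order solution $u_0, u_t = \frac{1}{2\kappa(O)} + O(t)$ matching the osculating-circle model from the proof of Theorem~\ref{lasyl}. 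The implicit function theorem produces real-analytic functions $u_0(t), u_t(t)$, jointly analytic in the jet parameters. Hence $|OC| = r_0$, $|CP| = r_t$, and $\lambda(O, \gamma(t)) = \int_0^t \|\dot\gamma(\sigma)\|\, d\sigma$ are all real-analytic in $t$, with Taylor coefficients analytic in the jet. Extracting the coefficient at $t^{n+1}$ in $L(0,t)$ gives $\widehat L_{n+1}$; applying the same analysis based at $\gamma(-t)$ in place of $\gamma(0)$ gives the corresponding expansion of $L(-t,0)$, and subtracting yields $\widehat \Lambda_{n+1}$. The lower-order coefficients $\widehat L_k$ for $k < n+1$ depend only on the $(k-1)$-jets by the same argument applied with $n$ replaced by $k-1$, hence a fortiori analytically on the $n$-jets.

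The main technical obstacle is the non-degeneracy of the rescaled implicit equation at $t = 0$; once this is checked (essentially a rewording of Theorem~\ref{lasyl} as a regularity statement for the rescaled intersection problem), the reduction to polynomial data via Lemmas~\ref{lemli} and~\ref{lcomp} and the parameter-dependent version of the implicit function theorem deliver the analytic dependence of the Taylor coefficients on the jet data with no further work.
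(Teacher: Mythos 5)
Your proposal is correct, and its overall skeleton matches the paper's: both first reduce to real-analytic data by replacing the metric and the curve with representatives of their $n$-jets (using Lemma \ref{lemli} for the metric and Lemma \ref{lcomp} with $b=0$ for the curve, each costing $o(t^{n+1})$), and both then prove joint analyticity of $L(0,t)$ in $t$ and in the jet parameters for analytic data. Where you genuinely diverge is in how that analyticity across $\{t=0\}$ is established. The paper complexifies: it observes that the two complex geodesics tangent to the complexified curve through a nearby point exist by quadraticity of tangency plus the implicit function theorem, so $L(0,t)$ extends holomorphically off the hypersurface $\{t=0\}$, and then invokes the removable-singularity theorem for bounded holomorphic functions to extend across it. You instead desingularize the real intersection problem directly, writing $C(t)$ as the solution of $\mathrm{Exp}_O(tu_0\dot\gamma(0))=\mathrm{Exp}_{\gamma(t)}(-tu_tv(t))$ and dividing the two components by $t$ and $t^2$; in normal coordinates adapted to $\gamma$ the first component of the difference vanishes to order $1$ and the second to order $2$ identically in $(u_0,u_t)$, so the rescaled system is analytic, and its Jacobian in $(u_0,u_t)$ at $t=0$ is triangular with unit determinant, so the analytic implicit function theorem applies. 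Both arguments are valid. Yours is more constructive and stays real, at the price of having to actually verify the divisibility of the components and the non-degeneracy of the rescaled Jacobian (which you correctly flag as the one point requiring work, and which does check out against the osculating-circle model, with $u_0,u_t\to\frac{1}{2\kappa(O)}$ under the normalization (\ref{ltt})); the paper's complexification avoids that computation but has to justify boundedness near $\{t=0\}$ and the well-definedness of the complexified tangency construction. One small point to keep in mind in a final write-up: the jet coordinates in the statement are taken in an arbitrary chart, so you should note (as the paper also leaves implicit) that the passage to normal coordinates adapted to $\gamma$ depends analytically on the pure $n$-jet data, so analyticity in the normal-coordinate jets transfers back to the original ones.
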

\begin{proof} The asymptotics $L(0,t),\La(t)=O(t^3)$ follows from 
Theorem \ref{lasyl}.  
 
 Case 1): the curve $\gamma$ and the metric are analytic. Consider the 
metric and the curve with variable Taylor coefficients of orders up to $n$; the other, higher Taylor 
coefficients are fixed. Consider  $L(0,t)$ and $\La(t)$  as functions in $t$ and in the latter variable 
Taylor coefficients. They are analytic in  $t$ and in the Taylor coefficients of order up to $n$: analytic 
on the product of a small disk centered at 0 with coordinate $t$ and a domain in the space of 
collections of Taylor coefficients. In more detail, complexifying everything we 
get that $L(0,t)$ has a well-defined holomorphic extension to complex domain. 
(The complexified lengths of segments in the definition of the function  
$L(0,t)$  become integrals of appropriate
holomorphic forms along paths.) Well-definedness  follows from the fact 
that through each point $C$ in a complex neighborhood of the real curve 
$\gamma$ there are two complex geodesics tangent to its complexification. 
This follows by  quadraticity of tangencies (non-vanishing of geodesic 
curvature) and Implicit Function Theorem. Analytic extendability to the 
locus $\{ t=0\}$ follows from the Erasing Singularity Theorem on 
bounded functions holomorphic on  complement to a hypersurface. 
Therefore, both functions admit a Taylor series in $t$ with coefficients being analytic functions in the  
Taylor coefficients of orders up to $n$ of the metric and the curve. They depend only on pure $n$-jets, 
since applying a translation of both the curve and the metric leaves $L(0,t)$ and $\La(t)$ invariant.

Case 2) of  general $C^n$-smooth metric $g$ and curve $\gamma$. 
Consider other, analytic metric $\wt g$ and curve $\wt\gamma$ representing their $n$-jets. The  functions $\wt L(0,t)$ and $\wt\La(t)$ defined by them are 
analytic and coincide with the  functions $L(0,t)$ and $\La(t)$ corresponding 
to $g$ and $\gamma$ up to  $o(t^{n+1})$. Indeed, if the $C^{n}$-smooth function $y=f(x)$ 
representing $\gamma$ as a graph changes in the same $n$-jet, i.e., by a quantity  $o(x^n)$, 
then $L(0,t)$, $\La(t)$ change by 
a quantity of order $o(t^{n+1})$. This follows from the results of Subsection 8.4 applied to $b=0$. 
 A similar statement holds for  
change of metric inside a given $n$-jet, by Lemma \ref{lemli}. 
This together with the discussion in Case 1) implies 
that $L(0,t)$ and $\La(t)$ have asymptotic Taylor expansions of order up to $t^{n+1}$ coinciding with those of 
$\wt L(0,t)$ and $\wt\La(t)$, and hence, having coefficients being analytic functions of the pure $n$-jets of $g$ and $\gamma$.  Lemma \ref{leman} is proved.
\end{proof}

\subsection{Proof of Lemma \ref{lmjet}} Let $\Sigma$ be a two-dimensional surface equipped with 
a $C^n$-smooth Riemannian metric $g$. 
Let $E\in\Sigma$, and let $V=V(E)\subset\Sigma$ be its 
small neighborhood. Let $(x,y)$ be (not necessarily normal) 
local coordinates on a domain containing $V$. 
Consider a $C^n$-smooth germ of curve $\gamma$ at a point $O\in V$ with positive geodesic curvature; the tangent line $T_O\gamma$ is not necessarily horizontal. The corresponding function $\La(t)$ admits an asymptotic Taylor expansion 
$$\Lambda(t)=\sum_{k=3}^{n+1}\wh\Lambda_kt^k+o(t^{n+1}).$$
 Its coefficients are analytic functions of the pure $n$-jets of the metric and  $\gamma$ at $O$ (Lemma \ref{leman}). Therefore, 
 without loss of generality we consider that $O$ is the origin in the coordinates 
 $(x,y)$, applying a translation, which changes neither $\La(t)$, nor the above 
 pure jets. Then $\gamma$  
is the graph of a $C^n$-function 
$$\gamma=\{  y= h(x)\}, \ \  h(x)=b_1x+\frac{b_2}2 x^2+\frac1{3!} b_3 x^3+\dots+\frac1{n!}b_n x^n
+o(x^{n}).$$
By definition, the coordinates of the pure jet $j^n_O\gamma$ are $(b_1,\dots, b_n)$. 

We already know that $\wh\La_{n+1}$ is an affine function in $b_n$, which follows from Lemma \ref{lcomp}, 
 see (\ref{dlat}). To obtain a precise formula for its coefficient at $b_n$, we use the following 
 proposition. 
 \begin{proposition} \label{prowu} Let $n\geq3$, $\Sigma$, $O$, 
 $(x,y)$, $h(x)$ be as above. Consider a family of tangent germs of 
 curves $\gamma_{n,b}=\{ y=h_{n,b}(x)\}$ at $O:=(x_0,y_0)$, $h_{n,b}(x)=h(x)+bx^n+o(x^n)$;  $h_{n,0}:=h$, $\gamma_{n,0}:=\gamma$. 
 Let $w\in T_O\Sigma$ 
 denote the orthogonal projection of the vector $\frac{\partial}{\partial y}$ 
 to $(T_O\gamma)^\perp$. Let $u=(1,b_1)\in T_O\gamma$: 
 the tangent vector to $\gamma$ with unit $x$-component.  Let $\kappa(O)$ denote the geodesic curvature of the curve $\gamma$ at $O$, which coincided with that  
of $\gamma_{n,b}$. Let $(\wt x,\wt y)$ be normal coordinates centered at $O$ such that the $\wt x$-axis is tangent to $\gamma$. Set 
$$\hat x:=\kappa(O)\wt x, \ \hat y:=\kappa(O)\wt y.$$
 In the coordinates $(\hat x,\hat y)$ the family of curves $\gamma_{n,b}$ is the family of graphs of functions 
 $\{ \hat y=\hat h_{n,b}(\hat x)\}$, set $\hat h_{n,0}:=\hat h$, such that 
 $\hat h(\hat x)=\frac{\hat x^2}2+O(\hat x^3)$, 
 \begin{equation} \hat h_{n,b}(\hat x)=\hat h(\hat x)+\mu_n b\hat x^n+o(\hat x^n), \ \mu_n=||w||||u||^{-n}{\kappa^{1-n}}(O).
 \label{wthh}\end{equation}
 \end{proposition}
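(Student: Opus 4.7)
The plan is to reduce the statement to a linear-algebra computation at $O$ by carefully tracking how the vertical displacement $(0,bx^n)$ that defines $\gamma_{n,b}-\gamma$ transforms under the coordinate change $\Phi:(\wt x,\wt y)\mapsto(x,y)$. The central observation is that, because the displacement already has size $O(x^n)$, only the linearization $d\Phi_0$ contributes to the leading $\hat x^n$ coefficient of $\hat h_{n,b}-\hat h$; all higher Taylor terms of $\Phi$, as well as the $o(x^n)$ remainder in $h_{n,b}-h$, contribute only $o(\hat x^n)$.

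First I would identify $d\Phi_0$. Since the $\wt x$-axis is tangent to $\gamma$ at $O$ and $(\wt x,\wt y)$ are normal coordinates, $\partial/\partial\wt x|_O$ is the $g$-unit tangent $\dot\gamma(O)=u/\|u\|$ and $\partial/\partial\wt y|_O$ is a $g$-unit vector in $(T_O\gamma)^{\perp}$, which, up to sign, equals $w/\|w\|$ by the definition of $w$ as the orthogonal projection of $\partial/\partial y$ onto $(T_O\gamma)^{\perp}$. Decomposing $\partial/\partial y = (\text{component along }\dot\gamma) + w$ and taking the $g$-inner product with $w/\|w\|$ yields
\[
\text{the }\wt y\text{-component of }\partial/\partial y\text{ at }O\ =\ g(\partial/\partial y,w/\|w\|)\ =\ \|w\|.
\]
Hence the displacement $(0,bx^n)$ at a point of $\gamma$ near $O$ has $\wt y$-component $\|w\|bx^n+o(x^n)$.

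Next I would change parameter from $x$ to $\wt x$ along $\gamma$. Since $\wt x$ restricted to $\gamma$ is the natural length parameter from $O$ and $\|\dot\gamma(0)\|_g=\|u\|$, one has $\wt x = \|u\|x+O(x^2)$ on $\gamma$, so $x^n=\|u\|^{-n}\wt x^n+O(\wt x^{n+1})$. Combining with the previous paragraph, the $\wt y$-difference of the two graphs at the same $\wt x$ is
\[
\wt h_{n,b}(\wt x)-\wt h(\wt x)\ =\ \|w\|\,\|u\|^{-n}\,b\,\wt x^{n}+o(\wt x^{n}).
\]
Finally, applying the homothety $\hat x=\kappa(O)\wt x$, $\hat y=\kappa(O)\wt y$ multiplies this by $\kappa(O)\cdot\kappa(O)^{-n}=\kappa(O)^{1-n}$, giving $\mu_n=\|w\|\,\|u\|^{-n}\,\kappa(O)^{1-n}$ as claimed. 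The identity $\hat h(\hat x)=\hat x^2/2+O(\hat x^3)$ follows from Proposition \ref{kappa=} applied in $(\wt x,\wt y)$ (which gives $\wt h(\wt x)=\tfrac{\kappa(O)}{2}\wt x^2+O(\wt x^3)$) and the same rescaling.

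The only mild subtlety, and the main place to be careful, is the justification that the nonlinear part of $\Phi$ and the $o(x^n)$ error contribute only $o(\hat x^n)$ to $\hat h_{n,b}-\hat h$. For the coordinate change this follows because $\Phi$ is $C^{n-1}$ on a neighborhood of $O$ (with $C^n$-smooth metric, the exponential chart has that regularity, cf.\ Remark \ref{sigsmooth}), so the second-order Taylor correction in $\Phi$ applied to a displacement of size $O(x^n)$ produces terms of size $O(x^{n+1})=o(\hat x^n)$; and the $o(x^n)$ piece of $h_{n,b}-h$ is mapped to $o(\wt x^n)=o(\hat x^n)$ by $d\Phi_0^{-1}$ alone. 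No obstruction beyond this routine bookkeeping is expected.
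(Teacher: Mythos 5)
Your proposal is correct and follows essentially the same route as the paper's proof: both reduce to the linearization at $O$, where the factor $\|w\|$ arises from projecting the vertical displacement onto the $g$-unit normal $\pm w/\|w\|=\partial/\partial\wt y|_O$ (the paper phrases this as the cosine of the angle between $\partial/\partial y$ and $(T_O\gamma)^{\perp}$ relating gaps along the orthogonal geodesic and along the vertical line), the factor $\|u\|^{-n}$ from $\wt x=\|u\|x+O(x^2)$ along $\gamma$, and $\kappa^{1-n}$ from the final homothety. One nitpick: $\wt x$ restricted to $\gamma$ is the arc length only to first order (and the transformed displacement also has a small $\wt x$-component), but both effects perturb the graphs' difference by $O(\wt x^{n+1})=o(\wt x^n)$, so your conclusion stands.
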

 \begin{proof} Fix a point $A=(\wt x,0)$ on the $\wt x$-axis. Let $\ell$ denote the geodesic 
 through $A$ orthogonal to the $\wt x$-axis. We have to calculate the 
 gap (i.e., distance) $\wt\Delta(\wt x)$ 
 between the intersection points of the geodesic 
 $\ell$ with the curves $\gamma_{n,b}$ and $\gamma$. Let   
 $\Delta(\wt x)$ denote the gap between the points of their intersection with the vertical line $\{ x= x(A)\}$. 
 Their ratio $\wt\Delta(\wt x)\slash\Delta(\wt x)$ tends to the cosine of the angle between the 
 vector $\frac{\partial}{\partial y}\in T_O\Sigma$ and the 
 line $(T_O\gamma)^\perp$. One has $\Delta(\wt x)=||\frac{\partial}{\partial y}||bx^n+o(x^n)$. Hence,  
 by definition,  
  \begin{equation} \wt\Delta(\wt x)=||w||bx^n+o(x^n).\label{wtdx}\end{equation}
  One has $dx=\alpha d\wt x+\beta d\wt y$ on $T_O\Sigma$,  
  $\alpha=dx(\frac{\partial}{\partial\wt x})=||u||^{-1}$, by 
  definition; $x=\alpha\wt x+\beta\wt y+O(|\wt x|^2+|\wt y|^2)$. 
  One has 
  $\wt y=\frac{\kappa(O)}2\wt x^2=O(\wt x^2)$ along each curve $\gamma_{n,b}$, by (\ref{x122}). This together with 
  (\ref{wtdx}) implies that 
\begin{equation}\wt\Delta(\wt x)=||w||||u||^{-n}b\wt x^n+o(\wt x^n). \label{wtde}\end{equation}
Hence, in the coordinates $(\wt x,\wt y)$ 
$$\gamma_{n,b}=\{\wt y=\wt h_{n,b}(\wt x)\},  \ \ 
\wt h_{n,b}(\wt x)=\wt h_{n,0}(\wt x)+||w||||u||^{-n}b\wt x^n+o(\wt x^n).$$ 
Now rescaling to the coordinates $(\hat x,\hat y)$ yields that   $\gamma_{n,b}$ is a family of graphs of functions $\hat h_{n,b}(\hat x)$ satisfying (\ref{wthh}). The proposition 
is proved.
\end{proof} 
\begin{proposition} \label{propol} Consider the above family of curves $\gamma_{n,b}$ 
and the corresponding functions $\La^{n,b}(t)$, set $\La^{n,0}:=\La$. One has 
\begin{equation}\wh\La^{n,b}_{n+1}=\wh\La_{n+1}+\nu_nb, \ 
\nu_n:=\frac{(n-2)(n-3)}{6(n+1)}||w||(||u||\kappa(O))^{-n}.\label{whlanb}
\end{equation}
\end{proposition}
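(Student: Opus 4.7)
\textbf{Proof plan for Proposition \ref{propol}.}
The plan is to reduce Proposition \ref{propol} to Lemma \ref{lcomp} by using the coordinate change and metric rescaling supplied by Proposition \ref{prowu}. The key observation is that Lemma \ref{lcomp} was proved in the very special setting of a normal chart in which $\gamma$ is tangent to the $x$-axis and $\kappa(O)=1$, and this is exactly the setup produced by Proposition \ref{prowu} after passing from $(x,y)$ to $(\hat x,\hat y)$. We begin by introducing the rescaled metric $\hat g := \kappa(O)^2 g$, so that $\hat g$-lengths equal $\kappa(O)$ times $g$-lengths and $\kappa_{\hat g}(O)=1$. A direct check (using that Christoffel symbols are unchanged under constant rescaling of the metric, together with the linear rescaling $(\hat x,\hat y)=\kappa(O)(\wt x,\wt y)$) shows that $(\hat x,\hat y)$ are normal coordinates for $\hat g$, with $\gamma$ and $\gamma_{n,b}$ tangent to the $\hat x$-axis. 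Proposition \ref{prowu} then identifies the family as $\gamma_{n,b}=\{\hat y=\hat h_{n,b}(\hat x)\}$ with $\hat h_{n,b}(\hat x)-\hat h(\hat x)=\mu_n b\,\hat x^n+o(\hat x^n)$ and $\mu_n=\|w\|\|u\|^{-n}\kappa(O)^{1-n}$.

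Next we track how $\Lambda$ and the Poritsky parameter $t$ behave under the passage from $g$ to $\hat g$. By Remark \ref{rtinv} the parameter $t$ is invariant under constant rescaling of the metric, so $\hat t=t$. Since $\hat g$-distances are $\kappa(O)$ times $g$-distances, the analogue $\hat L$ of the function (\ref{lab}) computed in the metric $\hat g$ equals $\kappa(O)\,L$, and therefore $\hat\Lambda^{n,b}(t)=\kappa(O)\,\Lambda^{n,b}(t)$ and similarly for $\hat\Lambda(t)$. Applying Lemma \ref{lcomp} in the coordinates $(\hat x,\hat y)$ for the metric $\hat g$, where its hypotheses are now met and the $b$ of that lemma is replaced by $\mu_n b$, we obtain (for odd $n$)
\begin{equation*}
\hat\Lambda^{n,b}(t)-\hat\Lambda(t)
=\frac{(n-2)(n-3)}{6(n+1)}\,\mu_n b\, t^{n+1}+o(t^{n+1}).
\end{equation*}
Dividing through by $\kappa(O)$ to pass back to the original metric and simplifying $\mu_n/\kappa(O)=\|w\|\|u\|^{-n}\kappa(O)^{-n}=\|w\|(\|u\|\kappa(O))^{-n}$ yields $\Lambda^{n,b}(t)-\Lambda(t)=\nu_n b\, t^{n+1}+o(t^{n+1})$. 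Uniqueness of the asymptotic Taylor coefficients (Lemma \ref{leman}) then gives $\wh\Lambda^{n,b}_{n+1}=\wh\Lambda_{n+1}+\nu_n b$. For even $n$ and for $n=3$, the same reduction together with the even-$n$ clause of Lemma \ref{lcomp} shows that the difference is $o(t^{n+1})$, consistently with $\nu_3=0$ and with the vanishing of $\sigma_n$ asserted in the Main Lemma for even $n>3$.

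The main obstacle is the careful bookkeeping of the two transformations (the nonlinear passage to normal coordinates followed by a linear rescaling, and the concomitant rescaling of the metric): one must verify that $(\hat x,\hat y)$ really are normal for $\hat g$, that the two curves are tangent to the $\hat x$-axis there, and that the scaling laws $\hat t=t$ and $\hat\Lambda=\kappa(O)\Lambda$ are applied consistently. The asymmetry between the invariance of $t$ and the scaling of $\Lambda$ is exactly what converts the exponent $1-n$ in $\mu_n$ into the exponent $-n$ in $\nu_n$ and thereby produces the formula claimed in Proposition \ref{propol}. Once these checks are in place, the computation is essentially a substitution.
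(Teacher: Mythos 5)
Your proposal is correct and follows essentially the same route as the paper: rescale the metric so that the geodesic curvature at $O$ becomes $1$, pass to the normal coordinates $(\hat x,\hat y)$, apply Lemma \ref{lcomp} with $b$ replaced by $\mu_n b$ as supplied by Proposition \ref{prowu}, and rescale back using Remark \ref{rtinv} (invariance of $t$, scaling of $\La$) to convert $\mu_n\kappa^{-1}(O)$ into $\|w\|(\|u\|\kappa(O))^{-n}$. Your bookkeeping of the two rescalings is, if anything, slightly more explicit than the paper's.
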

\begin{proof} The coordinates $(\hat x,\hat y)$ are normal coordinates for the rescaled metric $\hat g:=\kappa(O)g$. The 
common geodesic curvature at $O$  of the curves $\gamma_{n,b}$ in the metric $\hat g$ is equal to 1, by construction. Therefore, for the metric  
$\hat g$ one has  
$\wh\La^{n,b}_{n+1}-\wh\La_{n+1}=\frac{(n-2)(n-3)}{6(n+1)}\mu_nb$, by Lemma \ref{lcomp} and (\ref{wthh}). Rescaling 
the metric back to $g$ by the factor $\kappa^{-1}(O)$ rescales the functions $\La_{n,b}$ and their Taylor coefficients by the same factor (Remark \ref{rtinv}). This implies (\ref{whlanb}).
\end{proof}
 \begin{proposition} Let $\gamma$ be a germ of $C^n$-smooth curve at a point $O\in\Sigma$ lying in a chart with coordinates $(x,y)$. Let $\gamma$ 
 be  
 a graph $\{ y=h(x)\}$.  Let $b_1,\dots,b_n$ denote the  coordinates of the 
  pure $n$-jet $j^n_Oh$. Let 
 $w,u\in T_O\Sigma$ be the vectors from Proposition \ref{prowu}. 
 Then the Taylor coefficient $\wh\La_{n+1}$ of the corresponding function $\La(t)$ is equal to 
\begin{equation}\wh\La_{n+1}=\sigma_nb_n-P_n,\label{whla}\end{equation}
\begin{equation}\sigma_n=\frac{(n-2)(n-3)}{6(n+1)!}||w||\left(||u||\kappa(O)\right)^{-n},
\label{sinsin}
\end{equation}
 where $P_n$ is an analytic   function in 
 $b_1,\dots,b_{n-1}$ and in  the pure $n$-jet of the metric 
 at $O$. 
 \end{proposition}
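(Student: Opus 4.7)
The plan is to combine Lemma \ref{leman} with Proposition \ref{propol} to pin down the dependence of $\wh\La_{n+1}$ on $b_n$. By Lemma \ref{leman}, $\wh\La_{n+1}$ is an analytic function of the pure $n$-jet of $\gamma$, whose coordinates are $(b_1,\dots,b_n)$, and of the pure $n$-jet of the metric at $O$. It therefore suffices to identify the partial derivative $\partial \wh\La_{n+1}/\partial b_n$ and then integrate.

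To compute this partial derivative, I would invoke the one-parameter deformation of Proposition \ref{propol}: $h_{n,b}(x) = h(x) + bx^n + o(x^n)$. Since the Taylor expansion of $h$ has $x^n$-coefficient $b_n/n!$, the deformation changes only the $n$-th jet coordinate, namely $b_n(\gamma_{n,b}) = b_n(\gamma) + n!\, b$, while leaving $b_1,\dots,b_{n-1}$ and the metric jet untouched. Note also that the vectors $u,w$ and the curvature $\kappa(O)$ depend only on $b_1,b_2$ and on the 1-jet of the metric, so the constant $\nu_n$ from Proposition \ref{propol} is a function of these data alone (independent of $b_3,\dots,b_n$). Proposition \ref{propol} then yields
$$\wh\La^{n,b}_{n+1} - \wh\La_{n+1} = \nu_n\, b = \frac{\nu_n}{n!}\bigl(b_n(\gamma_{n,b}) - b_n(\gamma)\bigr).$$
Since this difference is linear in $b$, the analytic function $\wh\La_{n+1}$ is an affine function of $b_n$ with the other arguments fixed, of slope $\nu_n/n!$, and a direct calculation gives
$$\frac{\nu_n}{n!} = \frac{(n-2)(n-3)}{6(n+1)\cdot n!}\,||w||\,(||u||\kappa(O))^{-n} = \frac{(n-2)(n-3)}{6(n+1)!}\,||w||\,(||u||\kappa(O))^{-n} = \sigma_n.$$

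Integrating in $b_n$ then yields $\wh\La_{n+1} = \sigma_n b_n - P_n$, where $P_n$ depends analytically on $b_1,\dots,b_{n-1}$ and the pure $n$-jet of the metric: indeed $\wh\La_{n+1}$ has exactly this joint analytic dependence by Lemma \ref{leman}, and subtracting $\sigma_n b_n$ (whose $b_n$-dependence is affine with coefficient depending only on the 2-jet data of $\gamma$ and the metric) preserves analyticity and removes the dependence on $b_n$. There is no genuine obstacle here: the only piece of bookkeeping to keep straight is the factor $n!$ that converts the deformation parameter $b$ into the jet coordinate $b_n$, and once that is correctly accounted for, the identity $\nu_n/n! = \sigma_n$ matches the prescribed formula for $\sigma_n$ exactly; everything else is a formal consequence of the two quoted results.
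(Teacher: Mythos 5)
Your proposal is correct and follows essentially the same route as the paper: both arguments combine Proposition \ref{propol} (the deformation $h_{n,b}=h+bx^n+o(x^n)$, with the conversion $b=\frac1{n!}\bigl(b_n(\gamma_{n,b})-b_n(\gamma)\bigr)$ giving the slope $\nu_n/n!=\sigma_n$) with the analyticity statement of Lemma \ref{leman} to conclude that $\wh\La_{n+1}$ is affine in $b_n$ with the remaining part $P_n$ analytic in $b_1,\dots,b_{n-1}$ and the pure $n$-jet of the metric. The $n!$ bookkeeping you single out is exactly the point the paper's proof also makes explicit.
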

 \begin{proof} The fact that $\wh\La_{n+1}$ depends on $b_n$ as an affine function with factor 
 $\sigma_n$ at $b_n$ follows from definition and  Proposition \ref{propol};  the $b$ from Proposition \ref{propol} 
 is $\frac1{n!}$ times the difference of the $b_n$-coordinates of jets of functions $h_{n,b}(x)$ and $h(x)$. 
 The function $P_n$ is thus independent on $b_n$ and hence, has the required type, by Lemma \ref{leman}.
 \end{proof}
 
\begin{proof} {\bf of Lemma \ref{lmjet} and its addendum.} All the statements of Lemma \ref{lmjet} 
and its addendum follow from the above proposition, except for the following points discussed below. 
Note that $\sigma_n$ depends only on the pure  2-jet of the curve $\gamma$ and the pure 1-jet of the metric, 
by definition. The function $P_n$ is an analytic function of the pure $n$-jet of the 
metric and the pure $(n-1)$-jet of the curve $\gamma$. Let us treat it as a function of 
a point and a pure $(n-1)$-jet of curve. We have to prove its smoothness. To this end, we use the assumption 
that the metric is $C^{n+1}$-smooth. (This is the only place in the proof where we use this assumption.) 
Then its pure $n$-jet is a $C^1$-smooth function of a point. This together with the above analyticity statement 
proves $C^1$-smoothness and finishes the proof of Lemma \ref{lmjet}.
\end{proof}

 \subsection{Proof of Theorems \ref{osc} and \ref{uniq4}} 
\begin{proof} {\bf of Theorem \ref{osc}.} Let $O\in\Sigma$. Let $(x,y)$ be local coordinates on a neighborhood 
$V=V(O)\subset\Sigma$. Let $\mcj^4_y(V)$ denote the space of 4-jets of curves,  
as in Lemma \ref{lmjet}. Let $J_2=(x,b_0,b_1,b_2)$, 
$\sigma_5=\sigma_5(J_2)$ and $h_5:=P_5(J_2;b_3,b_4)$ be the same, as in (\ref{lan+1}). 
Consider the field of kernels $K_4$  of the following 1-form $\nu_4$ on  $\mcj^4_y(V)$: 
$$\nu_4:=db_4-\sigma_5^{-1}h_5(x,b_0,b_1,b_2,b_3,b_4)dx; \ 
K_4:=\operatorname{Ker}(\nu_4).$$
Let $\mcd_4$ denote the canonical distribution on $\mcj^4_y(V)\simeq\mcf^4_y(V)$, see (\ref{candis}): 
$$\mcd_{4}=\operatorname{Ker}(db_0-b_1dx,  db_1-b_2dx, db_2-b_3dx, 
db_3-b_4dx).$$
Set 
\begin{equation}\mcp:=K_4\cap\mcd_{4}.\label{defdp}\end{equation}
This is a line field, since  the above intersections are obviously transverse and 
$\dim(\mcd_{4})=2$. It is $C^1$-smooth, since so are $\sigma_2$ and 
$h_5$ (Lemma \ref{lmjet}). Let $\gamma$ be an arbitrary $C^5$-smooth  germ of curve $\gamma$ based at a point  $A\in V$ such that the line $T_A\gamma$ is not parallel to the $y$-axis. 
Let $\gamma$ have  string Poritsky property. Then  $\Lambda(t)\equiv0$, 
hence, $\wh\Lambda_6=0$, thus, 
\begin{equation}\sigma_5(J_2)b_5-h_5(J_2;b_3,b_4)=0,\label{s5b}\end{equation}
by (\ref{lan+1}). On the other hand,  
the  $5$-jet extension of the curve $\gamma$ 
is  tangent to the canonical distribution $\mcd_{5}$, and hence, 
to the hyperplane field $\{ db_4=b_5dx\} $. This together with (\ref{s5b}) 
implies that its 4-jet extension  
is tangent to the hyperplane field  $\{ db_4=\frac{h_5}{\sigma_5}dx\}$. Thus, it is tangent to 
 the kernel field $K_4$, and hence, to   $\mcp=K_4\cap\mcd_{4}$. 
 This proves Theorem \ref{osc}. \end{proof}
 
\begin{proof} {\bf of Theorem \ref{uniq4}.}  Two germs of curves with string Poritsky property and the same 
4-jet correspond to  the same point in $\mcj^4$. Therefore, their 4-jet extensions  coincide with one and the same phase curve of the 
line field $\mcp$, by Theorem \ref{osc} and the Uniqueness Theorem for ordinary 
differential equations. Thus, the germs coincide. This proves Theorem \ref{uniq4}. 
\end{proof} 

\section{Acknowledgements} 

I wish to thank Sergei Tabachnikov, to whom this paper is much due, for introducing me into 
the topic of curves with Poritsky property and related areas. I wish to thank to him and to 
Misha Bialy and Maxim Arnold for helpful 
discussions. Most of results of the paper were obtained while I was visiting Mathematical Sciences Research Institute (MSRI) 
in Berkeley, California. I wish to thank MSRI for hospitality and support.


\begin{thebibliography}{}

\bibitem{amiran} Amiran, E. {\it Caustics and evolutes for convex planar domains.}  J. Diff. Geometry, \textbf{28} (1988), 345--357. 

\bibitem{amiran2} Amiran, E. {\it Lazutkin coordinates and invariant curves for outer billiards.} J. Math. 
Physics \textbf{36(3)} (1995),1232--1241. 

\bibitem{ar2} Arnold, V. {\it Mathematical methods of classical mechanics.} 
Springer-Verlag, 1978. 

\bibitem{ar3} Arnold, V. {\it Contact geometry and wave propagation.} 
Monogr. \textbf{34} de l'Enseign. Math. (1989), Univ. of Geneva.

\bibitem{bm2} Bialy, M.; Mironov, A.E. {\it Algebraic Birkhoff conjecture for billiards on Sphere and Hyperbolic plane.} J. Geom. Phys., \textbf{115} (2017), 150--156. 

\bibitem{bennett} Bennett, A.G. {\it Hyperbolic geometry - circles.} https://www.maa.org/press/periodicals/loci/joma/hyperbolic-geometry-circles


\bibitem{hess} Glutsyuk, A. {\it On polynomially integrable Birkhoff billiards on surfaces 
of constant curvature.}  To appear in J. Eur. Math. Soc. Preprint https://arxiv.org/abs/1706.04030

\bibitem{four} Glutsyuk, A.;  Izmestiev, I.; Tabachnikov, S. 
{\it Four equivalent properties of integrable billiards.}  To appear in Israel J. Math.  Preprint https://arxiv.org/abs/1909.09028

\bibitem{gs} Glutsyuk, A.; Shustin, E. {\it On polynomially integrable planar outer billiards and curves with symmetry property.}   Math. Annalen \textbf{372} (2018), 1481--1501. 

\bibitem{izm} Izmestiev, I. {\it Spherical and hyperbolic conics.} In Eighteen Essays in Non-Euclidean Geometry (editors: V.Alberge and A.Papadopoulos), IRMA Lectures in Mathematics and Theoretical Physics \textbf{29} (2019), 262--320. Eur. Math. Soc. Publishing House. 


\bibitem{kalsor} Kaloshin, V.; Sorrentino, A. {\it On local Birkhoff Conjecture for convex billiards.} Ann. of Math., \textbf{188} (2018),  No. 1, 315--380. 


\bibitem{klein} Klein, F. {\it Non-Euclidean geometry.} Russian translation. NKTP SSSR, 
Moscow, Leningrad, 1936. 

\bibitem{laz} Lazutkin, V.F. {\it The existence of caustics for a billiard problem in a convex
domain.} Math. USSR Izvestija \textbf{7} (1973), 185--214.

\bibitem{mm} Marvizi S., Melrose R. {\it Spectral invariants of convex planar regions.} 
J. Diff. Geom. \textbf{17} (1982), 475--502.

\bibitem{masal} Masaltsev, L.A. {\it Incidence theorems in spaces of constant curvature.} 
J. Math. Sci., \textbf{72} (1994), No. 4, 3201--3206.

\bibitem{melrose1} Melrose, R. {\it Equivalence of glancing hypersurfaces.} Invent. Math., 
\textbf{37} (1976), 165--192.

\bibitem{melrose2} Melrose, R. {\it Equivalence of glancing hypersurfaces 2.} 
Math. Ann. \textbf{255} (1981), 159--198. 

\bibitem{olver} Olver, P. {\it Equivalence, invariants and symmetry.} Cambridge University Press, 1995. 

\bibitem{poritsky}  Poritsky, H.  {\it The  billiard  ball  problem  on  a  table  with  a  convex  boundary -- an  illustrative
dynamical problem.} 
Ann. of Math. (2) \textbf{51} (1950), 446--470. 

\bibitem{sos} Sossinsky, A.B. {\it Geometries.} Student Mathematical Library, v. 64, AMS, 
2012.

\bibitem{tab95} Tabachnikov, S. {\it Billiards.}  Panor. Synth. \textbf{1} (1995),
SMF,  vi$+$142.

\bibitem{tab} Tabachnikov, S. {\it Geometry and billiards.} Student Mathematical Library, \textbf{30}, American Mathematical
Society, Providence, RI; Mathematics Advanced Study Semesters, University Park, PA (2005).

\bibitem{tab08} Tabachnikov, S. {\it On algebraically integrable outer billiards.} Pacific J. of Math.
\textbf{235} (2008), no. 1, 101--104.

\bibitem{tabponc} Tabachnikov, S. {\it Poncelet's theorem and dual billiards.} L'Enseign. Math., \textbf{39} (1993), 189--194. 

\bibitem{tabcom} Tabachnikov, S., {\it Commuting dual billiard maps,} Geometriae Dedicata, \textbf{53} (1994), 57--68. 

\bibitem{tabdog} Tabachnikov, S.; Dogru, F. {\it Dual billiards}.  Math. Intelligencer  \textbf{27}:4 (2005), 18--25.

\bibitem{veselov2} Veselov, A.P. {\it Confocal surfaces and integrable billiards on the sphere and in the Lobachevsky space.} 
J. Geom. Phys.,  \textbf{7} (1990), Issue 1, 81--107. 

\end{thebibliography}
\end{document}